\documentclass[a4paper,twoside,reqno,11pt,american]{amsart} 

\usepackage[top=2.5cm, bottom=2.5cm, left=2.5cm, right=2.5cm]{geometry}
\usepackage[utf8]{inputenc}
\usepackage[T1]{fontenc}

\usepackage[provide=*]{babel} 
\usepackage{csquotes} 
\usepackage{comment} 
\usepackage{wrapfig}

\usepackage[noindentafter]{titlesec}
\titleformat{name=\section}[hang]{\scshape \large \centering}{\thesection.~}{0em}{}[]
\titleformat{\subsection}[hang]{\bfseries}{\thesubsection.~}{0em}{}[]
\titleformat{\subsubsection}[runin]{\bfseries}{\thesubsubsection.}{.2em}{}[]

\usepackage{titletoc}
\titlecontents{section}[2pc]{\addvspace{1pc}}
{\contentslabel[\textsc{\thecontentslabel}]{1pc}}
{}{\dotfill\contentspage}
[\addvspace{2pt}]

\titlecontents{subsection}[4pc]{}
{\contentslabel[\subsectionname\thecontentslabel]{2pc}}
{}{\hfill\contentspage}
[]

\usepackage{mathtools,amssymb,amsthm,latexsym}
\usepackage{stmaryrd} 
\newcommand{\myfatslash}{\mathbin{\mkern-6mu\fatslash}}
\usepackage{cases} 

\usepackage[arrow,matrix]{xy} 
\usepackage{tikz-cd}
\usepackage[shortlabels]{enumitem} 

\usepackage{graphicx}
\usepackage[dvipsnames]{xcolor}
\usepackage[dvipsnames]{xcolor}
\definecolor{e-mail}{rgb}{0,.40,.80}
\definecolor{reference}{rgb}{.20,.60,.22}
\definecolor{mrnumber}{rgb}{.80,.40,0}
\definecolor{citation}{rgb}{0,.40,.80}

\usepackage{enumitem}
\setlist[enumerate]{label*=(\roman*)} 

\usepackage[
backend=biber, 
style=alphabetic,
sorting=nyvt,
firstinits=true,
doi=false,
isbn=false,
url=false,
backref=true
]{biblatex}
\usepackage[
bookmarksnumbered=true,
colorlinks=true,
linkcolor=reference, citecolor=citation, urlcolor=e-mail
]{hyperref}

\usepackage[noabbrev, capitalise, nameinlink]{cleveref} 
\crefname{equation}{}{}
\crefname{item}{}{}
\crefname{enumi}{}{}
\crefname{enumii}{}{}
\crefname{enumiii}{}{}
\crefname{enumiv}{}{}

\theoremstyle{plain}
\newtheorem{theorem}{Theorem}[section]
\newtheorem{proposition}[theorem]{Proposition}
\newtheorem{lemma}[theorem]{Lemma}
\newtheorem{corollary}[theorem]{Corollary}
\newtheorem*{theorem*}{Theorem}

\theoremstyle{definition}
\newtheorem{definition}[theorem]{Definition}
\newtheorem{example}[theorem]{Example}

\theoremstyle{remark}
\newtheorem{remark}[theorem]{Remark}

\numberwithin{equation}{section}

\newcommand{\XX}{\mathcal{X}}
\newcommand{\XXrig}{\XX_{\mathrm{rig}}}
\newcommand{\XXcoa}{\XX_{\mathrm{coarse}}}

\newcommand{\YY}{\mathcal{Y}}
\newcommand{\YYcoa}{\YY_{\mathrm{coarse}}}
\newcommand{\YYrig}{\YY_{\mathrm{rig}}}
\newcommand{\II}{\mathcal{I}}
\newcommand{\MM}{\mathcal{M}}

\newcommand{\ZZ}{\mathbf{Z}}
\newcommand{\CC}{\mathbf{C}}
\newcommand{\FF}{\mathbf{F}}
\newcommand{\QQ}{\mathbf{Q}}

\newcommand{\spec}{\operatorname{Spec}}
\newcommand{\Hom}{\operatorname{Hom}}
\newcommand{\Isom}{\operatorname{Isom}}
\newcommand{\Aut}{\operatorname{Aut}}
\newcommand{\Out}{\operatorname{Out}}
\newcommand{\Card}{\operatorname{Card}}

\newcommand{\piet}{\pi_1^{\rm et}}
\newcommand{\Gk}{\mathrm{G}_K}

\newcommand{\Dab}{\Delta^{\rm ab}}
\newcommand{\DOne}{\Delta^{(1)}}
\newcommand{\Dtf}{\Delta_{\rm tf}}          

\newcommand{\ab}{\mathrm{ab}}
\newcommand{\DerivSeries}[2]{#1^{(#2)}}
\newcommand{\FinStepSolvQuo}[2]{#1^{#2}}
\newcommand{\CenterSubgrp}[1]{\mathrm{Z}\left(#1\right)}

\newcommand{\R}{\widetilde{\rm Out}_{\pi}(\II_{\XX,\eta})} 
\newcommand{\T}{\widetilde{\Out}^{\Gk}_\pi(\II_\eta,\XXrig)}

\title{Anabelian geometry for Deligne-Mumford curves}

\author[B.~Collas]{Benjamin~Collas}
\address{Research Institute for Mathematical Sciences, Kyoto University, Kyoto 606-8502, Japan}
\email{bcollas@kurims.kyoto-u.ac.jp}

\author[S.~Philip]{S\'everin~Philip}
\address{Stockholm University, Department of Mathematics, Albanovägen 28, SE-106 91 Stockholm, Sweden}
\email{severin.philip@math.su.se}

\author[N.~Yamaguchi]{Naganori~Yamaguchi}
\address{Institute of Science Tokyo, 2-12-1 Ookayama, Meguro-ku, Tokyo 152-8550, Japan}
\email{yamaguchi.n.ac@m.titech.ac.jp}

\subjclass[2020]{Primary: 14A20, 14H30, 20E18; Secondary: 14G32, 20F34, 14H37 }

\keywords{Anabelian geometry; Deligne–Mumford curves; stacky curves and orbicurves, profinite groups; étale fundamental group; Fenchel–Nielsen theory; solvable quotients; hyperbolic curves; stack inertia; Grothendieck conjecture}

\thanks{This manuscript is part of the France-Japan AHGT international research network supported by the Research Institute for Mathematical Sciences of Kyoto University and CNRS. The first author acknowledges the support of the International Center for Next-Generation Geometry, a center affiliated with the Research Institute for Mathematical Sciences located in Kyoto University. The second author is supported by the Verg Foundation and the Royal Swedish Academy of Sciences. The third author would like to express sincere gratitude to Prof. Akio Tamagawa for his invaluable assistance and insightful suggestions throughout this research.}

\begin{document}
	
	\begin{abstract}
		We develop an anabelian framework for general Deligne–Mumford curves, showing that their stack and orbifold  structures are encoded in the group-theoretic properties of their étale fundamental groups. After establishing the required properties for profinite $F$-groups, we prove that fundamental geometric features, including hyperbolicity, affineness, and inertia data, can already be detected from low-level solvable quotients of the associated profinite groups, namely at the optimal $3$-step level. As a consequence, we obtain some anabelian reconstruction results for Deligne–Mumford curves, their rigidifications, and their coarsification. While the $m$-step Grothendieck conjecture doesn't hold for Deligne-Mumford curves, we establish a $5$-step anabelian theorem for the rigidification of affine Deligne-Mumford curves, namely affine stacky curves. A certain emphasis is given to the role of stack inertia groups.
	\end{abstract}
	
	\maketitle
	
	\vfill
	
	\tableofcontents
	\thispagestyle{empty}
	
	\vspace{2em}
	
	\newpage

		\section{Introduction}\label{intro}
		This paper initiates a general anabelian framework for Deligne-Mumford curves over fields of characteristic zero. We establish some reconstruction algorithms for certain arithmetic-geometric data following two \emph{a priori} distinct but indeed related guidelines, which are, the use of ``higher'' stack symmetries for curves (or stack inertia groups), and the use of ``lower'' derived quotients of homotopy groups (or $m$-step solvable properties). 
		
		This approach results in a shift of paradigm that takes the hidden symmetries of curves into account. As a consequence:\textbf{(a)}~anabelian properties hold for curves beyond the classical scheme hyperbolic condition, \textbf{(b)}~cuspidal and stack inertia conditions are mixed together, and \textbf{(c)}~the Grothendieck $m$-step solvable conjecture doesn't hold as it is. Constructions of this paper thus provide new arithmetic-geometric insights in the case of the moduli spaces of curves -- whose anabelian character is expected but, beyond the genus zero case corresponding to configuration spaces, is not established; we refer to \cite{Col26} Section~2.1.2, which puts together \cite{Nak94} and \cite{MR1478817}, and more generally to \cite{HMM22} Theorem~A for the configuration spaces of general hyperbolic curves -- and in the case of the weighted projective line.
		
		We further develop and establish some purely group-theoretic property of Fenchel groups -- or $F$-groups, which correspond to the geometric fundamental group of the rigidification of a Deligne-Mumford curve -- which are of independent interest, and we enrich some previous purely geometric constructions of stacky curves -- such as given in \cite{MR2279100} and \cite{GS17}, and as gathered in \cite{VZB22} -- with their arithmetic and homotopical nature.
		
		\subsection{Stack anabelian arithmetic geometry}
		In its modern form and its general mono-version, anabelian geometry focuses on the canonical reconstruction of discrete invariants of a space $X$ -- such as the topological type $(g,r)$ of a hyperbolic curve, or its cuspidal inertia/decomposition groups corresponding to closed points -- from its étale (resp. geometric) fundamental group $\Pi_X$ (resp. $\Delta_X$); it includes the reconstruction of $X$ itself. The original \emph{Grothendieck conjecture} states, for given $X$ and $Y$ spaces over a field $K$, the bijectivity of the natural map
		\[
		\Isom_K(Y,X)\longrightarrow \Isom_{G_K}(\Pi_{Y},\Pi_{X})/\sim_{\Delta_{X}}.
		\]
		where the right-hand side denotes the $\Delta_X$-orbits of $G_K$-compatible isomorphisms $\Pi_Y\to\Pi_X$. In its weak form, the Grothendieck conjecture deals with the existence of a map in the reverse direction.
		
		In the case of hyperbolic curves, this conjecture was successively established by H.~Nakamura in genus zero (weak form) \cite{MR1072981} Theorem~6.1, by A.~Tamagawa for affine curves \cite{MR1478817} Theorem~6.3, and by S.~Mochizuki in its general form \cite{MR1720187} Theorem~16.5.
		
		\medskip
		
		\subsubsection{}
		Let $\XX$ be a Deligne-Mumford curve, which comes with its rigidification $\XXrig$ as defined in \cite{AOV08} -- also called a stacky curve, that has no non-trivial generic stack inertia -- and its coarsification $\XXcoa$ -- which is a (smooth) scheme amounting, in the case of an underlying moduli problem, to resolving the moduli problem attached to $\XX$ only geometrically. In our context, the weak Grothendieck conjecture for Deligne-Mumford curves can then be seen as structured in a $3$-level diagram as follows:
		
		\begin{equation*}
			\begin{tikzcd}
				\Isom_K(\YY,\XX) \ar[r,shift left,"\Psi"]\ar[d]& \Isom_{G_K}(\Pi_{\YY},\Pi_{\XX})/\sim_{\Delta_{\XX}}\ar[d,two heads]\ar[l,shift left,dashed]\\
				\Isom_K(\YYrig,\XXrig) \ar[r,"\Psi_{\rm rig}", shift left]\ar[d, shift right]& \Isom_{G_K}(\Pi_{\YYrig},\Pi_{\XXrig})/\sim_{\Delta_{\XXrig}} \ar[d,two heads]\ar[l,shift left, dashed]\\
				\Isom_K(\YYcoa,\XXcoa) \ar[r,"\Psi_{\rm coarse}",shift left,]\ar[u,shift right, dashed]&\Isom_{G_K}(\Pi_{\YYcoa},\Pi_{\XXcoa})/\sim_{\Delta_{\XXcoa}} \ar[l,shift left, dashed]
			\end{tikzcd}
		\end{equation*}
		where dashed arrows indicate progress established in this manuscript.
		\medskip
		
		Our first main result is given as follows -- see Proposition~\ref{prop:IrigcoaRec} and Theorem~\ref{theo:DMbiAnab}.
		\begin{theorem}\label{Tha:MainA}
			For $\XX$ a hyperbolic Deligne-Mumford curve over a number field, the weak Grothendieck conjecture 
			for $\XX$ and for its rigidification $\XXrig$ holds.
		\end{theorem}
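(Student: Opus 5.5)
The plan is to reduce to the classical weak Grothendieck conjecture for hyperbolic curves over number fields (Nakamura, Tamagawa, Mochizuki) by passing through finite étale covers that are schemes. We then descend, keeping track of the stack inertia separately.

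\textbf{Step 1: the rigidification.} Let $\XX$ be hyperbolic, so that $\XXrig$ is a hyperbolic stacky curve whose geometric fundamental group $\Delta_{\XXrig}$ is an $F$-group. This group contains an open torsion-free normal subgroup $N$, and $N$ corresponds to a finite étale Galois cover $X'\to\XXrig$ with $X'$ a hyperbolic scheme curve over a finite extension $K'$ of $K$.

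Fix a $G_K$-isomorphism $\phi\colon\Pi_{\YYrig}\to\Pi_{\XXrig}$. We first reconstruct group-theoretically the following invariants from $\Delta_{\XXrig}$ and $\Pi_{\XXrig}$:
\begin{itemize}
\item the genus, the number of cusps and the orbifold signature, read off from the $F$-group structure as established earlier for profinite $F$-groups;
\item the torsion, i.e.\ the stack inertia at orbifold points;
\item the decomposition groups.
\end{itemize}
Using these, $\phi$ identifies the subgroup $\phi^{-1}(N)$ with a torsion-free open subgroup of $\Delta_{\YYrig}$. Hence it defines a cover $Y'\to\YYrig$ together with a $G_{K'}$-isomorphism $\Pi_{Y'}\cong\Pi_{X'}$. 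Mochizuki's theorem then yields an isomorphism $Y'\simeq X'$, and this isomorphism is functorial and equivariant for the Galois groups $\Pi_{\XXrig}/N$, acting on both sides through $\phi$.

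\textbf{Step 2: descent to the rigidification.} Taking the stack quotient by the finite group $\Pi_{\XXrig}/\Pi_{X'}$ gives $\YYrig\simeq[Y'/G]\simeq[X'/G]\simeq\XXrig$. Compatibility with the orbifold points comes from the fact that the inertia is determined by stabilizers, which are matched because $\phi$ preserves torsion. Independence of the choice of $N$ follows from the uniqueness (bijectivity) part of Mochizuki's result on the cofinal system of torsion-free normal subgroups.

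\textbf{Step 3: the Deligne–Mumford curve.} The curve $\XX$ is a gerbe over $\XXrig$ banded by the generic inertia group $\mathcal I_\eta$. We need the fundamental exact sequence
\[
1\to\mathcal I_{\XX,\eta}\to\Pi_\XX\to\Pi_{\XXrig}\to 1,
\]
with $\mathcal I_{\XX,\eta}$ recovered as the maximal finite normal subgroup of $\Pi_\XX$; that $\mathcal I_{\XX,\eta}$ is characterized intrinsically is presumably the content of Proposition~\ref{prop:IrigcoaRec}. Then $\phi$ induces an isomorphism on inertia and an isomorphism $\phi_{\rm rig}$ on the quotients, which gives $\YYrig\simeq\XXrig$ by Step~2. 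It remains to lift this isomorphism to the gerbes. Over a number field, gerbes banded by $\mathcal I_\eta$ are classified by nonabelian $H^2$ data, namely the outer action $\Pi_{\XXrig}\to\Out(\mathcal I_\eta)$ together with an extension class. Both pieces of data are encoded in the extension above, and $\phi$ identifies the extension classes. This produces an isomorphism $\YY\simeq\XX$ over the rigidification isomorphism.

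\textbf{Main obstacle.} I expect Step~3 to be the hardest, namely showing that the étale extension class determines the gerbe. One needs that the stacky $H^2$ classifying the gerbe injects into the group cohomology of $\Pi_{\XXrig}$, a $K(\pi,1)$-type statement for hyperbolic stacky curves. My first attempt would be to verify this on the scheme cover $X'$, where such $K(\pi,1)$ results are classical, and then pass back to $\XXrig$ using the Hochschild–Serre spectral sequence.
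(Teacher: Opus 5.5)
Your outline is correct, but it takes a different route from the paper at both levels.

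\emph{The rigidification.} The paper does not carry out a descent itself. It characterizes $\II_{\XX,\eta}$ as the unique maximal finite closed normal subgroup of $\Delta_\XX$ (via elasticity of $\Delta_{\XXrig}$, Proposition~\ref{prop:IrigcoaRec}), so that $\phi$ descends to $\Pi_{\YYrig}\simeq\Pi_{\XXrig}$. It then simply invokes Hoshi's Theorem~4.5. Your Steps~1--2 are in effect a reproof of that cited theorem by Galois descent from Mochizuki's theorem. Three small points there:
\begin{enumerate}
\item the relevant finite group is $\Pi_{\XXrig}/\Pi_{X'}$, not $\Pi_{\XXrig}/N$;
\item its non-geometric elements act $K'/K$-semilinearly, so you need the conjecture for Galois-conjugate curves to get equivariance;
\item reconstructing the signature and decomposition groups is superfluous, since torsion-freeness alone makes $Y'$ schematic.
\end{enumerate}

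\emph{The lift to $\XX$.} The paper never compares nonabelian $H^2$ on the stack. Proposition~\ref{prop:monoXoverXrig} passes to a scheme cover on which the outer action on $\II_{\XX,\eta}$ is trivial. It uses $H^2(\pi_1(U_3),Z(\II_{\XX,\eta}))\simeq H^2_{\rm et}(U_3,Z(\II_{\XX,\eta}))$ only on that hyperbolic scheme curve, kills the class on a further cover, and writes $\XX\simeq[U/H]$ with $H=\Pi_\XX/\pi_1(U)$. Corollary~\ref{cor:monotobi} then transports $U$ and $H$ along $\phi$.

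Your direct comparison of Giraud's class with the extension class also works. Classes of gerbes with a fixed lien $L$ form, when nonempty, a torsor under $H^2(Z(L))$, and the comparison map is equivariant. So injectivity reduces to $H^2_{\rm et}(\XXrig,Z)\to H^2(\Pi_{\XXrig},Z)$. This map is even an isomorphism, by comparing Cartan--Leray spectral sequences for a finite Galois scheme cover $X'\to\XXrig$ with $X'$ a $K(\pi,1)$, exactly as you propose.

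\emph{What each buys.} Your version is more conceptual and shows that $\XXrig$ is itself a $K(\pi,1)$. The paper's version yields an explicit uniformization $\XX\simeq[U/H]$ read off from $\Pi_\XX$, which it reuses for automorphisms in Lemma~\ref{lem:bianabDMautX}.

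\emph{A point to make explicit.} In either route, the isomorphism $f_{\rm rig}$ from Step~2 must induce $\phi_{\rm rig}$ up to an inner automorphism; agreement on $\Pi_{X'}$ propagates to $\Pi_{\XXrig}$ by slimness. A bare isomorphism $\YYrig\simeq\XXrig$, as the weak form alone provides, would not guarantee that the liens of $\XX$ and $f_{\rm rig}^*\YY$ coincide. The paper builds this compatibility into the hypothesis of Corollary~\ref{cor:monotobi}.
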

		
		This series of results relies, at the rigidified level, on the profinite $F$-group theory developed in Section~\ref{subsub:SlimCfree}, and follows Hoshi's Grothendieck conjecture (weak form) for stacky curves of \cite{Hos22} Theorem~4.5. While the latter follows from a Galois descent of the classical Grothendieck conjecture for curves, we provide a separate proof based on the mono-anabelian property of the stack inertia groups $\II_{\XX,\bar{x}}$ at closed points -- see Proposition~\ref{prop:IrigcoaRec} and \ref{prop:BianabXrig}. Following Mochizuki's philosophy of arithmetic holomorphy, this latter approach allows, over $p$-adic fields, $\XXrig$ varying in a mono-anabelian holomorphic deformation around the fixed discrete $\II_{\XX,\bar{x}}$ data -- see \cite{Col26} \S.~1.3.1 and the original \cite{AbsTopIII} Introduction for this principle -- which can be the object of future work.
		
		The anabelian result for Deligne-Mumford curves follows the step-by-step identification of $\XX$ as the global quotient of a scheme by a finite group and as a $\II_{\eta}$-gerbe over $\XXrig$ \emph{à la} Giraud, see Proposition~\ref{prop:monoXoverXrig}, which is an algebraic and homotopical refinement of Behrend-Noohi in \cite{MR2279100} Proposition~7.5.
		
		\medskip
		
		Due to the stack inertia, the hyperbolic condition for Deligne-Mumford curves is weaker than the hyperbolic condition for curves -- that is $2-2g-r-\sum(1-1/n_i)<0$, where $n_i$ denotes the order of stacky points. Our stack approach thus establishes the weak Grothendieck conjecture for (coarse) smooth curves whose topological data $(g,r)$ are beyond the usual scheme hyperbolic condition $2-2g-r<0$ -- see Remark~\ref{rem:hypRigAnabRel}~(i).

		\subsubsection{} The motivation for the use of derived series $\{\Delta_{\XXrig}^{(m)}\}_{m\geq 1}$ and $m$-step maximal quotients $\Delta_{\XXrig}^m=\Delta_{\XXrig}/ \Delta_{\XXrig}^{(m)}$ for our stacky anabelian study comes from the homotopic nature of the stack inertia groups $\II_{\XXrig,\bar{x}}$, that is $\II_{\XXrig,\bar{x}}<\Pi_{\XXrig}$ for $\bar{x}$ a geometric point of $\XXrig$ (see Section~\ref{subsub:InertiaHomotopy} and also Section~\ref{sub:introIC} below), and a certain abelian-period condition as in Section~\ref{sub:torsfreecommutator} that detects how the stack inertia groups survive in the (maximal abelian) $1$-step quotient $\Delta_{\XXrig}^1$.

		\medskip
		
		In the anabelian reconstruction process for stacky curves -- and in contrast to the case of general Deligne-Mumford curves -- the stack inertia intervenes at the first layer out of five; that is: ($m=1$)~deals with the stack inertia property (and the genus), $(m=3)$~deals with geometric properties (hyperbolicity and affineness), and $(m\geq 5)$~finalizes the reconstruction. More precisely, denoting $\Pi^{\Delta-m}_{\XX}=\Pi_{\XX}/\Delta_{\XX}^{(m)}$ in accordance with the action of $G_K$ on $\Delta_{\XX}<\Pi_{\XX}$, our second main series of results can be stated as follows -- see Theorem~\ref{thm:mstep_generalGenus} with Remark~\ref{rem:asbmStep}, Section~\ref{subsub:anabStepGrp} for affineness and hyperbolicity, and more generally Section~\ref{subsection:2.2} 
		\begin{theorem}\label{Th:MainB} For a \emph{ hyperbolic Deligne-Mumford curve}, the Grothendieck $m$-step conjecture doesn't hold for any $m\geq 1$. On the other hand, for $\XX$ and $\XX'$ \emph{stacky curves} over $K$ a finitely generated field over $\QQ$, which are affine, non-perfect, and hyperbolic, with $(g_\XX,r_\XX)\neq (0,1)$, the Grothendieck $5$-step conjecture holds, that is:
			\begin{equation}\tag*{$(*)_{m=5}$}
				\mathcal{X}\cong\mathcal{X}' \text{ if and only if }\Pi_{\mathcal{X}}^{\Delta-5}\cong\Pi_{\mathcal{X}'}^{\Delta-5}
			\end{equation}
			The level $m=3$ is furthermore optimal for characterizing the hyperbolicity and the affineness of~$\XX$.
		\end{theorem}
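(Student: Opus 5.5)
The plan has three parts: a counterexample for the failure statement, a reduction of the positive statement to the scheme case via a characteristic torsion-free subgroup, and an analysis of the $3$-step level through Fenchel group theory.

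\emph{Failure of the $m$-step conjecture.} I would use the gerbe structure. A Deligne--Mumford curve $\XX$ is an $\II_\eta$-gerbe over $\XXrig$, so the homotopy sequence
\[
1\to \II_{\XX,\eta}\to \Pi_\XX\to\Pi_{\XXrig}\to 1
\]
realizes $\Pi_\XX$ as an extension of $\Pi_{\XXrig}$ by the finite group $\II_\eta$. Fix a hyperbolic $\XXrig$ and compare the trivial gerbe $\XXrig\times B G$ with a non-trivial $G$-gerbe $\YY$ over $\XXrig$, for a suitable finite group $G$. If $G$ is abelian, central in $\Pi$, and the gerbe class dies modulo the $m$-th derived subgroup, then $\Pi_\XX^{\Delta-m}$ and $\Pi_\YY^{\Delta-m}$ coincide. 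For a non-trivial gerbe class one can arrange this by a choice of Brauer-type class over the number field, since $\Delta^{(m)}$ absorbs the relevant extension data. This gives non-isomorphic $\XX,\YY$ with isomorphic $m$-step quotients for every $m$. Simpler still, one can take $\XX=\XXrig\times BG$ and $\YY=\XXrig\times BG'$ with $G,G'$ non-isomorphic but with isomorphic maximal $m$-step solvable quotients, for instance perfect groups, which are invisible in every $\Pi^{\Delta-m}$.

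\emph{The $5$-step conjecture.} For stacky curves, $\Delta_\XX$ is a profinite $F$-group of signature $(g;n_1,\dots,n_s;r)$ with $r\ge 1$ since $\XX$ is affine. I would proceed in three steps.
\begin{enumerate}
\item At $m=1$, use the torsion-free/abelian-period criterion of Section~\ref{sub:torsfreecommutator} to read off $g$ and the torsion data of $\Delta^{\ab}$; here the non-perfect hypothesis is what makes the stack inertia visible.
\item At $m=3$, detect hyperbolicity and affineness. A first guess is that the centre of $\Delta^{2}$ (or of $\Delta^3$) is trivial exactly in the hyperbolic case, and that cohomological dimension (freeness) of the torsion-free part distinguishes affine from proper.
\item Pass to a characteristic open torsion-free subgroup, namely the kernel of $\Delta\to\Delta^{\ab}\otimes \ZZ/N$, with $N$ the lcm of the $n_i$. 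This corresponds to a finite étale cover $X\to \XX$ by a genuine affine hyperbolic curve, and it is defined group-theoretically from the $1$-step quotient. Its own $m$-step quotient sits inside $\Pi^{\Delta-(m+1)}_\XX$ or so.
\end{enumerate}
Then apply the $m$-step Grothendieck conjecture for affine hyperbolic curves over finitely generated fields (Yamaguchi's results, valid for small $m$, e.g.\ $m=3$ or $4$ for affine curves) to $X$ and to its Galois twists. This yields $X\cong X'$ compatibly with the covering group action, and I would descend via Galois descent, as in Hoshi's approach, to get $\XX\cong\XX'$. The count $5 = 1+4$, or $2+3$, matches this passage.

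\emph{Optimality of $m=3$.} I would exhibit, at $m=2$, a hyperbolic and a non-hyperbolic stacky curve with isomorphic $\Pi^{\Delta-2}$, for example signatures such as $(0;2,2,2,2;0)$ versus a Euclidean one, or in the affine case $(0;2,2;1)$ versus something non-hyperbolic. One should check this against the metabelian quotients of triangle-type groups.

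The main obstacle is step (iii): showing that the inertia and decomposition data needed for descent, namely the stacky points of order $n_i$ and the cusps, are recoverable at the right level, and that the derived-series levels add up to exactly $5$. I would first attempt to locate the stack inertia groups as maximal finite subgroups surviving in $\Delta^{2}$, and then track them through the cover. The $(g,r)\neq(0,1)$ exclusion presumably enters here, since $\Delta$ is then a free product of cyclic groups and the cusp is not separated.
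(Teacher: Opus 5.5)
Your failure construction works in its ``simpler'' form. For $G$ perfect one has $\Delta^{(m)}_{\XXrig\times BG}=\Delta^{(m)}_{\XXrig}\times G$, so $\XXrig\times BG$ and $\XXrig$ have the same $\Pi^{\Delta-m}$ for every $m$. This is exactly the non-solvable generic inertia obstruction of Section~\ref{subsub:mGCfails}. The central abelian gerbe variant is unsupported and should be dropped. The skeleton of your $5$-step argument also matches the paper: one step to reach a torsion-free open subgroup with abelian quotient, Lemma~\ref{lemma:mstep} to recover its $4$-step quotient inside $\Pi^{\Delta-5}_\XX$, then Theorem~\ref{th:Yam24mStep}. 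Only the $4$-step version is available, so the count is $1+4$, not $2+3$.

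Two steps fail as written. First, Theorem~\ref{th:Yam24mStep} requires $r_U\geq 3$ and $(g_U,r_U)\notin\{(0,3),(0,4)\}$, and your single cover need not satisfy this. Take $\Sigma_\Delta=(0,2;\{2\})$, which is non-perfect, affine, hyperbolic, with $(g,r)\neq(0,1)$, and $N=2$. The kernel of $\Delta\to\Delta^{\mathrm{ab}}\otimes\ZZ/2\cong(\ZZ/2)^2$ is torsion-free with $r_H=4$ and $\chi(H)=-2$, so it is a $(0,4)$ curve. The paper instead uses the family $\mathcal{H}^1_\XX$ of Lemma~\ref{lem:cofH}, which it shows is non-empty and cofinal via Corollary~\ref{cor:r_greater-1}.

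Second, your descent is not justified. Theorem~\ref{th:Yam24mStep} only asserts the existence of some $K$-isomorphism $X\cong X'$, and nothing you say makes it equivariant for $\Pi^{\Delta-5}_\XX/H$. Hoshi's Galois descent needs an isomorphism that induces the given one, i.e.\ the full Grothendieck conjecture, and an existence statement does not provide that. The paper avoids single-cover descent altogether:
\begin{enumerate}
\item it applies the $4$-step result to every $H\in\mathcal{H}^1_\XX$;
\item it uses Lemma~4.14 of \cite{MR4745885} for compatibility along the tower;
\item it assembles $\tilde\phi\colon\tilde X^1\to\tilde{X'}^1$ and passes to quotient stacks;
\item it checks that the result induces the image of $\theta$, using that $\Pi^{\Delta-1}_\XX$ is topologically generated by images of sections over open subgroups of $G_K$.
\end{enumerate}

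Your $3$-step part does not work, and in Theorem~\ref{thm:mstep_generalGenus} this is also what transfers affineness and hyperbolicity from $\XX$ to $\XX'$. Centre-freeness of $\Delta^2$ or $\Delta^3$ does not detect hyperbolicity. The signature $(0,0;\{2,2,3\})$ gives $\Delta=\Delta^2\cong S_3$, and $(0,0;\{2,3,4\})$ gives $\Delta^3\cong S_4$; both are centre-free and both are elliptic. Freeness of a torsion-free subgroup is also a property of $\Delta$, not visibly of $\Delta^3$. The paper's criterion (Proposition~\ref{prop:3steprecoofaffine}) is that $\Delta$ is hyperbolic if and only if $\Delta^3$ has derived length $3$ and is not $S_4$. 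This rests on Proposition~\ref{cor:infinitederived} (solvable quotients of every derived length) and Table~\ref{fig:Table_nonhyperbolic_F_groups}. Affineness is detected by an open normal $H<\Delta^3$ with abelian quotient and $H^2$ torsion-free, using Lemma~\ref{lemma:torsion} and Theorem~\ref{thm:proFFenchels}~(i), together with the Chen-rank identity that separates free groups from surface groups.

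For optimality, both of your candidate hyperbolic signatures, $(0,0;\{2,2,2,2\})$ and $(0,1;\{2,2\})$, have $\chi=0$. They are parabolic and cannot serve. The paper instead uses $(0,0;\{2,2^n,3\})$, which all have $\Delta^2\cong S_3$; this is elliptic for $n=1$ and hyperbolic for $n\geq 3$.
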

		
		The $m$-step part of this result relies on a $4$-step weak Grothendieck conjecture result for \emph{affine} curves of \cite{MR4745885} by specialization to generic fiber that echoes Oda-Tamagawa's good reduction criterion of \cite{MR1478817}, and on a certain cover provided by Theorem~\ref{theo:mainC}; passing from $\XXrig$ to a finite étale schematic cover as given by Theorem~\ref{theo:mainC} above adds another step. The case of proper curves would require adapting distinct techniques from $p$-adic Hodge theory and line bundles as in \cite{MR1720187}.
		
		In the case of Deligne-Mumford curves, on the other hand, the non-solvability of the generic stack inertia stands as a general obstruction to the $m$-step anabelianity of Deligne-Mumford curves -- see Section~\ref{subsub:mGCfails}. 
		
		\medskip
		
		Note that since the Grothendieck $(m-1)$-step conjecture implies the Grothendieck $m$-step conjecture, and all imply the weak form of the classical Grothendieck conjecture, the result above in particular recovers \cite{Hos22} Theorem~4.5 and the rigidification part of Theorem~\ref{Tha:MainA}.
		
		\medskip
		
		Let us recall that this result is part of a broader series of progress: for number fields ($m$-step solvable for Neukirch-Uchida's Theorem for $m=3$, Saïdi-Tamagawa \cite{ST22}), for affine hyperbolic curves ($m=4$, see \cite{MR4745885}), and for proper hyperbolic curves ($m=5$, Mochizuki \cite{MR1720187} Theorem~18.1) -- for a similar birational almost-abelian approach, we also refer to \cite{Top16}.

		\subsection{Fenchel groups: profinite and truncated properties}
		In the profinite group-theoretic settings, establishing the $m$-step anabelian properties of stacky curves results in the development of further notions and properties for $F$-groups.
		
		\medskip
		
		\subsubsection{} The fact that a hyperbolic profinite $F$-group $\Delta$ is ``globally'' not solvable (it contains a torsion-free $F$-group that is not, see Theorem~\ref{theo:mainC}) but ``locally'' solvable (it contains solvable quotients of any derived length, see (ii)~below) leads to the intermediate notions of $m$-solvability and $m$-derived perfectness. 
		
		\begin{center}
			\begin{equation*}\def\arraystretch{.7}
				\begin{array}{ccccccc}
					\text{perfect} & \Leftrightarrow & 0\text{-derived perfect}
					& \Leftarrow      & 0\text{-step solvable}  & \Leftrightarrow & \text{trivial}                                            \\
					&                 & \Downarrow              &                 & \Downarrow             &                 &                \\
					&                 & 1\text{-derived perfect} & \Leftarrow      & 1\text{-step solvable} & \Leftrightarrow & \text{abelian} \\
					&                 & \Downarrow              &                 & \Downarrow             &                 &                \\
					&                 & \vdots                  &                 & \vdots                 &                 &                \\
					&                 & \Downarrow              &                 & \Downarrow             &                 &                \\
					&                 & m\text{-derived perfect} & \Leftarrow      & m\text{-step solvable} &                 &                \\
					&                 & \Downarrow              &                 & \Downarrow             &                 &                \\
					&                 & \vdots                  &                 & \vdots                 &                 &                \\
					&                 &                         &                 & \Downarrow             &                 &                \\
					&                 &                         &                 & \text{solvable}        &                 &
				\end{array}
			\end{equation*}
		\end{center}
		
		\medskip
		
		The $m$-derived perfectness stands as an obstruction to checking if the stack inertia survives in the maximal $(m+1)$-step solvable quotient $\Delta^{m+1}$ -- see Section~\ref{subsub:mStpSol} and also the structure result of Lemma~\ref{lem:weaklyprefect_transitive}.
		
		\subsubsection{}
		The following $m$-step profinite Fenchel-Nielsen result, see Theorem~\ref{thm:proFFenchels}, in its group-theoretic proof and formulation, is a key ingredient in establishing the anabelian $m$-step Theorem~\ref{Th:MainB} and the group-theoretic properties (i)-(iii) below.
		
		\begin{theorem}\label{theo:mainC}
			Every non-perfect profinite $F$-group $\Delta$ admits a torsion-free open characteristic subgroup whose quotient has derived length at most $3$. When $\Delta$ is affine, or is proper and satisfies a certain abelian-period condition, the quotient can furthermore be supposed abelian.
		\end{theorem}
		This result generalizes and refines with purely group-theoretic argument a previous discrete version -- established in terms of Fuchsian and triangle groups, see \cite{MR2379717} Theorem~3.1 and \cite{MR4816529} Theorem~1.1, and more generally \cite{Sah69}. It follows from a profinite version of the classcial Fenchel-Nielsen Theorem that is given in Theorem~\ref{thm:classicalproFfenchel}.
		
		\medskip
		
		Note that in the proper case -- that is with no cuspidal data -- the abelian-period condition amounts to preserving the cyclic stack inertia groups in the derived group, see Section~\ref{subsub:abelPer}.
		
		\subsubsection{}
		In addition to the previously mentioned characterization of the hyperbolicity by the maximal $3$-step solvable quotient $\Delta^3$ of $\Delta$, we further obtain the following group-theoretic properties:
		\begin{enumerate}
			\item The list of periods, the sign of the Euler characteristic, and the affineness property are isomorphism invariants of the group -- see Proposition~\ref{prop:SigIsoInv};
			\item 	Every non-perfect hyperbolic profinite $F$-group admits a solvable quotient of derived length $c$ for any integers $c\geq 0$ -- see Proposition~\ref{cor:infinitederived};
			\item The maximal $m$-step solvable quotients $\Delta^m$ are center-free for $m$ large enough -- see Proposition~\ref{prop:center-free}.
		\end{enumerate}
		These group-theoretic statements, which are systematically exploited for establishing our anabelian results above, can also be seen as an application of anabelian geometry to group theory; they can be of independent interest for the group theorist.

		\subsection{Stack inertia vs cuspidal arithmetic homotopy}\label{sub:introIC}
		As part of a broader project on the stack arithmetic homotopy of moduli spaces of curves $\MM_{g,[r]}$ -- in Grothendieck-Teichmüller theory, where one gives a stack inertia definition of the Grothendieck-Teichmüller group $GR\simeq\rm \!I\Gamma$ in \cite{Col12}; in Galois-Teichmüller theory, where one establishes that the $G_\QQ$-action on cyclic stack inertia has the same type as on the cuspidal one \cite{CM23}; and with respect to Oda's problem, where stack methods establish a result originally obtained in a cuspidal framework \cite{CP25}  -- this paper realizes a more formal link between stack inertia groups and cuspidal groups. 
		
		\subsubsection{} Conceptually, \emph{the virtual wounding of the curve by the stack inertia} is revealed in the derived group as follows: at the level of the torsion-free derived subgroup $\Delta_\XX^{(1)}$, the stack inertia disappears and resurfaces as cuspidal points, which can then be tracked down at higher $m$-step levels. More precisely, the number $r_{\Delta^{(1)}}$ of cuspidal points is given in terms of the orbifold periods of the stacky points -- see Lemma~\ref{cor:r_greater}. 
		
		\medskip
		
		The stack inertia vs cuspidal inertia duality is more apparent in the following proper vs affine cases. In the case of \emph{proper} profinite $F$-groups, namely of type $(g,r)=(g,0)$, the abelian-period condition is a numerical condition for the cyclic stack inertia to survive at the first stage of the derived series, while in the \emph{affine} case of type $(g,r)=(0,1)$, the number of cusps $r_H$ -- where $H$ corresponds to a schematic étale cover of a stacky curve $\XXrig$ -- is bounded by $r_{\Delta^{(1)}}$, and thus stands as a first $m$-step obstruction to anabelian phenomenon as in Section~\ref{subsection:2.2}.

		\subsubsection{}
		All the anabelian results apply \emph{stricto sensu} to the dimension one moduli spaces of curves --  that is, $\MM_{0,[4]}$ and $\MM_{1,1}$ -- to the dimension one irreducible components of special loci $\MM_{g,[r]}(A)$ of curves of genus $g$ with $r$ marked points admitting $A\leq\II_{\MM,\bar{z}}$ as a stack inertia group, and to their rigidification. 
		
		\emph{Sensu lato}, the higher symmetries of stacky curves are related to the higher symmetries of the moduli space of curves $\MM_{g,[r]}$ as follows. Denote by $(g,r)$ the topological type of $\XXrig$. Since $\XXrig$ contains a dense schematic curve $X$, a surjective morphism $\Pi_{\XXrig}\twoheadrightarrow A$ onto a finite abelian group (such as involved in the abelian-period condition with respect to $\Delta_{\XXrig}^1$) defines a certain $A$-cover $Z\to X$, that is, a point $z\colon \spec K\to \MM_{g,[r]}(A)$ of the moduli spaces of curves of genus $g$ with $r$ marked points, whose stack inertia groups satisfy $A\leq\II_{\MM,\bar{z}}<\Pi_{\MM}$. We further refer to \cite{CM23} and Section~4.2 herein for the arithmetic of this stack inertia in the case where $A$ is a cyclic group.

		\subsubsection{} 
		As investigating remarks, let us note that this cuspidal-stack inertia behavior would suggest that the stack inertia should naturally appear in the case of the $m$-step Grothendieck conjecture for proper hyperbolic non-perfect stacky curves. In the affine case, the use of stack inertia-only arguments, as a replacement to the use of Theorem~\ref{theo:mainC}, could also lower the bound from $m=5$ to $m=4$.
		
		\medskip
		
		In another direction, note that a straightforward application of Theorem~\ref{Th:MainB} to hyperbolic smooth weighted projective lines $\mathbf{P}(a_1,\dots,a_n)$ with $a_i\in\mathbf{Z}_{>0}$, provides the examples of \emph{Fano varieties with log terminal singularities} (cf.~\cite{Dol82,KM98}) that exhibit some anabelian behavior. With this respect, the results of this paper thus further point toward investigating the group-theoretic and homotopical nature of classical structures arising in representation theory (canonical algebras and Auslander–Reiten theory~\cite{GL87}), singularity theory (orbifold weights and graded rings), and derived geometry (derived categories and Bridgeland stability conditions~\cite{Bri07}).

		
	\bigskip

	{\footnotesize
		\noindent \emph{Notations}. 
		For $G$ a profinite group, we follow (unlike the classical anabelian papers) the usual group theoretic notations $\{G^{(k)}\}_{k\geq 0}$ for derived series, where  $G^{(0)} \coloneqq G$, and $G^{(m)}$ is the closure of $[G^{(m-1)},G^{(m-1)}]$ for  $m\geq 1$. The \emph{derived length of $G$}, if it exists, is the minimal integer $k$ such that $G^{(k)}=1$; the \emph{maximal $m$-step solvable quotient of $G$} is defined as $G^{m} \coloneqq G / G^{(m)}$. A group $G$ is said to be \emph{perfect} if $G{\color{red}\simeq}{\color{blue}=} G^{(1)}$. We denote by $G_{\rm tf}$ the maximal torsion-free quotient of $G$.
	}
	
	\section{Group theoretic properties of profinite \texorpdfstring{$F$}{F}-groups}\label{sec:Fenchel}
	
	The notions of profinite $F$-group (where $F$ stands for Fenchel), and their related open subgroups, are the group-theoretic notions corresponding to the geometric fundamental group $\Delta_{\XXrig}$ of the rigidification of a Deligne-Mumford curve $\XX$ as in Section~\ref{sub:AHGDM} -- that is, of a smooth $1$-dimensional stack of finite type with trivial generic inertia -- and its finite étale covers. After a brief reminder on the basics of those groups, such as presentation, signature, Euler characteristic, hyperbolicity, and affineness -- which we show by anabelian methods that some of those are group isomorphism invariants in Proposition~\ref{prop:SigIsoInv} --  we establish our Fenchel-Nielsen results in their profinite and derived forms -- see Theorem~\ref{thm:classicalproFfenchel} and Theorem~\ref{thm:proFFenchels} respectively.
	
	\medskip
	
	We also provide a group-theoretic criterion for the stack inertia to survive in the derived series, establish some anabelian slimness and center-freeness properties, as well as a group-theoretic characterization of hyperbolicity and affineness for non-perfect hyperbolic $F$-groups in their $3$-step quotient -- see Section~\ref{sec:weaklyperfect-hyperbolic}. We also note that the result does not hold at the $2$-step level.
	
	\medskip
	
	This section serves as a preparatory group-theoretic introduction to the anabelian results on Deligne-Mumford curves of Section~\ref{sub:GCDM} and the group-theoretic foundation of the $m$-step results for \emph{stacky} curves of Section~\ref{subsection:2.2}.

	\subsection{Profinite \texorpdfstring{$F$}{F}-groups and hyperbolicity}\label{sub:firstpropFgroups}
	
	\subsubsection{} A profinite $F$-group corresponds to the geometric fundamental group of orbicurves. Since topologically finitely generated, such a group can be obtained as the profinite completion of a discrete Fenchel-group $\tilde{\Delta}$ -- see \cite{MR577064} p.~126 for a definition in terms of Fuchsian groups. We recall that discrete $F$-groups are residually finite, that is $\tilde{\Delta}\hookrightarrow {\Delta}$ -- see \cite{Sah69} Theorem~1.5~(a).
	
	\begin{definition}\label{def:proFgroup}
		A profinite group $\Delta$ is called a profinite Fenchel group, or profinite $F$-group, if it admits a topological presentation of the form
		\begin{equation}\label{equation:01-decomposition}\small
			\Bigl\langle \alpha_{1}, \beta_{1}, \dots, \alpha_{g}, \beta_{g}, \gamma_{1}, \dots, \gamma_{r}, \delta_{1}, \dots, \delta_{k} \Bigm| \prod_{i=1}^{g} [\alpha_i, \beta_i] \cdot \prod_{j=1}^{r} \gamma_j \cdot \prod_{\ell=1}^{k} \delta_\ell = 1,\quad \delta_1^{n_1} = \cdots = \delta_k^{n_k} = 1 \Bigr\rangle
		\end{equation}
		for some integers $g$, $r$, $k\in\mathbb{Z}_{\geq 0}$, and $n_i \in\mathbb{Z}_{\geq 2}$ for each $i=1,\dots,k$.
		The tuple
		\begin{equation*}
			\Sigma_{\Delta}=(g, r; \{n_1, \dots, n_k\})
		\end{equation*}
		is called the signature of $\Delta$; here $g$ is the genus, $r$ the number of cusps, and $k$ the number of orbifold points of periods $n_1, \dots, n_k$.
	\end{definition}
	
	In a consistent manner with the geometric context \cite{Wal61}, the Euler characteristic $\chi(\Delta)$ of a profinite $F$-group $\Delta$ with signature $(g, r; \{n_1, \dots, n_k\})$ is then given by
	\begin{equation}\label{def:euler-characteristic}
		\chi(\Delta)
		\coloneqq 2 - 2g - r - \sum_{i=1}^{k} \left(1 - \frac{1}{n_i}\right).
	\end{equation}
	We say that $\Delta$ is \textit{hyperbolic} if $\chi(\Delta)<0$.
	
	\begin{remark}\label{rem:classical_reconstruction_of_euler}
		The presentation (and hence the signature) of a non-trivial profinite $F$-group is not necessarily unique.
		For example, the free profinite group of rank $4$ can be seen as a profinite $F$-group with signature $(2,1;\emptyset)$ or $(0,5;\emptyset)$.
	\end{remark}
	
	\begin{proposition} \label{prop:inducedsign}
		Let $\Delta$ be a profinite $F$-group with signature $(g,r;\{n_1,\dots, n_k\})$. Let $H$ be an open subgroup of $\Delta$. Then $H$ is a profinite $F$-group with signature $(g_H,r_H; \{n_{H,1},\dots, n_{H,k_H}\})$ whose parameters are defined as follows:
		\begin{enumerate}
			\item The number $r_H$ of cusps is given by $r_H=\sum_{i=1}^r \operatorname{cyc}(\gamma_i)$, where $\operatorname{cyc}(\gamma_i)$ denotes the number of disjoint cycles of the permutation induced by $\gamma_i$ in the quotient $\Delta/H$.
			
			\item The number $k_H$ of orbifold points is given by $k_H=\sum_{i=1}^k \operatorname{cyc}(\delta_i)$, where $\operatorname{cyc}(\delta_i)$ denotes the number of disjoint cycles of the permutation induced by $\delta_i$ in the quotient $\Delta/H$.
			
			\item The orbifold data $\{n_i\}_{i=1,\dots,k}$ identifies as a multiset as $\{n_{H,1},\dots, n_{H,k_H}\}=\{m_{i,j}\}_{i,j}$ where $m_{i,j}= n_i/s_{i,j}$ and $s_{i,j}$ denotes the order of the corresponding cycle in the decomposition of the permutation induced by $\delta_i$.
			
			\item\label{RH_formula} The genus $g_{H}$ is the unique rational number that satisfies the equation $\chi(H)= [\Delta:H]\cdot\chi(\Delta)$ given by Riemann-Hurwitz.
		\end{enumerate}
		The tuple $(g_H,r_H; \{n_{H,1},\dots, n_{H,k_H}\})$ is called the \emph{induced signature $\Sigma_H^\Delta$ of $\Delta$ to $H$} 
	\end{proposition}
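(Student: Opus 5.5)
The plan is to reduce to the discrete setting and use the classical Reidemeister–Schreier / Riemann–Hurwitz analysis for Fuchsian (or Fenchel) groups, then transport it back through profinite completion.

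\textbf{Step 1: pass to the discrete group.} Let $\tilde\Delta$ be the discrete $F$-group with the same presentation \eqref{equation:01-decomposition}. By residual finiteness (\cite{Sah69} Theorem~1.5), $\tilde\Delta\hookrightarrow\Delta$ with dense image. The open subgroup $H$ therefore pulls back to a finite-index subgroup $\tilde H=H\cap\tilde\Delta$. Density gives $\Delta/H\cong\tilde\Delta/\tilde H$ as $\tilde\Delta$-sets, and $H$ is the closure of $\tilde H$.

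Since $\tilde\Delta$ is finitely generated and residually finite, its profinite topology induces on $\tilde H$ the full profinite topology of $\tilde H$. Indeed, every finite-index subgroup of $\tilde H$ contains a finite-index normal subgroup of $\tilde\Delta$, namely the core. Hence $H\cong\widehat{\tilde H}$. It is therefore enough to show that $\tilde H$ is a discrete $F$-group with the stated signature, because profinite completion preserves the presentation.

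\textbf{Step 2: the discrete statement.} Realize $\tilde\Delta$ as the orbifold fundamental group of a $2$-orbifold $O$. This is a surface of genus $g$ with $r$ punctures and $k$ cone points of orders $n_i$; if $\chi\ge 0$, use the corresponding Euclidean or spherical orbifold, which exists for all signatures allowed here except bad orbifolds, and those must be handled separately. The subgroup $\tilde H$ corresponds to a finite orbifold cover $O_H\to O$ of degree $d=[\Delta:H]$.

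The points of $O_H$ over the puncture $j$ correspond to the $\langle\gamma_j\rangle$-orbits on $\tilde\Delta/\tilde H$, so there are $\operatorname{cyc}(\gamma_j)$ of them. Likewise, the points over the cone point $i$ correspond to the $\langle\delta_i\rangle$-orbits. An orbit of length $s$ carries the local stabilizer $\langle\delta_i^{s}\rangle\cap$ (conjugate of $\tilde H$), which has order $n_i/s$. This gives a cone point of order $n_i/s$, or a regular point when $s=n_i$.

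Two technical points need care here. First, $m_{i,j}=1$ entries should be discarded, or the multiset in (iii) read with that convention; likewise $k_H$ in (ii) counts only the non-trivial cycles in that sense. Second, one must check that $\delta_i$ has order exactly $n_i$ in $\tilde\Delta$, which is standard for Fenchel groups.

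The orbifold Euler characteristic is multiplicative under finite covers. That is item (iv), and it determines $g_H$. Finally, the standard presentation of the fundamental group of the orbifold $O_H$ is exactly of the form \eqref{equation:01-decomposition} with these data.

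\textbf{Step 3: a group-theoretic alternative.} Alternatively, one can avoid geometry by Reidemeister–Schreier. Choose a Schreier transversal adapted to the cycle decompositions of the $\gamma_j$ and $\delta_i$, as in \cite{MR577064}. One then rewrites the relators and obtains the cusp generators $t\gamma_j^{s}t^{-1}$ and the elliptic generators $t\delta_i^{s}t^{-1}$, of order $n_i/s$, one for each cycle.

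\textbf{Main obstacle.} The delicate part is the identification of $H$ with the profinite completion of $\tilde H$ in Step 1, together with the degenerate non-hyperbolic and bad signatures in Step 2, such as $(0,0;\{n\})$ or $(0,0;\{n_1,n_2\})$ with $n_1\neq n_2$. In those cases the periods need not be realized, and the claimed signature must be interpreted via the presentation alone. For these I would check the finitely many exceptional cases directly from the presentation.
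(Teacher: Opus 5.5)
Your proposal is correct and takes essentially the paper's route. The paper reduces to the discrete $F$-group $\tilde\Delta$ via residual finiteness and computes the induced presentation of $\tilde H$ by Reidemeister--Schreier, which is your Step~3; your orbifold-covering Step~2 is the geometric form of the same computation. Your explicit check that $H\cong\widehat{\tilde H}$, and your remarks on discarding periods $m_{i,j}=1$ and on the bad signatures $(0,0;\{n\})$ and $(0,0;\{n_1,n_2\})$, supply details the paper leaves implicit; note that Reidemeister--Schreier yields the relators $(t\delta_i^{s}t^{-1})^{n_i/s}$ whatever the actual order of $\delta_i$ in $\tilde\Delta$, so those cases can be handled uniformly without a separate check.
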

	
	\begin{proof} Since $\Delta$ is the profinite completion of a discrete $F$-group $\tilde{\Delta}$, and $F$-groups are residually finite, that is $\tilde{\Delta}\hookrightarrow \Delta$, one can assume that $\Delta$ is discrete. In this case, the induced representation of $H$ can be computed via the Reidemeister--Schreier rewriting process.
	\end{proof}
	
	It follows directly that a profinite $F$-group $\Delta$ has no cusps if and only if all its open subgroups have no cusps.
	
	\subsubsection{} In its discrete original form, Fenchel's conjecture states that every orbicurve admits a topological finite cover by a variety; it was successively established by Bundgaard–Nielsen and Fox \cite{MR48447,MR53937}, building on earlier work of Nielsen \cite{MR29378} -- see also \cite{MR702279} for a last erratum. The profinite version is a direct consequence of the discrete one.
	
	\begin{theorem}[Profinite Fenchel-Nielsen]\label{thm:classicalproFfenchel}
		A profinite $F$-group contains a torsion-free open normal subgroup.
	\end{theorem}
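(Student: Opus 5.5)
The plan is to reduce to the discrete Fenchel–Nielsen theorem, as announced just before the statement. By Definition~\ref{def:proFgroup} and the remarks preceding it, $\Delta$ is the profinite completion $\widehat{\tilde\Delta}$ of the discrete $F$-group $\tilde\Delta$ given by the same presentation~\eqref{equation:01-decomposition}, and the natural map $\tilde\Delta\to\Delta$ is injective by residual finiteness (\cite{Sah69} Theorem~1.5~(a)). By the theorem of Bundgaard–Nielsen and Fox \cite{MR48447,MR53937}, $\tilde\Delta$ contains a torsion-free normal subgroup $\tilde N$ of finite index. Let $N$ be the closure of $\tilde N$ in $\Delta$. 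Since $\tilde N$ has finite index in $\tilde\Delta$, $N$ is open and normal, $\Delta/N\cong\tilde\Delta/\tilde N$, and $N$ is the profinite completion of $\tilde N$.

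The main step, and the expected obstacle, is to show that $N$ is torsion-free. Profinite completions of torsion-free groups can in general acquire torsion, so this genuinely needs to be argued. The plan is to identify $N$ directly as a profinite $F$-group without torsion generators.

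\begin{enumerate}
\item Apply Proposition~\ref{prop:inducedsign} to $H=N$. The orbifold periods of $N$ are the numbers $n_i/s_{i,j}$, where $s_{i,j}$ is the order of a cycle of $\delta_i$ acting on $\Delta/N=\tilde\Delta/\tilde N$.
\item Because $\tilde N$ is torsion-free, the image of $\delta_i$ in $\tilde\Delta/\tilde N$ has order exactly $n_i$. Hence $\delta_i$ acts on the cosets of the normal subgroup $\tilde N$ freely, with all cycles of length $n_i$. So every induced period equals $1$, and $N$ has a presentation of type $(g_N,r_N;\emptyset)$.
\item A profinite group with such a presentation is free profinite when $r_N\geq1$, or a profinite surface group when $r_N=0$. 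Both are torsion-free: free profinite groups are projective, of cohomological dimension $\leq 1$; surface groups are Poincaré duality groups of dimension $2$ at every prime, or are trivial or $\widehat{\ZZ}^2$ in genus $\leq1$. Any group of finite cohomological dimension has no nontrivial finite cyclic subgroups, since $\ZZ/p$ has infinite cohomological dimension.
\end{enumerate}

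For step (iii) I would either cite the standard fact that profinite completions of surface groups are good in Serre's sense, hence of cohomological dimension $2$, or simply invoke known results. It is worth checking the edge cases where the discrete group is finite or Euclidean. For example, when $\tilde\Delta$ is finite (spherical signatures), $\tilde N$ is trivial, $N=1$, and the statement holds trivially.
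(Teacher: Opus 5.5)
Your proposal is correct and follows essentially the same route as the paper. Both arguments apply discrete Fenchel--Nielsen, take the closure of the torsion-free normal subgroup, identify it as an open normal subgroup isomorphic to the profinite completion of $\tilde N$ (the paper cites Corollary~2.8 of \cite{BCR16} for this), and then note that this is the completion of a free or surface group, hence torsion-free. Your extra details spell out what the paper asserts in one line: the periods vanish because each $\delta_i$ acts freely on $\tilde\Delta/\tilde N$, which implicitly uses that $\delta_i$ has exact order $n_i$ in the infinite case, and torsion-freeness follows from finite cohomological dimension.
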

	
	\begin{proof}
		Let $\Delta$ be a profinite $F$-group, which, by definition, we may assume is the profinite completion of a discrete $F$-group $\tilde{\Delta}$. The discrete Fenchel-Nielsen theorem provides a cofinite normal subgroup $H\triangleleft\tilde{\Delta}$ that is torsion-free. Denoting $\overline{H}$ the closure of $H$ in $\Delta$, one has $\tilde{\Delta}/H \xrightarrow{\sim} \Delta/\overline{H}$, thus $\overline{H}$ is an open normal subgroup of $\Delta$. Furthermore, since $\tilde{\Delta}$ is residually finite, the natural map
		$\widehat{H} \to \overline{H} < \Delta$ is an isomorphism
		by \cite{BCR16} Corollary~2.8.
		
		Since $H$ is a finite-index torsion-free subgroup of a discrete $F$-group, the induced presentation of Definition~\ref{def:proFgroup} shows that $H$ is either a finitely generated free group or a compact surface group. The profinite completion of such a group is torsion-free; equivalently, every finite subgroup of $\widehat{H}$ is trivial. Thus $\overline{H}\cong\widehat{H}$ is torsion-free and satisfies the required assumptions.
	\end{proof}
	
	A straightforward application of Theorem~\ref{thm:classicalproFfenchel} provides a group-theoretic characterization of hyperbolicity, parabolicity, or ellipticity of a profinite $F$-group. We recall that a group is said to be virtually abelian if it has a cofinite abelian subgroup.
	
	\begin{corollary}\label{rem:hyperbolic-patterns}
		Let $\Delta$ be a profinite $F$-group of signature $(g, r; \{n_1, \dots, n_k\})$, then $\Delta$ is hyperbolic, that is $\chi(\Delta)<0$, if and only if $\Delta$ is not virtually abelian.
		
		Similarly, $\Delta$ is elliptic, that is $\chi(\Delta)>0$ (resp. parabolic, that is $\chi(\Delta)=0$), if and only if $\Delta$ is finite (resp. $\Delta$ is infinite and virtually abelian).
	\end{corollary}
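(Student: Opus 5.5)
The plan is to reduce to a torsion-free open normal subgroup, via Theorem~\ref{thm:classicalproFfenchel}. Fix such a subgroup $H\triangleleft\Delta$ of index $d=[\Delta:H]$. By Proposition~\ref{prop:inducedsign}, $H$ is a profinite $F$-group; since it is torsion-free, its induced signature has no orbifold points, so $\Sigma_H^\Delta=(g_H,r_H;\emptyset)$. The Riemann--Hurwitz formula of Proposition~\ref{prop:inducedsign}~\ref{RH_formula} gives
\[
\chi(H)=2-2g_H-r_H=d\cdot\chi(\Delta),
\]
so $\chi(\Delta)$ has the same sign as $\chi(H)$. The three cases can therefore be handled on $H$, which is either a free profinite group of rank $2g_H+r_H-1$ (when $r_H\ge1$) or a profinite surface group of genus $g_H$ (when $r_H=0$). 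Virtual abelianity and finiteness pass between $\Delta$ and its open subgroup $H$.

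\emph{Case $\chi(H)>0$.} Here $(g_H,r_H)\in\{(0,0),(0,1)\}$, so $H$ is trivial. Hence $\Delta$ is finite, and in particular virtually abelian.

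\emph{Case $\chi(H)=0$.} Here $(g_H,r_H)\in\{(1,0),(0,2)\}$, so $H\cong\widehat{\ZZ}^2$ or $H\cong\widehat{\ZZ}$. Thus $\Delta$ is infinite and virtually abelian.

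\emph{Case $\chi(H)<0$.} Then $H$ is a free profinite group of rank at least $2$, or a profinite surface group of genus at least $2$. The main point is to show that such an $H$ is not virtually abelian; then neither is $\Delta$, since an open abelian subgroup $A$ of $\Delta$ would give the open abelian subgroup $A\cap H$ of $H$.

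This step is the expected obstacle. I would argue as follows. Every open subgroup $U\le H$ is again of the same type, with $\chi(U)=[H:U]\chi(H)<0$. So it suffices to show that $U^{\ab}$ has $\widehat{\ZZ}$-rank strictly greater than $2$ in the cases where $U$ could otherwise look abelian. Concretely, $U^{\ab}\cong\widehat{\ZZ}^{2g_U}$ or $\widehat{\ZZ}^{2g_U+r_U-1}$, and this rank tends to infinity with the index. An abelian profinite group that is topologically finitely generated and torsion-free, say $\widehat{\ZZ}^n$, has open subgroups all isomorphic to $\widehat{\ZZ}^n$, so their abelianization ranks are bounded by $n$. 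This gives a contradiction.

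Alternatively, one can use that $H$ surjects onto a free profinite group of rank $2$, or onto a nonabelian finite group such as $S_3$, in a way that persists on open subgroups. The rank-growth argument is cleaner, because the induced signatures are given by Proposition~\ref{prop:inducedsign}.

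Combining the three cases gives all the equivalences, since the trichotomy on the sign of $\chi$ is exhaustive. Finite, infinite virtually abelian, and not virtually abelian are mutually exclusive, so each implication reverses.
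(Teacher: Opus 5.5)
Your proof is correct and follows the paper's route: take the torsion-free open normal subgroup $H$ from the profinite Fenchel--Nielsen theorem, use Riemann--Hurwitz to transfer the sign of $\chi$, then classify the torsion-free case. The paper simply delegates that last step to the discrete classification, while you carry it out directly for profinite groups, and your abelianization-rank growth argument is a clean way to get non-virtual-abelianness without passing back through discrete groups.

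The only slip is harmless. A torsion-free $H$ can still carry induced periods $>1$ when some $\delta_i$ has order $<n_i$ in $\Delta$, as for the signatures $(0,0;\{n\})$ and $(0,0;\{n_1,n_2\})$ with $n_1\neq n_2$. In those cases, however, $\Delta$ is finite with $\chi(\Delta)>0$, so your conclusion is unaffected.
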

	
	\begin{proof}
		Let $H$ be the cofinite torsion-free normal subgroup of $\Delta$ given by  Theorem~\ref{thm:classicalproFfenchel}. By Proposition~\ref{prop:inducedsign} \ref{RH_formula}, one has $\chi(\Delta) > 0$ if and only if $\chi(H) > 0$ (resp. $\chi(\Delta) = 0$ if and only if $\chi(H) = 0$), and the corresponding characterization for profinite $F$-groups follows directly from the discrete one.
	\end{proof}
	
	\begin{proposition}\label{prop:SigIsoInv}
		For a non-trivial profinite $F$-group, the list of periods, the Euler characteristic, and the affineness, are isomorphism invariants of the group. More precisely, denoting $\Delta^{\rm ab}_{\rm tf}$  the abelianization of the torsion-free quotient of $\Delta$:
		\begin{equation*}
			\chi(\Delta)=2-\mathrm{rank}_{\mathbb{Z}_{\ell}}(\Delta^{\rm ab}_{\rm tf})-\varepsilon- \sum_{i=1}^{k} \left(1 - \frac{1}{n_i}\right)
		\end{equation*}
		where $\varepsilon=1$ (resp. $\varepsilon=0$) if $\Delta$ is affine (resp. not affine).
	\end{proposition}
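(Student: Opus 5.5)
The plan is to read all three invariants off two group-theoretic features: the finite subgroups of $\Delta$, and a torsion-free open normal subgroup $H\triangleleft\Delta$ supplied by Theorem~\ref{thm:classicalproFfenchel}. Once the periods and affineness are shown to be invariants, the formula for $\chi(\Delta)$ follows from a computation of $\Delta^{\rm ab}$, and so $\chi(\Delta)$ is an invariant as well.

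\emph{Step 1: the abelianization.} From the presentation \eqref{equation:01-decomposition} I will compute $\Delta^{\rm ab}$ directly. If $r\geq 1$, the relation can be solved for $\gamma_r$, so
\[
\Delta^{\rm ab}\cong\widehat{\ZZ}^{2g+r-1}\oplus\bigoplus_{i}\ZZ/n_i.
\]
In fact $\Delta$ is then the free profinite product of $\widehat{\ZZ}^{*(2g+r-1)}$ with the groups $\ZZ/n_i$. If $r=0$, the relation becomes $\sum_\ell \delta_\ell=0$ in the abelianization. This gives $\widehat{\ZZ}^{2g}$ plus a finite group, namely the quotient of $\bigoplus_i\ZZ/n_i$ by the diagonal-type relation. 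In both cases the $\ZZ_\ell$-rank of the torsion-free part of $\Delta^{\rm ab}\otimes\ZZ_\ell$ equals $2g+r-\varepsilon$, where $\varepsilon=1$ if $r\geq 1$ and $\varepsilon=0$ if $r=0$. This rank is independent of $\ell$. Substituting $2g+r=\mathrm{rank}+\varepsilon$ into \eqref{def:euler-characteristic} gives the displayed formula. It therefore remains to show that $\varepsilon$ and the multiset $\{n_i\}$ are isomorphism invariants.

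\emph{Step 2: affineness.} Choose $H$ torsion-free, open and normal. By Proposition~\ref{prop:inducedsign}, $r_H=0$ if and only if $r=0$. For $r_H\geq1$, $H$ is the profinite completion of a free group, so $\mathrm{cd}_\ell H\leq 1$. For $r_H=0$, $H$ is the completion of a closed surface group of genus $g_H$. If $g_H\geq1$, then $H^2(H,\FF_\ell)\cong\FF_\ell\neq0$ and $\mathrm{cd}_\ell H=2$. Hence, whenever $\Delta$ is infinite, $\Delta$ is affine if and only if some (equivalently, every) torsion-free open subgroup has $H^2(-,\FF_\ell)=0$. This condition is manifestly group-theoretic. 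The finite case needs separate treatment. For example, $\ZZ/n$ has both signature $(0,1;\{n\})$ and signature $(0,0;\{n,n\})$, so affineness is not intrinsic there. I therefore expect to have to exclude the elliptic finite case, or fix a normalization of the signature for it, consistent with Corollary~\ref{rem:hyperbolic-patterns}.

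\emph{Step 3: periods; this is the main obstacle.} I would show that every finite subgroup of $\Delta$ is cyclic and conjugate into some $\langle\delta_i\rangle$. I would also show that $\langle\delta_i\rangle$ has order exactly $n_i$, and that $\langle\delta_i\rangle$ and $\langle\delta_j\rangle$ are non-conjugate for $i\neq j$. The multiset $\{n_i\}$ is then the list of orders of the conjugacy classes of maximal finite subgroups, which is an invariant. My first attempt would transfer the classical Fuchsian-group fact through the finite quotients $\Delta/N$, with $N\subset H$ ranging over torsion-free open normal subgroups. Any finite subgroup $F$ meets $N$ trivially, so $F$ maps isomorphically into $\Delta/N$. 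Its image acts on the compact Riemann surface attached to $N$, and there it has a fixed point, because a free action would contradict the Riemann--Hurwitz count coming from Proposition~\ref{prop:inducedsign}. This places the image inside a conjugate of the image of some $\langle\delta_i\rangle$. A compactness argument over the inverse system of such $N$ then gives conjugacy in $\Delta$ itself. Non-conjugacy of distinct $\langle\delta_i\rangle$, and the fact that each $\delta_i$ has order exactly $n_i$, would be checked in one sufficiently deep finite quotient, using the cycle data of Proposition~\ref{prop:inducedsign}.
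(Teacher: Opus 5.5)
Your Steps~1 and~2 follow the paper's route. The formula is read off from $\Delta^{\rm ab}$, and affineness from whether a Fenchel--Nielsen torsion-free open subgroup is free or a surface group. Your $H^2(H,\FF_\ell)$ test makes this distinction intrinsic. It also covers the infinite parabolic cases, where the paper's appeal to a \emph{non-abelian} free open subgroup needs hyperbolicity.

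Your worry about finite groups is justified, and it is a defect of the statement itself. $\ZZ/n\ZZ$ has signatures $(0,1;\{n\})$ and $(0,0;\{n,n\})$, with $\chi=1/n$ and $2/n$ respectively. So the periods, $\chi$ and affineness are invariants only for infinite $\Delta$; the non-triviality invoked in the paper's proof is not enough.

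The genuine gap is in Step~3. The paper does not prove this step but imports it from \cite{MR2402513} Lemma~2.1~(iv): the conjugates of the $\langle\delta_i\rangle$ are exactly the maximal finite subgroups, and distinct ones meet trivially. Your sketch falls short of this in two places.

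\emph{First}, if the image $\bar F$ of $F$ acted freely on the open curve $X_N$, then $FN$ would have no orbifold points and would be torsion-free. So the count only shows that \emph{some} non-trivial element of $\bar F$ has a fixed point, not that $\bar F$ has a common one. This gives conjugacy into some $\langle\delta_i\rangle$ when $|F|$ is prime, but says nothing about cyclicity of a general finite $F$.

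\emph{Second}, non-conjugacy of $\langle\delta_i\rangle$ and $\langle\delta_j\rangle$ does not make them maximal. When $n_i\mid n_j$ you must exclude $g\langle\delta_i\rangle g^{-1}\subsetneq\langle\delta_j\rangle$ for every $g\in\Delta$. The cycle data of Proposition~\ref{prop:inducedsign} in one finite quotient does not obviously do this, since an automorphism of $X_N$ may fix points lying over two different branch points.

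The missing ingredient is the separatedness of stabilizers in the profinite tower. A non-trivial torsion element must fix a \emph{unique} point of the inverse limit over $N$ of the fibres of $X_N$ above the orbifold points. Equivalently, distinct conjugates of the $\langle\delta_i\rangle$ meet trivially and each $\langle\delta_i\rangle$ is self-normalizing. Granting this, your prime-order argument bootstraps:
\begin{enumerate}
\item A central element of order $p$ in a Sylow $p$-subgroup of $F$ forces that Sylow subgroup into a single conjugate of some $\langle\delta_i\rangle$.
\item Hence $F$ has cyclic Sylow subgroups and is solvable.
\item The unique fixed point of a minimal normal subgroup of $F$ is then fixed by all of $F$, so $F$ lies in a conjugate of some $\langle\delta_i\rangle$.
\end{enumerate}
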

	
	The proof relies on some group-theoretic reasoning that is classical in anabelian geometry, see also \S~\ref{subsub:anabStepGrp}. We recall that by Definition~\ref{def:proFgroup} the abelianized $\Dab$ identifies with:
	\begin{equation} \label{eq:DabExpl}
		\Delta^{\mathrm{ab}} \cong
		\begin{cases}
			\bigoplus_{i=1}^g \bigl(\widehat{\mathbb{Z}}\alpha_i \oplus \widehat{\mathbb{Z}}\beta_i \bigr)
			\;\oplus\; \bigoplus_{i=1}^{r-1} \widehat{\mathbb{Z}}\, \gamma_i
			\;\oplus\; \left(\bigoplus_{i=1}^k (\mathbb{Z}/n_i\mathbb{Z}) \cdot \delta_i\right)
			\qquad                       & \text{when $r \geq 1$} \\
			\bigoplus_{i=1}^g \bigl(\widehat{\mathbb{Z}}\alpha_i \oplus \widehat{\mathbb{Z}}\beta_i \bigr)
			\;\oplus\; \left(\prod_{i=1}^{k} \mathbb{Z}/n_i\mathbb{Z}\right) / \langle (1,\dots,1) \rangle.
			\qquad                       & \text{when $r = 0$} 
		\end{cases} 
	\end{equation}

	\begin{proof}
		The non-triviality assumption excludes the first two non-hyperbolic cases of Table~\ref{fig:Table_nonhyperbolic_F_groups}. By \cite{MR2402513} Lemma~2.1~(iv), the closed subgroup generated by some conjugate of  $\delta_{i}$ for some $i$ is characterized as a maximal finite subgroup, and for any distinct indices $i, j$, the intersection $\langle\delta_{i}\rangle\cap \langle\delta_{j}\rangle$ is trivial. Thus, the periods $\{n_1,\dots, n_k\}$ are characterized as the orders of the conjugacy classes of the maximal finite subgroups of $\Delta$.
		
		The affineness follows the existence of a non-abelian free open subgroup $H$ of $\Delta$ given by Theorem~\ref{thm:classicalproFfenchel}: if $r\geq 1$, then by Eq.~\eqref{eq:DabExpl}, $H$ is a non-abelian free profinite group, while otherwise, $H$ is a surface group.
		
		The formula for $\chi$ is then immediate from the presentations of Eq.~\eqref{eq:DabExpl}, with $\varepsilon=1$ if and only if $ \Delta/\Delta_{\mathrm{tor}}$ is a free group as above.
	\end{proof}
	
	\begin{table}[b]
		\begin{tabular}{lccc}\hline
			Signature ($\Sigma$) & Group                                                       & $\chi$          & \text{derived length} \\ \hline
			$(0,0;\emptyset)$    & $\{1\}$                                                     & $2$             & $0$                   \\
			$(0,0;\{n\})$        & $\{1\}$                                                     & $1 + 1/n$       & $0$                   \\
			$(0,0;\{n_1,n_2\})$  & $\mathbb{Z}/\mathrm{gcd}(n_1,n_2)\mathbb{Z}$                & $1/n_1 + 1/n_2$ & $1$                   \\
			$(0,0;\{2,2,n\})$    & $D_n$ (Dihedral group)                                      & $1/n$           & $\leq 2$              \\
			$(0,0;\{2,3,3\})$    & $A_4$                                                       & $1/6$           & $2$                   \\
			$(0,0;\{2,3,4\})$    & $S_4$                                                       & $1/12$          & $3$                   \\
			$(0,0;\{2,3,5\})$    & $A_5$                                                       & $1/30$          & non-solvable          \\
			$(0,1;\emptyset)$    & $\{1\}$                                                     & $1$             & $0$                   \\
			$(0,1;\{n\})$        & $\mathbb{Z}/n\mathbb{Z}$                                    & $1/n$           & $1$                   \\
			$(0,0;\{2,3,6\})$    & $(\mathbb{Z}\times\mathbb{Z})\rtimes\mathbb{Z}/6\mathbb{Z}$ & $0$             & $2$                   \\
			$(0,0;\{2,4,4\})$    & $(\mathbb{Z}\times\mathbb{Z})\rtimes\mathbb{Z}/4\mathbb{Z}$ & $0$             & $2$                   \\
			$(0,0;\{3,3,3\})$    & $(\mathbb{Z}\times\mathbb{Z})\rtimes\mathbb{Z}/3\mathbb{Z}$ & $0$             & $2$                   \\
			$(0,0;\{2,2,2,2\})$  & $\langle a,b,c\mid a^2,b^2,c^2,(abc)^2\rangle$              & $0$             & $2$                   \\
			$(0,1;\{2,2\})$      & $\mathbb{Z}/2\mathbb{Z} * \mathbb{Z}/2\mathbb{Z}$           & $0$             & $2$                   \\
			$(0,2;\emptyset)$    & $\mathbb{Z}$                                                & $0$             & $1$                   \\
			$(1,0;\emptyset)$    & $\mathbb{Z} \times \mathbb{Z}$                              & $0$             & $1$                   \\ \hline
		\end{tabular}
		\caption{The list of non-hyperbolic profinite $F$-groups}\label{fig:Table_nonhyperbolic_F_groups}
	\end{table}
	
	\subsubsection{} In the direction of a more systematic treatment of $m$-step properties of profinite $F$-groups of Section~\ref{sub:Fenchel_ref}, let us give a direct consequence of Theorem~\ref{thm:proFFenchels} regarding the perfectness property -- that is on the triviality of $G^{\rm ab}=G/G^{(1)}$.
	
	\begin{corollary}\label{cor_perfect}
		Any non-trivial open subgroup of a non-perfect profinite $F$-group is non-perfect.
	\end{corollary}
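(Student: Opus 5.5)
The plan is to deduce the statement from Theorem~\ref{thm:proFFenchels} (Theorem~\ref{theo:mainC}) by a short solvability argument. The key observation is that a group which is both perfect and solvable is trivial.

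Let $\Delta$ be a non-perfect profinite $F$-group, and let $H<\Delta$ be a non-trivial open subgroup. By Theorem~\ref{thm:proFFenchels}, there is a torsion-free open characteristic subgroup $N\triangleleft\Delta$ such that $\Delta/N$ is solvable of derived length at most $3$. Suppose, for contradiction, that $H$ is perfect. The image $H/(H\cap N)\hookrightarrow \Delta/N$ is then perfect, since quotients of perfect groups are perfect. It is also solvable, as a subgroup of a solvable group. Hence $H/(H\cap N)$ is trivial, that is, $H\subseteq N$. This also covers the case where $\Delta$ is finite: there $N=1$, and we would get $H=1$, which is excluded.

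It therefore remains to show that a non-trivial open subgroup $H$ of the torsion-free profinite $F$-group $N$ is not perfect. First, $N$ has signature of the form $(g_N,r_N;\emptyset)$. Indeed, if some period $n_i\geq 2$ occurred, the generator $\delta_i$ would be a non-trivial torsion element; its non-triviality follows from the residual finiteness of discrete $F$-groups, or from the characterization of periods recalled in the proof of Proposition~\ref{prop:SigIsoInv}. Since $H$ is open in $N$, Proposition~\ref{prop:inducedsign} shows that $H$ is a profinite $F$-group with signature $(g_H,r_H;\emptyset)$, because the induced periods $n_i/s_{i,j}$ arise only from periods of $N$. By Eq.~\eqref{eq:DabExpl}, the abelianization is
\[
H^{\rm ab}\cong \widehat{\mathbb{Z}}^{2g_H+r_H-1}\ \text{ if } r_H\geq 1,\qquad H^{\rm ab}\cong\widehat{\mathbb{Z}}^{2g_H}\ \text{ if } r_H=0 .
\]
This vanishes only when $g_H=0$ and $r_H\leq 1$. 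By Table~\ref{fig:Table_nonhyperbolic_F_groups}, the signatures $(0,0;\emptyset)$ and $(0,1;\emptyset)$ both give the trivial group. Since $H\neq 1$, we conclude $H^{\rm ab}\neq 0$, which contradicts the perfectness of $H$.

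The only delicate point is the bookkeeping that $H\subseteq N$ has no orbifold points, so that Eq.~\eqref{eq:DabExpl} applies with $k_H=0$. This is immediate from part (iii) of Proposition~\ref{prop:inducedsign} applied to $N$, once we know that $N$ itself has no periods. The substantive input is entirely Theorem~\ref{thm:proFFenchels}, namely the existence of a torsion-free open subgroup with solvable quotient. This is where the non-perfectness hypothesis on $\Delta$ is used.
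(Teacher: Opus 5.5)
Your proof is correct, but it takes a different route from the paper's.

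\emph{The paper's route.} It first settles the non-hyperbolic case by inspecting Table~\ref{fig:Table_nonhyperbolic_F_groups}. For hyperbolic $\Delta$ it uses only the classical profinite Fenchel--Nielsen Theorem~\ref{thm:classicalproFfenchel}, together with Sah's result that the torsion-free open normal subgroup $H$ can be chosen with solvable quotient. Passing to $H^{(1)}$, it shows that the maximal prosolvable quotient $\Delta^{\mathrm{solv}}$ is infinite. A perfect open $M$ must then map trivially to $\Delta^{\mathrm{solv}}$, so it lies in a closed subgroup of infinite index, which contradicts openness.

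\emph{Your route.} You invoke the refined Theorem~\ref{thm:proFFenchels} to trap a perfect $H$ inside the torsion-free $N$. You then finish with the nonvanishing of the abelianization of a nontrivial free or surface group, rather than with an index argument.

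\emph{What each buys.} Your version treats all non-perfect $\Delta$ uniformly, including the finite and Euclidean cases, so no table is needed. It also shows that every perfect closed subgroup of $\Delta$ lies in $N$. It matches the paper's own framing of the corollary as a consequence of Theorem~\ref{thm:proFFenchels}. The paper's version depends only on classical inputs and not on the derived-length refinement. It also yields the independently useful fact that $\Delta^{\mathrm{solv}}$ is infinite for non-perfect hyperbolic $\Delta$.

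\emph{One small imprecision.} Residual finiteness only transports the order of $\delta_i$ from the discrete group. For the degenerate signatures $(0,0;\{n\})$ and $(0,0;\{n_1,n_2\})$, $\delta_i$ need not have order $n_i$ there. Those groups are cyclic, however, so torsion-freeness forces $N=1$, and this is already excluded by $1\neq H\subseteq N$. Alternatively, you can simply cite that torsion-free open subgroups are free or surface groups, as in the proof of Theorem~\ref{thm:classicalproFfenchel}.
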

	
	\begin{proof}
		Since for $\Delta$ non-hyperbolic the conclusion follows from Table~\ref{fig:Table_nonhyperbolic_F_groups}, we can assume that $\Delta$ is hyperbolic. We first show that the maximal solvable quotient $\Delta^{\mathrm{solv}}$ of $\Delta$ is an infinite group. Indeed, by Theorem~\ref{thm:classicalproFfenchel}, there exists a torsion-free open normal subgroup $H\triangleleft\Delta$ such that $\Delta/H$ can be assumed to be solvable by \cite{Sah69} Theorem~1.5(c).
		Since $H^{(1)}$ is a characteristic subgroup of $H$, it follows that $H^{(1)}$ is normal in $\Delta$, and we obtain the following commutative diagram with exact rows:
		\begin{equation*}
			\xymatrix{
				1\ar[r]& H\ar[r]\ar@{->>}[d]& \Delta \ar[r] \ar@{->>}[d]& \Delta/H\ar[r]\ar@{=}[d]&1\\
				1\ar[r]& H^{\mathrm{ab}}\ar[r]& \Delta/H^{(1)} \ar[r]& \Delta/H\ar[r]&1\\
			}
		\end{equation*}
		Since $H^{\mathrm{ab}}$ is infinite, $\Delta/H^{(1)}$ is also infinite.
		Additionally $\Delta/H^{(1)}$ is solvable thus $\Delta^{\mathrm{solv}}$ is infinite.
		
		Let $M$ be a non-trivial open subgroup of $\Delta$ and assume that it is perfect.
		Note that if the composite  $M\hookrightarrow \Delta \twoheadrightarrow \Delta^{\mathrm{solv}}$ is non-trivial, then $M$ has a solvable quotient.
		Since perfect groups have no solvable quotient, it thus follows that $M<\ker(\Delta\twoheadrightarrow \Delta^{\mathrm{solv}})$.
		This contradicts the openness hypothesis on $M$, since by the claim, $\ker(\Delta\twoheadrightarrow \Delta^{\mathrm{solv}})$ is not an open subgroup of $\Delta$.
		
	\end{proof}
	
	\subsection{Torsion freeness for commutator subgroups in the proper and $(0,1)$ cases}\label{sub:torsfreecommutator}
	
	\subsubsection{}\label{subsub:abelPer} In the case of a compact profinite $F$-group, that is with no cusp, we relate the orders of the inertia subgroups to the cardinality of the abelianization and the signature of the commutator subgroup in this case.  Our numerical condition for the orbifold inertia groups $I_i\hookrightarrow \Delta$ to survive in the abelianized group is as follows.
	
	\begin{lemma}\label{lem:orderinab}
		Let $\Delta$ be a profinite $F$-group with signature $(g,0;\{n_{1},\dots,n_{k}\})$, and assume that $k \geq 2$. Then the order of an orbifold inertia group $I_{0}=\langle \delta_{i_0}\rangle$ in $\Delta^{\mathrm{ab}}$ is given by $\gcd(n_{i_0}, \operatorname{lcm}(n_j\mid j\neq i_0))$.
		In particular, $n_{i_0}$ divides $\mathrm{lcm}(n_j\mid j\neq i_0)$ if and only if  the natural morphism $I_{i_0} \to \Delta^{\mathrm{ab}}$ is injective.
	\end{lemma}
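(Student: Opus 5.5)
We compute everything from the explicit description of $\Delta^{\mathrm{ab}}$ in the proper case $r=0$ of Eq.~\eqref{eq:DabExpl}. Since the image of $I_{i_0}=\langle\delta_{i_0}\rangle$ in $\Delta^{\mathrm{ab}}$ lies in the torsion part, we can discard the free summand $\bigoplus_i(\widehat{\mathbb{Z}}\alpha_i\oplus\widehat{\mathbb{Z}}\beta_i)$. The problem then reduces to a finite abelian computation in
\[
T \coloneqq \Bigl(\prod_{i=1}^{k}\mathbb{Z}/n_i\mathbb{Z}\Bigr)\big/\langle(1,\dots,1)\rangle .
\]
We must determine the order of the class of the basis vector $e_{i_0}$, which is the image of $\delta_{i_0}$.

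The key step is to find the smallest $m\geq 1$ such that $m e_{i_0}\in\langle(1,\dots,1)\rangle$. This holds if and only if there is an integer $t$ with
\[
t\equiv m \pmod{n_{i_0}}, \qquad t\equiv 0 \pmod{n_j}\ \text{for all } j\neq i_0 .
\]
The second family of conditions says exactly that $t$ is a multiple of $L\coloneqq\operatorname{lcm}(n_j\mid j\neq i_0)$; this is where the assumption $k\geq 2$ guarantees that $L$ is defined over a nonempty set. So $m e_{i_0}=0$ in $T$ if and only if $m\bmod n_{i_0}$ lies in the subgroup $L\mathbb{Z}/n_{i_0}\mathbb{Z}$ of $\mathbb{Z}/n_{i_0}\mathbb{Z}$. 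This subgroup equals $\gcd(n_{i_0},L)\mathbb{Z}/n_{i_0}\mathbb{Z}$. Hence the order of $e_{i_0}$ in $T$ is $\gcd(n_{i_0},L)$, which is the claimed formula.

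For the ``in particular'' statement, $I_{i_0}$ is cyclic of order exactly $n_{i_0}$. This uses the fact that $\delta_{i_0}$ has order exactly $n_{i_0}$ in $\Delta$, i.e.\ that maximal finite subgroups have orders the periods, as used in the proof of Proposition~\ref{prop:SigIsoInv} via \cite{MR2402513} Lemma~2.1. Therefore $I_{i_0}\to\Delta^{\mathrm{ab}}$ is injective if and only if the image has order $n_{i_0}$, that is, $\gcd(n_{i_0},L)=n_{i_0}$, that is, $n_{i_0}\mid L$.

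The only delicate point is justifying that $\delta_{i_0}$ has order exactly $n_{i_0}$ in $\Delta$. In the non-hyperbolic small cases of Table~\ref{fig:Table_nonhyperbolic_F_groups} this can fail, for instance for signature $(0,0;\{n_1,n_2\})$. In those cases I would state the injectivity criterion relative to the order of $I_{i_0}$ itself, or check the table entries directly. In the hyperbolic case the statement follows from the cited characterization. The congruence computation above is routine via the Chinese remainder reasoning.
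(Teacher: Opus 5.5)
Your proposal is correct and follows the paper's argument. The paper likewise computes the order of the image of $\delta_{i_0}$ in $(\prod_i \mathbb{Z}/n_i\mathbb{Z})/\langle(1,\dots,1)\rangle$ by solving $a\equiv b \pmod{n_{i_0}}$, $a\equiv 0 \pmod{n_j}$ ($j\neq i_0$), obtains $\gcd(n_{i_0},\operatorname{lcm}(n_j\mid j\neq i_0))$, and reads off injectivity as this order equaling $n_{i_0}$. Your caveat is also well taken, as the paper passes over it silently: that last equivalence needs $\delta_{i_0}$ to have exact order $n_{i_0}$ in $\Delta$ (or $I_{i_0}$ to be read as the abstract cyclic group of order $n_{i_0}$), and this fails for $(0,0;\{n_1,n_2\})$ with $n_1\neq n_2$.
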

	
	\begin{proof}
		Let $\psi$ be the natural homomorphism:
		\begin{equation*}
			\psi: \bigoplus_{i=1}^{k} \mathbb{Z} \cdot \delta_i \twoheadrightarrow \left( \bigoplus_{i=1}^{k} \mathbb{Z}/n_i\mathbb{Z} \cdot \delta_i \right) \Big/ \left\langle \sum_{i=1}^{k} \delta_i \right\rangle.
		\end{equation*}
		Then $\ker\psi$ is generated by $(n_1,0,\dots,0),\dots,(0,\dots,0,n_k)$ and $(1,\dots,1)$.
		Consequently, the tuple $(0,\dots,0,b,0,\dots,0)$ is in $\ker\psi$ if and only if the system of congruences $a\equiv b\pmod{n_i}$, $a\equiv 0\pmod{n_j}$, $ j\neq i$, has a solution, that is, if and only if  $\gcd(n_i, \mathrm{lcm}(n_j\mid j\neq i))$ divides $b$.
		
		Since the order of $\delta_{i_0}$ in $\Delta^{\mathrm{ab}}$ equals the smallest positive integer $b$ that satisfies the above required condition, the order of the image of $\delta_{i_0}$ in $\Delta^{\mathrm{ab}}$ is given by $\gcd(n_{i_0}, \mathrm{lcm}(n_j\mid j\neq i_0))$.
		
		\medskip
		
		For the second assertion, note that, for $n_{i_0}$ to divide $\mathrm{lcm}(n_j\mid j\neq i_0)$ is equivalent for $n_{i_0}$ to satisfying the condition $n_{i_0}=\gcd(n_{i_0}, \mathrm{lcm}(n_j\mid j\neq i_0))$, which is equivalent to the order of $\delta_{i_0}$ in $\Delta^{\mathrm{ab}}$ being equal to $n_{i_0}$.
	\end{proof}
	
	Note that following Remark~\ref{rem:McAbGr} below, the condition $k\geq 2$ is necessary for the derived subgroup of $\Delta$ to be geometric, that is, to be realizable as an analytic surface group. In the following, we consider the maximal abelian torsion-free quotient $\Delta_{\rm tf}^{\rm ab}=\Delta^{\mathrm{ab}}/\Delta^{\rm ab}_{\rm tor}$ -- where $\Delta^{\mathrm{ab}}_{\rm tor}$ denotes the closed subgroup generated by all the torsion elements of $\Delta^{\rm ab}$ -- and we write $\Delta_0=\ker(\Delta\to\Delta_{\rm tf}^{\rm ab})$.

	\begin{lemma}\label{lem:inducedsignature}
		Let $\Delta$ be a profinite $F$-group with signature $(g,0;\{n_1,\dots, n_k\})$ and assume $k\geq 2$. Then
		$\Card(\Dab_{\rm tor})=\gcd(\prod_{i\in I} n_i \mid \Card I
		= k-1)$
		and $\Delta_0$ is open in $\Delta$, with induced signature
		\begin{equation*}
			\Sigma_{\Delta_0}=\bigl(g',0;
			\{\underbrace{\omega_1,\dots,\omega_1}_{m_1\ \text{times}},\
			\underbrace{\omega_2,\dots,\omega_2}_{m_2\ \text{times}},\
			\dots,\
			\underbrace{\omega_k,\dots,\omega_k}_{m_k\ \text{times}}\}\bigr),
		\end{equation*}
		where we denote
		\begin{equation*}
			\omega_i\coloneqq \frac{n_i}{\gcd(n_i, \mathrm{lcm}(n_j\mid j\neq i))}, \qquad
			m_i\coloneqq \frac{\Card(\Delta^{\mathrm{ab}}_{\rm tor}
				)}{\gcd(n_i, \mathrm{lcm}(n_j\mid j\neq i))},
		\end{equation*}
		and $g'$ is the unique integer that satisfies the equation of Proposition~\ref{prop:inducedsign}~\ref{RH_formula}.
	\end{lemma}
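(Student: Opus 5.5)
Since $r=0$, Eq.~\eqref{eq:DabExpl} gives $\Dab\cong \widehat{\mathbb{Z}}^{2g}\oplus A$ with $A=\bigl(\bigoplus_{i}\mathbb{Z}/n_i\mathbb{Z}\bigr)/\langle(1,\dots,1)\rangle$. Hence $\Dab_{\rm tor}=A$ is finite and $\Delta^{\rm ab}_{\rm tf}\cong\widehat{\mathbb{Z}}^{2g}$. The plan has three steps. First compute $\Card A$. Then show that $\Delta_0$ has finite index, equal to $\Card A$ times an infinite-index part; we must be careful here. Finally read off the induced signature from Proposition~\ref{prop:inducedsign}.

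\textbf{Step 1: $\Card A$.} We have $A=\mathbb{Z}^k/L$, where $L$ is generated by the $n_ie_i$ and by $(1,\dots,1)$. The index $[\mathbb{Z}^k:L]$ is the gcd of the $k\times k$ minors of the $(k+1)\times k$ relation matrix $\mathrm{diag}(n_1,\dots,n_k)$ stacked over $(1,\dots,1)$. The minor omitting row $(1,\dots,1)$ is $\prod n_i$. The minor omitting row $n_je_j$ expands along the all-ones row to $\prod_{i\neq j}n_i$. Since $\prod n_i$ is a multiple of each $\prod_{i\neq j}n_i$, the gcd equals $\gcd(\prod_{i\in I}n_i\mid \Card I=k-1)$, as claimed.

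\textbf{Step 2: openness.} Strictly speaking, $\Delta_0$ is the kernel of $\Delta\to\widehat{\mathbb{Z}}^{2g}$, so it is open only when $g=0$. For $g\geq1$ I would read the statement via the intended finite quotient $\Delta\to\Dab\to\Dab_{\rm tor}$, using the splitting of Eq.~\eqref{eq:DabExpl} in which the $\alpha_i,\beta_i$ map to $0$. So $\Delta_0$ is the kernel of the map $\Delta\to A$ sending $\delta_i\mapsto\bar\delta_i$ and $\alpha_i,\beta_i\mapsto0$. This kernel is open of index $\Card A$, and it is normal with abelian quotient. I would flag this interpretation; in genus $0$ both readings agree.

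\textbf{Step 3: signature.} Apply Proposition~\ref{prop:inducedsign} to the normal subgroup $\Delta_0$ with abelian quotient $Q=A$. The number of cusps stays $0$. For each $i$, $\delta_i$ acts on $Q$ by translation by $\bar\delta_i$, so all its cycles have length $s_i=\mathrm{ord}_Q(\bar\delta_i)$. By Lemma~\ref{lem:orderinab}, $s_i=\gcd(n_i,\mathrm{lcm}(n_j\mid j\neq i))$. The number of cycles is therefore $\Card A/s_i=m_i$, and each gives a period $n_i/s_i=\omega_i$. Periods with $\omega_i=1$ are dropped as nonsingular points; we should remark that the multiset is understood this way. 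The genus $g'$ is then fixed by Riemann--Hurwitz,
\[
\chi(\Delta_0)=\Card A\cdot\chi(\Delta),
\]
and $g'$ is an integer because $\Delta_0$ is a profinite $F$-group.

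\textbf{Main obstacle.} The delicate point is Step 2: reconciling $\Delta_0$ as literally defined with an open subgroup when $g>0$. The arithmetic in Steps 1 and 3 is routine once Lemma~\ref{lem:orderinab} is available.
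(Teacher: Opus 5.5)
Your argument is correct. Steps~2--3 are what the paper does: its proof also identifies $\Delta/\Delta_0$ with $\Dab_{\rm tor}$, which is the reading $\Delta_0=\ker(\Delta\to\Dab_{\rm tor})$ used explicitly in Proposition~\ref{prop:LCMcondition} and Theorem~\ref{thm:proFFenchels}. It then reads off the cycle structure of translation by $\bar\delta_i$ from Lemma~\ref{lem:orderinab} and Proposition~\ref{prop:inducedsign}, exactly as you do.

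The genuine difference is in Step~1. The paper first notes that $(1,\dots,1)$ has order $\mathrm{lcm}(n_1,\dots,n_k)$ in $\bigoplus_i\mathbb{Z}/n_i\mathbb{Z}$, so $\Card(\Dab_{\rm tor})=\prod_i n_i/\mathrm{lcm}(n_1,\dots,n_k)$. It then converts this to the $\gcd$ form through the $p$-adic identity $\mathrm{lcm}(n_1,\dots,n_k)\cdot\gcd(\prod_{i\in I}n_i\mid\Card I=k-1)=\prod_i n_i$; for each prime $p$, the gcd has valuation equal to the sum of all but the largest $v_p(n_i)$. Your determinantal-divisor computation, with the index of the full-rank lattice equal to the gcd of the maximal minors, reaches the $\gcd$ form directly and needs no valuation bookkeeping. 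The paper's route, in exchange, yields the intermediate expression $\prod_i n_i/\mathrm{lcm}(n_i)$. That is the form actually invoked later, for instance in Proposition~\ref{cor:s4}.

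One correction to your Step~2 commentary: the literal definition $\Delta_0=\ker(\Delta\to\Delta^{\rm ab}_{\rm tf})$ does \emph{not} agree with your reading in genus $0$. When $g=0$ one has $\Delta^{\rm ab}_{\rm tf}=0$, so the literal kernel is all of $\Delta$. The intended subgroup is $\DOne$, as the remark following the lemma states. So the literal definition is off in every genus, not only for $g\geq 1$. This does not affect your proof, which uses the correct subgroup throughout.
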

	
	Note that in the notation $\Sigma_{\Delta_0}$ we omit the indices $i$ for which $\omega_i=1$. Under the assumption $\Sigma_\Delta=(0,0;\{n_1,\dots, n_k\})$, the same statement holds, replacing $\Dab_{\rm tor}$ (resp. $\Delta_0$) by $\Dab$ (resp.~$\DOne$).
	
	\begin{proof}
		By the presentation of $\Delta$ and Eq.~\eqref{eq:DabExpl},  we have $ \Dab_{\rm tor}\cong (\prod_{i=1}^{k} \mathbb{Z}/n_i\mathbb{Z}) / \langle (1,\dots,1) \rangle$ which is finite and implies that $\Delta_0$ is open in $\Delta$. Let us compute its order. Fixing $p$ a rational prime, we compute
		\begin{align*}
			\mathrm{ord}_p\left(
			\mathrm{lcm}(n_1,\dots,n_k) \cdot
			\gcd\left(\prod_{i\in I} n_i \middle| \Card I = k-1\right)
			\right)
			& = \mathrm{ord}_p(n_k) + \sum_{i=1}^{k-1} \mathrm{ord}_p(n_i) \\
			& = \mathrm{ord}_p\left(\prod_{i=1}^{k} n_i\right).
		\end{align*}
		Running over all rational primes,  we then obtain
		\begin{equation}\label{eq:htshyhdh}
			\mathrm{lcm}(n_1,\dots,n_k) \cdot \gcd\left(\prod_{i\in I} n_i \middle| \Card I = k-1\right)
			= \prod_{i=1}^k n_i,
		\end{equation}
		which in turn implies
		\begin{equation*}
			\Card( \Dab_{\rm tor})
			= \frac{\prod_{i=1}^k n_i}{\mathrm{lcm}(n_1,\dots,n_k)}
			= \gcd\left(\prod_{i\in I} n_i \middle| \Card I
			= k-1\right).
		\end{equation*}
		
		Since $\Dab_{\rm tor}$ is abelian, we have that every disjoint cycle of the permutation induced by $\delta_i$ on $\Dab_{\rm tor}$ has the same order  $\gcd(n_i, \mathrm{lcm}(n_j\mid j\neq i))$, which is the order of $\delta_{i}$ in $\Dab_{\rm tor} $ by Lemma~\ref{lem:orderinab}.
		By Proposition~\ref{prop:inducedsign} the induced signature of $\Delta_0$ is thus given by
		\begin{equation*}
			\bigl(g',0;
			\{\underbrace{\omega_1,\dots,\omega_1}_{m_1\ \text{times}},\
			\underbrace{\omega_2,\dots,\omega_2}_{m_2\ \text{times}},\
			\dots,\
			\underbrace{\omega_k,\dots,\omega_k}_{m_k\ \text{times}}\}\bigr).
		\end{equation*}
	\end{proof}
	
	The two previous lemmata motivate the introduction of the following condition on the periods of orbifold points.
	\begin{definition}\label{def:LCM}
		Let $\Delta$ be a profinite $F$-group with signature $(g, r; \{n_1, \dots, n_k\})$ and $k\geq 2$.
		We say that $\Delta$ satisfies the \textit{abelian-period condition} if
		\begin{equation*}
			\forall i\in\{1,\dots,k\},\qquad n_i\ \text{ divides }\ \mathrm{lcm}(n_j\mid j\neq i).
		\end{equation*}
	\end{definition}
	
	By Proposition~\ref{prop:SigIsoInv}, this condition does not depend on the choice of the signature of $\Delta$. As a convention, in the case $k=0$ (resp. $k=1$), we say that $\Delta$ satisfies (resp. does not satisfy) the abelian-period condition.
	
	\begin{remark}\label{rem:McAbGr}
		For a discrete $F$-group the \emph{abelian period condition} is nothing else than the L.C.M. condition of \cite{Mac65} which guarantees that a given abelian group, in particular the derived group of a discrete $F$-group, can be realized as the automorphism group of an analytic surface.
	\end{remark}

	We show that, in the case of $F$-groups with no cusps, the \emph{abelian-period condition} characterizes the torsion-freeness of commutator subgroups.
	
	\begin{proposition}\label{prop:LCMcondition}
		Let $\Delta$ be a profinite $F$-group with signature $(g, 0; \{n_1, \dots, n_k\})$.
		Then the following hold:
		\begin{enumerate}
			\item\label{prop:LCMcondition1}
			The group $\Delta$ satisfies the abelian-period condition if and only if $\Delta_0=\ker(\Delta\to \Dab_{\rm tor})$ is a surface group.
			\item\label{prop:LCMcondition2}
			The periods $n_1,\dots,n_k$ satisfy the condition
			\begin{equation}\tag{$\ast$}
				\forall i\in\{1,\dots,k\}, \text{ either }
				\Bigl(n_i\mid \mathrm{lcm}(n_j\mid j\neq i)\Bigr) \text{ or }
				\Bigl(\ \frac{\prod_{j\neq i} n_j}{\mathrm{lcm}(n_j\mid j\neq i)}\geq 2\ \Bigr)
			\end{equation}
			if and only if $\Delta_0$ satisfies the abelian-period condition.
		\end{enumerate}
	\end{proposition}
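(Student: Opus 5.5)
The plan is to reduce both assertions to the explicit induced signature of $\Delta_0$ given by Lemma~\ref{lem:inducedsignature}. Throughout, $\Delta_0$ means $\ker(\Delta\to\Delta^{\rm ab}_{\rm tf})$, i.e.\ the preimage of $\Dab_{\rm tor}$, as fixed just before that lemma. The degenerate cases $k=0$ and $k=1$ are handled first by hand. If $k=0$, then $\Delta_0$ is already a surface group and everything holds by the stated convention. If $k=1$, then $\Dab_{\rm tor}=(\ZZ/n_1\ZZ)/\langle 1\rangle$ is trivial, so $\Delta_0=\Delta$ still contains $\langle\delta_1\rangle$; this is consistent with the convention that $k=1$ fails the abelian-period condition. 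From now on assume $k\geq 2$, and write $L_i=\mathrm{lcm}(n_j\mid j\neq i)$.

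\emph{Step 1: torsion criterion.} A profinite $F$-group without cusps is a surface group if and only if its signature has no orbifold points. One direction is immediate from the presentation. For the other, use the characterization of the $\langle\delta_i\rangle$ as maximal finite subgroups (\cite{MR2402513} Lemma~2.1, as in the proof of Proposition~\ref{prop:SigIsoInv}): a period $n\geq 2$ produces nontrivial torsion. Conversely, a torsion-free group of signature $(g',0;\emptyset)$ is a surface group. The small non-hyperbolic cases should be checked against Table~\ref{fig:Table_nonhyperbolic_F_groups}.

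\emph{Step 2: proof of (i).} By Proposition~\ref{prop:inducedsign}, $\Delta_0$ has no cusps. By Lemma~\ref{lem:inducedsignature}, its periods are the $\omega_i=n_i/\gcd(n_i,L_i)$ with multiplicities $m_i$. Step~1 then says $\Delta_0$ is a surface group if and only if $\omega_i=1$ for all $i$. This is equivalent to $n_i\mid L_i$ for all $i$, which is exactly Definition~\ref{def:LCM}.

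\emph{Step 3: proof of (ii).} First rewrite $m_i$ using $\Card(\Dab_{\rm tor})=\prod_j n_j/\mathrm{lcm}(n_1,\dots,n_k)$ (Eq.~\eqref{eq:htshyhdh}) together with $\mathrm{lcm}(n_1,\dots,n_k)=n_iL_i/\gcd(n_i,L_i)$. This gives
\[
m_i=\frac{\prod_{j\neq i}n_j}{L_i},
\]
so condition $(\ast)$ says precisely that for each $i$, either $\omega_i=1$ or $m_i\geq 2$. Now test the abelian-period condition on the multiset of periods of $\Delta_0$.
\begin{enumerate}
\item If every nontrivial $\omega_i$ occurs with multiplicity $m_i\geq 2$, then each period of $\Delta_0$ is repeated. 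Hence it divides the lcm of the others, and the condition holds; if there are no periods at all, it holds by convention.
\item Conversely, suppose some $\omega_i>1$ with $m_i=1$, and pick a prime $p\mid\omega_i$. Then $\mathrm{ord}_p(n_i)>\mathrm{ord}_p(n_j)$ for every $j\neq i$. For $j\neq i$ we have $n_i\mid L_j$, so $\mathrm{ord}_p(n_j)\leq\mathrm{ord}_p(L_j)$, and therefore $p\nmid\omega_j$. The period $\omega_i$ occurs only once, so $p\mid\omega_i$ but $p$ does not divide the lcm of the remaining periods of $\Delta_0$. Hence the abelian-period condition fails. This includes the case where $\omega_i$ is the only period, where failure holds by the $k=1$ convention.
\end{enumerate}

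I expect the main obstacle to be Step~1 together with the edge cases. One must justify that a nontrivial period in the induced signature really yields torsion in the profinite group, and that the conventions for $k_{\Delta_0}\in\{0,1\}$ match. For the torsion, I would first try reducing to the discrete group via residual finiteness, as in the proof of Theorem~\ref{thm:classicalproFfenchel}. The arithmetic in Step~3 is routine once $m_i$ has been rewritten.
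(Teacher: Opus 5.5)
Your strategy is the paper's. For (ii) you follow it exactly. You make explicit the identity $m_i=\prod_{j\neq i}n_j/L_i$, with $L_i=\mathrm{lcm}(n_j\mid j\neq i)$, which the paper uses tacitly to read $(\ast)$ as ``$\omega_i=1$ or $m_i\geq 2$'', and then you run the same $p$-adic argument.

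For (i) you read torsion-freeness off the periods $\omega_i$ of $\Delta_0$, instead of going through Lemma~\ref{lem:orderinab} and the condition $\DOne\cap\langle\delta_i\rangle=1$. This variant is slightly cleaner, for three reasons:
\begin{itemize}
\item It never passes through $\DOne$, which is not open for $g\geq 1$; the paper tacitly uses $\Delta_0\cap\langle\delta_i\rangle=\DOne\cap\langle\delta_i\rangle$.
\item It needs only that each $\delta_i$ has exact order $n_i$. If $\omega_i>1$, then $\delta_i^{\gcd(n_i,L_i)}\in\Delta_0$ is nontrivial torsion. If all $\omega_i=1$, the induced presentation is already a surface presentation. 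You do not need that all torsion is conjugate into the $\langle\delta_i\rangle$.
\item It makes the paper's separate treatment of perfect $\Delta$ unnecessary.
\end{itemize}

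One thing must be fixed: the identification of $\Delta_0$ you adopt at the outset. The preimage of $\Dab_{\rm tor}$, i.e.\ $\ker(\Delta\to\Dab_{\rm tf})$, contains every $\delta_i$, since their images in $\Dab$ are torsion. It equals $\Delta$ when $g=0$, and it has infinite index, with quotient $\widehat{\ZZ}^{2g}$, when $g\geq 1$. So neither Proposition~\ref{prop:inducedsign} nor Lemma~\ref{lem:inducedsignature} applies to it, and for this group (i) is false. For example, $(1,0;\{2,2\})$ satisfies the abelian-period condition, yet that preimage contains $\delta_1$, of order $2$.

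The paper's sentence before Lemma~\ref{lem:inducedsignature} does say this, but it is a slip. The group that lemma actually describes, and the one named in the statement, is the kernel of $\Delta\to\Dab\to\Dab_{\rm tor}$, using the retraction given by the splitting \eqref{eq:DabExpl} (killing the $\alpha_i,\beta_i$). That kernel is open of index $\Card(\Dab_{\rm tor})$, each $\delta_i$ acts on its cosets by translation by an element of order $\gcd(n_i,L_i)$, and for $g=0$ it is $\DOne$. Your own $k=1$ sentence, ``$\Delta_0=\Delta$'', already uses this definition. With it, Steps~2 and~3 go through verbatim.

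Finally, the check of small cases against Table~\ref{fig:Table_nonhyperbolic_F_groups} cannot succeed, because (i) is false for the two bad signatures:
\begin{itemize}
\item For $(0,0;\{n\})$ the product relation forces $\delta_1=1$, so your $k=1$ claim fails when $g=0$. Here $\Delta_0=\Delta=1$, although by convention the abelian-period condition fails.
\item For $(0,0;\{n_1,n_2\})$ with $n_1\neq n_2$, one has $\Delta\cong\ZZ/\gcd(n_1,n_2)\ZZ$ and $\Delta_0=\DOne=1$, while the condition fails.
\end{itemize}
The trivial group must count as the genus-$0$ surface group, since otherwise the good signature $(0,0;\{n,n\})$ already violates (i). In these cases $\delta_i$ does not have exact order $n_i$, and they break the paper's proof too: for instance, the second assertion of Lemma~\ref{lem:orderinab} fails for $(0,0;\{2,4\})$. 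They must be excluded by hypothesis rather than verified.
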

	
	In summary, the following implications can be drawn:
	\begin{equation*}
		\left(
		\begin{array}{ccccccc}
			\Delta:\text{ abelian-period}   & \Leftrightarrow & \Delta_0:\text{ torsion-free} \\
			\Downarrow              &                 & \Downarrow   \\
			\Delta: (\ast) & \Leftrightarrow & \Delta_0:\text{ abelian-period}       \\
		\end{array}\right)
	\end{equation*}
	
	\begin{proof}
		If $\Delta$ is perfect, then $\Delta=\ker(\Delta\to \Dab_{\rm tor})$ is not torsion-free and the condition $(\ast)$ is not satisfied. In particular, both sides of the conditions in the statements \cref{prop:LCMcondition1,prop:LCMcondition2} are always false. In the rest of the proof, we may thus assume that $\Delta$ is non-perfect.
		
		\medskip
		
		\noindent \cref{prop:LCMcondition1}
		By Lemma~\ref{lem:orderinab}, the abelian-period condition is equivalent to the fact that the natural morphism $\langle\delta_{i}\rangle\to \Delta^{\mathrm{ab}}$ is injective for any $i \in \{1, \dots, k\}$.
		Hence $\Delta$ satisfies the abelian-period condition if and only if $\DOne \cap \langle \delta_i \rangle = \{1\}$ for any $i \in \{1, \dots, k\}$. Since all torsion elements are contained in one of the conjugates of $\langle \delta_i \rangle$, this is in turn equivalent to $\DOne$ being torsion-free. Finally, by Lemma~\ref{lem:inducedsignature}, $\Delta_0$ is open and torsion-free and is thus a surface group, since it also has no cusp by Proposition~\ref{prop:inducedsign}.
		
		\medskip
		
		\noindent\cref{prop:LCMcondition2}
		By Lemma~\ref{lem:inducedsignature}, $\Delta_0$ is an open subgroup of $\Delta$. Given the signature $\Sigma_{\Delta_0}$ induced from $\Delta$ as ibid. the condition $(\ast)$ is thus equivalent to the condition that, for any $i \in \{1, \dots, k\}$, either $\omega_i=1$ or $m_i\ge 2$. Thus, if condition $(\ast)$ is satisfied, then $\Delta_0$ satisfies the abelian-period condition.
		
		Let us show the converse implication, and assume that the periods $n_1,\dots, n_k$ do not satisfy condition $(\ast)$.
		Then there exists $i$ such that $\omega_i\ge 2$ and $m_i=1$.
		Let $p$ be a prime number such that $v_{p}(\omega_{i})\geq 1$, that is $v_p(n_i)> \max\{v_p(n_j)\mid j\neq i\}$.
		Then, for any $j\neq i$
		\[
		v_p(\omega_j)=\max\{0,\,v_p(n_j)-\max\{v_p(n_\ell)\mid \ell\neq j\}\} = 0,
		\]
		which implies that $\omega_i$ doesn't divide $\mathrm{lcm}(\omega_{j}\mid j\neq i)$.
		Since $m_{i}=1$, this is exactly that $\Delta_0$ does not satisfy the abelian-period condition.
	\end{proof}

	\begin{example}\label{ex:proFFenchels}
		Consider a profinite $F$-group such that $(g,r)=(0,0)$ with periods given by $\{n_1, n_2, n_3\} = \{a_1, a_2a_3, a_2a_4\}$ for some positive integers $a_1,\dots,a_4$ coprime to each other.
		These periods don't satisfy the condition $(\ast)$, since we have $a_2a_3\nmid \mathrm{lcm}(a_1,a_2a_4)=a_1a_2a_4$ and $a_1\cdot(a_2a_4)/\mathrm{lcm}(a_1,a_2a_4)=1<2$.
		
		\medskip
		
		Let us compute the signatures of $\DOne$ and $\Delta^{(2)}$.
		\begin{enumerate}[(i)]
			\item For $\DOne$, by gcd computation, one obtains $\Delta^{\mathrm{ab}}\cong\mathbb{Z}/a_2\mathbb{Z}$, and finally
			\begin{equation*}
				\omega_1=a_1,\quad
				\omega_2=a_3,\quad
				\omega_3=a_4 \text{ and } 
				m_1=a_2,\quad
				m_2=1,\quad
				m_3=1.
			\end{equation*}
			Therefore $\DOne$ has signature
			\begin{equation*}
				\Sigma_{\DOne}=(0,0;\{\underbrace{a_1,\dots,a_1}_{a_2\ \text{times}},\ a_3,a_4\});
			\end{equation*}
			and its periods satisfy only the condition $(\ast)$,  since $a_3\nmid a_1a_4$ and $
			a_1^{a_2}a_4/\mathrm{lcm}(a_1,a_4)=a_1^{a_2-1}\geq 2$.
			\item By a similar computation, the group $(\DOne)^{(1)}$ has signature
			\begin{equation*}
				\left(1+\frac{a_1^{a_2-2}}{2}\Bigl(a_2(a_1-1)-2a_1\Bigr),0;\{\underbrace{a_3,\dots,a_3}_{a_1^{a_2-1}\ \text{times}},\ \underbrace{a_4,\dots,a_4}_{a_1^{a_2-1}\ \text{times}}\}\right)
			\end{equation*}
			whose periods satisfy the abelian-period condition.
		\end{enumerate}
	\end{example}
	
	{ 
		\subsubsection{}\label{subsub:zerOne}  
		The following lemma explains why, in the $m$-step anabelian reconstruction, the case of  a profinite $F$-group $\Delta$ with a signature such that $(g,r)=(0,1)$ has to be handled specially: for any torsion-free open normal subgroup $H\triangleleft \Delta$ -- that would correspond to a finite cover of the curve -- the number of cusps $r_H$ is bounded above, by $r_{\DOne}$. Note that one can find \emph{hyperbolic} examples with $r_{\DOne}=1.$
		
		\begin{lemma}\label{cor:r_greater}
			Let $\Delta$ be a non-perfect profinite $F$-group with signature $(0, 1; \{n_1, \dots, n_k\})$. The commutator subgroup $\DOne$ is a torsion-free open normal subgroup of $\Delta$ with induced signature given by
			\begin{equation*}
				g_{\DOne}=\frac12\Bigl(-\chi(\Delta)\operatorname{Card}(\Delta^{\mathrm{ab}})+2-r_{\DOne}\Bigr)
				\text{ and }
				r_{\DOne}=\frac{\prod_{i=1}^k n_i}{\operatorname{lcm}(n_1,\dots,n_k)}.
			\end{equation*}
			Furthermore, if $\Delta$ is hyperbolic, then $g_{\DOne}\geq 1$.
		\end{lemma}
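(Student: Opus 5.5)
The plan is to work directly from the presentation of Definition~\ref{def:proFgroup} with $(g,r)=(0,1)$. The relation $\gamma_1\prod_\ell\delta_\ell=1$ lets us eliminate $\gamma_1$. This exhibits $\Delta$ as the free profinite product $\ast_{i=1}^k \mathbb{Z}/n_i\mathbb{Z}$, so by Eq.~\eqref{eq:DabExpl}
\[
\Delta^{\mathrm{ab}}\cong\bigoplus_{i=1}^k \mathbb{Z}/n_i\mathbb{Z}\cdot\delta_i ,
\]
a finite group of order $\prod_i n_i$. In particular $\DOne$ is open and normal, and it is non-trivial because $\Delta$ is non-perfect.

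\textbf{Torsion-freeness.} Each $\delta_i$ maps injectively into $\Delta^{\mathrm{ab}}$, since it generates a direct summand of order $n_i$. Hence $\DOne\cap\langle\delta_i\rangle^{\sigma}=\{1\}$ for every conjugate. By \cite{MR2402513} Lemma~2.1~(iv), as already used in Proposition~\ref{prop:SigIsoInv}, every torsion element lies in a conjugate of some $\langle\delta_i\rangle$. Therefore $\DOne$ is torsion-free. (Equivalently, by Proposition~\ref{prop:inducedsign}, each $\delta_i$ acts on $\Delta/\DOne$ with cycles of length $n_i$, so all induced periods $n_i/s_{i,j}$ equal $1$ and $k_{\DOne}=0$.)

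\textbf{Cusps.} By Proposition~\ref{prop:inducedsign}, $r_{\DOne}$ is the number of cycles of $\gamma_1=(\prod_\ell\delta_\ell)^{-1}$ acting by translation on $\Delta^{\mathrm{ab}}$. This equals $\Card(\Delta^{\mathrm{ab}})$ divided by the order of $(1,\dots,1)$ in $\bigoplus\mathbb{Z}/n_i$, and that order is $\operatorname{lcm}(n_1,\dots,n_k)$. This gives $r_{\DOne}=\prod n_i/\operatorname{lcm}(n_i)$.

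\textbf{Genus.} Proposition~\ref{prop:inducedsign}~\ref{RH_formula} gives
\[
2-2g_{\DOne}-r_{\DOne}=\Card(\Delta^{\mathrm{ab}})\,\chi(\Delta),
\]
which rearranges to the stated formula.

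\textbf{Hyperbolic case.} This is the main step. We must show $-\chi(\Delta)\prod n_i>r_{\DOne}$, i.e.\ $g_{\DOne}>0$, so that $g_{\DOne}\geq 1$ by integrality. Write $N=\prod n_i$ and $L=\operatorname{lcm}(n_i)$. The goal is
\[
N\Bigl(\sum_i(1-1/n_i)-1\Bigr)>N/L .
\]
Multiplying by $L/N$, it suffices to show
\[
L\Bigl(k-1-\sum_i 1/n_i\Bigr)>1 .
\]
Hyperbolicity means $k-1-\sum 1/n_i>0$. Multiplied by $L$, this quantity becomes the integer $L(k-1)-\sum_i L/n_i$, which is therefore $\geq 1$. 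So we get $\geq1$, and must rule out equality.

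Equality $L(k-1-\sum 1/n_i)=1$ would force $L(k-1)-\sum_i L/n_i=1$. Choose any $n_j$ and a prime $p$ dividing it (possible since $k\geq 2$ and $n_i\geq 2$). If $p$ divides every $L/n_i$, we get a contradiction mod $p$, so this reduces to a parity or valuation check.

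The fallback is to treat small $k$ by hand. For $k=2$ the condition is $1>1/n_1+1/n_2$ with equality excluded. One could instead simply check that $\chi(\Delta)\Card(\Delta^{\mathrm{ab}})$ is an even integer minus $r_{\DOne}$. Since $g_{\DOne}$ is an integer, $g_{\DOne}=0$ would give $r_{\DOne}=2+N|\chi|$. Combined with $r_{\DOne}=N/L\leq N/2$, this yields $|\chi|<1/2$. A short case analysis of $k=2,3$ with small periods, as in the list behind Table~\ref{fig:Table_nonhyperbolic_F_groups}, then shows that $g_{\DOne}=0$ forces $\chi\geq 0$. I expect this final inequality to be the only delicate point.
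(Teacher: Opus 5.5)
Your arguments for torsion-freeness, for $r_{\DOne}$, and for the Riemann--Hurwitz formula for $g_{\DOne}$ are correct and match the paper's. The gap is in the hyperbolic step, where you set yourself the wrong target.

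Write $N=\prod_i n_i=\Card(\Dab)$ and $L=\operatorname{lcm}(n_1,\dots,n_k)$. The genus formula then reads $2g_{\DOne}-2=-\chi(\Delta)N-r_{\DOne}$. So $g_{\DOne}\geq 1$ is equivalent to the \emph{non-strict} inequality $-\chi(\Delta)N\geq r_{\DOne}$. The strict inequality $-\chi(\Delta)N>r_{\DOne}$ that you aim for is equivalent to $g_{\DOne}>1$, not to $g_{\DOne}>0$.

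Your attempt to ``rule out equality'' in $L\,(k-1-\sum_i 1/n_i)\geq 1$ therefore tries to prove something false, and no mod-$p$ or parity argument can succeed. Take $\{n_1,n_2\}=\{2,3\}$. Then $L=N=6$ and $-\chi(\Delta)=1/6$, so $L\cdot(-\chi(\Delta))=1$. The lemma's own formulas give $r_{\DOne}=1$ and $g_{\DOne}=\frac12(1+2-1)=1$, so $\DOne$ has signature $(1,1;\emptyset)$. Equality genuinely occurs, and $g_{\DOne}=1$ exactly.

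The repair is one line, and it is exactly the paper's argument. You already showed that $L\cdot(-\chi(\Delta))=L(k-1)-\sum_i L/n_i$ is a positive integer, hence at least $1$. Multiplying by $N/L=r_{\DOne}$ gives $-\chi(\Delta)N\geq r_{\DOne}$. Substituting into the genus formula yields $g_{\DOne}\geq\frac12(r_{\DOne}+2-r_{\DOne})=1$.

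Your fallback (assume $g_{\DOne}=0$, deduce $|\chi(\Delta)|<1/2$, then check cases) can be completed, but as written it is not a proof. The bound actually forces $k=2$ with $1/2<1/n_1+1/n_2<1$. This leaves the infinite family $\{2,n\}$ with $n\geq 3$, together with $\{3,3\},\{3,4\},\{3,5\}$. That is not a finite list of small periods, and you did not carry out the verification.

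One minor slip: non-perfectness gives $\DOne\neq\Delta$, not $\DOne\neq\{1\}$. For $k=1$ the commutator subgroup is trivial.
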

		
		\begin{proof}
			As previously, the presentation \cref{equation:01-decomposition} implies:
			\begin{align*}
				\Delta^{\mathrm{ab}}
				& \cong \left(\bigoplus_{i=1}^k \mathbb{Z}/n_i \mathbb{Z} \cdot \delta_i\right)
			\end{align*}
			and it follows that $\DOne$ is a torsion-free open normal subgroup of $\Delta$. By this isomorphism, $\gamma_{1}$ is mapped to the element $(-1,\cdots,-1)$. The permutation action of $\gamma_1$ on $\Delta^{\mathrm{ab}}$ is thus given by disjoint cycles of length $\operatorname{ord} (-1,\dots, -1)$. We now obtain from Proposition~\ref{prop:inducedsign} that
			\begin{equation*}
				r_{\DOne}=\frac{\Card(\Delta^{\mathrm{ab}})}{\mathrm{ord}((-1,\cdots,-1))}=\frac{\prod_{i=1}^k n_i}{\mathrm{lcm}(n_1,\dots,n_k)}.
			\end{equation*}
			By the Riemann-Hurwitz formula for profinite $F$-groups we now have
			\begin{equation*}
				g_{\DOne}=\frac12\Bigl(-\chi(\Delta)\operatorname{Card}(\Delta^{\mathrm{ab}})+2-r_{\DOne}\Bigr).
			\end{equation*}
			This completes the proof of the first assertion.
			
			\medskip
			
			Next, we assume that $\Delta$ is hyperbolic.
			We have
			\begin{equation*}
				-\chi(\Delta)=\Bigl(\sum_{i=1}^k\bigl(1-\tfrac1{n_i}\bigr)\Bigr)-1
				=(k-1)-\sum_{i=1}^k\frac1{n_i}.
			\end{equation*}
			Multiplying both sides by $\mathrm{lcm}(n_1,\dots,n_k)$, this implies that
			\begin{equation*}
				-\chi(\Delta)\cdot\mathrm{lcm}(n_1,\dots,n_k)
				=(k-1)\cdot \mathrm{lcm}(n_1,\dots,n_k)-\sum_{i=1}^k \frac{\mathrm{lcm}(n_1,\dots,n_k)}{n_i}.
			\end{equation*}
			Here, the left-hand side is positive by the hyperbolicity hypothesis, and  the right-hand side is an integer immediately.
			These imply that
			\begin{equation*}
				-\chi(\Delta)\cdot\mathrm{lcm}(n_1,\dots,n_k)\geq 1.
			\end{equation*}
			Therefore we get
			\begin{equation*}
				-\chi(\Delta)\cdot\operatorname{Card}(\Delta^{\mathrm{ab}})=-\chi(\Delta)\cdot\operatorname{lcm}(n_1,\dots,n_k)\cdot \frac{\prod_{i=1}^k n_i}{\operatorname{lcm}(n_1,\dots,n_k)}\geq \frac{\prod_{i=1}^k n_i}{\operatorname{lcm}(n_1,\dots,n_k)}\ =r_{\Delta^{(1)}}.
			\end{equation*}
			Thus, by the first assertion, we have
			\begin{equation*}
				g_{\DOne}=\frac12\Bigl(-\chi(\Delta)\operatorname{Card}(\Dab)+2-r_{\DOne}\Bigr)\geq \frac12\Bigl(r_{\DOne}+2-r_{\DOne}\Bigr)=1.
			\end{equation*}
		\end{proof}
	}
	From the orbifold inertia point of view, on the other hand, $\Dab$ still contains all the orbifold inertia groups, and since $\Dab\cong \oplus_{i=1}^k \mathbb{Z}/n_i \mathbb{Z}$, the subgroup $\DOne$ is also the \emph{minimal torsion-free} open normal subgroup of $\Delta$ whose quotient is abelian.
	
	\subsection{Non-perfect Fenchel-Nielsen and applications}\label{sub:Fenchel_ref}
	We establish some refinements of the profinite Fenchel-Nielsen Theorem from the perspective of solvable quotients of bounded derived length. For a profinite group $G$, we recall that $G^{(m)}$ denotes its topological $m$-derived subgroup, and $G^{m} \coloneqq G / G^{(m)}$ its \emph{maximal $m$-step solvable quotient}.
	
	\subsubsection{} 
	The following provides a combinatorial group theoretic criterion for perfectness. Its proof relies on a numerical lemma, which is stated hereafter.
	
	\begin{proposition}\label{prop:charperfect}
		Let $\Delta$ be a nontrivial profinite $F$-group with signature $(g, r; \{n_1, \dots, n_k\})$.
		Then $\Delta$ is perfect if and only if $(g,r)=(0,0)$ and $\gcd(n_i, n_j) = 1$ for any $i \neq j$.
	\end{proposition}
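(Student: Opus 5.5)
Perfectness of a profinite group is equivalent to the triviality of its abelianization, so the plan is to read the criterion directly off the explicit description of $\Dab$ in Eq.~\eqref{eq:DabExpl}. The proof splits according to whether $r\geq 1$ or $r=0$, and within the second case whether $g\geq 1$ or $g=0$.

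\emph{Case $r\geq 1$.} By Eq.~\eqref{eq:DabExpl}, $\Dab$ contains $\widehat{\mathbb{Z}}^{2g+r-1}\oplus\bigoplus_i \mathbb{Z}/n_i\mathbb{Z}$. This is nontrivial unless $g=0$, $r=1$ and $k=0$. In that exceptional case $\Delta$ is trivial (signature $(0,1;\emptyset)$ in Table~\ref{fig:Table_nonhyperbolic_F_groups}), which the hypothesis excludes. So $\Delta$ is never perfect when $r\geq 1$.

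\emph{Case $r=0$, $g\geq 1$.} Here $\Dab$ surjects onto $\widehat{\mathbb{Z}}^{2g}\neq 0$, so again $\Delta$ is not perfect.

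\emph{Case $(g,r)=(0,0)$.} Now $\Dab\cong A:=\bigl(\prod_{i=1}^k\mathbb{Z}/n_i\mathbb{Z}\bigr)/\langle(1,\dots,1)\rangle$, and $\Delta$ is perfect if and only if $A=0$. The numerical step is to show
\[
\Card(A)=\frac{\prod_i n_i}{\mathrm{lcm}(n_1,\dots,n_k)}.
\]
The element $(1,\dots,1)$ has order $\mathrm{lcm}(n_i)$ in $\prod_i\mathbb{Z}/n_i\mathbb{Z}$, which gives this formula. The same identity is already obtained in the proof of Lemma~\ref{lem:inducedsignature} via Eq.~\eqref{eq:htshyhdh}; that lemma assumes $k\geq 2$, but the counting argument itself does not use that assumption.

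So $A=0$ if and only if $\prod_i n_i=\mathrm{lcm}(n_i)$. Comparing $p$-adic valuations, $\sum_i v_p(n_i)=\max_i v_p(n_i)$ for every prime $p$. This holds precisely when, for each $p$, at most one $n_i$ is divisible by $p$, which means $\gcd(n_i,n_j)=1$ for all $i\neq j$. For $k=0$ or $k=1$ the gcd condition is vacuous; the corresponding groups are trivial by Table~\ref{fig:Table_nonhyperbolic_F_groups} and are excluded by the nontriviality hypothesis.

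The only real obstacle is bookkeeping in the degenerate signatures: when $k\leq 1$ or $(g,r)=(0,1)$ with $k=0$, the formula for $\Dab$ degenerates. I will handle these by checking them against Table~\ref{fig:Table_nonhyperbolic_F_groups} and the nontriviality assumption, so that the stated equivalence holds exactly as written.
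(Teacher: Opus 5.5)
Your proof is correct and follows essentially the paper's route. Both read perfectness off the explicit abelianization, dispose of the cases $r\geq 1$ and $g\geq 1$, and in the $(0,0)$ case use $\Card(\Dab)=\prod_i n_i/\mathrm{lcm}(n_1,\dots,n_k)$. The only difference is the final arithmetic step. You compare $\sum_i v_p(n_i)$ with $\max_i v_p(n_i)$ directly, which gives both directions at once. The paper instead uses the Chinese remainder theorem for one direction, and for the other rewrites the cardinality as $\gcd(\prod_{i\in I}n_i \mid \Card I=k-1)$ and invokes Lemma~\ref{rgabagwra}; your version is shorter and avoids that auxiliary lemma.
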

	
	A similar statement holds in the discrete case, which relies on some Fuchsian group arguments, see \cite{Sah69} Theorem~1.5~(b); the following proof is purely group-theoretic and in terms of the fundamental group.
	\begin{proof}
		First, assume that $(g,r)=(0,0)$ and that $\gcd(n_i, n_j) = 1$ for any $i \neq j$. Since $\Delta$ is nontrivial, we must have $k\geq 2$.	Under these hypotheses and by the presentation \cref{equation:01-decomposition}, we have
		\begin{equation}\label{eq:grgarghteg}
			\Delta^{\mathrm{ab}} \cong \left(\prod_{i=1}^{k} \mathbb{Z}/n_i\mathbb{Z}\right) / \langle (1,\dots,1) \rangle.
		\end{equation}
		By the Chinese remainder theorem, since $\gcd(n_i, n_j) = 1$ for all $i \neq j$, the group $\left(\prod_{i=1}^{k}\mathbb{Z}/n_{i}\mathbb{Z}\right)$ is cyclic and generated by the element $(1,\dots,1)$. Hence, the group $\Delta^{\mathrm{ab}}$ is trivial in this case.
		
		\medskip
		
		Next, we show the converse implication. Let us assume that $\Delta^{\mathrm{ab}}$ is trivial. By the presentation \cref{equation:01-decomposition}, we have the following isomorphisms depending on whether $r=0$ or $r\geq 1$
		\begin{align*}
			\Delta^{\mathrm{ab}} \cong\; &
			\bigoplus_{i=1}^g \bigl(\widehat{\mathbb{Z}}\alpha_i \oplus \widehat{\mathbb{Z}}\beta_i \bigr)
			\;\oplus\; \bigoplus_{i=1}^{r-1} \widehat{\mathbb{Z}}\, \gamma_i
			\;\oplus\; \left(\bigoplus_{i=1}^k (\mathbb{Z}/n_i\mathbb{Z}) \cdot \delta_i\right)
			\qquad                       & \text{(when $r \geq 1$)} \\
			\Delta^{\mathrm{ab}} \cong\; &
			\bigoplus_{i=1}^g \bigl(\widehat{\mathbb{Z}}\alpha_i \oplus \widehat{\mathbb{Z}}\beta_i \bigr)
			\;\oplus\; \left(\prod_{i=1}^{k} \mathbb{Z}/n_i\mathbb{Z}\right) / \langle (1,\dots,1) \rangle.
			\qquad                       & \text{(when $r = 0$)}
		\end{align*}
		The fact that $\Delta^{\mathrm{ab}}=\{1\}$ thus gives that $g=0$ in both cases. Moreover, if $r\geq 1$ the signature has to be $(0,1;\emptyset)$ and $\Delta$ itself is trivial in that case, a contradiction.
		We thus have  $(g,r)=(0,0)$. If $k \leq 1$, then $\Delta$ is again trivial, and so we may further assume that $k \geq 2$.
		By Lemma~\ref{lem:inducedsignature}, we get that
		\begin{equation*}
			1=\Card(\Delta^{\mathrm{ab}}) = \gcd\left(\prod_{i\in I} n_i \middle| \Card I = k-1\right).
		\end{equation*}
		We now remark that $\gcd\left(\prod_{i\in I} n_i \middle| \Card I = s\right)$ divides $\gcd\left(\prod_{i\in I} n_i \middle| \Card I = k-1\right)$ for any $s \in \{1, \dots, k-1\}$ so that the former is again equal to $1$. By Lemma~\ref{rgabagwra}, we conclude that $\gcd(n_i, n_j) = 1$ for all $i \neq j$.
	\end{proof}

	\begin{lemma}\label{rgabagwra}
		For $n_{1}, \dots, n_{k}$ positive integers, one has
		\begin{equation*}
			\prod_{1 \leq i < j \leq k} \gcd(n_i, n_j) = \prod_{s = 1}^{k - 1} \gcd\left(\prod_{i \in I} n_i \middle| \Card I = s\right)
		\end{equation*}
	\end{lemma}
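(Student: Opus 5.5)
The identity is multiplicative and both sides are positive integers, so I will prove it prime by prime. Fix a rational prime $p$ and set $a_i=\mathrm{ord}_p(n_i)$. Then $\mathrm{ord}_p$ of the left-hand side is $\sum_{i<j}\min(a_i,a_j)$. For the right-hand side I will use that $\mathrm{ord}_p$ of a gcd is the minimum of the valuations, and that $\mathrm{ord}_p$ of a product is the sum. Hence $\mathrm{ord}_p$ of the $s$-th factor is $\min_{\Card I=s}\sum_{i\in I}a_i$.

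This minimum is attained by taking the $s$ smallest valuations. Reorder so that $a_{(1)}\le a_{(2)}\le\dots\le a_{(k)}$; then the $s$-th factor contributes $a_{(1)}+\dots+a_{(s)}$. Summing over $s=1,\dots,k-1$, the right-hand side has valuation
\begin{equation*}
\sum_{s=1}^{k-1}\sum_{t=1}^{s}a_{(t)}=\sum_{t=1}^{k}(k-t)\,a_{(t)}.
\end{equation*}

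On the left, $\sum_{i<j}\min(a_i,a_j)$ is symmetric in the indices, so I can compute it in the sorted order. For each $t$, the element $a_{(t)}$ is the minimum exactly in the pairs $\{(t),(u)\}$ with $u>t$. With ties, I assign the pair to the smaller sorted index, which is harmless because the values are equal. There are $k-t$ such pairs, so the left-hand valuation is also $\sum_t (k-t)a_{(t)}$.

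The two valuations agree for every prime $p$, so the two positive integers are equal. The only care needed is the justification that the minimum over $s$-subsets is attained by the $s$ smallest valuations, which is an immediate exchange argument, and the bookkeeping of ties in the pair count. I expect no real obstacle.
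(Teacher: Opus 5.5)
Your proposal is correct and is essentially the paper's own proof: fix a prime, sort the valuations, and check that both sides have valuation $\sum_{t=1}^{k}(k-t)\,a_{(t)}$. Your explicit justifications (that the minimum over $s$-subsets is attained by the $s$ smallest valuations, and the tie-breaking when counting pairs) are details the paper leaves implicit.
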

	
	\begin{proof}
		Let us fix $\ell$ a prime number and establish the equality for the $\ell$-adic valuation. Up to the  order of the $n_i$s, we may assume that
		$v_\ell(n_1) \leq v_\ell(n_2) \leq \cdots \leq v_\ell(n_k)$, so the LHS becomes
		\begin{equation*}
			v_{\ell}\left(\prod_{1\leq i<j\leq k}\gcd(n_{i},n_{j})\right)=\sum_{1\leq i<j\leq k}v_{\ell}(n_{i})=\sum_{i=1}^{k}(k-i)v_{\ell}(n_{i}).
		\end{equation*}
		Denoting $d_s = \gcd(\prod_{i \in I} n_i\mid\Card I = s )$ for $s =1, \dots, k$, the RHS gives:
		\begin{equation*}
			v_{\ell}\left(\prod_{s=1}^{k-1}d_{s}\right)=\sum_{s=1}^{k-1}v_{\ell}(d_{s})=\sum_{s=1}^{k-1}\left(\sum_{i=1}^{s}v_{\ell}(n_{i})\right)=\sum_{i=1}^{k}(k-i)v_{\ell}(n_{i}).
		\end{equation*}
		The equality then follows by running over every prime number $\ell$.
	\end{proof}
	
	\subsubsection{}
	We state a $m$-step refinement of the profinite Fenchel-Nielsen Theorem~\ref{thm:classicalproFfenchel} in terms of derived conditions. 
	The following lemma is certainly well-known by experts.
	
	\begin{lemma} \label{lem:charsubgroups}
		Let $\Delta$ be a profinite $F$-group and $m$ a positive integer. Let $U< \Delta$ be a torsion-free open normal subgroup such that $\Delta/U$ has derived length at most $m$. Then there is a torsion-free characteristic open subgroup $\overline{U}< \Delta$ such that $\Delta/\overline{U}$ has derived length at most $m$. 
	\end{lemma}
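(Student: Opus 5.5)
We take $\overline{U}$ to be the intersection of all images of $U$ under continuous automorphisms of $\Delta$:
\[
\overline{U}\coloneqq\bigcap_{\phi\in\Aut(\Delta)}\phi(U).
\]
This subgroup is characteristic by construction. It is torsion-free because it lies in $U$. It is normal, since inner automorphisms are among the $\phi$. Two points remain: that $\overline{U}$ is open, and that $\Delta/\overline{U}$ has derived length at most $m$.

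\emph{Openness.} A profinite $F$-group is topologically finitely generated by Definition~\ref{def:proFgroup}. Hence, for a fixed index $d=[\Delta:U]$, $\Delta$ has only finitely many open subgroups of index $d$: each one is determined by a continuous transitive action of $\Delta$ on $d$ points, and there are only finitely many homomorphisms from a finitely generated group to the symmetric group on $d$ points. Every $\phi(U)$ is open of index $d$, so the set $\{\phi(U)\}_{\phi}$ is finite. The intersection $\overline{U}$ is therefore a finite intersection of open subgroups, hence open.

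\emph{Derived length.} Each $\phi(U)$ is normal, and $\phi$ induces an isomorphism $\Delta/U\cong\Delta/\phi(U)$, so $\Delta/\phi(U)$ has derived length at most $m$. Write $\overline{U}=\phi_1(U)\cap\dots\cap\phi_s(U)$. The diagonal map
\[
\Delta/\overline{U}\hookrightarrow\prod_{i=1}^{s}\Delta/\phi_i(U)
\]
is injective. A finite product of groups of derived length at most $m$ has derived length at most $m$, since $(\prod G_i)^{(m)}=\prod G_i^{(m)}$; the same bound passes to subgroups. This gives the claim.

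The only substantive step is the finiteness of open subgroups of given index, which rests on topological finite generation; the remaining steps are formal. Equivalently, one could phrase the argument via the kernel of $\Delta\to\Delta/U$ and the finitely many surjections onto $\Delta/U$.
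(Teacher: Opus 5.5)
Your proposal is correct and follows essentially the same route as the paper: intersect $\phi(U)$ over all $\phi\in\Aut(\Delta)$, use topological finite generation to get only finitely many open subgroups of a given index (hence openness), and bound the derived length through the diagonal embedding $\Delta/\overline{U}\hookrightarrow\prod_{i}\Delta/\phi_i(U)$. You additionally spell out the finiteness of open subgroups of fixed index via transitive actions on $d$ points, where the paper simply cites the corresponding proposition.
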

	
	\begin{proof}
		Let $U$ be a torsion-free open normal subgroup of $\Delta$. As $\Delta$ is finitely generated, there are only finitely many open subgroups of $\Delta$ with a fixed index (see \cite[Proposition 2.5.1(a)]{MR2599132}). Hence $\overline{U}=\bigcap_{\varphi\in\mathrm{Aut}(\Delta)}\varphi(U)=\bigcap_{i=1}^n U_i$ is an open subgroup of $\Delta$ which is torsion-free and characteristic by construction. The quotient $\Delta/\overline{U}$ embeds diagonally into the finite direct product of the groups $\prod_{i=1}^n\Delta/U_i$ so that it has derived length at most $m$ since each of the factors has.
	\end{proof}
	
	In terms of anabelian reconstruction as in Section~\ref{subsub:rigidification}, and more generally as in Section~\ref{subsection:2.2}, the following result provides some group-theoretic criterion for the orbifold inertia to survive in the derived group $\Delta^{(2)}$ (resp. $\Delta^{(1)}$), that is, in the $m$-step maximal quotients $\Delta^{2}$ (resp. $\Delta^{1}$).
	
	\begin{theorem}\label{thm:proFFenchels}
		Any non-perfect profinite $F$-group $\Delta$ contains a torsion-free open characteristic subgroup whose quotient has derived length at most $3$. We can furthermore choose such a subgroup with a quotient that is
		\begin{enumerate}
			\item\label{thm:proFFenchels--1} abelian if and only if, either $r \geq 1$, or $r=0$ and $\Delta$ satisfies the abelian-period condition.
			\item\label{thm:proFFenchels-3} of derived length at most $2$ if and only if, either $(g,r)\neq (0,0)$, or $(g,r)=(0,0)$ and $\DOne$ satisfies the abelian-period condition.
		\end{enumerate}
		Each of the previous numerical conditions above holds if one assumes the subgroup to be only normal.
	\end{theorem}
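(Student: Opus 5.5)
By Lemma~\ref{lem:charsubgroups}, it suffices throughout to produce a torsion-free open \emph{normal} subgroup $U\triangleleft\Delta$ whose quotient has the required derived length; intersecting its $\Aut(\Delta)$-translates then yields a characteristic one with the same bound. This reduction also covers the final sentence of the statement. Since the non-hyperbolic cases are read off from Table~\ref{fig:Table_nonhyperbolic_F_groups}, I would focus on the hyperbolic case. Torsion-freeness will always be checked via the fact that every torsion element is conjugate into some $\langle\delta_i\rangle$. A normal subgroup $U$ is therefore torsion-free exactly when $U\cap\langle\delta_i\rangle=1$ for all $i$, that is, when each $\delta_i$ has order $n_i$ in $\Delta/U$.

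\emph{Step 1: the abelian case (i).} If $r\ge1$, the relation lets one eliminate $\gamma_r$, so $\Dab\supset\bigoplus_i(\ZZ/n_i)\delta_i$ by Eq.~\eqref{eq:DabExpl}. I take $U$ to be the kernel of $\Delta\to\bigoplus_i\ZZ/n_i\ZZ$ (composed with projection from $\Dab$). This kernel is open and meets each $\langle\delta_i\rangle$ trivially, exactly as in Lemma~\ref{cor:r_greater}. If $r=0$ and the abelian-period condition holds, Proposition~\ref{prop:LCMcondition}\ref{prop:LCMcondition1} gives $\Delta_0$ torsion-free and open with abelian quotient. Conversely, suppose $r=0$ and $\Delta/U$ is abelian with $U$ torsion-free. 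Then $\Delta\to\Delta/U$ factors through $\Dab$, so each $\delta_i$ must inject into $\Dab$, and Lemma~\ref{lem:orderinab} yields the abelian-period condition. (For $k\le1$ the conventions must be checked by hand: $k=1$, $r=0$ gives a relation making $\delta_1$ a product of commutators, so $\delta_1$ dies in any abelian quotient.)

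\emph{Step 2: length $\le 2$ and $\le 3$.} For (ii), if $(g,r)\neq(0,0)$ with $r\ge1$, Step~1 already applies. If instead $r=0$ and $g\ge1$, I map $\Delta\to\Dab$ and enlarge: the quotient $\Delta_0$ of Lemma~\ref{lem:inducedsignature} has genus $g'\ge g\ge1$, so I would pass to a further abelian cover of $\Delta_0$. Concretely, I would take the kernel of $\Delta\to \bigoplus_i \ZZ/n_i$ obtained by sending the $\alpha_1,\beta_1$ generators to suitable elements compensating the product relation, which makes the quotient abelian-by-abelian. Alternatively, I would show directly that in genus $\ge1$ one can realize $\bigoplus\ZZ/n_i\ZZ\rtimes$ (a finite abelian group) with all $\delta_i$ of full order. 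If $(g,r)=(0,0)$ and $\DOne$ satisfies the abelian-period condition, I apply Step~1 to the $F$-group $\DOne$ (open, with signature from Lemma~\ref{lem:inducedsignature}) to get $(\DOne)_0\triangleleft\DOne$, which is torsion-free. Its characteristic intersection is normal in $\Delta$, and the quotient is an extension of an abelian group by an abelian group. For the converse I argue as in Step~1: if $\Delta/U$ is metabelian, $U\supset\Delta^{(2)}$, so the torsion must inject into $\DOne/\Delta^{(2)}$, and this forces the abelian-period condition for $\DOne$. For the length $\le3$ statement in general, non-perfectness plus Proposition~\ref{prop:charperfect} lets me iterate: by Proposition~\ref{prop:LCMcondition}\ref{prop:LCMcondition2} and Example~\ref{ex:proFFenchels}, after at most two derived steps the subgroup $(\DOne)^{(1)}$ or $(\DOne)_0$ satisfies the abelian-period condition. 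I would prove this in general via a $p$-adic valuation argument: the periods $\omega_i$ of $\Delta_0$ are obtained by stripping the shared maximal $p$-parts, and after one more step the multiplicities $m_i\ge2$ hold since $\Delta$ is non-perfect. Then Step~1 applied at that level adds one more abelian layer, giving total length $\le3$.

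\emph{Main obstacle.} The hardest part is the numerical claim in Step~2: that for a non-perfect $(0,0;\{n_i\})$ group, $\DOne$ always satisfies $(\ast)$, so that $\Delta^{(2)}$-level kernels satisfy the abelian-period condition. My first attempt would be a prime-by-prime analysis of $\omega_i,m_i$ using Lemma~\ref{lem:inducedsignature} and Lemma~\ref{rgabagwra}, with non-perfectness supplying some $\gcd(n_i,n_j)>1$ and hence $\Card\Dab\ge2$. The $g\ge1$, $r=0$ case of (ii) is a secondary concern needing an explicit construction.
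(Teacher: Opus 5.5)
Your reduction to normal subgroups via Lemma~\ref{lem:charsubgroups} is fine. So are your Step~1 (including the explicit converse through Lemma~\ref{lem:orderinab}) and the $(g,r)=(0,0)$ part of (ii). The two remaining constructions have genuine gaps. In both, the missing idea is a cover that \emph{duplicates every period} before the abelian step.

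\emph{The case $r=0$, $g\geq 1$ of (ii).} Your concrete construction gives nothing new. Any map to $\bigoplus_i\ZZ/n_i\ZZ$ factors through $\Dab$, so $[\alpha_1,\beta_1]$ dies and the images of $\alpha_1,\beta_1$ are irrelevant. Hence, by Lemma~\ref{lem:orderinab}, such a map keeps every $\delta_i$ of full order only if $\Delta$ already satisfies the abelian-period condition. Going through $\Delta_0$ first does not work either. $\Delta_0$ need not satisfy the abelian-period condition: for $(1,0;\{2,4\})$ it has signature $(2,0;\{2\})$. Repairing that costs a third abelian layer. The paper instead takes the kernel $K$ of $\Delta\to(\Dtf)^{\rm ab}/n$. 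It contains every $\delta_i$, so by Proposition~\ref{prop:inducedsign} each period occurs $[\Delta:K]=n^{2g}\geq 2$ times in $K$. Thus $K$ satisfies the abelian-period condition, and your Step~1 applied to $K$ yields length $\leq 2$.

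\emph{The general bound $3$.} The numerical claim you isolate as the main obstacle is false. Take $\Delta$ of signature $(0,0;\{2,3,12\})$, which is hyperbolic ($\chi=-1/12$) and non-perfect.
\begin{itemize}
\item One has $\Dab\cong\ZZ/6\ZZ$. The generators of orders $2$ and $3$ inject, and the third has image of order $6$.
\item So $\DOne$ has a single cone point of order $2$, and Riemann--Hurwitz ($\chi(\DOne)=-1/2$) gives $\Sigma_{\DOne}=(1,0;\{2\})$.
\item Its cone generator is a commutator. Hence $\DOne$ fails both $(\ast)$ and the abelian-period condition, $(\DOne)_0=\DOne$, and $\Delta^{(2)}$ is not even open.
\end{itemize}
The genus jumps while the multiplicity stays $1$, so iterating along the derived series stalls.

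The paper uses non-perfectness differently. Take a prime $\ell\mid\gcd(n_1,n_2)$ and the map $\psi\colon\Delta\to\ZZ/\ell\ZZ$ with $\delta_1\mapsto 1$, $\delta_2\mapsto -1$, $\delta_i\mapsto 0$ for $i\geq 3$. Its kernel has signature $(0,0;\{n_1/\ell,\,n_2/\ell,\,n_3,\dots,n_k\})$, with each $n_i$, $i\geq 3$, repeated $\ell$ times. In the example, $\ell=2$ applied to the periods $12$ and $2$ gives $(0,0;\{6,3,3\})$. This signature satisfies $(\ast)$, so $(\ker\psi)^{(1)}$ satisfies the abelian-period condition. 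Then (ii) for $\ker\psi$ gives length $\leq 2$ above the cyclic layer.

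Alternatively, your route can be repaired once you have the genus trick.
\begin{itemize}
\item If $g_{\DOne}\geq 1$, apply the trick to $\DOne$.
\item If $g_{\DOne}=0$, a failure of $(\ast)$ would make all periods of $\DOne$ pairwise coprime, since distinct $\omega_i$ have disjoint prime supports. Then $\DOne$ would be perfect by Proposition~\ref{prop:charperfect}. Corollary~\ref{cor_perfect} excludes this, and its proof uses Sah's theorem, not this statement.
\end{itemize}
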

	
	Note that we will only have use of normal subgroups in the rest of the text. 
	
	\begin{proof}
		By Lemma~\ref{lem:charsubgroups} it is enough to show that open normal subgroups with the corresponding properties exist. 
		
		\medskip
		
		\noindent\cref{thm:proFFenchels--1}
		Since $\Delta^{\rm ab}_{\rm tor}$ is finite by the presentation \cref{equation:01-decomposition},  $\ker(\Delta\twoheadrightarrow \Delta^{\mathrm{ab}}_\mathrm{tor})$ is an open normal subgroup of $\Delta$ whose quotient is abelian.
		When  $r=0$, the equivalence follows from Proposition~\ref{prop:LCMcondition}~\cref{prop:LCMcondition1}.
		When $r\geq 1$, we have the following isomorphism:
		\begin{align*}
			\Delta^{\mathrm{ab}}
			& \cong \bigoplus_{i=1}^g \left(\hat{\mathbb{Z}}\alpha_i \oplus \hat{\mathbb{Z}}\beta_i \right)
			\oplus \bigoplus_{i=1}^{r-1} \hat{\mathbb{Z}} \gamma_i
			\oplus \left(\bigoplus_{i=1}^k \mathbb{Z}/n_i \mathbb{Z} \cdot \delta_i\right)
		\end{align*}
		Hence $\ker(\Delta\twoheadrightarrow \Delta^\mathrm{ab}_\mathrm{tor})$ is  torsion-free.
		Therefore, the condition is always true when $r\geq 1$.
		
		\medskip
		
		\noindent\cref{thm:proFFenchels-3}
		By \cref{thm:proFFenchels--1}, we may assume that $r=0$.
		First, we show the case where $(g,r)=(0,0)$. In this case, we have that $\DOne$ is open in $\Delta$ (see Lemma~\ref{lem:inducedsignature}).
		Hence, by \cref{thm:proFFenchels--1},  $\DOne$ satisfies the abelian-period condition if and only if there exists a torsion-free open characteristic subgroup of $\DOne$ whose quotient is abelian. Thus the equivalence, which completes the proof of the case where $(g,r)=(0,0)$ by Lemma~\ref{lem:charsubgroups}.
		
		Next, let us consider the case where $g\geq 1$ and show that $\Delta$ always satisfies the quotient condition. The maximal torsion-free quotient $\Dtf$ of $\Delta$ is a profinite $F$-group with signature $(g,r;\emptyset)$.
		For $n \in \mathbb{Z}_{\geq 2}$ let us consider the kernel $K$ of the natural surjection $\Delta\to(\Dtf)^{\mathrm{ab}}/n$, which is a characteristic open subgroup and contains all the torsion subgroups of $\Delta$.
		By Proposition~\ref{prop:inducedsign}, the induced signature of $K$ is given by
		\begin{equation*}
			\Sigma_K=(g_K, 0; \{\underbrace{n_1, \dots, n_1}_{p\text{ times}},\
			\underbrace{n_2, \dots, n_2}_{p\text{ times}},\
			\dots,\
			\underbrace{n_k, \dots, n_k}_{p\text{ times}}\}), \text{ with } p\coloneqq [\Delta:K].
		\end{equation*}
		
		Since $g\geq 1$, we have that $p> 1$, and thus $K$ satisfies the abelian-period condition (Note that $K$ is non-perfect by Proposition~\ref{prop:charperfect}.)
		Therefore, by~\ref{thm:proFFenchels--1}, there exists a torsion-free characteristic open subgroup $H$ of $K$ such that $K/H$ is abelian.
		As $\Delta/K$ is abelian, $\Delta/H$ has derived length at most $2$.
		
		\medskip
		
		\noindent
		We now prove the statement in the general case. From \cref{thm:proFFenchels-3}, we may assume that $(g,r)=(0,0)$. We may also assume that $k\geq3$. Since $\Delta$ is non-perfect, there exist distinct indices $i \ne j$ such that $\gcd(n_i,n_j)\ne 1$ by Proposition~\ref{prop:charperfect}. After reordering the indices if necessary, we may assume that $\gcd(n_1,n_2) \ne 1$. Let $\ell$ be a prime number dividing $\gcd(n_1, n_2)$.
		Define the following surjective morphism:
		\begin{equation*}
			\psi: \Delta \to \mathbb{Z}/\ell\mathbb{Z},\quad \delta_1 \mapsto 1,\quad \delta_2 \mapsto -1,\quad \delta_i \mapsto 0\text{ for }i \ne 1,2.
		\end{equation*}
		Since  $\ker(\psi)$ is an open normal subgroup of $\Delta$, it is also a profinite $F$-group whose induced signature is
		\begin{equation*}
			(0, 0; \{\tfrac{n_1}{\ell}, \tfrac{n_2}{\ell}, \underbrace{n_3, \dots, n_3}_{\ell\ \text{times}}, \dots, \underbrace{n_k, \dots, n_k}_{\ell\ \text{times}}\})
		\end{equation*}
		by Proposition~\ref{prop:inducedsign}, where $\tfrac{n_1}{\ell}$ and $\tfrac{n_2}{\ell}$ each appear only once, and each $n_i$ is repeated $\ell$ times for $i \geq 3$.
		Immediately, the induced signature of $\ker(\psi)$ satisfies the condition $(\ast)$ of Proposition~\ref{prop:LCMcondition}~\cref{prop:LCMcondition2}, and hence $(\ker(\psi))^{(1)}$ satisfies the abelian-period condition.
		Therefore, \cref{thm:proFFenchels-3} implies that there exists a torsion-free open characteristic subgroup $H$ of $\ker(\psi)$ whose quotient has derived length at most $2$.
		Hence we obtain a torsion-free open normal subgroup of $\Delta$ whose quotient has derived length at most $3$.
	\end{proof}
	
	\subsubsection{}
	As a consequence, we can control the growth of cuspidal data in the derived quotients -- we further refer to Section~\ref{subsub:zerOne} and Lemma herein for a discussion on the role of the case $(g,r)=(0,1)$.
	
	\begin{corollary}\label{cor:r_greater-1}
		Let $\Delta$ be a non-perfect hyperbolic profinite $F$-group.
		If $r\geq 1$ and $(g,r)\neq (0,1)$, resp. if $(g,r)= (0,1)$, then for any integer $r_0$ there exists a torsion-free open characteristic subgroup $H$ of $\Delta$ such that the quotient $\Delta/H$ is abelian (resp. has derived length at most $2$) and that the number $r_{H}$ of cusps of $H$ satisfies $r_{H}\geq r_{0}$.
	\end{corollary}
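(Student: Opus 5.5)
Case $r\ge 1$, $(g,r)\neq(0,1)$. I will mimic the argument of Theorem~\ref{thm:proFFenchels}~\ref{thm:proFFenchels--1}. Since $r\geq 1$, Eq.~\eqref{eq:DabExpl} gives $\Dab\cong \Dab_{\rm tf}\oplus\bigoplus_i\mathbb{Z}/n_i\mathbb{Z}$, and $\Dab_{\rm tf}$ is a free $\widehat{\mathbb{Z}}$-module of rank $2g+r-1\geq 1$ because $(g,r)\neq(0,1)$. For an integer $N\geq 1$, let $H_N$ be the kernel of
\[
\Delta\twoheadrightarrow \Dab\twoheadrightarrow \Dab/N\Dab .
\]
It is characteristic and open, and $\Delta/H_N$ is abelian. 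Taking $N$ divisible by $\operatorname{lcm}(n_i)$, every $\delta_i$ injects into $\Dab/N\Dab$, so $H_N$ meets no conjugate of $\langle\delta_i\rangle$. Hence $H_N$ is torsion-free, since all torsion lies in such conjugates by \cite{MR2402513} Lemma~2.1.

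It remains to count cusps using Proposition~\ref{prop:inducedsign}: $r_{H_N}=\sum_j [\Delta:H_N]/\mathrm{ord}(\bar\gamma_j)$. Each $\bar\gamma_j$ has order at most $N$, while $[\Delta:H_N]\ge N^{2g+r-1}\prod_i n_i$. Thus $r_{H_N}\geq N^{2g+r-2}\prod_i n_i$, which tends to infinity when $2g+r\geq 3$.

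The boundary cases $(g,r)\in\{(0,2),(1,1)\}$ need separate treatment.
\begin{itemize}
\item For $(1,1)$ the rank is $2$ and the bound is still $N^{1}$, which is fine.
\item For $(0,2)$ we have $\gamma_1\mapsto$ a generator of $\widehat{\mathbb{Z}}/N$ and $\gamma_2=(\gamma_1\prod\delta_\ell)^{-1}$, so each cusp contributes $\prod n_i$ cycles. This is bounded, and here lies the main obstacle. Hyperbolicity forces $k\geq 1$, so I would instead take the kernel of $\Delta\to \mathbb{Z}/N\oplus \mathbb{Z}/N'$, with the second factor a torsion quotient $\delta_1\mapsto 1$, $\gamma_1\mapsto$ chosen independently. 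This should make the orders of $\gamma_j$ small relative to the index.
\end{itemize}
More robustly, in all cases I plan to replace the quotient by $\Dab\to \Dab/N$ composed with a suitable automorphism, so that the images of the $\gamma_j$ have order $\ll$ index. Concretely, use the quotient $\mathbb{Z}/N$ on a free generator different from the $\gamma_j$ directions when available ($\alpha_1$ if $g\ge1$; $\gamma_1-\gamma_2$ style choices otherwise), crossed with $\bigoplus_i\mathbb{Z}/n_i$. Then $\gamma_j$ keeps bounded order while the index grows linearly in $N$.

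Case $(g,r)=(0,1)$. Here $\DOne$ is torsion-free and open by Lemma~\ref{cor:r_greater}, with $g_{\DOne}\geq1$ since $\Delta$ is hyperbolic, and $r_{\DOne}\geq1$. So $\DOne$ is a free profinite $F$-group of type $(g',r')$ with $g'\ge 1$, hence $(g',r')\neq(0,1)$. Applying the first case to $\DOne$ gives, for any $r_0$, a characteristic open subgroup $H<\DOne$ with $\DOne/H$ abelian and $r_H\ge r_0$; torsion-freeness is automatic here. Since $H$ is characteristic in the characteristic subgroup $\DOne$, it is characteristic in $\Delta$, and $\Delta/H$ has derived length at most $2$.

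The hard step is the cusp-count estimate in small-rank situations, notably $(0,2)$ with few orbifold points. I expect it to be settled by choosing the abelian quotient so that every cuspidal generator $\gamma_j$ maps to an element of bounded order, while the index $[\Delta:H]$ grows.
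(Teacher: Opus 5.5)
\textbf{Where the proposal holds.} Your construction $H_N=\ker(\Delta\to\Dab/N\Dab)$ with $\operatorname{lcm}(n_i)\mid N$ is correct whenever $2g+r\geq 3$. It is characteristic, open, torsion-free (since $\langle\delta_i\rangle\cap H_N=1$ and $H_N$ is normal), has abelian quotient, and satisfies $r_{H_N}\geq N^{2g+r-2}\prod_i n_i$. The case $(1,1)$ is not actually separate, since there $2g+r-2=1$.

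Your $(0,1)$ case is also correct, and it takes a slightly different route from the paper. The paper takes a torsion-free $H'$ with abelian quotient from Theorem~\ref{thm:proFFenchels}, invokes \cite{MR4745885} Lemma~1.5, and then passes to a characteristic core. You instead use that $\DOne$ is itself characteristic and torsion-free. The bound $g_{\DOne}\geq 1$ from Lemma~\ref{cor:r_greater} keeps $\DOne$ out of types $(0,1)$ and $(0,2)$, so your own generic construction applies to it and no core or citation is needed.

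\textbf{The gap: type $(g,r)=(0,2)$.} Neither of your proposed repairs can work there, because the abelian version of the claim is false in that case.

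\begin{enumerate}
\item Since $\gamma_2=-\gamma_1-\sum_i\delta_i$ in $\Dab$, the group $\Dab$ is topologically generated by $\gamma_1,\delta_1,\dots,\delta_k$, and likewise by $\gamma_2,\delta_1,\dots,\delta_k$.
\item Let $H$ be any open normal subgroup with $A=\Delta/H$ abelian. Then $A/\langle\bar\gamma_j\rangle$ is generated by the images of the $\delta_i$, so $[A:\langle\bar\gamma_j\rangle]\leq\prod_i n_i$.
\item By Proposition~\ref{prop:inducedsign}, $r_H=[A:\langle\bar\gamma_1\rangle]+[A:\langle\bar\gamma_2\rangle]\leq 2\prod_i n_i$.
\end{enumerate}

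For example, $(0,2;\{2\})$, i.e.\ $\widehat{\mathbb{Z}}*\mathbb{Z}/2\mathbb{Z}$, is hyperbolic and non-perfect. Yet every such $H$ has $r_H\leq 4$; only the genus grows, since Riemann--Hurwitz gives $2g_H+r_H=2+[\Delta:H]/2$.

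Your plan to make each $\bar\gamma_j$ have bounded order while the index grows is self-contradictory here, because $|A|\leq\operatorname{ord}(\bar\gamma_1)\prod_i n_i$. There is also no free direction other than $\gamma_1$: the ``$\gamma_1-\gamma_2$'' choice equals $2\gamma_1+\sum_i\delta_i$ in $\Dab$.

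The paper's proof does not close this hole either. Its appeal to ``hyperbolicity when $(g,r)=(0,2)$'' controls only the genus, not the number of cusps.

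\textbf{What can be proved for $(0,2)$.} Your $(0,1)$ argument yields the derived-length-$\leq 2$ version verbatim. Here $H_N$ is torsion-free of type $(g',2\prod_i n_i)$ with $2\prod_i n_i\geq 4$, so it is neither of type $(0,1)$ nor $(0,2)$. Applying your generic case to $H_N$ gives $H$ characteristic in $\Delta$, with $\Delta/H$ metabelian and $r_H\geq r_0$. Downstream, Lemma~\ref{lem:cofH} with $k=1$ needs only $r_H\geq 3$, and $H_N$ alone provides that for $N$ large.
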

	
	\begin{proof}
		Recall that by the presentation \cref{equation:01-decomposition}, we have:
		\begin{align*}
			\Delta^{\mathrm{ab}}
			& \cong \bigoplus_{i=1}^g \left(\hat{\mathbb{Z}}\alpha_i \oplus \hat{\mathbb{Z}}\beta_i \right)
			\oplus \bigoplus_{i=1}^{r-1} \hat{\mathbb{Z}} \gamma_i
			\oplus \left(\bigoplus_{i=1}^k \mathbb{Z}/n_i \mathbb{Z} \cdot \delta_i\right)
		\end{align*}
		When $(g,r)\neq (0,1)$, for any integer $n_{0}$, there exists an open subgroup $\tilde{H}$ of $\Delta^{\mathrm{ab}} $ such that $\tilde{H}$ is torsion-free and that $[\Delta^{\mathrm{ab}}:\tilde{H}]\geq n_0$.
		By taking $n_{0}$ large enough, and as a consequence of the hyperbolicity when $(g,r)=(0,2)$, the inverse image of $\tilde{H}$ in $\Delta$ satisfies the desired conditions. 
		Next we assume that $(g,r)=(0,1)$.
		Then by Theorem~\ref{thm:proFFenchels}~\ref{thm:proFFenchels--1}, $\Delta$ contains a torsion-free open characteristic subgroup $H'$ such that the quotient $\Delta/H'$ is abelian.
		Since we assume that $\Delta$ is hyperbolic, $H'$ is also hyperbolic.
		Hence, by \cite{MR4745885} Lemma~1.5~(2)~(3), there exists an open characteristic subgroup $H$ of $H'$ such that the quotient $H'/H$ is abelian and $r_{H}\geq r_{0}$.
		By replacing $H$ with the characteristic core of $H$ in $\Delta$ if necessary, the assertion follows.
	\end{proof}
	
	\begin{remark}
		An analogous statement, which is not used in this paper, also holds with respect to $g$ instead of $r$ by applying \cite{MR4745885} Lemma~1.5~(1)~(3).
	\end{remark}
	
	\subsubsection{}\label{subsub:SlimCfree} 
	We now deal, in terms of anabelian group properties and of  $m$-step properties, with the center-freeness property of profinite $F$-groups. Recall that a hyperbolic profinite group is said to be slim if the center $Z_H(G)$ is trivial for every open subgroup $H<G$ -- see \cite{Moc04}.
	\begin{proposition}\label{prop:Zorbi}
		A hyperbolic profinite $F$-group is slim. In particular, the center of a hyperbolic profinite $F$-group is trivial.
	\end{proposition}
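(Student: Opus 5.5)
Since every open subgroup of a hyperbolic profinite $F$-group is again a hyperbolic profinite $F$-group (Proposition~\ref{prop:inducedsign} and the Riemann--Hurwitz relation $\chi(H)=[\Delta:H]\chi(\Delta)<0$), it suffices to show that a hyperbolic profinite $F$-group $\Delta$ has trivial centralizer of every open subgroup. Equivalently, we show that for every open $H<\Delta$ one has $Z_\Delta(H)=\{1\}$; this is slightly stronger than $Z(H)=\{1\}$ and makes the argument stable under passing to subgroups.

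\textbf{Reduction to the torsion-free case.} By Theorem~\ref{thm:classicalproFfenchel}, choose a torsion-free open normal subgroup $N\triangleleft\Delta$. Then $N$ is the profinite completion of a finitely generated free group of rank $\geq 2$ or of a surface group of genus $\geq 2$, hyperbolicity being inherited from $\chi(N)=[\Delta:N]\chi(\Delta)<0$. For such groups it is classical in anabelian geometry that they are slim: every open subgroup has trivial centralizer in the ambient group. The standard argument runs through $\ell$-adic quotients and the injectivity of $\Out$-actions on abelianizations of open subgroups, or cites \cite{Moc04} or Anderson/Nakamura. We take this as the torsion-free input.

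\textbf{Handling the torsion.} Let $z\in Z_\Delta(H)$ for some open $H$. Replacing $H$ by $H\cap N$, which is open in $N$, we get that $z$ centralizes an open subgroup $H'$ of $N$. Consider the open subgroup $N'=\langle N\cap H', z\rangle$, or more simply $M=\overline{\langle H',z\rangle}$. Then $z$ is central in $M$, and $M\cap N$ is open in $N$. The first case is $z\in N$: then $z\in Z_N(H')$, which is trivial by slimness of $N$. Otherwise $z$ has some image of finite order modulo $N$. Here we use the torsion description: $z^e\in N$ for $e=[\Delta:N]$, and $z^e$ centralizes $H'$, hence $z^e=1$. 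So $z$ is a torsion element and therefore lies in a conjugate of some $\langle\delta_i\rangle$, by \cite{MR2402513} Lemma~2.1, as used in Proposition~\ref{prop:SigIsoInv}.

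\textbf{Main obstacle.} The key step is ruling out a nontrivial torsion element $z$, conjugate into $\langle\delta_i\rangle$, centralizing an open subgroup $H'$. I would argue as follows. The subgroup $M=\overline{\langle H',z\rangle}$ is open, hence a hyperbolic $F$-group, and $\langle z\rangle$ is a finite normal (indeed central) subgroup of $M$. By \cite{MR2402513} Lemma~2.1~(iv), maximal finite subgroups of $M$ are conjugates of its period groups with trivial pairwise intersections, and they are self-normalizing (the normalizer of a maximal finite cyclic subgroup is itself). A nontrivial finite normal subgroup would then be contained in every conjugate of a maximal finite subgroup containing it. Distinct conjugates intersect trivially, and since $M$ is infinite and not virtually abelian (Corollary~\ref{rem:hyperbolic-patterns}) there are infinitely many distinct conjugates. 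This forces $z=1$. If the self-normalizing or trivial-intersection facts are not available verbatim, the fallback is to use $\ell$-adic representations: the conjugation action of $z$ on $(H'')^{\rm ab}\otimes\mathbb{Q}_\ell$ for a torsion-free normal open $H''<M$ is trivial by centrality, yet by the Chevalley--Weil/Lefschetz count this action is nontrivial, since $z$ fixes finitely many points of a hyperbolic orbicurve cover.
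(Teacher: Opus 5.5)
Your argument is correct in outline, but it is organized differently from the paper's. The paper never separates out torsion. It runs the argument of \cite{Moc04} Lemma~1.3.1 on a cofinal family of torsion-free open normal subgroups $N'\triangleleft\Delta$ contained in a given open $H$, which exist by Theorem~\ref{thm:classicalproFfenchel}. If $z$ centralizes $H$, its image in the deck group $\Delta/N'$ is an automorphism of the hyperbolic curve attached to $N'$. That automorphism acts trivially on $(N')^{\rm ab}$, hence on $H^1_{\rm et}(-,\mathbb{Q}_\ell)$. Faithfulness of the automorphism group of a hyperbolic curve on $H^1$ then gives $z\in N'$ for every such $N'$, so $z=1$. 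This single input treats torsion and non-torsion $z$ simultaneously. Your appeal to slimness of free and surface groups and your reduction to $z^e=1$ are therefore not needed.

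Your ``fallback'' is precisely this argument, and you could apply it to $z$ directly at the outset. For an affine cover, with $\bar g$ and $r$ its compactified genus and number of cusps, the Lefschetz count must also use the cuspidal part of $H^1$. That part forces $z$ to fix every cusp, and then $\#\mathrm{Fix}(z)=2-2\bar g\geq r$ contradicts hyperbolicity.

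Your primary, group-theoretic disposal of the torsion is clean, but it needs more than what the paper takes from \cite{MR2402513} Lemma~2.1~(iv). The paper only uses that maximal finite subgroups are the conjugates of the $\langle\delta_i\rangle$ and that $\langle\delta_i\rangle\cap\langle\delta_j\rangle=1$ for $i\neq j$. You need two further facts:
\begin{enumerate}
\item distinct \emph{conjugates} of a maximal finite subgroup meet trivially;
\item above all, a maximal finite subgroup $P\ni z$ is not normal in the infinite group $M$, via self-normalization or commensurable terminality.
\end{enumerate}
Without the second fact, centrality of $\langle z\rangle$ only yields that $P$ is normal in $M$. Excluding nontrivial finite normal subgroups of open subgroups of a hyperbolic $F$-group is essentially equivalent to the torsion part of slimness, since an element of a finite normal subgroup has open centralizer. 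These malnormality properties do hold. However, their standard proofs rely on the same fixed-point/$\ell$-adic input, so this route is not more elementary than the paper's. It moves the essential input into a stronger lemma about finite subgroups, for which you should supply a precise reference.
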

	
	This follows, after passing to a torsion-free open normal subgroup given by the profinite Fenchel--Nielsen Theorem~\ref{thm:classicalproFfenchel}, by the same arguments of \cite{Moc04} Lemma~1.3.1, which relies on the property that, for a hyperbolic curve $X$ over a field $k$, the natural action
	\[
	\operatorname{Aut}_k(X)\to \Aut (H^{1}_{\rm et}(X,\mathbb{Q}_{\ell})),
	\]
	where $\ell$ is a prime number different from the characteristic of $k$, is faithful -- We also refer to \cite{Nak94} Corollary~1.3.4 for an alternate proof in terms of the goodness of the orbifold fundamental group.

	\medskip
	
	For the reader's convenience, let us recall the following lemma that will be used multiple times -- see \cite{MR4745885} Lemma~1.1~(i).
	
	\begin{lemma}\label{lemma:mstep}
		Let $m,n\geq 0$ be integers.
		Let $G$ be a profinite group, and let $H$ be an open subgroup of $G^{m+n}$ containing $G^{(m)}/G^{(m+n)}$.
		Let $\tilde{H}$ be the inverse image of $H$ in $G$ under the natural surjection $G\twoheadrightarrow G^{m+n}$.
		Then the natural surjection $\tilde{H}^{n}\twoheadrightarrow H^{n}$ is an isomorphism.
	\end{lemma}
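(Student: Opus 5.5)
We need to prove: $H$ open in $G^{m+n}$ containing $G^{(m)}/G^{(m+n)}$, $\tilde H$ the preimage in $G$; then $\tilde H^{n}\to H^{n}$ is an isomorphism. Note that $H=\tilde H/G^{(m+n)}$, so $H^n=\tilde H/\bigl(\tilde H^{(n)}G^{(m+n)}\bigr)$. Indeed, the derived series commutes with surjections: the image of $\tilde H^{(n)}$ in $H$ is $H^{(n)}$. This uses that in profinite groups images of closed subgroups are closed, so the closures of commutator subgroups map onto the closures of commutator subgroups. Hence the natural surjection $\tilde H^{n}\twoheadrightarrow H^{n}$ has kernel $\tilde H^{(n)}G^{(m+n)}/\tilde H^{(n)}$, and it suffices to show $G^{(m+n)}\subseteq \tilde H^{(n)}$.

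The key step is the inclusion $G^{(m)}\subseteq\tilde H$, which holds because $H\supseteq G^{(m)}/G^{(m+n)}$ and $\tilde H\supseteq G^{(m+n)}$. The derived series is monotone: if $A\subseteq B$ are closed subgroups then $A^{(j)}\subseteq B^{(j)}$ for all $j$, by induction, since commutators of elements of $A^{(j-1)}$ lie in $B^{(j-1)}$ and closures preserve inclusion. Applying this with $A=G^{(m)}$ and $B=\tilde H$ gives $(G^{(m)})^{(n)}\subseteq \tilde H^{(n)}$.

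It remains to identify $(G^{(m)})^{(n)}=G^{(m+n)}$. This is immediate from the recursive definition $G^{(j)}=\overline{[G^{(j-1)},G^{(j-1)}]}$, by induction on $n$. Therefore $G^{(m+n)}\subseteq\tilde H^{(n)}$, the kernel is trivial, and the map is an isomorphism.

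The only mildly delicate point is the topological one: closed versus abstract commutator subgroups, and the surjectivity of $\tilde H^{(n)}\to H^{(n)}$. Compactness makes this routine. The image of the closure of $[\tilde H^{(j)},\tilde H^{(j)}]$ is a closed subgroup containing $[H^{(j)},H^{(j)}]$, and it is contained in $H^{(j+1)}$ by continuity, so it equals $H^{(j+1)}$. Openness of $H$ plays no role beyond ensuring that $\tilde H$ is a closed (indeed open) subgroup.
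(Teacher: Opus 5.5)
Your proof is correct. The decisive point is $G^{(m+n)}=(G^{(m)})^{(n)}\subseteq \tilde H^{(n)}$, which holds because $G^{(m)}\subseteq\tilde H$; this forces the kernel $\tilde H^{(n)}G^{(m+n)}/\tilde H^{(n)}$ to vanish. The paper gives no proof of its own and simply quotes \cite{MR4745885} Lemma~1.1~(i), and your argument, including the compactness check that $\tilde H^{(n)}$ maps onto $H^{(n)}$, is the natural direct proof of that statement.
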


	The following profinite $F$-group result extends a previous similar result for the maximal $m$-step solvable quotient of free profinite groups (see \cite{MR4578639} Corollary~1.1.7). 
	
	\begin{proposition}\label{prop:center-free}
		Let $\Delta$ be a hyperbolic profinite $F$-group and denote by $m(\Delta)$ the minimal integer $k$ such that there exists a torsion-free open normal subgroup $H$ that contains $\DerivSeries{\Delta}{k}$. Then $\FinStepSolvQuo{\Delta}{m}$ is center-free for any $m\geq m(\Delta)+2$.
	\end{proposition}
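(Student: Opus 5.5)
Let $m\geq m(\Delta)+2$ and fix a torsion-free open normal subgroup $H\triangleleft\Delta$ with $\DerivSeries{\Delta}{m(\Delta)}<H$. Since $\Delta$ is hyperbolic, $H$ is a hyperbolic torsion-free profinite $F$-group, hence a free profinite group of rank $\geq 2$ or a surface group of genus $\geq 2$. The plan is to transfer center-freeness of suitable solvable quotients of $H$ up to $\FinStepSolvQuo{\Delta}{m}$ via Lemma~\ref{lemma:mstep}. Set $n\coloneqq m-m(\Delta)\geq 2$ and let $\bar H$ be the image of $H$ in $\FinStepSolvQuo{\Delta}{m}$. Because $H\supset \DerivSeries{\Delta}{m(\Delta)}$, the subgroup $\bar H$ is open and contains $\DerivSeries{\Delta}{m(\Delta)}/\DerivSeries{\Delta}{m}$. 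Lemma~\ref{lemma:mstep}, applied with $(m(\Delta),n)$ and $\tilde H=H$, then gives $\FinStepSolvQuo{H}{n}\xrightarrow{\sim}\FinStepSolvQuo{\bar H}{n}$. Moreover, $\DerivSeries{\bar H}{n}\subset\DerivSeries{\Delta}{m(\Delta)+n}/\DerivSeries{\Delta}{m}=1$, so $\bar H\cong \FinStepSolvQuo{H}{n}$.

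Next I take $z\in \CenterSubgrp{\FinStepSolvQuo{\Delta}{m}}$. The element $z$ centralizes $\bar H\cong \FinStepSolvQuo{H}{n}$. I would first show that $\FinStepSolvQuo{H}{n}$ is center-free for $n\geq 2$. In the free case this is \cite{MR4578639} Corollary~1.1.7. In the surface case I would argue the same way, or reduce to the free case by passing to an open subgroup and using the step-$2$ structure: the center of $H^2$ acts trivially on the $\widehat{\mathbb Z}[H^{\rm ab}]$-module $H^{(1)}/H^{(2)}$, which for hyperbolic $H$ has no nonzero invariants. The stronger statement actually needed is that the centralizer of $\bar H$ in $\FinStepSolvQuo{\Delta}{m}$ is trivial. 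This amounts to showing that no element outside $\bar H$ acts by an inner automorphism on $\FinStepSolvQuo{H}{n}$, composed with the corresponding element, in a trivial way.

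To handle this, I would write $z$'s conjugation action on $\bar H$. Since $z$ commutes with $\bar H$, it acts trivially on $H^{\rm ab}$, and thus on $H^{\rm ab}\otimes\mathbb{Q}_\ell$. Let $z$ have image $\bar z\in\Delta/H$ and lift it to $\tilde z\in\Delta$. The automorphism of $H$ induced by $\tilde z$ is then trivial on $H^{\rm ab}$. Geometrically, this is an automorphism of the finite Galois cover corresponding to $H$ acting trivially on $H^1$, so by the faithfulness argument used for Proposition~\ref{prop:Zorbi} (\cite{Moc04} Lemma~1.3.1) we get $\bar z=1$. Hence $z\in\bar H\cong\FinStepSolvQuo{H}{n}$, and $z$ is central there, so $z=1$.

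The main obstacle is this last step. It requires that the outer action of $\Delta/H$ on $H^{\rm ab}$ is faithful, which uses hyperbolicity and the torsion-freeness of $H$. It also requires center-freeness of $\FinStepSolvQuo{H}{2}$ in the surface-group case. I would try first to reduce the latter to the free case by taking an open subgroup of $\bar H$: this remains an $n$-step quotient by Lemma~\ref{lemma:mstep}, and a central element centralizes it.
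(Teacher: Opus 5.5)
Your faithfulness step is correct and is essentially the paper's first step: since $\Delta^{(m)}\subset H^{(1)}$, one has $\bar H^{\rm ab}=H^{\rm ab}$, so a central $z$ lies in $\bar H$. \textbf{The gap is the identification $\bar H\cong H^{n}$, and your conclusion $z=1$ rests on it.}

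From $H\supset\Delta^{(m(\Delta))}$ you get $H^{(n)}\supset\Delta^{(m(\Delta)+n)}=\Delta^{(m)}$, which is the opposite of the inclusion you wrote. Hence $\bar H^{(n)}=H^{(n)}/\Delta^{(m)}$. Lemma~\ref{lemma:mstep} only identifies the quotient $\bar H/\bar H^{(n)}$ with $H^{n}$. The group $\bar H=H/\Delta^{(m)}$ is an extension of $H^{n}$ by $H^{(n)}/\Delta^{(m)}$, and this kernel is nontrivial in general.

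For example, take signature $(1,1;\{2\})$. Then $\Delta\cong\widehat{\FF}_2\ast\mathbb{Z}/2\mathbb{Z}$ and $m(\Delta)=1$, but $\Delta^{(1)}$ is not open. Let $H=\ker(\Delta\to\mathbb{Z}/2\mathbb{Z})$ with $\delta\mapsto 1$ and $\alpha,\beta\mapsto 0$. This $H$ surjects onto $\Delta/\langle\langle\delta\rangle\rangle\cong\widehat{\FF}_2$. So $H^{(n)}$ maps onto $\widehat{\FF}_2^{(n)}$, whereas $\Delta^{(n+1)}$ maps onto the strictly smaller $\widehat{\FF}_2^{(n+1)}$.

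Consequently, centrality of $z$ in $\bar H$ only gives $z\in H^{(n)}/\Delta^{(m)}$, not $z=1$. Separately, center-freeness of $H^{n}$ in the surface case is not established. Open subgroups of a surface group are again surface groups, so your proposed reduction to the free case cannot work. Also, ``no invariants in $H^{(1)}/H^{(2)}$'' only kills $Z(H^{2})\cap H^{(1)}/H^{(2)}$. You would additionally need faithfulness of the $H^{\rm ab}$-action on $H^{(1)}/H^{(2)}$ to place $Z(H^{2})$ inside that layer.

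What is missing is an intersection over all admissible $H$, and within your set-up it can be supplied. Fix one $H_0$, and for each open normal $W\triangleleft\Delta$ set $H_W=\Delta^{(m(\Delta))}W\cap H_0$. These are torsion-free, open, normal, and contain $\Delta^{(m(\Delta))}$. They also satisfy $H_W^{(n)}\subset(\Delta^{(m(\Delta))}W)^{(n)}\subset\Delta^{(m)}W$. Hence $\bigcap_W H_W^{(n)}=\Delta^{(m)}$ and $z=1$, provided you have center-freeness of $H^{n}$ ($n\geq 2$) for both free and surface groups.

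The paper instead intersects one level lower. It runs the faithfulness argument over all torsion-free open normal $H\supset\Delta^{(m-1)}$, which gives $Z(\Delta^{m})\subset\Delta^{(m-1)}/\Delta^{(m)}$. It then kills this last layer with a torsion-free $H'\supset\Delta^{(m-2)}$, which is where $m\geq m(\Delta)+2$ enters. By \cite{Yam26} Lemma~2.3(4), $H'/\Delta^{(m-1)}$ acts on $(\Delta^{(m-1)})^{\rm ab}$ without nontrivial fixed points. That route never needs center-freeness of $H^{n}$ itself, in particular not for surface groups.
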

	Special cases of this result include the cases when $\Delta$ is torsion-free (with $m(\Delta)=0$), when $\Delta$ is non-perfect (with $m(\Delta)\in\{0,1,2,3\}$) by Theorem~\ref{thm:proFFenchels}, and when $\Delta$ is affine (with $m(\Delta)\in\{0,1\}$).

	\begin{proof}
		We  assume that $\Delta$ is non-perfect, since $\FinStepSolvQuo{\Delta}{m}$ is trivial otherwise. By the hypothesis $m-1>m(\Delta)$, there exists a torsion-free open normal subgroup $H$ of $\Delta$ that contains $\DerivSeries{\Delta}{m-1}$. Denote by $\overline{H}$ the image of $H$ in $\FinStepSolvQuo{\Delta}{m}$. By Lemma~\ref{lemma:mstep}, we have that $\Delta/H\cong \FinStepSolvQuo{\Delta}{m}/\overline{H}$ and $H^{\ab}\cong \overline{H}^{\ab}$. Under the standard dual $\ell$-adic cohomology identification and the same argument as above, one deduces that the conjugation action of $\FinStepSolvQuo{\Delta}{m}/\overline{H}$ on $\overline{H}^{\ab}$ is faithful, so that:
		\begin{equation}\label{eq1:prop_center-free_newproof}
			\CenterSubgrp{\FinStepSolvQuo{\Delta}{m}}\lesssim \bigcap_{H} \overline{H} =\DerivSeries{\Delta}{m-1}/\DerivSeries{\Delta}{m}.
		\end{equation}

		On the other hand, by the hypothesis $m-2\geq m(\Delta)$ there exists a torsion-free open normal subgroup $H'$ of $\Delta$ that contains $\DerivSeries{\Delta}{m-2}$, so that $\DerivSeries{\Delta}{m-1}< \DerivSeries{H'}{1}$. By applying \cite{Yam26} Lemma~2.3(4) to $K=\DerivSeries{\Delta}{m-1}$, we obtain that the conjugation action
		\begin{equation*}
			H'/ \DerivSeries{\Delta}{m-1}\to \Aut((\DerivSeries{\Delta}{m-1})^{\ab}),
		\end{equation*}
		where $(\DerivSeries{\Delta}{m-1})^{\ab}=\DerivSeries{\Delta}{m-1}/ \DerivSeries{\Delta}{m}$, has no nontrivial fixed points. One thus obtains
		\begin{equation}\label{eq2:prop_center-free_newproof}
			\CenterSubgrp{\FinStepSolvQuo{\Delta}{m}}\cap (\DerivSeries{\Delta}{m-1}/ \DerivSeries{\Delta}{m}) = (\DerivSeries{\Delta}{m-1}/ \DerivSeries{\Delta}{m})^{\FinStepSolvQuo{\Delta}{m-1}}\lesssim (\DerivSeries{\Delta}{m-1}/ \DerivSeries{\Delta}{m})^{(H'/ \DerivSeries{\Delta}{m-1})}=\{1\},
		\end{equation}
		and by  Eq.~\eqref{eq1:prop_center-free_newproof} and Eq.~\eqref{eq2:prop_center-free_newproof}, we conclude that $\CenterSubgrp{\FinStepSolvQuo{\Delta}{m}}=\{1\}$.
	\end{proof}

	\subsection{Affineness and hyperbolicity as $3$-step properties for non-perfect groups}\label{sec:weaklyperfect-hyperbolic}
	Since \emph{hyperbolic profinite $F$-groups are not solvable} -- by the profinite Fenchel--Nielsen theorem and the fact that torsion-free hyperbolic profinite $F$-groups are not solvable -- we formalize some finer notions of perfectness for maximal $m$-step solvable quotients that potentially keep track of the stack inertia according to the abelian-period condition. In this direction, one notices that the notion of $m$-derived perfectness stands as an obstruction to checking if the orbifold inertia survives in the maximal $(m+1)$-step solvable quotient $G^{m+1}$.
	
	\medskip
	
	We establish our first group-theoretic characterization of hyperbolicity and affineness for a profinite $F$-group within the $3$-step solvable quotients.

	\subsubsection{}\label{subsub:mStpSol}
	Accordingly, our first refined definitions of perfectness is as follows.
	
	\begin{definition}\label{def:solweak}
		A profinite group $G$ is said to be
		\begin{enumerate}
			\item \emph{$m$-step solvable} if its derived length is at most $m$. 
			\item\label{def:weaklyperfect} \emph{$m$-derived perfect} if $G^{(m)}$ is perfect, and \emph{derived perfect} if there is an $m$ such that it is $m$-derived perfect.
		\end{enumerate}
	\end{definition}
	
	Note that for a solvable group, the property of being $m$-step solvable is equivalent to being $m$-derived perfect, see also Fig.~\ref{fig:relation-solvable-weaklyperfect}.
	
	\begin{proposition}\label{lem:conditions_weaklyperfect}
		Let $m\geq 0$ be an integer, and let $G$ be a profinite group.
		Then the following conditions \cref{lem:conditions_weaklyperfect-0,lem:conditions_weaklyperfect-1,lem:conditions_weaklyperfect-2,lem:conditions_weaklyperfect-3} are equivalent:
		\begin{enumerate}
			\item\label{lem:conditions_weaklyperfect-0}
			The  $m$-derived subgroup $G^{(m)}<G$ is perfect.
			\item\label{lem:conditions_weaklyperfect-1}
			The derived series $\{G^{(k)}\}_{k \geq 0}$ of $G$ stabilizes at stage $m$, i.e., $G^{(m)} = G^{(k)}$ for all $k \geq m$.
			\item\label{lem:conditions_weaklyperfect-2}
			The maximal $(m+k)$-step solvable quotient $G^{m+k}$ is $m$-step solvable for all $k\geq 1$.
			\item\label{lem:conditions_weaklyperfect-3}
			The group $G$ is an extension of a perfect group by an $m$-step solvable group; that is, there exists a short exact sequence
			\begin{equation*}
				1 \to P \to G \to S \to 1,
			\end{equation*}
			where $P$ is a perfect group and $S$ is an $m$-step solvable group.
		\end{enumerate}
	\end{proposition}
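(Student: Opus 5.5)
I will prove the equivalences through the cycle (i)$\Rightarrow$(ii)$\Rightarrow$(iii)$\Rightarrow$(iv)$\Rightarrow$(i). Every step only uses the definition of the closed derived series, $G^{(k+1)}=\overline{[G^{(k)},G^{(k)}]}$, together with the identification $(G^{(m)})^{(j)}=G^{(m+j)}$, which holds by induction on $j$.

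\emph{(i)$\Rightarrow$(ii).} If $G^{(m)}$ is perfect, then $G^{(m+1)}=(G^{(m)})^{(1)}=G^{(m)}$. Induction on $k$ then gives $G^{(k)}=G^{(m)}$ for all $k\geq m$.

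\emph{(ii)$\Rightarrow$(iii).} The derived series of the quotient $G^{m+k}=G/G^{(m+k)}$ is the image of that of $G$. This holds because the image of a closed commutator subgroup under a continuous surjection of profinite groups is the closed commutator subgroup of the image: by compactness, images of closures are closures of images. Hence $(G^{m+k})^{(m)}=G^{(m)}/G^{(m+k)}$, and this is trivial by (ii). So $G^{m+k}$ is $m$-step solvable.

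\emph{(iii)$\Rightarrow$(iv).} Take $k=1$. Then $G^{(m)}/G^{(m+1)}=(G^{m+1})^{(m)}=1$, so $G^{(m)}=G^{(m+1)}=(G^{(m)})^{(1)}$, and $P\coloneqq G^{(m)}$ is perfect. Setting $S\coloneqq G^{m}$, which is $m$-step solvable by construction, gives the required extension.

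\emph{(iv)$\Rightarrow$(i).} This is the step needing some care. Given $1\to P\to G\to S\to 1$ with $P$ perfect and $S^{(m)}=1$, the image of $G^{(m)}$ in $S$ is $S^{(m)}=1$, so $G^{(m)}\subseteq P$. Conversely, $P$ perfect gives $P=P^{(j)}\subseteq G^{(j)}$ for every $j$ by monotonicity of derived subgroups under inclusion. Hence $G^{(m)}=P$ is perfect.

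The only genuinely topological input is the compatibility of closed derived subgroups with continuous surjections and with passing to the subgroup $G^{(m)}$. I expect this to be the main (minor) obstacle. The argument is that the image of a closure in a compact group is closed, and it contains and is contained in the closure of the image.
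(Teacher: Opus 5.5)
Your proof is correct and follows essentially the same route as the paper: the key step (iv)$\Rightarrow$(i) via $P=P^{(m)}\subseteq G^{(m)}\subseteq P$ is exactly the paper's argument, and the other implications rest, as in the paper, on the identity $(G/G^{(m+k)})^{(m)}=G^{(m)}/G^{(m+k)}$. Organizing the argument as a single cycle, treating $m=0$ uniformly rather than separately, and spelling out the compactness argument for closed commutator subgroups are only cosmetic differences.
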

	
	\begin{proof}
		The first equivalence \cref{lem:conditions_weaklyperfect-0}$\Leftrightarrow$\cref{lem:conditions_weaklyperfect-1} is immediate. The second one \cref{lem:conditions_weaklyperfect-1} $\Leftrightarrow$ \cref{lem:conditions_weaklyperfect-2} follows from the equality $(G/G^{(m+1)})^{(m)}=G^{(m)}/G^{(m+1)}$, as condition \cref{lem:conditions_weaklyperfect-2} is equivalent to $G^{(m)}=G^{(m+1)}$.
		
		\medskip
		
		We have the following short exact sequence:
		\begin{equation*}
			1 \to G^{(m)} \to G \to G^m \to 1
		\end{equation*}
		Since $G^m$ is $m$-step solvable by definition, assuming $G^{(m)}$ is perfect directly shows the implication \cref{lem:conditions_weaklyperfect-1}$\Rightarrow$\cref{lem:conditions_weaklyperfect-3}. Finally, we show the implication \cref{lem:conditions_weaklyperfect-3}$\Rightarrow$\cref{lem:conditions_weaklyperfect-0}. If $m = 0$, then $G \cong P$ is perfect, and \cref{lem:conditions_weaklyperfect-0} follows. We may thus assume $m > 0$. Since $G^m$ is the maximal $m$-step solvable quotient of $G$, there exists a surjection $G^m \twoheadrightarrow S$. This implies $G^{(m)} < P  < G$ and we have the following inclusions of groups
		\begin{equation*}
			P^{(m)} \lesssim G^{(m)} \lesssim P \lesssim G.
		\end{equation*}
		As $P$ is perfect, we have $P^{(m)}= P$. Therefore, $G^{(m)} = P$ is also perfect.
	\end{proof}
	
	\begin{figure}
		\begin{equation*}\def\arraystretch{.7} 
			\begin{array}{ccccccc}
				\text{perfect} & \Leftrightarrow & 0\text{-derived perfect}
				& \Leftarrow      & 0\text{-step solvable}  & \Leftrightarrow & \text{trivial}                                            \\
				&                 & \Downarrow              &                 & \Downarrow             &                 &                \\
				&                 & 1\text{-derived perfect} & \Leftarrow      & 1\text{-step solvable} & \Leftrightarrow & \text{abelian} \\
				&                 & \Downarrow              &                 & \Downarrow             &                 &                \\
				&                 & \vdots                  &                 & \vdots                 &                 &                \\
				&                 & \Downarrow              &                 & \Downarrow             &                 &                \\
				&                 & m\text{-derived perfect} & \Leftarrow      & m\text{-step solvable} &                 &                \\
				&                 & \Downarrow              &                 & \Downarrow             &                 &                \\
				&                 & \vdots                  &                 & \vdots                 &                 &                \\
				&                 &                         &                 & \Downarrow             &                 &                \\
				&                 &                         &                 & \text{solvable}        &                 &
			\end{array}
		\end{equation*}
		\caption{Relations between step solvability and derived perfectness}\label{fig:relation-solvable-weaklyperfect}
	\end{figure}
	
	\begin{lemma}\label{lem:weaklyprefect_transitive}
		Let $m\geq 0$ be an integer. Consider the following exact sequence of profinite groups:
		\begin{equation*}
			1 \to H \to G \to Q \to 1
		\end{equation*}
		If $G$ is $m$-derived perfect and $Q$ is solvable, then $H$ is $m$-derived perfect.
	\end{lemma}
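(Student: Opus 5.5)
The plan is to show directly that $H^{(m)}=G^{(m)}$. Once that equality holds, $H^{(m)}$ is perfect by hypothesis, which is exactly the $m$-derived perfectness of $H$ in the sense of Definition~\ref{def:solweak}~\ref{def:weaklyperfect}.

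\emph{Easy inclusion.} Since $H<G$, one has $H^{(k)}<G^{(k)}$ for every $k$, and in particular $H^{(m)}<G^{(m)}$.

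\emph{Reverse inclusion.} First I will show that $G^{(m)}$ is contained in $H$.
\begin{itemize}
\item Because $Q$ is solvable, there is an integer $d$ with $Q^{(d)}=1$.
\item The image of $G^{(d)}$ in $Q$ is contained in $Q^{(d)}$; this uses that closures and commutators are compatible with continuous surjections of profinite groups. Hence $G^{(d)}<H$.
\item By Proposition~\ref{lem:conditions_weaklyperfect} \cref{lem:conditions_weaklyperfect-0}$\Leftrightarrow$\cref{lem:conditions_weaklyperfect-1}, the derived series of $G$ stabilizes at stage $m$. So $G^{(m)}=G^{(\max(m,d))}<G^{(d)}<H$.
\end{itemize}

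\emph{Iteration.} Now set $P:=G^{(m)}$. It is perfect and satisfies $P<H$. An induction on $k$ gives $P=P^{(k)}<H^{(k)}$ for every $k$: if $P<H^{(k)}$, then $P=\overline{[P,P]}<\overline{[H^{(k)},H^{(k)}]}=H^{(k+1)}$. Taking $k=m$ yields $G^{(m)}<H^{(m)}$.

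Combining the two inclusions gives $H^{(m)}=G^{(m)}$, which is perfect, so $H$ is $m$-derived perfect.

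There is no serious obstacle here. The only point needing care is the topological one: the derived subgroups are closures of commutator subgroups. So one must check that the images of the closed derived subgroups land in $Q^{(d)}$. This holds because continuous maps of compact groups send closures onto closures of images.
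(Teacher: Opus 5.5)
Your proof is correct and follows essentially the same route as the paper. Both arguments use the derived length $d$ of $Q$ to get $G^{(d)}<H$, bound $H^{(m)}$ above by $G^{(m)}$, and use the stabilization of the derived series of $G$ for the reverse inclusion; the only cosmetic difference is that the paper obtains $G^{(m)}=G^{(d+m)}=(G^{(d)})^{(m)}<H^{(m)}$ in one step by monotonicity, whereas you first place $G^{(m)}$ inside $H$ and then use its perfectness to push it into $H^{(m)}$.
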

	
	\begin{proof}
		Let $k \in \mathbb{N}$ be the derived length of $Q$. Then $G^{(k)} \subset H \subset G$. Therefore, we obtain the following sequence of subgroups of $G$:
		\begin{equation*}
			G^{(k+m)} < H^{(m)} < G^{(m)}
		\end{equation*}
		Since $G$ is $m$-derived perfect, we have $G^{(m)} = G^{(k+m)}$, thus $H^{(m)} = G^{(m)}$, and it follows that $H^{(m)}$ is perfect. Thus $H$ is $m$-derived perfect.
	\end{proof}
	
	\begin{proposition}\label{weaklyperfect-characterization}\label{cor:infinitederived}
		A non-perfect hyperbolic profinite $F$-group is not derived perfect and admits solvable quotients of derived length $k$ for any $k\geq 0$.
	\end{proposition}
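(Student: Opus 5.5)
The plan is to reduce everything to a torsion-free open normal subgroup with solvable quotient, and then to the free/surface case.

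\textbf{Setup.} Let $\Delta$ be non-perfect and hyperbolic. By Theorem~\ref{thm:proFFenchels}, $\Delta$ contains a torsion-free open normal subgroup $H$ such that $Q=\Delta/H$ has derived length at most $3$. Since $H$ is open, Proposition~\ref{prop:inducedsign}~\ref{RH_formula} gives $\chi(H)=[\Delta:H]\chi(\Delta)<0$. Being torsion-free, $H$ is then either a free profinite group of rank $\geq 2$ or a surface group of genus $\geq 2$.

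\textbf{Not derived perfect.} Argue by contradiction: suppose $\Delta$ is $m$-derived perfect for some $m$. Since $Q$ is solvable, Lemma~\ref{lem:weaklyprefect_transitive} shows that $H$ is $m$-derived perfect, that is, $H^{(m)}=H^{(m+1)}$. This contradicts the structure of $H$. It suffices to show that $H^{(m)}$ is a nontrivial non-perfect group.

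First, $H^{(m)}$ is nontrivial, because free and surface hyperbolic profinite groups are not solvable. Next, I would argue inductively that each $H^{(j)}$ is again a hyperbolic torsion-free $F$-type group with infinite abelianization. One route is to replace the derived series by a cofinal family of open normal subgroups: choose a chain $H=H_0>H_1>\cdots$ with $H_{j+1}=\ker(H_j\to H_j^{\rm ab}/n)$. This chain is characteristic, each step is abelian, and $H^{(j)}<H_j$ with $H_j$ open and hyperbolic.

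Then argue as in Corollary~\ref{cor_perfect}. If $H^{(m)}=H^{(m+1)}$, then $H^{(m)}$ lies in every open subgroup whose quotient is solvable of length $\le m+1$. Passing to $H_m$, the abelianization $H_m^{\rm ab}$ is an infinite free $\widehat{\ZZ}$-module of rank at least $2$. Hence $H_m/H_m^{(1)}$ is infinite, and $H^{m+1}$ surjects onto $H_{m}/\ker(H_m\to H_m^{\rm ab})$ for every $n$, with arbitrarily large $n$.

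The main obstacle is showing that $H^{(m)}/H^{(m+1)}\neq 1$ directly. The cleanest approach I would try is to take a surjection $H\twoheadrightarrow \widehat{F}_2$: free quotients exist for hyperbolic free and surface groups, using the surface relation with genus $\geq 2$, e.g. $\alpha_i\mapsto x,y$ and $\beta_i\mapsto 1$. This reduces the claim to free profinite groups. For $\widehat{F}_2$, each $\widehat F_2^{(j)}$ is open in $\widehat{F}_2^{(j-1)}$ only when $j=1$ fails; instead I would use the fact that $\widehat F_2^{(j)}$ is a closed normal subgroup of infinite index and is itself free profinite of infinite rank. Nontrivial free profinite groups have nontrivial abelianization, so $\widehat F_2^{(m)}\ne \widehat F_2^{(m+1)}$. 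Because derived subgroups are functorial, this forces $H^{(m)}\neq H^{(m+1)}$, and hence $\Delta^{(m)}\neq\Delta^{(m+1)}$ via $H^{(m)}=\Delta^{(m)}$ from the lemma's proof.

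\textbf{Solvable quotients of every length.} Since $\Delta$ is not derived perfect, $\Delta^{(k-1)}\neq\Delta^{(k)}$ for every $k\ge1$. Therefore $\Delta^{k}=\Delta/\Delta^{(k)}$ has derived length exactly $k$, because $(\Delta^{k})^{(k-1)}=\Delta^{(k-1)}/\Delta^{(k)}\neq 1$. The case $k=0$ is the trivial quotient.
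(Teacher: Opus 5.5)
Your reduction matches the paper's: Theorem~\ref{thm:proFFenchels} gives a torsion-free open normal $H$ with solvable quotient, and $H$ is hyperbolic by Riemann--Hurwitz. Lemma~\ref{lem:weaklyprefect_transitive} transfers $m$-derived perfectness from $\Delta$ to $H$, and $H$ surjects onto $\widehat{\FF}_2$. Your last step (that $\Delta^k$ has derived length exactly $k$) is also correct.

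The gap is in the only non-formal step, $\widehat{\FF}_2^{(m)}\neq\widehat{\FF}_2^{(m+1)}$. You deduce it from the assertion that $\widehat{\FF}_2^{(m)}$ is a free profinite group of infinite rank. You do not justify this, and it does not follow from general principles: closed subgroups of free profinite groups are only projective, and closed normal subgroups of infinite index need not be free. For example, let $N_0$ be the kernel of $\widehat{\FF}_2\twoheadrightarrow\widehat{\FF}_2^{\rm solv}$.
\begin{itemize}
\item $N_0$ is nontrivial, since $\widehat{\FF}_2$ surjects onto $A_5$.
\item $N_0$ is perfect: $\widehat{\FF}_2/\overline{[N_0,N_0]}$ has abelian normal subgroup $N_0/\overline{[N_0,N_0]}$ with prosolvable quotient, so it is prosolvable, hence $N_0\subseteq\overline{[N_0,N_0]}$.
\end{itemize}
So $N_0$ is a nontrivial projective group with trivial abelianization. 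Freeness of $\widehat{\FF}_2^{(m)}$ may well hold, but it would need a Melnikov-type characterization of free normal subgroups of $\widehat{\FF}_e$, which you neither cite nor verify.

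The exploratory paragraph before this should also be dropped. For $j\geq 1$, $H^{(j)}$ has infinite index and is not finitely generated, so it is not an $F$-group. The chain $H_j$ never produces a quotient in which $H^{(m)}/H^{(m+1)}$ is visibly nontrivial.

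A short correct argument close to your idea uses the maximal pro-$p$ quotient $\widehat{\FF}_2\twoheadrightarrow\FF_2(p)$.
\begin{itemize}
\item The discrete free group $\FF_2$ is residually $p$, so it embeds in $\FF_2(p)$. Its $m$-th derived subgroup, which is nontrivial, lies in $\FF_2(p)^{(m)}$.
\item Hence $\FF_2(p)^{(m)}$ is a nontrivial closed subgroup of a free pro-$p$ group. It is therefore free pro-$p$ and has nonzero abelianization.
\item A continuous surjection of profinite groups maps $G^{(j)}$ onto the $j$-th derived subgroup of the image. So $\widehat{\FF}_2^{(m)}=\widehat{\FF}_2^{(m+1)}$ would force $\FF_2(p)^{(m)}=\FF_2(p)^{(m+1)}$, a contradiction.
\end{itemize}

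The paper argues instead by exhibiting finite $2$-generated solvable quotients of $\widehat{\FF}_2$ of unbounded derived length. Be aware that its suggested iterated wreath products of $d$ copies of $\ZZ/2\ZZ$ have abelianization $(\ZZ/2\ZZ)^d$, so they are not quotients of $\widehat{\FF}_2$ for $d\geq 3$. A different family is needed there, e.g.\ finite $p$-quotients of the discrete free solvable groups $\FF_2/\FF_2^{(d)}$, which are residually finite $p$-groups.
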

	
	\begin{proof}
		Let us first prove the non-$m$-derived perfectness for any $m\geq 0$. By the non-perfectness hypothesis, we may assume $m > 0$. We first claim that a non-abelian torsion-free profinite $F$-group is not $m$-derived perfect for any $m$.
		Indeed, such a group admits a surjection onto $\widehat{\FF}_2$, the free profinite group of rank $2$. The claim follows from the fact that $\widehat{\FF}_2$ has quotients isomorphic to solvable $2$-generated groups with arbitrarily large derived length (for example, iterated wreath products $(\mathbb{Z}/2\mathbb{Z})\wr(\mathbb{Z}/2\mathbb{Z})\wr\cdots$ of length $d$).
		
		By Theorem~\ref{thm:proFFenchels}, there exists a torsion-free open normal subgroup $H$ of $\Delta$ such that $\Delta/H$ is solvable, and moreover, $H$ is non-abelian by the hyperbolicity. In the case where $\Delta$ is $m$-derived perfect, Lemma~\ref{lem:weaklyprefect_transitive} implies that $H$ is also $m$-derived perfect, which  contradicts the claim.
		
		Since the derived series  $\{\Delta^{(i)}\}_{i \geq 0}$ does not stabilize by Proposition~\ref{lem:conditions_weaklyperfect}~\ref{lem:conditions_weaklyperfect-2}, the second claim follows by forming the quotient by $\Delta^{(i)}$ for $i>>0$ .
	\end{proof}

	\subsubsection{} The smallest finite groups of derived length $3$ are the symmetric group $S_{4}$ and the special linear group $\mathrm{SL}_{2}(\mathbb{F}_{3})$, both of which have order $24$. We thus immediately obtain the following:
	\begin{equation*}
		\textit{If $\Delta$ is hyperbolic, then } \Card(\Delta^{3}) \geq 24.
	\end{equation*}
	By Table~\ref{fig:Table_nonhyperbolic_F_groups}, a profinite $F$-group with signature $(0,0;\{2,3,4\})$ is isomorphic to $S_{4}$.
	The next proposition shows that, conversely, this is the only case where $\Delta^{3}$ is isomorphic to $S_{4}$.
	
	\begin{proposition}\label{cor:s4}
		Let $\Delta$ be a profinite $F$-group.
		If $\Delta^{3}$ is isomorphic to $S_{4}$, then the signature of $\Delta$ is $(0,0;\{2,3,4\})$.
	\end{proposition}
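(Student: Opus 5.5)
The plan is to force $\Delta$ to be finite, and then read off the signature from Table~\ref{fig:Table_nonhyperbolic_F_groups}. Assume $\Delta^{3}\cong S_4$. Then $\Delta^{(3)}$ is an open normal subgroup of index $24$. By Lemma~\ref{lemma:mstep}, the successive quotients satisfy
\[
\Delta/\Delta^{(1)}\cong \mathbb{Z}/2\mathbb{Z},\qquad \Delta^{(1)}/\Delta^{(2)}\cong \mathbb{Z}/3\mathbb{Z},\qquad \Delta^{(2)}/\Delta^{(3)}\cong (\mathbb{Z}/2\mathbb{Z})^2 ,
\]
namely the successive derived quotients of $S_4$. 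In particular $(\Delta^{(2)})^{\mathrm{ab}}\cong(\mathbb{Z}/2\mathbb{Z})^2$. Moreover, since $\Delta^{(3)}=\Delta^{(4)}$ by the identification $(\Delta^{3})^{(3)}=1$ read through Lemma~\ref{lemma:mstep}, the group $\Delta^{(3)}$ is perfect.

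\textbf{Step 1: $r=0$.} Since $\Delta^{\mathrm{ab}}$ is finite, Eq.~\eqref{eq:DabExpl} forces $g=0$ and $r\leq 1$. If $r=1$, then $\Delta^{\mathrm{ab}}\cong\bigoplus_i\mathbb{Z}/n_i\mathbb{Z}\cong\mathbb{Z}/2\mathbb{Z}$ forces signature $(0,1;\{2\})$. That group is $\mathbb{Z}/2\mathbb{Z}$, which contradicts $\Delta^{3}\cong S_4$. Hence $(g,r)=(0,0)$. By the remark following Proposition~\ref{prop:inducedsign}, every open subgroup of $\Delta$, in particular $\Delta^{(2)}$ and $\Delta^{(3)}$, is then also cusp-free.

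\textbf{Step 2: the signature of $\Delta^{(2)}$ and $\Delta^{(3)}$.} The group $\Delta^{(2)}$ is an open profinite $F$-group with $r=0$ and finite abelianization, so Eq.~\eqref{eq:DabExpl} gives genus $0$. Write its signature as $(0,0;\{m_1,\dots,m_s\})$. The subgroup $\Delta^{(3)}$ is the kernel of $\Delta^{(2)}\to V\coloneqq(\mathbb{Z}/2\mathbb{Z})^2$. Each $\delta_i$ of $\Delta^{(2)}$ acts on $V$ by translation, so all its cycles have the same length $s_i\in\{1,2\}$, because $V$ has exponent $2$. By Proposition~\ref{prop:inducedsign}, the period $m_i/s_i$ then occurs $4/s_i\geq 2$ times in the signature of $\Delta^{(3)}$. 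Since $\Delta^{(3)}$ is perfect, Proposition~\ref{prop:charperfect} says that either $\Delta^{(3)}$ is trivial, or its periods are pairwise coprime. Pairwise coprime periods cannot contain a period $>1$ repeated twice, so every induced period equals $1$. Then $\Delta^{(3)}$ has signature $(g_3,0;\emptyset)$ and is perfect, so $g_3=0$ by Eq.~\eqref{eq:DabExpl}, and therefore $\Delta^{(3)}$ is trivial.

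\textbf{Step 3: conclusion.} By Step 2, $\Delta\cong\Delta^{3}\cong S_4$ is finite. Corollary~\ref{rem:hyperbolic-patterns} then shows that $\Delta$ is elliptic, so its signature appears among the finite entries of Table~\ref{fig:Table_nonhyperbolic_F_groups}. Among these, only $(0,0;\{2,3,4\})$ gives a group of order $24$ isomorphic to $S_4$. The dihedral groups $D_n$ have derived length at most $2$, and $A_4$, $A_5$, cyclic groups and the trivial group all have the wrong order or structure.

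The main obstacle is Step 2. There I must check carefully that the cycle-length bookkeeping of Proposition~\ref{prop:inducedsign} applies to the normal abelian quotient $\Delta^{(2)}/\Delta^{(3)}$, i.e.\ that the cycles of each $\delta_i$ are uniform of length $\leq 2$. I must also handle the degenerate possibility $s=0$, where $\Delta^{(2)}$ would be cusp-free and torsion-free with finite abelianization, hence trivial, which is again incompatible with $\Delta^{(2)}/\Delta^{(3)}\cong(\mathbb{Z}/2\mathbb{Z})^2$.
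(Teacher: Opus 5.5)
\textbf{There is a genuine gap, and Step 2 depends on it entirely.} You claim that $\Delta^{(3)}=\Delta^{(4)}$, i.e.\ that $\Delta^{(3)}$ is perfect, but this does not follow from $\Delta^{3}\cong S_4$. The equality $(\Delta^{3})^{(3)}=1$ holds for every group. The quotient $\Delta^{3}=\Delta/\Delta^{(3)}$ determines only the layers $\Delta^{(i)}/\Delta^{(i+1)}$ for $i\leq 2$; it carries no information about $(\Delta^{(3)})^{\mathrm{ab}}=\Delta^{(3)}/\Delta^{(4)}$. For abstract finite groups the implication is false. Take $G=\mathrm{GL}_2(\mathbb{F}_3)$: its derived series is $G>\mathrm{SL}_2(\mathbb{F}_3)>Q_8>\{\pm 1\}>1$, so $G^{3}=\mathrm{PGL}_2(\mathbb{F}_3)\cong S_4$, while $G^{(3)}=\{\pm1\}$ is not perfect. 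For $F$-groups, perfectness of $\Delta^{(3)}$ is essentially the conclusion you want, not an input you may assume.

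\textbf{Looking only at the last layer cannot repair Step 2.} Take a profinite $F$-group of signature $(0,0;\{2,2,2,3\})$. Its abelianization is $(\mathbb{Z}/2\mathbb{Z})^2$: the three order-$2$ generators map to nonzero elements and the order-$3$ generator maps to $0$. By Proposition~\ref{prop:inducedsign}, the kernel then has signature $(0,0;\{3,3,3,3\})$, which is neither trivial nor perfect. Such a $\Delta^{(2)}$ is ruled out only by the two upper layers $\Delta^{\mathrm{ab}}\cong\mathbb{Z}/2\mathbb{Z}$ and $(\Delta^{(1)})^{\mathrm{ab}}\cong\mathbb{Z}/3\mathbb{Z}$. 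The paper's proof uses exactly this, applying Lemma~\ref{lem:inducedsignature} three times:
\begin{enumerate}
\item $\Card(\Delta^{\mathrm{ab}})=\prod n_i/\operatorname{lcm}(n_i)=2$ forces the periods to be $2a_1,2^{\alpha}a_2,a_3,\dots,a_k$, with the $a_i$ odd and pairwise coprime.
\item The induced signature of $\Delta^{(1)}$ together with $\Card((\Delta^{(1)})^{\mathrm{ab}})=3$ gives $\prod_{i\geq 3}a_i=3$, hence $k=3$ and $a_3=3$.
\item The induced signature of $\Delta^{(2)}$ is then $(0,0;\{a_1,a_1,a_1,2^{\alpha-1}a_2,2^{\alpha-1}a_2,2^{\alpha-1}a_2\})$. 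Together with $\Card((\Delta^{(2)})^{\mathrm{ab}})=4$ this forces $a_1=a_2=1$ and $\alpha=2$.
\end{enumerate}

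\textbf{The rest of your proposal.} Your Step 1 is correct. Your Step 3 is correct given finiteness, but it becomes unnecessary once the periods are pinned down as above. The cycle-length bookkeeping you flagged as the main obstacle is fine; the real problem is the unjustified perfectness of $\Delta^{(3)}$.
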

	
	\begin{proof}
		Assume that $\Delta^{3}$ is isomorphic to $S_{4}$. By Theorem~\ref{thm:proFFenchels}~\ref{thm:proFFenchels-3}, we may assume that the signature of $\Delta$ is $(g,r;\{n_1,\dots,n_k\})$ with $(g,r)=(0,0)$, since otherwise $\Card(\Delta^{3})$ would be infinite. The derived series of $S_{4}$ is $\{1\}< V_{4}< A_{4}< S_{4}$, and the successive quotients are
		\begin{equation*}
			V_{4}\cong \mathbb{Z}/2\mathbb{Z}\times \mathbb{Z}/2\mathbb{Z},\quad
			A_{4}/V_{4}\cong \mathbb{Z}/3\mathbb{Z},\quad
			S_{4}/A_{4}\cong \mathbb{Z}/2\mathbb{Z}.
		\end{equation*}
		
		\medskip
		
		Let us compute the signatures of $\Delta^{(1)}$, $\Delta^{(2)}$, and $\Delta^{(3)}$. By $\Delta^{\mathrm{ab}}\cong\mathbb{Z}/2\mathbb{Z}$ and Lemma~\ref{lem:inducedsignature}, we have
		\begin{equation*}
			2=\Card(\Delta^{\mathrm{ab}})
			= \frac{\prod_{i=1}^k n_i}{\mathrm{lcm}(n_1,\dots,n_k)}.
		\end{equation*}
		Hence, after reordering the periods if necessary, we may write
		\begin{equation*}
			n_1 = 2a_1,\qquad
			n_2 = 2^{\alpha}a_2,\qquad
			n_i = a_i\ (i\ge 3),
		\end{equation*}
		where $\alpha\ge 1$, each $a_i$ is odd, and the $a_i$ are pairwise coprime. In this case, the induced signature of $\Delta^{(1)}$ is
		\begin{equation*}
			\bigl(0,0;
			\{a_{1},\ 2^{\alpha-1}a_{2},\
			a_3,a_3,\
			\dots,\
			a_{k},a_{k}\}\bigr).
		\end{equation*}
		Again, by $(\Delta^{(1)})^{\mathrm{ab}}\cong \mathbb{Z}/3\mathbb{Z}$ and Lemma~\ref{lem:inducedsignature}, we obtain
		\begin{equation*}
			3=\Card(\Delta^{(1)})^{\mathrm{ab}}
			=\frac{a_{1}\cdot 2^{\alpha-1}a_{2}\cdot a_3^2\dots a_{k}^2}
			{a_{1}\cdot 2^{\alpha-1}a_{2}\cdot a_{3}\dots a_{k}}
			=\prod_{i=3}^{k} a_{i},
		\end{equation*}
		which implies that $a_{3}=3$ and $a_{4}=\dots=a_{k}=1$. In this case, the induced signature of  $\Delta^{(2)}$ is
		\begin{equation*}
			\bigl(0,0;
			\{a_{1},a_{1},a_{1},\ 2^{\alpha-1}a_{2},2^{\alpha-1}a_{2},2^{\alpha-1}a_{2}\}\bigr).
		\end{equation*}
		Again, by $\Delta^{(2)}/\Delta^{(3)}\cong \mathbb{Z}/2\mathbb{Z}\times \mathbb{Z}/2\mathbb{Z}$ and Lemma~\ref{lem:inducedsignature}, we obtain
		\begin{equation*}
			4=\Card(\Delta^{(2)})^{\mathrm{ab}}
			=\frac{a_{1}^{3}\,2^{3(\alpha-1)}a_{2}^{3}}
			{a_{1}\,2^{\alpha-1}a_2}
			= a_{1}^{2}\,2^{2(\alpha-1)}\,a_{2}^{2}.
		\end{equation*}
		Since $a_{1}$ and $a_{2}$ are odd, this forces $a_{1}=a_{2}=1$ and $\alpha=2$.
		Therefore the signature of $\Delta$ is $(0,0;\{2,3,4\})$ as desired.
	\end{proof}
	
	By similar computations, we can show that $\operatorname{SL}_2(\FF_3)$ group does not occur as $\Delta^{3}$ for some profinite $F$-group $\Delta$.
	
	\subsubsection{}\label{subsub:anabStepGrp} 
	We already established that for a profinite $F$-group, the hyperbolicity and affineness properties are group-theoretic, see Proposition~\ref{prop:SigIsoInv}. Let us consider the characterization by $m$-step solvable quotient for $m \geq 3$. We recall that the Chen ranks $\{\Theta_k(\Delta)={\rm dim}({\rm gr}_k\Delta^2)\}_{k\geq 1}$, of a group are also some group-isomorphism invariants -- see \cite{Chen51}.

	\begin{proposition}\label{prop:3steprecoofaffine}
		Let $\Delta$ be a non-perfect profinite $F$-group. 
		\begin{itemize}
			\item[$(i)$] The group $\Delta$ is hyperbolic if and only if $\Delta^3$ is of derived length $3$ and not isomorphic to~$S_4$. 
			\item[$(ii)$] Assume that $\Delta$ is hyperbolic. Then $\Delta$ is affine if and only if $\Delta^3$ contains an open normal subgroup $H$ such that $\Delta^3/H$ is abelian, $H^2$ is torsion free, and such that
			\[
			2\Theta_2(H)-\Theta_1(H)^2+\Theta_1(H)=0.
			\]
		\end{itemize} 
	\end{proposition}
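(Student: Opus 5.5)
For $(i)$, I would split along Corollary~\ref{rem:hyperbolic-patterns} and Table~\ref{fig:Table_nonhyperbolic_F_groups}.

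If $\Delta$ is hyperbolic and non-perfect, Proposition~\ref{cor:infinitederived} shows that $\Delta$ is not $2$-derived perfect. By Proposition~\ref{lem:conditions_weaklyperfect}~\ref{lem:conditions_weaklyperfect-2}, $\Delta^{3}$ is then not $2$-step solvable, so it has derived length exactly $3$. It remains to see that $\Delta^{3}\not\cong S_4$. Proposition~\ref{cor:s4} says this would force the signature $(0,0;\{2,3,4\})$, which is elliptic, a contradiction.

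Conversely, suppose $\Delta$ is non-hyperbolic and non-perfect. I would read off from the table that every such group is either $S_4$ itself (signature $(0,0;\{2,3,4\})$) or of derived length at most $2$. In the first case $\Delta^3\cong S_4$; in the second, $\Delta^3=\Delta$ has derived length $\le 2$. Either way the criterion fails, which proves $(i)$.

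For $(ii)$, the plan is to find, inside $\Delta^3$, a quotient that is the $2$-step quotient of a torsion-free $F$-group, and to test affineness there through Chen ranks.

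For the forward direction, assume $\Delta$ is affine, so $r\ge 1$. Theorem~\ref{thm:proFFenchels}~\ref{thm:proFFenchels--1} gives a torsion-free open normal subgroup $U\triangleleft\Delta$ with $\Delta/U$ abelian; in particular $U\supset\Delta^{(1)}$, and $U$ is free of finite rank $n\ge2$ by hyperbolicity. Let $H$ be the image of $U$ in $\Delta^{3}$. Applying Lemma~\ref{lemma:mstep} with $m=1$, $n=2$, we get $H^2\cong U^2$ and $\Delta^3/H\cong\Delta/U$ abelian. Then $U^2$ is the free metabelian pro-group of rank $n$, which is torsion-free. Chen's computation for free groups gives $\Theta_1=n$ and $\Theta_2=\binom{n}{2}$, and these satisfy the identity $2\Theta_2-\Theta_1^2+\Theta_1=0$.

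For the converse, take $H$ as in the statement and let $U$ be its preimage in $\Delta$. Since $\Delta^3/H$ is abelian, $U\supset\Delta^{(1)}$, so Lemma~\ref{lemma:mstep} again gives $U^2\cong H^2$. Now $U$ is an open subgroup, hence an $F$-group.

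The main obstacle is showing that $U$ is torsion-free using only the torsion-freeness of $U^2$. I would argue that any finite cyclic inertia $\langle\delta\rangle\subset U$ injects into $U^{\rm ab}$, or at least into $U^2$. If $U$ has a cusp, then by Eq.~\eqref{eq:DabExpl} the $\delta_i$ are direct summands of $U^{\rm ab}$, so torsion would survive in $U^2$. If $U$ has no cusp, then $\Delta$ was proper, and I would use Theorem~\ref{thm:proFFenchels} together with Proposition~\ref{prop:LCMcondition} to find a torsion element of $U$ surviving in $U^2$; this is the delicate point. I expect to use the non-perfectness hypothesis to rule out perfect pieces here.

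Once $U$ is torsion-free, it is either free of rank $n$ or a surface group of genus $g\ge2$. In the surface case, the Chen ranks are $\Theta_1=2g$ and $\Theta_2=\binom{2g}{2}-1$, since the single relation lies in degree $2$. Then $2\Theta_2-\Theta_1^2+\Theta_1=-2\neq0$, so the identity forces $U$ to be free. Affineness of $U$ is equivalent to affineness of $\Delta$, because cusps persist to open subgroups by Proposition~\ref{prop:inducedsign}, and this concludes $(ii)$.
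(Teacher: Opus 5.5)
Your part $(i)$ and the forward implication of $(ii)$ are correct and follow the paper's route. You are even a bit more explicit than the paper in excluding $S_4$ via Proposition~\ref{cor:s4}. Your cusped case of the torsion step is also fine.

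\textbf{The gap.} It is in the converse of $(ii)$, exactly at the step you flag as delicate: showing that the preimage $U$ is torsion-free from the torsion-freeness of $U^2\cong H^2$ when $r_U=0$. This step carries the whole converse, because the Chen-rank comparison only applies once $U$ is known to be free or a surface group. Your first idea fails in the cuspless case, since an inertia group need not survive in $U^{\rm ab}$. For example, $U=\Delta$ with signature $(1,0;\{n\})$ is hyperbolic and non-perfect, and it is a legitimate candidate (take $H=\Delta^3$). There the relation gives $\delta_1=[\alpha_1,\beta_1]^{-1}$, which dies in $U^{\rm ab}\cong\widehat{\mathbb{Z}}^2$. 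Pointing to Theorem~\ref{thm:proFFenchels} and Proposition~\ref{prop:LCMcondition} is not yet an argument, since they speak about $U^{\rm ab}$ and subgroups with abelian quotient, not about $U^2$.

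\textbf{What is missing.} You need a genuinely metabelian detection argument, which the paper isolates as Lemma~\ref{lemma:torsion}.
\begin{itemize}
\item For $g_U\geq 1$, let $\ell$ be a prime dividing $n_i$. Send $\alpha_1\mapsto x$, $\beta_1\mapsto y$, $\delta_i\mapsto z$ and all other generators to $1$ in the Heisenberg group of order $\ell^3$, where $[x,y]=z^{-1}$ and $z$ is central. This group is metabelian, so $\delta_i$ has an image of order $\ell$ in $U^2$.
\item For $g_U=0$, suppose every $\delta_i$ died in $U^{\rm ab}$. Then Lemma~\ref{lem:orderinab} makes the periods pairwise coprime, so $U$ is perfect by Proposition~\ref{prop:charperfect}. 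This contradicts Corollary~\ref{cor_perfect}.
\end{itemize}
This last step is where non-perfectness of $\Delta$ actually enters, and it cannot be dropped. A perfect group such as $(0,0;\{2,3,7\})$ has trivial, hence torsion-free, $2$-step quotient while having torsion.

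\textbf{Your suggested tools can work for $g_U\geq 1$,} but the argument has to be written out.
\begin{itemize}
\item Take $V=\ker\bigl(U\to (U_{\rm tf})^{\rm ab}/n\bigr)$ as in the proof of Theorem~\ref{thm:proFFenchels}~\ref{thm:proFFenchels-3}. It contains all torsion of $U$.
\item By Proposition~\ref{prop:inducedsign}, each period of $U$ appears $[U:V]\geq 2$ times in the signature of $V$. So $V$ satisfies the abelian-period condition, and each $\delta_i$ injects into $V^{\rm ab}$ by Lemma~\ref{lem:orderinab}.
\item Since $V\supseteq U^{(1)}$, we have $V^{(1)}\supseteq U^{(2)}$. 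Hence $V^{\rm ab}$ is a quotient of $V/U^{(2)}\subset U^2$, and $\delta_i$ survives in $U^2$.
\end{itemize}
Either way, your converse of $(ii)$ is incomplete until this torsion-detection step is supplied.
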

	
	In particular, if two non-perfect profinite $F$-groups have isomorphic $3$-step solvable quotients, they are both affine or hyperbolic as soon as one is so.
	
	\medskip
	
	Note that the hyperbolicity property cannot be characterized using the $2$-step quotient only, since curves of signature $\Sigma_n=(0,0;\{2,2^n,3\})$ all have $S_3$ as $2$-quotient, while the curve corresponding to $\Sigma_1$ is non-hyperbolic. For hyperbolic curves to have a $3$-step quotient of derived length $3$ stands in sharp contrast to the non-hyperbolic case, since the derived length of every non-perfect, non-hyperbolic profinite $F$-group is at most $2$ -- except for the case $(0,0;\{2,3,4\})$, see Table.~\ref{fig:Table_nonhyperbolic_F_groups}.
	
	\medskip

	The proof of Proposition~\ref{prop:3steprecoofaffine} relies on the following lemma.
	
	\begin{lemma}\label{lemma:torsion}
		Let $\Delta$ be a non-perfect profinite $F$-group. Then $\Delta$ is torsion-free if and only if $\Delta^2$ is torsion-free.
	\end{lemma}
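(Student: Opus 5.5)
The plan is to prove the two implications separately. The forward one is a structural statement about free and surface groups. The backward one reduces to exhibiting a single nontrivial torsion element in $\Delta^2$, built from the inertia generators $\delta_i$.

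\emph{($\Rightarrow$)} Suppose $\Delta$ is torsion-free, so it has signature $(g,r;\emptyset)$. Consider the exact sequence
\[
1\to \DOne/\Delta^{(2)}\to \Delta^2\to \Dab\to 1 .
\]
By Eq.~\eqref{eq:DabExpl}, $\Dab$ is a free $\widehat{\mathbb{Z}}$-module, hence torsion-free. So a torsion element of $\Delta^2$ maps to $1$ and must lie in $(\DOne)^{\ab}=\DOne/\Delta^{(2)}$. It therefore suffices to show that $(\DOne)^{\ab}$ is torsion-free.

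\begin{enumerate}
\item Write $\DOne=\bigcap_H H$, where $H$ runs over the open subgroups containing $\DOne$.
\item Each such $H$ is open in a torsion-free $F$-group. By Proposition~\ref{prop:inducedsign} it has signature $(g_H,r_H;\emptyset)$, so by Eq.~\eqref{eq:DabExpl} its abelianization $H^{\ab}$ is a free $\widehat{\mathbb{Z}}$-module.
\item By a standard compactness argument, the closure of $[\DOne,\DOne]$ equals $\bigcap_H \overline{[H,H]}$. Hence $(\DOne)^{\ab}\cong\varprojlim_H H^{\ab}$.
\item This limit is a closed subgroup of a product of torsion-free groups, and is therefore torsion-free.
\end{enumerate}

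\emph{($\Leftarrow$)} Suppose $\Delta$ has torsion, so $k\geq 1$. I will show that some $\delta_i$ has nontrivial image in $\Delta^2$. I split into cases using Proposition~\ref{prop:charperfect}, since non-perfectness forces $(g,r)\neq(0,0)$ or some $\gcd(n_i,n_j)\neq 1$.

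\begin{enumerate}
\item \emph{Case $r\geq1$.} Eq.~\eqref{eq:DabExpl} shows that $\langle\delta_i\rangle$ injects into $\Dab$, and $\Dab$ is a quotient of $\Delta^2$. So $\delta_i$ survives in $\Delta^2$.
\item \emph{Case $r=0$ and $\gcd(n_1,n_2)\neq1$ for some pair.} Lemma~\ref{lem:orderinab} gives the order of $\delta_1$ in $\Dab$ as $\gcd(n_1,\mathrm{lcm}(n_j\mid j\neq 1))$, which is divisible by $\gcd(n_1,n_2)>1$. Again $\delta_1$ is nontrivial in $\Dab$, hence in $\Delta^2$. If $k=1$, I treat this via the same formula or the explicit presentation.
\item \emph{Case $r=0$, $g\geq1$, all periods pairwise coprime.} All $\delta_i$ may die in $\Dab$, so I go one step deeper, following the proof of Theorem~\ref{thm:proFFenchels}~\ref{thm:proFFenchels-3}.
\begin{itemize}
\item Take $K=\ker(\Delta\to(\Dtf)^{\ab}/n)$ for some $n\geq 2$. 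It is open and contains all torsion of $\Delta$, and its induced signature repeats each period $p=[\Delta:K]>1$ times.
\item Hence $K$ satisfies the abelian-period condition, and Lemma~\ref{lem:orderinab} shows that $\langle\delta_i\rangle$ injects into $K^{\ab}$.
\item Since $\Delta/K$ is abelian, $\DOne\subset K$ and so $\Delta^{(2)}\subset K^{(1)}$. Thus $\Delta^2$ surjects onto $\Delta/K^{(1)}$.
\item The image of $\delta_i$ in $K/K^{(1)}\subset \Delta/K^{(1)}$ is nontrivial, so $\delta_i$ is a nontrivial torsion element of $\Delta^2$.
\end{itemize}
\end{enumerate}

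\emph{Main obstacle.} I expect the delicate step to be the identification $(\DOne)^{\ab}\cong\varprojlim_H H^{\ab}$ in the forward direction, where I need the closed commutator subgroup to commute with the intersection. If that identification is awkward, I would instead cite the known torsion-freeness of the maximal metabelian quotients of free and surface profinite groups.
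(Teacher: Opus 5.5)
Your proof is correct, and for $r\geq 1$ and for $r=0$ with two non-coprime periods it matches the paper. The paper phrases the latter contrapositively: if every $\delta_i$ dies in $\Delta^{\mathrm{ab}}$, then Lemma~\ref{lem:orderinab} forces pairwise coprime periods, contradicting Proposition~\ref{prop:charperfect}.

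The genuine difference is the case $r=0$, $g\geq 1$. The paper exhibits an explicit finite metabelian quotient: the Heisenberg group $H_\ell$ with $\ell\mid n_i$, via $\alpha_1\mapsto x$, $\beta_1\mapsto y$, $\delta_i\mapsto z$. The surface relation is respected because $[x,y]z=1$, so $\delta_i$ survives in $\Delta^2$ with order $\ell$. You instead pass to the abelian cover $K=\ker(\Delta\to(\mathbb{Z}/n)^{2g})$. Its induced signature repeats each period $p\geq 2$ times, so $K$ satisfies the abelian-period condition and $K^{(1)}$ is torsion-free. The cleanest citation for this is Proposition~\ref{prop:LCMcondition}~(i) rather than Lemma~\ref{lem:orderinab} directly, since $\delta_i$ is a priori only conjugate into a standard elliptic generator of $K$. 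Hence $\delta_i$ survives in the metabelian quotient $\Delta/K^{(1)}$ of $\Delta^2$.

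The Heisenberg route is short and self-contained. Yours reuses the paper's abelian-period machinery and shows that the whole of $\langle\delta_i\rangle$ embeds in $\Delta^2$. Your aside about $k=1$ in case 2 is unnecessary. Case 2 needs a pair of periods, and $k=1$ with $r=0$ falls under case 3 (where $K$ has $p\geq 2$ copies of $n_1$), or is perfect when $g=0$.

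For the forward direction, the paper simply invokes the known torsion-freeness of $\Delta^2$ for free and surface profinite groups, whereas you prove it. Your compactness step is sound: for $U$ open normal and $H=\Delta^{(1)}U$, one has $\overline{[H,H]}\subset \Delta^{(2)}U$. Moreover, you only need injectivity of $\Delta^{(1)}/\Delta^{(2)}\to\prod_H H^{\mathrm{ab}}$, not the full identification with the inverse limit.
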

	
	\begin{proof}
		If $\Delta$ is torsion-free, then it is a free profinite group or a profinite surface group.
		In both cases $\Delta^2$ is also torsion-free.
		
		\medskip
		
		Let us assume that $\Delta^2$ is torsion-free and fix $i\in\{1,\dots, k\}$ -- assuming $\Delta$ has at least one torsion element. If $r\geq 1$, then the image of $\delta_{i}$ in $\Delta^{\mathrm{ab}}$ is a non-trivial torsion element, which is a contradiction. We may thus further assume $r=0$. 
		
		\medskip
		
		Let us start by dealing with the $g=0$ case. The assumption on $\Delta^2$ gives that the image of $\delta_i$ in $\Delta^{\mathrm{ab}}$ is trivial. By Lemma~\ref{lem:orderinab}, the order of the image of $\delta_i$ in $\Delta^{\mathrm{ab}}$ is $\gcd\bigl(n_i, \mathrm{lcm}(n_j\mid j\neq i)\bigr)=1$, hence $n_{i}$ is coprime to every period $n_{j}$ for $j\neq i$. However, as $\Delta$ is non-perfect this is a contradiction with Proposition~\ref{prop:charperfect}.
		
		\medskip
		
		It remains to consider the case $g\geq 1$. Let $\ell$ be a prime number dividing $n_i$ and consider the Heisenberg group
		\[
		H_{\ell}=
		\langle x,y,z \mid x^{\ell}=y^{\ell}=z^{\ell}=1,
		[x,y]=z^{-1},
		[x,z]=[y,z]=1\rangle.
		\]
		It is a finite metabelian group. The map defined by
		\[
		\alpha_1\mapsto x,~\beta_1\mapsto y, ~ \delta_i\mapsto z
		\]
		and by sending all other generators of $\Delta$ to $1$ gives a homomorphism $\psi_i\colon\Delta\to H_{\ell}$. This homomorphism factors through $\Delta^2$ while the image of $\delta_i$ has order $\ell$. This is a contradiction with our starting assumption, hence $\Delta$ is torsion free.  
	\end{proof}

	\begin{proof}[Proof of Proposition \ref{prop:3steprecoofaffine}]
		We first treat the characterization of hyperbolicity.  If $\Delta$ is non-hyperbolic, then we conclude by the classification of Table~\ref{fig:Table_nonhyperbolic_F_groups} where all groups have derived length at most $2$ except $S_4$. For the converse, by Proposition~\ref{cor:infinitederived}, there is a solvable quotient $Q$ of $\Delta$ of derived length at least $4$. Then, $Q/Q^{(3)}$ is solvable of derived length $3$. As it is a subquotient of $\Delta^3$ we get that $\Delta^3$ is not $2$-step solvable. 
		
		\medskip
		
		Let us consider the affineness property. We can apply the affineness criterion of Theorem~\ref{thm:proFFenchels}~\cref{thm:proFFenchels--1}. That is, from the existence or non-existence of a torsion-free open normal subgroup whose quotient is abelian, we can distinguish via $\Delta$ the following two cases:
		\begin{center}
			(i) $r \geq 1$, or $r=0$ and $\Delta$ satisfies the abelian-period condition. \\
			(ii) $r=0$ and $\Delta$ does not satisfy the abelian period condition. 
		\end{center}
		
		Let us first show that we can, in fact, distinguish between these two situations from $\Delta^3$ alone. In the first case, the torsion-free open normal subgroup $\tilde{H}$ of abelian quotient given by Theorem~\ref{thm:proFFenchels}~\ref{thm:proFFenchels--1} provides an image $H$ in $\Delta^{3}$, such that $\tilde{H}^{2}\rightarrow H^{2}$ is an isomorphism by Lemma~\ref{lemma:mstep}. In particular, by Lemma~\ref{lemma:torsion} the subgroup $H^2$ is torsion-free if and only if $\tilde{H}$ is torsion-free. Thus we are in case (i) if and only if $\Delta^3$ admits an open normal subgroup $H$ such that $\Delta^3/H$ is abelian and $H^2$ is torsion-free..    
		
		\medskip
		
		Furthermore, in the situation (i), since $\tilde{H}$ is torsion-free, the group $\tilde{H}^{2}$ is the maximal metabelian quotient of either a non-abelian free profinite group $F$ or of a non-abelian profinite surface group $S$. In both cases, we have
		\[
		\Theta_1(H)=\operatorname{rank}(H^2)^{\mathrm{ab}}\text{ and } 2\Theta_2(H)=
		\begin{cases}
			(\operatorname{rank}(H^2)^{\mathrm{ab}})^2-\operatorname{rank}(H^2)^{\mathrm{ab}}\ \text{ for } F, \text{ affine case}\\
			(\operatorname{rank}(H^2)^{\mathrm{ab}})^2-\operatorname{rank}(H^2)^{\mathrm{ab}}-2\text{ for } S, \text{ proper case},
		\end{cases}
		\]
		see \cite{Chen51} and \cite{Mur66} \S~3 for explicit formulae (resp. \cite{MR3926216} Eq.~(45)) for $F$ (resp. for $S$), and we obtain the given numerical affineness characterization.
	\end{proof}
	
	In the non-hyperbolic case we can also recover affineness by the $3$-step quotient from a direct computation using Table~\ref{fig:Table_nonhyperbolic_F_groups}.
	
	\section{Anabelian properties of Deligne-Mumford curves}\label{sec:anabprop}
	In what follows, we assume an algebraic Deligne-Mumford stack $\XX$ to be irreducible separated and smooth over a field $K$ that is of characteristic zero. For arithmetic and homotopic purposes as in Section~\ref{subsubs:FondES}, we further assume that $\XX$ is quasi-compact and geometrically connected. Let us recall that, to a geometric point $\bar{x}\colon \spec \bar{K}\to \XX$, we can attach the (finite) stack inertia group of its $2$-transformation $\II_{\XX,\bar{x}}=\spec \bar{K}\prescript{}{\bar{x}}{\times} \II$, where $\II=\XX\times_{\XX\times\XX}\XX$ denotes the inertia stack of $\XX$. These groups will be of central interest in our anabelian results.
	
	\medskip
	
	We start by introducing the necessary basics on Deligne-Mumford curves towards the rigidification and coarsification homotopy sequences in Section~\ref{subsubs:HoRigCoa}. We obtain the group-theoretic characterization of some discrete invariants such as the genus $g$ and number of cusps $r$, in the $m$-step situation for those, and in particular the stack inertia $\II_*$ of the generic and closed points as subgroups of the étale fundamental group. This is done in Sections~\ref{subsec2.1.2} and \ref{subsub:rigidification} which also contains some first anabelian results for $\XXrig$ and $\XXcoa$. A homotopic and algebraic uniformization result for Deligne-Mumford curves as $\II_{\XX,\eta}$-gerbe over their rigidification is also established, see Proposition~\ref{prop:monoXoverXrig}. This finally leads to our main anabelian result for Deligne-Mumford curves -- see Section~\ref{subsub:DManab}.
	
	\medskip
	
	Building on the group-theoretic profinite $F$-group constructions of Sections~\ref{sub:Fenchel_ref} and \ref{sec:weaklyperfect-hyperbolic} -- which rely on cuspidal and stack inertial properties -- we obtain in Section~\ref{subsection:2.2} our main $5$-step anabelian result for stacky curves (or the rigidification $\XXrig$ of a Deligne-Mumford curve). Even though the generalization of the $m$-step question fails directly in the context of Deligne-Mumford curves, we end by providing an anabelian result in the particular case of a trivial gerbe over an affine hyperbolic curve.

	\subsection{Arithmetic homotopy of Deligne-Mumford curves}\label{sub:AHGDM}
	
	\subsubsection{} 
	A Deligne-Mumford curve $\XX$ is a dimension-one Deligne-Mumford stack. We denote by $g_\XX=\dim H^1(\XX,\mathcal O_\XX)$ its genus, by $\{n_1,\dots,n_k\}$ the list of its periods -- that is, the order of the finite stack inertia group $\mathcal{I}_{\XX,x_i}$ -- and by $\II_{\XX,\eta}$ its generic stack inertia group.
	
	\begin{definition}
		A  smooth geometrically connected Deligne-Mumford curve $\XX$ over a field $K$ of characteristic $0$ is \emph{hyperbolic} (resp. affine) if there exists a compactification $\XX\hookrightarrow \tilde{\XX}$ by a smooth proper curve $\tilde{\XX}$ over $K$ such that $\mathcal{D}=\tilde{\XX}\setminus \XX$ is a Cartier divisor given as a finite sum of $r_\XX$ points and such that its Euler characteristic
		\begin{equation}\label{eq:EulerDM}
			\chi(\XX)\coloneqq\frac{1}{\Card(\II_{\XX,\eta})}\ \left(2 - 2g_\XX - r_\XX - \sum_{i=1}^{k} (1 - \frac{1}{n_i})\right)
		\end{equation}
		is negative (resp. if $r_{\mathcal{X}}\geq 1$). A \emph{stacky curve} is a Deligne-Mumford curve with a dense open subscheme; it is hyperbolic or affine under similar corresponding assumptions. The closed points of $\mathcal{D}$ are called \emph{cuspidal points}.
	\end{definition}
	This definition of stacky curves is equivalent to the definition of ``orbicurves'' of \cite{MR2046610}, whose terminology is more analytic-orbifold evoking. 
	
	\begin{proposition}
		A Deligne-Mumford curve $(\tilde{\XX},\mathcal{D})$ over $K$ admits
		\begin{enumerate}
			\item \emph{A rigidified algebraic Deligne-Mumford stack $\XXrig$}, defined as the $G_\eta$-gerbe $\XX\to\XXrig=\XX\myfatslash G_\eta$ via the $2$-quotient $\XX\myfatslash G_\eta$, where $G_\eta$ is the Zariski closure of the generic inertia $\II_{\XX,\eta}<\II_\XX$ (similarly for the divisor $\mathcal{D}\to\mathcal{D}_{\rm rig}$).
			\item \emph{A coarse moduli space $\XXcoa$ }, that is a smooth curve over $K$ whose geometric points are in bijection with the geometric fiber of $\XX$.
		\end{enumerate}
	\end{proposition}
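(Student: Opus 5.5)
The plan is to obtain both constructions from the general theory of rigidification and coarse spaces for Deligne–Mumford stacks, and then to use the curve hypothesis only to confirm the geometric properties claimed. Both (i) and (ii) are essentially existence results from the literature. The work lies in checking that their hypotheses hold for a smooth, separated, irreducible Deligne–Mumford curve $\XX$ of finite type over $K$ of characteristic zero.

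For (i), the first step is to show that the generic inertia extends to a flat closed subgroup stack $G_\eta\subset\II_\XX$.
\begin{itemize}
\item Since $\XX$ is irreducible and smooth of dimension one, the inertia $\II_\XX\to\XX$ is finite and unramified; in characteristic zero it is étale over a dense open substack.
\item Let $G_\eta$ be the Zariski closure of $\II_{\XX,\eta}$ in $\II_\XX$. It is a closed subgroup stack, because closure commutes with the group operations on the flat locus.
\item Over a smooth curve, the closure of a finite étale subgroup over the generic point is flat. Indeed, each local ring at a closed point is a DVR, and torsion-free modules over a DVR are flat.
\item $G_\eta$ is normal in $\II_\XX$, since conjugation preserves the generic inertia.
\end{itemize}
One can then invoke the rigidification theorem of Abramovich–Olsson–Vistoli \cite{AOV08}, in the form for flat finitely presented normal subgroup stacks of the inertia. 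It produces the Deligne–Mumford stack $\XXrig=\XX\myfatslash G_\eta$ and a morphism $\XX\to\XXrig$ that is a $G_\eta$-gerbe. Smoothness, separatedness and the dimension-one property pass to $\XXrig$, because a gerbe is fppf locally a product with a classifying stack. The divisor $\mathcal{D}$ is treated the same way, by restricting to $\tilde{\XX}$ and pulling back. By construction, $\XXrig$ has trivial generic inertia, so it contains a dense open subscheme and is therefore a stacky curve.

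For (ii), one applies the Keel–Mori theorem: a separated Deligne–Mumford stack of finite type has finite inertia, hence admits a coarse moduli space $\XX\to\XXcoa$. This map is proper and quasi-finite, and it induces a bijection on geometric points. It also factors through $\XXrig$, so one may work with $\XXrig$. Since $\XXrig$ is étale locally of the form $[U/\Gamma]$ with $\Gamma$ finite, and $U/\Gamma$ is normal of dimension one, $\XXcoa$ is a normal curve over $K$. Normal curves are smooth in characteristic zero.

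I expect the main obstacle to be the flatness and normality of the closure $G_\eta$ at the stacky points, since the inertia jumps there. If the DVR argument above is not enough, the fallback is to verify flatness étale locally on a chart $[U/\Gamma]$ with $U$ a smooth affine curve. On such a chart, $G_\eta$ is the subgroup of $\Gamma$ acting trivially on $U$, which is a constant group scheme and hence flat.
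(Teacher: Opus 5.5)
Your proposal is correct and follows essentially the same route as the paper: rigidification by citing \cite{AOV08} Appendix~A, existence of $\XXcoa$ via Keel--Mori after reducing to $\XXrig$, and smoothness of $\XXcoa$ from the normality of the étale-local quotients $U/\Gamma$, which is the paper's ``at worst quotient singularities, hence normal, hence smooth''. Your DVR torsion-freeness argument showing that the closure $G_\eta$ is a flat (indeed finite étale) normal subgroup stack of $\II_\XX$ checks a hypothesis of the AOV theorem that the paper leaves implicit, while the paper also sketches a direct gluing construction of $\XXcoa$ that your argument does not need.
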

	In the rigidification, the $2$-quotient removes $\II_{\XX,\eta}$ from the inertia stack $\II_\XX$ of $\XX$ as well as of $\mathcal{D}$; we refer to \cite{AOV08} Appendix~A for its construction. The coarse moduli is endowed with a morphism $\pi\colon\XX \to \XXcoa$ that is universal among the morphisms from $\XX$ to $K$-schemes. For Deligne-Mumford stacks, the existence of $\XXcoa$ is given by \cite{KM07} Theorem~1 since $\XX$ has finite diagonal. In the case of Deligne-Mumford curves, one can give a direct construction that follows from the étale local description of $\XX$ as a quotient stack as follows.

	Since $\XX$ and its rigidification $\XXrig$ have the same coarse moduli, one can assume that $\XX$ has no generic inertia. It is thus generically covered by a scheme $X^0$ and by a finite number of quotients $[\spec A/G_i]$, where $A$ is a $1$-dimensional $K$-algebra and $G_i$ the finite stabilizing group of a stacky point of $\XX$. The coarse moduli is obtained as the gluing of $X^0$ and of $U_i=\spec A^{G_i}$, which can be done since a curve is determined by its fraction field and its ramification data. One then checks directly that the resulting map $\XX \to \XXcoa$ satisfies the universal coarse moduli property. Since $\XXcoa$ has at worst quotient singularities, it is thus a normal curve, hence smooth.

	\medskip
	
	The moduli spaces of curves $\mathcal{M}_{g,[r]}$, whose $S$-fibers are families of curves $C\to S$ of genus $g$ with Cartier divisor of degree $r$, provide examples of hyperbolic Deligne-Mumford curves for $(g,m)=(0,4)$ and $(1,1)$. Their generic inertia groups are, respectively, the Vierergruppe $V_4$ and the involution group $\ZZ/2\ZZ$.
	
	\begin{remark}
		By definition of $\mathcal{D}$, and following \cite{AOV08} \S~3 for the genus equality, one has $(g_\XX,r_\XX)=(g_{\XXcoa}, r_{\XXcoa})$.
	\end{remark}
	
	\subsubsection{}\label{subsub:InertiaHomotopy} 
	Fixing a geometric point $x$ of $\XX$, consider the étale fundamental group $\Pi_{\XX}\coloneqq \pi_1^{\mathrm{\acute et}}(\mathcal X,x)$ and the geometric fundamental group $\Delta_\XX \coloneqq \pi_1^{\mathrm{\acute et}}(\mathcal X\times {\overline K})$ -- we refer to \cite{NOO04} for a definition in terms of Galois category and to \cite{Zoo01} in terms of étale topological type; since $\XX$ is normal, they are both equivalent by \cite{AM69} Theorem~11.1. 
	
	\medskip
	
	Fixing $k\hookrightarrow \CC$ and denoting $\XX(\CC)^{\rm an}$ for the analytic orbifold associated to $\XX$, it follows from a Riemann existence theorem, as in \cite{FRI82} Theorem~8.4, that we can apply \cite{MR2279100} Proposition~5.6.
	\begin{proposition}
		The geometric fundamental group $\Delta_\XX$ of a stacky curve $\XX$ is a profinite $F$-group that identifies as $\Delta_\XX\simeq \widehat{\pi}_1^{\rm orb}(\XX(\CC)^{\rm an},x)$. If $\XX$ is of genus $g$ with $r$ cuspidal points and $k$ orbifold points of periods $\{n_{\mathcal{X},1},\dots,n_{\mathcal{X},k_{\mathcal{X}}}\}$, then the signature of $\XX$ is given by $\Sigma_\XX=(g_{\mathcal{X}}, r_{\mathcal{X}}; \{n_{\mathcal{X},1},\dots,n_{\mathcal{X},k_{\mathcal{X}}}\})$.
	\end{proposition}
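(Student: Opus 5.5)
The plan is to reduce to the analytic orbifold and then use the classical presentation of the orbifold fundamental group. Fix an embedding $K\hookrightarrow\CC$. Since $\XX$ is normal, the étale fundamental group of $\XX\times\bar K$ does not depend on the choice of algebraically closed field of characteristic zero: the invariance of $\piet$ under extension $\bar K\subset\CC$ of algebraically closed fields holds for Deligne-Mumford stacks of finite type, by reducing to a finite étale schematic atlas or to Galois covers. So we may assume the base is $\CC$. The Riemann existence theorem for Deligne-Mumford stacks (\cite{FRI82} Theorem~8.4, in the stacky form used by \cite{MR2279100}) identifies the category of finite étale covers of $\XX_\CC$ with the category of finite orbifold covers of the analytic orbifold $\XX(\CC)^{\rm an}$. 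Hence \cite{MR2279100} Proposition~5.6 gives $\Delta_\XX\simeq\widehat{\pi}_1^{\rm orb}(\XX(\CC)^{\rm an},x)$, the profinite completion of the discrete orbifold fundamental group.

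Next I would compute the discrete orbifold fundamental group. Because $\XX$ is a stacky curve, it has trivial generic inertia and a dense open subscheme. Étale locally around each stacky point $x_i$ it is $[\spec A/G_i]$ with $G_i=\II_{\XX,x_i}$, and $G_i$ acts faithfully on a smooth curve germ in characteristic zero, so $G_i$ is cyclic of order $n_i$. Analytically, $\XX(\CC)^{\rm an}$ is then the orbifold with underlying surface $\XXcoa(\CC)$, of genus $g_\XX$ with $r_\XX$ punctures (by the preceding remark $(g_\XX,r_\XX)=(g_{\XXcoa},r_{\XXcoa})$), together with $k$ cone points of orders $n_i$. Applying van Kampen to the decomposition into the punctured surface $\XXcoa(\CC)\setminus\{x_i\}$ and the local discs $[D/\mu_{n_i}]$, whose orbifold fundamental group is $\ZZ/n_i\ZZ$ generated by a small loop, we obtain the Fuchsian-type presentation
\[
\Bigl\langle \alpha_i,\beta_i,\gamma_j,\delta_\ell \Bigm| \textstyle\prod[\alpha_i,\beta_i]\prod\gamma_j\prod\delta_\ell=1,\ \delta_\ell^{n_\ell}=1\Bigr\rangle .
\]
Its profinite completion is, by definition, a profinite $F$-group with signature $(g_\XX,r_\XX;\{n_1,\dots,n_k\})$, in the sense of Definition~\ref{def:proFgroup}.

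I expect the main obstacle to be the careful justification of the first step, namely the comparison of the étale fundamental group of the stack with the completed orbifold group, including the change of algebraically closed base field. I would handle both points uniformly. By Theorem~\ref{thm:classicalproFfenchel} at the discrete level, there is a finite Galois cover of the orbifold by a genuine Riemann surface, and algebraically this is a finite étale Galois cover $Y\to\XX$ by a scheme with group $G$. Then $\Delta_\XX$ is an extension of $G$ by $\Delta_Y$, and the classical Riemann existence theorem for the scheme $Y$ transfers to $\XX$ through this extension. A secondary point to check is that the local stabilizers in the analytic orbifold agree with the stack inertia groups $\II_{\XX,x_i}$, so that the periods are exactly the $n_i$; this follows from the local quotient description.
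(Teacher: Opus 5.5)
Your main line is the paper's argument. The paper's proof is exactly the observation that the Riemann existence theorem of \cite{FRI82} Theorem~8.4 allows one to apply \cite{MR2279100} Proposition~5.6. Your van Kampen computation with cyclic local stabilizers only makes explicit the presentation, and hence the signature, that the paper leaves to that citation.

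One caveat concerns your ``uniform'' fallback through a Fenchel--Nielsen cover by a Riemann surface. It fails for bad orbifolds, such as $\mathbf{P}(1,n)$ of signature $(0,0;\{n\})$, or a curve of signature $(0,0;\{n_1,n_2\})$ with $n_1\neq n_2$. These are stacky curves in the paper's sense, but they admit no finite étale cover by a scheme. The torsion-free subgroup given by Theorem~\ref{thm:classicalproFfenchel} corresponds there to a cover that is still a stack, because the stack inertia does not inject into $\Delta_\XX$. The same objection applies to the ``finite étale schematic atlas'' in your base-change step.

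In these cases the statement still holds, with $\Delta_\XX$ trivial or cyclic. But you must rely on the direct stacky comparison theorem, or on the root-stack description over $\XXcoa$, rather than on a reduction to a schematic Galois cover.
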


	In particular, the Euler characteristic of a stacky curve is given as in Formula~\ref{def:euler-characteristic}, and we remark that the Euler characteristic Eq.~\eqref{eq:EulerDM} is just $\chi(\XX)=\chi(\XXrig)/\Card(\II_{\XX,\eta})$, so 
	\begin{center}
		\emph{$\XX$ is hyperbolic if and only if $\XXrig$ is hyperbolic}.
	\end{center} 
	
	Similarly, $\XX$ is affine if and only if $\XXrig$ is affine.  In a similar spirit, we say that a Deligne-Mumford curve is \emph{non-perfect} if its geometric fundamental group $\Delta_{\mathcal{X}}$ is non-perfect.

	Note that by Proposition~\ref{prop:HoRigSeq}, the geometric fundamental group $\Delta_\XX$ of a Deligne-Mumford curve is the extension of a finite group $\II_{\XX,\eta}$ by a profinite F-group $\piet(\XXrig\times \bar{K})$ with presentation as above.
	
	\medskip
	
	We further recall that to each closed point $x\in \mathcal{D}$ of the boundary is associated, up to conjugacy, a \emph{cuspidal inertia group} $\II_{x,\mathcal{D}}\hookrightarrow \Pi_\XX$, and that an étale local description gives $\II_{x,\mathcal{D}}\simeq \widehat{\ZZ}(1)\rtimes H$, where $H$ is a finite stack inertia group.

	Similarly, stack inertia groups are \emph{homotopic subgroups}, since by \cite{NOO04} Section~4, one has by Galois category formalism, for every geometric point $\bar{x}$ of $\XX$, a group homomorphism $\omega_{\bar{x}}\colon\II_{\XX,\bar{x}}\to \Pi_\XX$. It then follows from the uniformizability of a Deligne-Mumford curve of Proposition~\ref{prop:monoXoverXrig} -- that is $\XX\simeq [U/H]$ with $U$ algebraic space and $H$ finite group -- and Theorem~6.2 ibid. that these homomorphisms $\omega_x\colon\II_{\XX,\bar{x}}\hookrightarrow \Pi_\XX$ are injective.
	
	\begin{remark}
		The uniformizability of the Deligne-Mumford curves can also be deduced, by Theorem~6.2 ibid., from the injectivity $\II_{\XXrig,\bar{x}}\hookrightarrow \pi_1^{\rm orb} (\XXrig\times \bar{K})$, the residual finiteness of F-groups by \cite{Sah69} Theorem~1.5~(a), and the fact that $\II_{\XXrig,\bar{x}}\simeq \II_{\XX,\bar{x}}/\II_{\XX,\eta}$ as in Proposition~\ref{prop:HoRigSeq}.
	\end{remark}

	\subsubsection{}\label{subsubs:HoRigCoa} 
	At the homotopy level, the rigidification map $\XX\to\XXrig$ and the coarse map $\XX\to\XXcoa$ give rise to two short exact sequences. The following is an algebraic version of \cite{MR2279100} Proposition~7.5 that holds for the geometric group of hyperbolic analytic Deligne-Mumford curves.
	\begin{proposition}\label{prop:HoRigSeq}
		A  Deligne-Mumford curve over $K$ admits a homotopy-rigidification exact sequence
		\begin{equation}\label{Eq:inRig}
			1 \longrightarrow \II_{\XX,\eta} \longrightarrow \pi_1^{\rm et}(\XX,\bar{x}) \longrightarrow \pi_1^{\rm et}(\XXrig,\bar{x}) \longrightarrow 1,
		\end{equation}
		and similarly for the geometric fundamental groups. 
	\end{proposition}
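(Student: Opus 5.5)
We prove exactness of \eqref{Eq:inRig} by applying the homotopy exact sequence for fibrations to the rigidification $\rho\colon\XX\to\XXrig$, which is a gerbe banded by the finite étale group $G_\eta$. The geometric fibre is $B\II_{\XX,\eta}$ over $\bar{K}$. The strategy is to show that $\rho$ behaves as a proper smooth fibration with connected fibres whose fundamental group is $\II_{\XX,\eta}$. We then deduce the statement for geometric fundamental groups by base change to $\bar{K}$. This uses that $\XXrig\times\bar{K}=(\XX\times\bar{K})_{\rm rig}$, since rigidification commutes with flat base change, and the five lemma with the sequences $1\to\Delta\to\Pi\to G_K\to 1$.

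\textbf{Step 1: surjectivity.} The morphism $\rho$ is proper (a gerbe for a finite group), faithfully flat, and has geometrically connected fibres $B\II_{\XX,\eta}$. Consider a connected finite étale cover $\mathcal{Y}\to\XXrig$. Its pullback $\mathcal{Y}\times_{\XXrig}\XX\to\mathcal{Y}$ is again a gerbe, hence has connected fibres, so the pullback stays connected. Thus $\pi_1(\XX)\to\pi_1(\XXrig)$ is surjective.

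\textbf{Step 2: the composite $\II_{\XX,\eta}\to\pi_1(\XXrig)$ is trivial, and the image of $\omega_{\bar{x}}$ is normal.} The map $\omega_{\bar{x}}$ is the homotopic inertia map from \S\ref{subsubs:HoRigCoa}. Its composite with $\rho_*$ factors through $\II_{\XXrig,\bar{x}}$, and it kills $\II_{\XX,\eta}$ because $\II_{\XX,\eta}\subset\ker(\II_{\XX}\to\II_{\XXrig})$ by the construction in \cite{AOV08} Appendix~A. Normality holds because $G_\eta$ is a normal subgroup stack of $\II_\XX$: conjugating by paths along $\XX$ preserves it.

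\textbf{Step 3: exactness in the middle.} Let $\mathcal{Z}\to\XX$ be a connected finite étale cover on which $\II_{\XX,\eta}$ acts trivially, meaning that the fibre functor representation restricted to $\omega_{\bar{x}}(\II_{\XX,\eta})$ is trivial. We must show that $\mathcal{Z}$ descends to $\XXrig$. The generic inertia of $\mathcal{Z}$ is then all of $\II_{\XX,\eta}$, so $G_\eta$ acts trivially on $\mathcal{Z}$. By the universal property of rigidification (\cite{AOV08} Theorem~A.1), $\mathcal{Z}\myfatslash G_\eta$ exists, is finite étale over $\XXrig$, and pulls back to $\mathcal{Z}$. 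Hence $\ker\rho_*$ is the normal closure of the image of $\II_{\XX,\eta}$, which by Step 2 is the image itself.

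\textbf{Step 4: injectivity of $\II_{\XX,\eta}\to\Pi_\XX$.} This is the main obstacle. One route is to use the uniformizability $\XX\simeq[U/H]$ together with \cite{NOO04} Theorem~6.2, as in the remark of \S\ref{subsubs:HoRigCoa}. Since that remark relies on Proposition~\ref{prop:monoXoverXrig}, I would avoid circularity and argue directly instead.

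First, the dense open substack of $\XXrig$ is a scheme $X^0$, so $\XX|_{X^0}$ is a $G_\eta$-gerbe over a scheme. After a finite étale cover of $X^0$, this gerbe is neutral, of the form $B\II_{\XX,\eta}\times X'$. Such a cover exists because the gerbe class lies in finite-order étale cohomology, which can be killed by covers, in the manner of Giraud. The projection $B\II_{\XX,\eta}\times X'\to B\II_{\XX,\eta}$ then splits $\II_{\XX,\eta}\to\pi_1$ over this open.

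It remains to extend this splitting from the open to all of $\XX$. Finite étale covers of $\XX$ are determined by their restriction to a dense open, provided they extend over the stacky points. Over those points, I would use the étale local model $[\spec A/G_i]$ with $\II_{\XX,\eta}\triangleleft G_i$ to check that the relevant $\II_{\XX,\eta}$-torsor extends. I expect this local extension argument to be the delicate part.
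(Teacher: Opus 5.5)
Your Steps~1--3 are sound. They prove surjectivity and exactness in the middle directly, via Galois categories, the gerbe structure and the universal property of rigidification; the paper instead quotes the homotopy sequence of \cite{LV18} Corollary~2.9 for geometrically unibranch stacks.

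\textbf{The gap is Step~4}, which is the only non-formal part of the statement. The splitting coming from $\XX\times_{\XXrig}X'\simeq B\II_{\XX,\eta}\times X'$ only shows that $\II_{\XX,\eta}$ injects into $\pi_1(\XX\times_{\XXrig}X^0)$. But $\pi_1(\XX\times_{\XXrig}X^0)\to\Pi_\XX$ is a surjection whose kernel is normally generated by the local loops at the stacky points. So you need the $\II_{\XX,\eta}$-free cover $X'\to\XX\times_{\XXrig}X^0$ to be fixed by all these loops, i.e.\ to extend over the stacky points.

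This cannot be checked in the chart $[\spec A/G_i]$, because it depends on global choices. The trivializing cover $X'\to X^0$ may be ramified above a stacky point $p$ to an order not dividing $n_p$, and even a good trivialization can be spoiled by twisting. For instance, take $\XX=B(\ZZ/2\ZZ)\times\XXrig$ with $p$ of odd period. The trivial section $X^0\to B(\ZZ/2\ZZ)\times X^0$ extends over $p$. Its twist by a $\ZZ/2\ZZ$-torsor $T$ on $X^0$ with nontrivial monodromy around $p$ does not: pulling a putative extension back along $\XXrig\to B(\ZZ/2\ZZ)\times\XXrig$ would extend $T$ to a finite étale cover of $\XXrig$, whose ramification at $p$ must divide the period. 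Making all these local conditions hold at once is a global existence problem of Fenchel--Nielsen type, and your proposal gives no mechanism for solving it.

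\textbf{The missing idea} is to neutralize the gerbe over a finite étale cover of all of $\XXrig$, not over an open of it.
\begin{itemize}
\item Theorem~\ref{thm:classicalproFfenchel}, applied to $\Delta_{\XXrig}$ after a finite extension of $K$, gives a finite étale $V\to\XXrig$ with $V$ a scheme. This uses only that $\Delta_{\XXrig}$ is a profinite $F$-group, not the sequence being proved.
\item Then $\XX\times_{\XXrig}V\to\XX$ is finite étale, and $\XX\times_{\XXrig}V\to V$ is a gerbe over a curve.
\item A further finite étale $V'\to V$ makes the band constant and neutralizes the gerbe, giving $\XX\times_{\XXrig}V'\simeq B\II_{\XX,\eta}\times V'$.
\item The composite $V'\to\XX$ is a finite étale cover of $\XX$ by a scheme. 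Hence the stabilizers act freely on its fibres, and $\II_{\XX,\eta}\to\Pi_\XX$ is injective; this is \cite{NOO04} Theorem~6.2 for the uniformizable stack $\XX$.
\end{itemize}
No extension over stacky points is needed. This is just the construction in the proof of Proposition~\ref{prop:monoXoverXrig} without the sequence as input, so your circularity concern disappears.

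The paper argues analytically instead. It identifies $\Delta_\XX$ with the profinite completion of the orbifold fundamental group of $\XX(\CC)^{\rm an}$ and gets injectivity of the generic stabilizer from van Kampen, as in \cite{NOO04} Example~6.3. It takes the geometric sequence from \cite{MR2279100} Proposition~7.5.
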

	
	\begin{proof}
		Since a Deligne-Mumford curve is geometrically unibranch, the exact sequence is given by \cite{LV18} Corollary~2.9, where it is sufficient to check the left exactness at the analytification level, since $\widehat{\pi}_1^{\rm et}(\XX(\CC)^{\rm an})\simeq\Delta_\XX\hookrightarrow\Pi_\XX$:
		Since $\XX(\CC)^{\rm an}$ is one-dimensional, an application of van Kampen theorem as in \cite{NOO04} Example~6.3 implies the injectivity $\II_{\XX,\eta}\hookrightarrow \Delta_\XX$.
		The exact sequence for the geometric fundamental groups is given by \cite{MR2279100} Proposition~7.5.
	\end{proof}
	
	
	The following is \cite{NOO04} Theorem~7.12. 
	\begin{proposition}\label{prop:hocoar}
		Let $\XX$ be a Deligne-Mumford curve, and denote $N_\XX=\langle \II_{\XX,x}\rangle$ the closure in $\pi_1^{\rm et}(\XX,x)$ of the subgroup generated by the inertia groups of all the closed geometric points of $\XX$.  One has a homotopy-coarsification exact sequence
		\begin{equation}\label{Eq:inCoa}
			1 \longrightarrow N_\XX \longrightarrow \pi_1(\XX,x) \longrightarrow \pi_1(\XXcoa,x) \longrightarrow 1.
		\end{equation}
	\end{proposition}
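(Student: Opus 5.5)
The plan is to realize the coarse map at the level of Galois categories and to identify the kernel of the induced map on fundamental groups with the normal closure of the inertia images. The map $\pi\colon\XX\to\XXcoa$ induces a functor $\mathrm{FEt}(\XXcoa)\to\mathrm{FEt}(\XX)$, $V\mapsto V\times_{\XXcoa}\XX$, and hence a homomorphism $\pi_*\colon\pi_1(\XX,x)\to\pi_1(\XXcoa,x)$. I would then carry out three steps:
\begin{itemize}
\item prove that $\pi_*$ is surjective;
\item prove that $N_\XX\subseteq\ker\pi_*$;
\item prove that $\ker\pi_*\subseteq N_\XX$; this step carries the real content.
\end{itemize}

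\textbf{Surjectivity.} By the Galois-category dictionary, surjectivity amounts to two facts: the pullback of a connected finite étale cover of $\XXcoa$ is connected, and the pullback functor is fully faithful on connected objects. Both follow because $\pi$ is a proper universal homeomorphism on underlying spaces. Moreover, coarse spaces commute with flat base change, so $V$ is the coarse space of $V\times_{\XXcoa}\XX$ for $V$ étale over $\XXcoa$. Normality of $\XX$ together with $\pi_*\mathcal O_\XX=\mathcal O_{\XXcoa}$ gives connectedness.

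\textbf{Inclusion $N_\XX\subseteq\ker\pi_*$.} For a geometric point $\bar x$, the composite $\II_{\XX,\bar x}\xrightarrow{\omega_{\bar x}}\Pi_\XX\to\pi_1(\XXcoa)$ factors through $\pi_1$ of the residual gerbe $B\II_{\XX,\bar x}$ mapped to a geometric point of the coarse space. It is therefore trivial, and $\ker\pi_*$ is a closed normal subgroup containing every $\omega_{\bar x}(\II_{\XX,\bar x})$.

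\textbf{Inclusion $\ker\pi_*\subseteq N_\XX$.} Let $\YY\to\XX$ be a connected finite étale cover, corresponding to an open subgroup $U$ containing $N_\XX$. Then every stack inertia group acts trivially on the fibre over $\bar x$, so the induced map $\II_{\YY,\bar y}\to\II_{\XX,\bar x}$, which is injective, is an isomorphism at every point. Using the étale local description of $\XX$ as $[\spec A/G]$ given above, $\YY$ is étale-locally $[\spec B/G]$ with $\spec B\to\spec A$ finite étale and $G$-equivariant, with stabilizers preserved. Hence $\spec B^G\to\spec A^G$ is finite étale, so $\YYcoa\to\XXcoa$ is finite étale and $\YY\simeq\YYcoa\times_{\XXcoa}\XX$. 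This shows every cover trivialized by $N_\XX$ descends, so $\ker\pi_*\subseteq N_\XX$.

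The main obstacle is this last local computation: checking that $B^G$ is étale over $A^G$ when stabilizers are preserved, and that the descent glues globally. I would do it by passing to strict henselizations at a point and reducing to the statement that $[\spec B/G]\to[\spec A/G]$ is étale with isomorphic stabilizers. Such a map is then a pullback from the coarse space, by the standard stabilizer-preserving criterion of Abramovich–Vistoli and Kresch–Vistoli type. The identifications glue because coarse spaces are functorial and compatible with étale base change.
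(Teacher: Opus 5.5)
Your argument is correct, but it does not follow a proof in the paper, because the paper gives none: the proposition is simply quoted from \cite{NOO04} Theorem~7.12, Noohi's general computation of the fundamental group of a coarse moduli space. What you supply is a self-contained proof in the Deligne--Mumford curve setting, in three parts.
\begin{enumerate}
\item Surjectivity of $\pi_*$ holds because $\pi\colon\XX\to\XXcoa$ is a universal homeomorphism. By the Galois-category criterion, preservation of connectedness alone suffices; full faithfulness is an equivalent condition, not an additional one.
\item The inclusion $N_\XX\subseteq\ker\pi_*$ follows from functoriality of $\omega_{\bar x}$ and the triviality of inertia on the coarse space.
\item The reverse inclusion comes from showing that a connected cover whose open subgroup $U$ contains $N_\XX$ is inertia-preserving, and is therefore pulled back from a finite étale cover of $\XXcoa$.
\end{enumerate}
The citation buys brevity and generality. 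Your route makes explicit which inputs are used: the Keel--Mori universal homeomorphism, the étale-local structure $\XX\simeq[\spec A/G_{\bar x}]$ over the coarse space, and stabilizer-preserving descent. It also applies equally to $\Pi_\XX$ and to $\Delta_\XX$, the latter by base change to $\bar K$.

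A few points to make precise if you write it up.
\begin{itemize}
\item It is the \emph{normality} of $N_\XX$ that makes every conjugate of every $\omega_{\bar x}(\II_{\XX,\bar x})$ act trivially on $\Pi_\XX/U$.
\item The local structure you need is the general one for separated Deligne--Mumford stacks. There $G_{\bar x}$ may act non-faithfully because of generic inertia, so the rigidified sketch given earlier in the paper is not enough.
\item You can apply the Abramovich--Vistoli stabilizer-preserving criterion directly to the representable finite étale map $\YY\to\XX$, which makes the henselian computation unnecessary. Finiteness of $\YYcoa\to\XXcoa$ then follows from étale plus proper. Properness holds because $\YY\to\XXcoa$ is proper and $\YY\to\YYcoa$ is a universal homeomorphism.
\end{itemize}
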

	Note that, since a hidden étale path $x\rightsquigarrow x'$ between closed points has the effect of conjugating the stack inertia groups, the group $N_\XX$ is a normal subgroup of $\pi_1(\XX,x)$.
	
	\subsubsection{}\label{subsubs:FondES}
	For $\XX$ a Deligne-Mumford stack over $K$ that is quasi-compact and geometrically connected, it follows from \cite{Zoo01} Cor.~6.6 that one has a fundamental arithmetic-geometric exact sequence 
	\[
	1\to\Delta_\XX\to\Pi_\XX\to \Gk\to 1,\text{ which gives an outer Galois representation } \Gk\to \Out(\Delta_\XX)
	\]
	where $\mathrm{Out}(\Delta_{\mathcal{X}})$ denotes the outer automorphism group of $\Delta_{\mathcal{X}}$ -- the quasi-compactness implies the right-exactness, and the geometric connectedness implies the left-exactness.
	
	For simplicity, let us assume that a splitting of the exact sequence above is given, for example by a rational point $s\colon \spec K\to \XX$, so that 
	\begin{center}\itshape 
		the previous outer Galois representation lifts to a Galois action $\varphi_s\colon G_K\longrightarrow \Aut(\Delta_\XX)$.
	\end{center}
	\medskip
	
	As a substitute for a general description of this action -- including the case where $\XX$ is a global quotient as in Proposition~\ref{prop:monoXoverXrig} so that the cuspidal inertia is potentially of the form $I_D\simeq \widehat{\ZZ}\rtimes H$ with $H$ \emph{non-cyclic} finite group -- we recall the motivational example of $\XX=\MM_{g,[r]}$ the moduli stack of curves of genus $g$ with $r$ marked points. They are Deligne-Mumford over $\QQ$, and the above $G_\QQ$-action $\varphi_s$ is given on an inertia group $\II \leq \Pi_\XX$ by conjugacy-cyclotomy, that is:
	\[
	\sigma.\gamma=g_{\gamma,\sigma}^{-1}.\gamma^{\chi_\sigma}.g_{\gamma,\sigma} \text{ where } \gamma\in \II \text{ and for }\sigma\in G_\mathbb{Q}
	\]
	in the situations where
	\begin{enumerate}
		\item $\II\simeq \II_{D_i}\leq \Pi_\XX$ is a \emph{cuspidal inertia group} associated to an irreducible component $D_i$ of the border of the Deligne-Knudsen compactification of $\MM_{g,[r]}$, see for example \cite{Nak02};
		\item  $\II\simeq\II_{\XX,\bar{x}}\leq \Pi_\XX$ is a \emph{cyclic} stack inertia group -- see \cite{CM23}.
	\end{enumerate}
	which both rely on the existence of a compactification of $\XX$ with normal crossing divisors and an intricate process in tracking the geometry of curves.

	\medskip
	
	In the following, we further consider the (geometrically) \emph{maximal $m$-step solvable quotient\footnote{The anabelian literature commonly uses the notation $\Pi_{\mathcal{X}}^{(m)}$, which has been updated so that it better reflects the non-canonical choice of taking the geometric quotient and that it respects the group theory notation for derived groups.} $\Pi_\XX^{\Delta-m}\coloneqq \Pi_{\mathcal{X}}/\Delta^{(m)}_{\mathcal{X}}$}, which also fits into a natural exact sequence
	\begin{equation}\label{def:hom-exact-2}
		1 \to \Delta_{\mathcal{X}}^{m} \to \Pi_\XX^{\Delta-m}\to G_K\to 1, \text{ and gives } \Gk\to \Out(\Delta_{\mathcal{X}}^{m})
	\end{equation}
	as before. In particular, this endows $\Delta_{\mathcal{X}}^{\mathrm{ab}}$ with a structure of $\Gk$-module.

	\subsection{The Grothendieck conjecture for Deligne-Mumford curves}\label{sub:GCDM}
	
	\subsubsection{}\label{subsec2.1.2} 
	The group theoretic characterization of the genus $g_\XX$ (resp. the number of cusps $r_\XX$) follows from group-theoretic data at the level of the coarse moduli space $\XXcoa$ which is a smooth (schematic) curve (resp. at the level of the rigidification $\XXrig$, which is a smooth stacky curve).
	
	\medskip
	
	Indeed, since by \cite{NOO04} Theorem~7.12, the kernel of $\Delta_\XX^{\rm ab}\to \Delta_{\XXcoa}^{\rm ab}$ coincides with the subgroup of $\Delta_\XX^{\rm ab}$ generated by all the torsion subgroups, one obtains an isomorphism of $G_K$-modules $\Delta_{\XXcoa}^{\rm ab}\simeq \Delta_{\XX,{\rm tf}}^{\rm ab}$. One can then proceed in terms of Frobenius weight as in \cite{MR1072981} \S~2.4 by considering the following exact sequence (resp. isomorphism) of $G_K$-modules:
	\begin{equation}\label{wf2}
		0 \rightarrow \hat{\mathbb{Z}}(1) \rightarrow \mathbb{Z}[D(\overline{K})]\bigotimes_{\mathbb{Z}} \hat{\mathbb{Z}}(1)\xrightarrow{f} \Delta_{\XXcoa}^{\mathrm{ab}}\rightarrow T(J_{\XXcoa})
		\rightarrow 0 \quad \text{when } r_{\mathcal{X}}\neq 0
	\end{equation}
	(resp. $\Delta_{\XXcoa}^{\mathrm{ab}}\xrightarrow{\sim}T(J_{\XXcoa})$, when $r_{\mathcal{X}}=0$), where $\mathbb{Z}[D(\overline{K})]$ is the free $\mathbb{Z}$-module with basis the cusps $D(\overline{K})$ of $\XXcoa$, $\hat{\mathbb{Z}}(1)$ denotes the Tate twist, and $f$ sends $v\otimes 1$ to a (topological) generator of the (cuspidal) inertia group $I_{v,\Delta_{\XXcoa}^{\mathrm{ab}}}$ at $v\in D(\overline{K})$. The $G_{K}$-representations on $\mathbb{Z}[D(\overline{K})]\bigotimes_{\mathbb{Z}} \hat{\mathbb{Z}}(1)$ and $T(J_{\XXcoa})$ then have Frobenius weights $-2$ and $-1$, respectively, by ibid. -- see also \cite{MR4745885} Lemma~1.2 for a positive characteristic version.
	
	\begin{proposition}\label{lem:reconstructionsgr}
		Let $\mathcal{X}$ and $\mathcal{X}'$ be Deligne-Mumford curves over a field $K$ that is finitely generated over $\mathbb{Q}$, and assume that $\XXrig$ is non-perfect. In this case, if $\Pi_{\mathcal{X}}^{\Delta-m}$ and $\Pi_{\mathcal{X}'}^{\Delta-m}$ are isomorphic over $G_K$ for $m=1$ (resp. for $m=3$), then $g_{\mathcal{X}}=g_{\mathcal{X}'}$  (resp. $r_{\mathcal{X}}=r_{\mathcal{X}'}$). 
	\end{proposition}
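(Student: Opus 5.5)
For the genus ($m=1$), I would work with the $G_K$-module $\Delta_\XX^{\rm ab}$ and use the isomorphism of $G_K$-modules $\Delta_{\XXcoa}^{\rm ab}\simeq \Delta^{\rm ab}_{\XX,\rm tf}$ recalled above. A $G_K$-compatible isomorphism $\Pi_\XX^{\Delta-1}\simeq\Pi_{\XX'}^{\Delta-1}$ restricts to an isomorphism $\Delta_\XX^{\rm ab}\simeq\Delta_{\XX'}^{\rm ab}$ of $G_K$-modules, well defined since inner automorphisms act trivially on an abelian group. It therefore induces an isomorphism of the maximal torsion-free quotients, and hence $\Delta_{\XXcoa}^{\rm ab}\simeq\Delta_{\XX'_{\rm coarse}}^{\rm ab}$ as $G_K$-modules. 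Specializing at a prime of good reduction of a model of $K$ finitely generated over $\QQ$, I would take Frobenius weights in Eq.~\eqref{wf2}. The weight $-1$ part is $T(J_{\XXcoa})\otimes\QQ_\ell$, of rank $2g_\XX$, and it is read off as the rank of the quotient of $\Delta^{\rm ab}_{\rm tf}\otimes\QQ_\ell$ by its weight $-2$ part. Hence $2g_\XX=2g_{\XX'}$. Alternatively, the weight $-2$ part has rank $\max(r-1,0)$, which recovers $r$ already at $m=1$ whenever $r\geq 2$. The only ambiguity at this level is between $r=0$ and $r=1$.

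For the cusps ($m=3$), this residual ambiguity $r\in\{0,1\}$ is the point of the $3$-step hypothesis. By Proposition~\ref{prop:HoRigSeq}, $\Delta_\XX$ is an extension of $\Delta_{\XXrig}$ by the finite group $\II_{\XX,\eta}$. To separate $r=0$ from $r\geq1$, I would invoke the affineness criterion of Proposition~\ref{prop:3steprecoofaffine}~(ii) on $\Delta_{\XXrig}^3$. For this I would first recover $\Delta_{\XXrig}^3$ (or at least its relevant subquotients) from $\Delta_\XX^3$, for instance by killing the image of the maximal finite normal subgroup. In the hyperbolic case this is legitimate by slimness (Proposition~\ref{prop:Zorbi}) together with Proposition~\ref{prop:center-free}. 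In the non-hyperbolic case I would instead use Table~\ref{fig:Table_nonhyperbolic_F_groups} and a direct check. Hyperbolicity itself is detected at $m=3$ by Proposition~\ref{prop:3steprecoofaffine}~(i), using the non-perfectness of $\XXrig$. Combining affineness with the weight computation above, I would then argue that $r_\XX=\operatorname{rank}(\text{weight }-2)+\varepsilon$ is an invariant, where $\varepsilon\in\{0,1\}$ is the affineness indicator.

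The main obstacle is passing from $\Pi^{\Delta-3}_\XX$ to the $3$-step quotient of the rigidified group, since the generic inertia $\II_{\XX,\eta}$ might not survive, or might survive only partially, in $\Delta_\XX^3$. If it does not survive, I would use the fact that $\Delta_\XX^3$ surjects onto $\Delta_{\XXrig}^3$ with finite kernel. Then I would run the criterion of Proposition~\ref{prop:3steprecoofaffine}~(ii) for open subgroups up to finite kernels: torsion-freeness of $H^2$ and the Chen rank identity are rational invariants once the finite part is killed. I would check that this finite perturbation does not change $\Theta_1$ or $\Theta_2$ after tensoring with $\QQ_\ell$.
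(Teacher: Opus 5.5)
Your treatment of the genus, and your architecture for the cusps, are exactly the paper's proof. That architecture is: the weight $-2$ part of $\Delta^{\rm ab}_{\XX,{\rm tf}}$ has rank $r_\XX-\varepsilon_\XX$, and $\varepsilon_\XX$ is read off from Proposition~\ref{prop:3steprecoofaffine}~(ii) applied to $\Delta^3_{\XXrig}$. The paper passes from $\Pi^{\Delta-3}_\XX$ to $\Delta^3_{\XXrig}$ by citing Proposition~\ref{prop:IrigcoaRec}, but that result characterizes $\II_{\XX,\eta}$ only inside the full group $\Delta_\XX$.

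You correctly single out this passage as the crux, but your justification of it fails.
\begin{itemize}
\item Proposition~\ref{prop:Zorbi} is slimness of the full profinite $F$-group. It says nothing about finite normal subgroups of its $3$-step quotient.
\item Proposition~\ref{prop:center-free} gives center-freeness of $\Delta^m$ only for $m\geq m(\Delta)+2$, hence at $m=3$ only when $m(\Delta)\leq 1$. In any case, center-freeness does not exclude finite normal subgroups.
\item For the hyperbolic non-perfect signature $(0,0;\{2,3,8\})$, $\DOne$ has signature $(0,0;\{3,3,4\})$ and $\Delta^{(2)}$ has signature $(0,0;\{4,4,4\})$. So $\Delta^3$ is finite of order $96$.
\item For $(0,0;\{5,6,21\})$, $\Delta^3$ is infinite, but $\Delta^{(2)}$ has signature $(6,0;\{2,\dots,2,7,\dots,7\})$ with $25$ copies of each period. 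So the torsion of $\Delta^{(2)}/\Delta^{(3)}$ is a nontrivial finite normal subgroup of $\Delta^3$.
\end{itemize}
Hence the image of $\II_{\XX,\eta}$ is not the maximal finite normal subgroup of $\Delta^3_\XX$, and killing the latter does not return $\Delta^3_{\XXrig}$. Your fallback does not close this either. Through Lemma~\ref{lemma:torsion}, the criterion of Proposition~\ref{prop:3steprecoofaffine}~(ii) hinges on torsion-freeness of $H^2$, which is exactly what is not invariant under a finite kernel. Chen ranks are rationally insensitive to a finite kernel; torsion is not.

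In fact no argument can close the gap in the stated generality, because the $r$-assertion fails there.
\begin{itemize}
\item Non-hyperbolic case: $[\mathbb{A}^1/\mu_n]$ and $[\mathbf{P}^1/\mu_n]$, of signatures $(0,1;\{n\})$ and $(0,0;\{n,n\})$, both have $\Pi\simeq\mu_n(\overline{K})\rtimes G_K$. Yet $r=1$ in the first case and $r=0$ in the second. So your ``direct check'' with Table~\ref{fig:Table_nonhyperbolic_F_groups} cannot succeed.
\item With generic inertia, hyperbolicity of $\XX$ alone is not enough. Let $\XX$ be the $(0,0;\{2,3,8\})$ orbicurve and $G=\Delta^3_\XX$. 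Enlarge $K$ so that the action of $G_K$ on $G$ defined by a rational point becomes trivial. Then $\Pi^{\Delta-3}_\XX\simeq G\times G_K\simeq\Pi^{\Delta-3}_{\XX'}$ for $\XX'=\mathbb{A}^1\times BG$, while $r_\XX=0\neq 1=r_{\XX'}$.
\end{itemize}
So extra hypotheses are needed, for example trivial generic inertia together with hyperbolicity, which is the setting of the application in Theorem~\ref{thm:mstep_generalGenus}. In that setting your argument is complete and coincides with the paper's.

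If you want to keep generic inertia with both curves hyperbolic, bypass rigidification altogether. Take an open $U<\Pi^{\Delta-3}_\XX$ containing $\Delta^{(2)}_\XX/\Delta^{(3)}_\XX$. Lemma~\ref{lemma:mstep} identifies $(U\cap\Delta^3_\XX)^{\rm ab}$ with the abelianized geometric fundamental group of the corresponding finite étale cover, as a Galois module. The weight $-2$ part of its torsion-free quotient is nonzero if and only if that cover has at least two cusps. Such a $U$ exists if and only if $r_\XX\geq 1$, using Lemma~\ref{cor:r_greater} in the $(0,1)$ case.
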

	
	\begin{proof}
		We proceed as described above under the identification $\Delta_{\XXcoa}^{\rm ab}\simeq \Delta_{\XX,{\rm tf}}^{\rm ab}$. Let  $W_2$ be the maximal $G_K$-submodule of $\Delta_{\XX,{\rm tf}}^{\rm ab}$ that has weight $-2$.
		Then we have
		\begin{align}
			&\mathrm{rank}_{\hat{\mathbb{Z}}}(W_2)
			= r_{\mathcal{X}}-\varepsilon_{\mathcal{X}} \label{eq:cups}\\ 
			&\mathrm{rank}_{\hat{\mathbb{Z}}}(\Delta_{\XX,{\rm tf}}^{\rm ab}/W_2)
			= 2g_{\mathcal{X}}
		\end{align}
		where $\varepsilon_{\mathcal{X}}=0$ (resp. $\varepsilon_{\mathcal{X}}=1$) if $r_{\mathcal{X}}=0$ (resp. $r_{\mathcal{X}}\ge 1$).
		If $\Pi_{\mathcal{X}}^{\Delta-m}$ and $\Pi_{\mathcal{X}'}^{\Delta-m}$ are isomorphic over $G_{K}$ for $m\geq 1$,
		then $\Delta_{\XX,{\rm tf}}^{\rm ab}$ and $\Delta_{\mathcal{X}',{\rm tf}}^{\mathrm{ab}}$ are isomorphic as $G_{K}$-modules, and it follows directly that $g_{\mathcal{X}}=g_{\mathcal{X}'}$.
		
		\medskip
		
		The affineness characterization by the $3$-step quotient of $\XXrig$ -- which, group-theoretically, is obtained by Proposition~\ref{prop:IrigcoaRec} -- as in Proposition~\ref{prop:3steprecoofaffine}~(ii) implies $r_{\mathcal{X}}\geq 1$ if and only if $r_{\mathcal{X}'}\geq 1$, then Eq.~\eqref{eq:cups} gives $r_{\mathcal{X}}=r_{\mathcal{X}'}$.
	\end{proof}
	
	\begin{remark}
		As in the affineness characterization of Proposition~\ref{prop:3steprecoofaffine}~(ii) one can give a mono-anabelian characterization of $g_\XX$ and $r_\XX$, for example
		$g_\XX=1/2\,\mathrm{rank}_{\hat{\mathbb{Z}}}(\Delta_{\mathcal{X}}^{\mathrm{ab}/{\rm tor}}/W_{2})$.
		The characterization of $r_\XX$ is more intricate and involves a certain group $H$ and the equation on Chen ranks, see ibid.    
	\end{remark}
	
	\begin{wrapfigure}[5]{r}{4cm}
		\vspace*{-2em}
		\centering
		\begin{tikzcd}[row sep=12pt, column sep=5pt]
			\Delta_\XX\ar[d,two heads] \ar[r]& \Pi_\XX \ar[d, two heads]\ar[dr]& \\
			\Delta_{\XXrig} \ar[d,two heads] \ar[r]& \Pi_{\XXrig} \ar[d, two heads]\ar[r]& G_K \\
			\Delta_{\XXcoa}\ar[r] & \Pi_{\XXcoa} \ar[ur] & 
		\end{tikzcd}
	\end{wrapfigure}
	\subsubsection{}\label{subsub:rigidification}
	We now provide some stack anabelian properties. For $\XX$ and $\YY$ Deligne-Mumford stacks, recall that we denote by $\Isom_{G_K} (\Pi_\YY,\Pi_\XX)/\sim_{\Delta_\XX}$ the $\pi_1^{\rm et}(\XX\times\bar{K})$-conjugacy orbit of the isomorphisms $\Pi_\YY\to\Pi_\XX$ that commute over $G_K$ -- and similarly the automorphism version.
	
	\medskip
	
	From the immediate commutativity of the diagram on the right, 
	where $\Delta_{\XXrig}\simeq \Delta_{\XX}/\II_{\XX,\eta}$ and $\Delta_{\XXcoa}\simeq \Delta_\XX/N_\XX$, we then obtain
	\begin{equation}\label{diag:AnabIsomPiX}
		\begin{tikzcd}
			\Isom_K(\YY,\XX) \ar[r,"\Psi"]\ar[d]& \Isom_{G_K}(\Pi_{\YY},\Pi_{\XX})/\sim_{\Delta_{\XX}}\ar[d,two heads]\\
			\Isom_K(\YYrig,\XXrig) \ar[r,"\Psi_{\rm rig}"]\ar[d]& \Isom_{G_K}(\Pi_{\YYrig},\Pi_{\XXrig})/\sim_{\Delta_{\XXrig}} \ar[d,two heads]\\
			\Isom_K(\YYcoa,\XXcoa) \ar[r,"\Psi_{\rm coarse}"]&\Isom_{G_K}(\Pi_{\YYcoa},\Pi_{\XXcoa})/\sim_{\Delta_{\XXcoa}}
		\end{tikzcd}
	\end{equation}
	whose aim in this section is to study the bijectivity of the horizontal arrows.
	
	\begin{proposition}\label{prop:IrigcoaRec}
		For $\XX$ a hyperbolic Deligne-Mumford curve over a number field $K$, the following hold
		\begin{enumerate}
			\item\label{it:RelAnI} The stack inertia groups of closed points $\II_{\XX,\bar{x}}$ are characterized as the maximal finite nontrivial closed subgroups of $\Delta_\XX$ and the generic stack inertia group $\II_{\XX,\bar{\eta}}$ is the intersection of all such groups. Furthermore, $\II_{\XX,\bar{\eta}}$ is characterized as the unique maximal finite closed normal subgroup of $\Delta_\XX$.
			\item\label{it:PIZ} The generic stack inertia group is such that $Z(\Delta_\XX)<\II_{\XX,\eta}$. 
		\end{enumerate}
		As a consequence, a $G_K$-isomorphism $\Pi_\mathcal{Y}\to\Pi_\XX$ induces an isomorphism $\II_{\YY,\eta}\to \II_{\XX,\eta}$ between generic inertia groups, and a $G_K$-isomorphism $\Pi_{\YYrig}\to \Pi_{\XXrig}$, and some $K$-isomorphisms between the coarse and rigidified spaces.
	\end{proposition}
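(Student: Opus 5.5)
The plan is to work with the extension of Proposition~\ref{prop:HoRigSeq},
\[
1\to \II_{\XX,\eta}\to \Delta_\XX\xrightarrow{\ p\ } \Delta_{\XXrig}\to 1,
\]
where $\II_{\XX,\eta}$ is finite and $\Delta_{\XXrig}$ is a hyperbolic profinite $F$-group, because $\XX$ is hyperbolic if and only if $\XXrig$ is. All finite-subgroup statements for $\Delta_\XX$ will be reduced to the corresponding statements for $\Delta_{\XXrig}$. The tools are the maximality and trivial-intersection facts for the conjugates of the $\langle\delta_i\rangle$ (\cite{MR2402513} Lemma~2.1, as used in Proposition~\ref{prop:SigIsoInv}) and the slimness of Proposition~\ref{prop:Zorbi}.

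For \ref{it:RelAnI}, I will use two facts about $\Delta_{\XXrig}$. First, every finite subgroup of a profinite $F$-group lies in a conjugate of some $\langle\delta_i\rangle$, and these cyclic groups are the maximal finite subgroups. Second, $\Delta_{\XXrig}$ is infinite, so it has no nontrivial finite normal subgroup: such a subgroup would meet a torsion-free open normal subgroup (Theorem~\ref{thm:classicalproFfenchel}) trivially, hence centralize an open subgroup, which slimness forbids.

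Now let $F<\Delta_\XX$ be finite. Then $p(F)$ is finite, so it lies in a conjugate of some $\langle\delta_i\rangle$ or is trivial. Hence $F\subseteq p^{-1}(g\langle\delta_i\rangle g^{-1})$, and this preimage is finite. Via the homotopy injection $\omega_{\bar x}$ and $\II_{\XXrig,\bar x}\simeq \II_{\XX,\bar x}/\II_{\XX,\eta}$, it is exactly the stack inertia group $\II_{\XX,\bar x}$ at the corresponding stacky point. At a non-stacky closed point the preimage is $\II_{\XX,\eta}$ itself. So the maximal finite subgroups are the $\II_{\XX,\bar x}$.

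Intersecting two non-conjugate such groups gives $p^{-1}(\langle\delta_i\rangle\cap\langle\delta_j\rangle)=\II_{\XX,\eta}$. The same holds for distinct conjugates, by the trivial-intersection part of Lemma~2.1 of \cite{MR2402513}. Hence the intersection of all of them is $\II_{\XX,\eta}$.

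For normality: if $N\triangleleft\Delta_\XX$ is finite, then $p(N)$ is a finite normal subgroup of $\Delta_{\XXrig}$, hence trivial. So $N\subseteq\II_{\XX,\eta}$, which gives uniqueness and maximality.

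For \ref{it:PIZ}, I will show that $p(Z(\Delta_\XX))$ is central in $\Delta_{\XXrig}$ by surjectivity of $p$. Proposition~\ref{prop:Zorbi} then gives $p(Z(\Delta_\XX))=1$, so $Z(\Delta_\XX)<\II_{\XX,\eta}$.

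For the consequences, let $\Phi\colon\Pi_\YY\to\Pi_\XX$ be a $G_K$-isomorphism. It restricts to $\Delta_\YY\simeq\Delta_\XX$, being the kernels of the maps to $G_K$. By \ref{it:RelAnI}, it maps the unique maximal finite normal subgroup onto the corresponding one, so $\II_{\YY,\eta}\simeq\II_{\XX,\eta}$. Since $\II_{\XX,\eta}$ is characteristic in $\Delta_\XX$, it is normal in $\Pi_\XX$, and $\Phi$ descends to $\Pi_{\YY}/\II_{\YY,\eta}\simeq\Pi_{\XX}/\II_{\XX,\eta}$. By Proposition~\ref{prop:HoRigSeq} these quotients are $\Pi_{\YYrig}$ and $\Pi_{\XXrig}$.

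Likewise, $\Phi$ carries the set of all $\II_{\XX,\bar x}$ to the corresponding set, so it carries $N_\YY$ onto $N_\XX$ and, by Proposition~\ref{prop:hocoar}, induces $\Pi_{\YYcoa}\simeq\Pi_{\XXcoa}$ over $G_K$. The $K$-isomorphisms then come from the weak Grothendieck conjecture for hyperbolic stacky curves over number fields (\cite{Hos22} Theorem~4.5) applied to the rigidifications. Passing to coarse spaces gives the coarse isomorphism; alternatively, Mochizuki's theorem applies directly when the coarse space is hyperbolic.

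The main obstacle is identifying $p^{-1}(g\langle\delta_i\rangle g^{-1})$ with an actual stack inertia group $\II_{\XX,\bar x}$, rather than just with some finite subgroup. This requires knowing that the images $\omega_{\bar x}(\II_{\XX,\bar x})$ exhaust these preimages. I would argue this via the analytic description $\Delta_\XX\simeq\widehat{\pi}_1^{\rm orb}$ with local groups at orbifold points, where $\II_{\XX,\bar x}$ is an extension of $\II_{\XXrig,\bar x}$ by $\II_{\XX,\eta}$ and has the same order as the preimage. The injectivity from \cite{NOO04} Theorem~6.2 then finishes the identification.
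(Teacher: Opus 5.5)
Your proposal is correct and follows essentially the same route as the paper: you reduce to $\Delta_{\XXrig}$ via the rigidification sequence, identify the maximal finite subgroups as preimages of the orbifold inertia groups of $\XXrig$, and use the hyperbolicity of $\Delta_{\XXrig}$ to kill both finite normal subgroups and the center, before concluding with \cite{Hos22} Theorem~4.5. The differences are only in the cited tools and in the level of detail. You use slimness together with a torsion-free open normal subgroup where the paper invokes elasticity (\cite{AbsTopI} Proposition~2.3), and the description of maximal finite subgroups from \cite{MR2402513} Lemma~2.1 where the paper cites \cite{Moc07} Lemma~2.11; you also make explicit the order count identifying each preimage with an actual stack inertia group $\II_{\XX,\tilde x}$, which the paper leaves implicit.
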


	In other words, $\Psi_{\rm rig}$ and $\Psi_{\rm coarse}$ are surjective. Note that the group-theoretic properties of the stack inertia groups are \emph{relative and mono-anabelian} -- that is, they depend on the data of the morphism $\Pi_\XX\to\Gk$.

	\begin{proof}
		For \ref{it:RelAnI}, let $I<\Delta_\XX$ be such a group and consider the exact sequence \eqref{Eq:inRig}, with $\rho\colon\Delta_{\XX}\to \Delta_{\XXrig}$. Then by \cite{Moc07} Lemma~2.11, there exists a stack inertia group $\II_{\XXrig,\bar{x}}<\Pi_{\XXrig}$ such that $\II_{\XXrig,\bar{x}}>\rho(I)$, and one further has 
		\[
		1 \longrightarrow \II_{\XX,\eta} \longrightarrow \rho^{-1}(\II_{\XXrig,\bar{x}})\longrightarrow\II_{\XXrig,\overline{x}} \longrightarrow 1
		\]
		with $\rho^{-1}(\II_{\XXrig,\bar{x}})>I$ and equality by the maximality assumption. Finally $I\simeq \II_{\XX,\tilde{x}}$, with $\tilde{x}$ a closed point of $\XX$ such that $\rho(\tilde{x})=\bar{x}$.
		
		Consider $I$ a closed normal subgroup of $\Delta_\XX$. From \cite{AbsTopI} Proposition 2.3~(i) we have that $\Delta_{\XXrig}$ is elastic, so that the image of $I$ in $\Delta_{\XXrig}$ is trivial. It follows that $\II_{\XX,\eta}$ contains $I$ and is maximal among closed normal finite subgroups of $\Delta_\XX$.
		
		For \ref{it:PIZ}, by the exact sequence \eqref{Eq:inRig}, an element of $Z(\Delta_\XX)$ gives an element of $Z(\Delta_{\XXrig})$ which, since $\XXrig$ is a stacky curve, is trivial by the slimness of stacky curves as in Proposition~\ref{prop:Zorbi}.   
		
		A $G_K$-isomorphism $\Pi_{\YY}\to \Pi_{\XX}$ induces an isomorphism between geometric fundamental groups $\Delta_{\YY}\to \Delta_{\XX}$ which itself gives isomorphisms $\II_{\YY,\eta}\to \II_{\XX,\eta}$  (resp. $N_\YY\to N_\XX$) from the group theoretic characterizations of the first part. Then, by corestriction in the homotopy sequence~\eqref{Eq:inRig} (resp. \eqref{Eq:inCoa}), also an isomorphism $\Pi_{\YYrig}\longrightarrow \Pi_{\XXrig}$ (resp. $\Pi_{\YYcoa}\longrightarrow \Pi_{\XXcoa}$).
		
		Following \cite{Hos22} Theorem~4.5, since the Galois geometric condition is automatically satisfied over number fields, the isomorphism $\Pi_{\YYrig}\longrightarrow \Pi_{\XXrig}$ lifts to a unique isomorphism $\YYrig\longrightarrow\XXrig$, which in turn induces a unique isomorphism $\YYcoa\longrightarrow\XXcoa$.
	\end{proof}
	
	While of more restricted scope, see Remark \ref{rem:hypRigAnabRel}~(i), the following is of independent interest.
	
	\begin{proposition}\label{prop:BianabXrig}
		For $\XX$ a Deligne-Mumford curve over a number field $K$, such that $\XXcoa$ is hyperbolic, the mono-anabelian characterization of the stack inertia groups at closed points of $\XX$ implies the bijectivity of $\Psi_{\rm coarse}$ and the weak Grothendieck conjecture for $\XXrig$.
	\end{proposition}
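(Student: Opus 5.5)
The plan is to reduce everything to the classical Grothendieck conjecture for the hyperbolic curve $\XXcoa$ (Tamagawa, Mochizuki), using the relative mono-anabelian characterization of the closed-point stack inertia groups to pass between $\Pi_\XX$, $\Pi_{\XXrig}$ and $\Pi_{\XXcoa}$. Write $\rho\colon \Pi_\XX\to\Pi_{\XXrig}$ and $\pi\colon\Pi_\XX\to\Pi_{\XXcoa}$ for the maps of the exact sequences \eqref{Eq:inRig} and \eqref{Eq:inCoa}.

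\emph{First step: bijectivity of $\Psi_{\rm coarse}$.} Since $\XXcoa$ is a hyperbolic curve over a number field, Mochizuki's theorem (\cite{MR1720187} Theorem~16.5) makes $\Isom_K(\YYcoa,\XXcoa)\to\Isom_{G_K}(\Pi_{\YYcoa},\Pi_{\XXcoa})/\sim_{\Delta_{\XXcoa}}$ bijective. The remaining point is that the classes reached by the coarse row of diagram~\eqref{diag:AnabIsomPiX} are well defined from $\Pi_\XX$. This uses the mono-anabelian characterization: the stack inertia groups are the maximal finite closed subgroups of $\Delta_\XX$, so any $G_K$-isomorphism $\Pi_\YY\to\Pi_\XX$ carries $N_\YY$ onto $N_\XX$ and descends via \eqref{Eq:inCoa}. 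Compatibility with the geometric side is immediate, because $\Psi$ and the descended map both come from functoriality of $\pi_1$.

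\emph{Second step: weak Grothendieck conjecture for $\XXrig$.} Start with a $G_K$-isomorphism $\phi\colon\Pi_{\YYrig}\to\Pi_{\XXrig}$. The stack inertia groups of $\XXrig$ at closed points are its maximal finite subgroups; in $\Delta_{\XXrig}$ these are the conjugates of the $\langle\delta_i\rangle$, which is the argument of Proposition~\ref{prop:SigIsoInv} via \cite{MR2402513}. Hence $\phi$ maps $N_{\YYrig}$ onto $N_{\XXrig}$ and induces $\bar\phi\colon\Pi_{\YYcoa}\to\Pi_{\XXcoa}$. By the first step, $\bar\phi$ comes from a unique $K$-isomorphism $f\colon\YYcoa\to\XXcoa$.

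Next, $\phi$ respects conjugacy classes of the inertia subgroups, together with their orders and their Galois-conjugacy behaviour. The cuspidal decomposition groups of $\Pi_{\XXcoa}$ are recovered group-theoretically, including for points that are not cusps, as in Tamagawa--Mochizuki. From this, $f$ sends each stacky point of $\YYrig$ to a stacky point of $\XXrig$ with the same period. A stacky curve is determined by its coarse space together with the orbifold points and their periods, since it is a root stack over $\XXcoa$ in characteristic zero. So $f$ lifts to an isomorphism $\YYrig\to\XXrig$. Its image under $\Psi_{\rm rig}$ agrees with $\phi$ modulo $\Delta_{\XXrig}$-conjugacy: both induce $\bar\phi$ on the coarse quotient, and any remaining discrepancy is an automorphism of $\Pi_{\XXrig}$ over $G_K$ that is trivial on $\Pi_{\XXcoa}$.

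\emph{Main obstacle.} The hardest step is the last one. I have to show that a $G_K$-automorphism of $\Pi_{\XXrig}$ that induces an inner automorphism on $\Pi_{\XXcoa}$ and preserves every inertia class is itself inner. My first attempt is to pass to a torsion-free open characteristic subgroup given by Theorem~\ref{thm:classicalproFfenchel}, which corresponds to a schematic hyperbolic cover, and then apply the classical conjecture there to obtain a lift. Slimness, Proposition~\ref{prop:Zorbi}, would then ensure that the lift is unique and descends.

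If this is too delicate, I expect to settle for the weak form. That asks only for the existence of the isomorphism $\YYrig\to\XXrig$, which the root-stack reconstruction already provides. This also explains why the statement is of ``more restricted scope'': it needs $\XXcoa$ itself, and not merely $\XXrig$, to be hyperbolic.
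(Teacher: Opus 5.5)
Your proof follows the paper's route. The characterization of closed-point stack inertia as the maximal finite subgroups of $\Delta_{\XXrig}$ gives $\phi(N_{\YYrig})=N_{\XXrig}$, hence $\bar\phi\colon\Pi_{\YYcoa}\to\Pi_{\XXcoa}$. The classical Grothendieck conjecture for the hyperbolic curve $\XXcoa$ (the paper cites \cite{Moc03} Theorem~4.12) then gives the bijectivity of $\Psi_{\rm coarse}$ and the isomorphism $f$. Finally, the root-stack description $\XXrig\simeq\sqrt{D/\XXcoa}$ of \cite{GS17} lifts $f$ to $\YYrig\to\XXrig$. Like the paper, you obtain only the weak form, which is all that is claimed. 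The compatibility $\Psi_{\rm rig}(\tilde f)\sim\phi$ in your ``main obstacle'' is not part of the statement, and your first attempt at it is essentially the Galois-descent argument of \cite{Hos22}.

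The one step that needs repair is how you show that $f$ matches stacky points and periods. Decomposition groups of non-cuspidal points are \emph{not} recoverable from $\Pi_{\XXcoa}$ alone; that would be a section-conjecture-type statement, and Tamagawa--Mochizuki give nothing of the kind. Work in $\Pi_{\XXrig}$ instead, as follows.
\begin{enumerate}
\item Maximal finite subgroups of the hyperbolic $F$-group $\Delta_{\XXrig}$ are self-normalizing. So the decomposition group of a stacky geometric point $\bar x$ is $N_{\Pi_{\XXrig}}(\II_{\XXrig,\bar x})$, an extension of $G_{K(x)}$ by $\II_{\XXrig,\bar x}$. Its image in $\Pi_{\XXcoa}$ is the decomposition group $\bar D_{\bar x}$ of the branch point $x$.
\item Since $\phi$ permutes maximal finite subgroups, it carries these normalizers to one another. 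Hence $\bar\phi(\bar D_{\bar y})=\bar D_{\bar x}$ with $\Card(\II_{\YYrig,\bar y})=\Card(\II_{\XXrig,\bar x})$.
\item $\Pi(f)$ agrees with $\bar\phi$ up to $\Delta_{\XXcoa}$-conjugation, and by functoriality it sends $\bar D_{\bar y}$ to $\bar D_{f(\bar y)}$. So the decomposition groups of the closed points $f(y)$ and $x$ are conjugate.
\item Distinct closed points of a hyperbolic curve over a number field have non-conjugate decomposition groups, by injectivity of the Kummer map via Mordell--Weil. Hence $f(y)=x$ with equal periods.
\end{enumerate}
Only after this does \cite{GS17} give the lift.
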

	
	\begin{proof}
		For a given $G_K$-isomorphism $\Pi_{\YYrig}\longrightarrow\Pi_{\XXrig}$, proceeding as in the conclusion of Proposition \ref{prop:IrigcoaRec}, the group-theoretic characterization of the various stack inertia groups induces an isomorphism $\Pi_{\YYcoa}\longrightarrow\Pi_{\XXcoa}$. Since a number field is in particular a sub-$p$-adic field, the bijection $\Psi_{\rm coarse}$ is then given by \cite{Moc03} Theorem~4.12 for hyperbolic curves.
		
		It further follows the description of \cite{GS17} Theorem~1 of $\XXrig$ as the root stack $\XXrig\simeq \sqrt{D/\XXcoa}$ and the group-theoretic characterization of the inertia groups $\II_{\XXrig,x}$'s at closed points of Proposition~\ref{prop:IrigcoaRec}, that a morphism $\phi_{coarse}\colon \YYcoa \longrightarrow \XXcoa$ can be completed into a commutative diagram
		\[\begin{tikzcd}
			\YYrig \ar[d]\ar[r,dotted]& \XXrig\ar[d]\\
			\YYcoa \ar[r,"\phi_{coarse}"']& \XXcoa
		\end{tikzcd}
		\]
		so as to give a map $\Isom_{G_K}(\Pi_{\YYrig},\Pi_{\XXrig})/\sim_{\Delta_{\XXrig}}\to \Isom_K(\YYrig,\XXrig)$ at the rigidification level.
	\end{proof}
	
	Results of Proposition~\ref{prop:IrigcoaRec} readily apply to the one-dimensional moduli spaces $\MM_{0,[4]}$ and $\MM_{1,1}$, as well as to any of the special loci $\MM_{g,[r]}(\gamma)_{\rm kr}$ of \cite{CM23} with $kr$-data fixed so that the special loci is one-dimensional. We record an additional straightforward one-dimensional result.
	\begin{corollary}
		For $\XX=\MM_{0,[4]}$ or $\MM_{1,1}$ one has $Z(\Delta_\XX)=\II_{\XX,\eta}$. 
	\end{corollary}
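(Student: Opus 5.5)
By Proposition~\ref{prop:IrigcoaRec}~\ref{it:PIZ}, applicable since $\MM_{0,[4]}$ and $\MM_{1,1}$ are hyperbolic Deligne-Mumford curves, we already have $Z(\Delta_\XX)<\II_{\XX,\eta}$. The statement is therefore equivalent to the reverse inclusion $\II_{\XX,\eta}<Z(\Delta_\XX)$. Since $\II_{\XX,\eta}$ is finite and normal in $\Delta_\XX$ by Proposition~\ref{prop:HoRigSeq}, this means showing that the conjugation homomorphism
\[
c\colon \Delta_\XX\longrightarrow \Aut(\II_{\XX,\eta})
\]
is trivial. Because $\Aut(\II_{\XX,\eta})$ is finite, $c$ is continuous with open kernel. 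I plan to check its triviality on the dense discrete subgroup $\widehat{\pi}_1^{\rm orb}\supset\pi_1^{\rm orb}(\XX(\CC)^{\rm an})$, namely the (orbifold) mapping class group $\Gamma_{g,[r]}$, where explicit generators are available.

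For $\XX=\MM_{1,1}$, the discrete group is $\Gamma_{1,1}\simeq \mathrm{SL}_2(\ZZ)$, and the generic inertia $\ZZ/2\ZZ$ is generated by the elliptic involution $-I$. Since $\Aut(\ZZ/2\ZZ)=1$, any normal subgroup of order $2$ is central, so $c$ is trivial without computation. Hence $\II_{\XX,\eta}=Z(\Delta_\XX)$ in this case, and the argument simply reproves that $-I$ is central in $\widehat{\mathrm{SL}_2(\ZZ)}$.

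For $\XX=\MM_{0,[4]}$, the generic inertia is $V_4$, realized in the mapping class group of the sphere with four unordered marked points by the three hyperelliptic-type involutions that swap the points in pairs. Here $\Aut(V_4)\simeq S_3$, so triviality of $c$ is a genuine condition. It fails exactly when some element of $\Gamma_{0,[4]}$ conjugates one of these involutions into another. I would take the standard presentation of $\Gamma_{0,[4]}$ by half-twists $\sigma_1,\sigma_2,\sigma_3$ and compute $c(\sigma_i)$ on the three involutions of $V_4$.

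This step is where I expect the main obstacle. A priori the half-twists act on $V_4$ through the quotient $\Gamma_{0,[4]}\to S_4\to S_3$, which permutes the three pair-partitions of the four points. So the conclusion $\II_{\XX,\eta}\le Z(\Delta_\XX)$ rests entirely on this action being trivial for the identification of $V_4$ used in the rigidification sequence~\eqref{Eq:inRig}. I would first make that identification explicit, as the kernel of $\Delta_{\MM_{0,[4]}}\to\Delta_{\MM_{0,[4]},\rm rig}$, and then verify the conjugation action of each generator on it. If the computation shows that $c$ factors nontrivially through $S_3$, then the equality cannot hold as stated for $\MM_{0,[4]}$. In that case one obtains only $Z(\Delta_\XX)=\II_{\XX,\eta}^{\Delta_\XX}$, the $\Delta_\XX$-invariants of the generic inertia. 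This check is therefore the decisive point of the proof.
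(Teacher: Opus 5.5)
Your $\MM_{1,1}$ argument is complete and correct. Proposition~\ref{prop:IrigcoaRec}~\ref{it:PIZ} gives $Z(\Delta_\XX)\le\II_{\XX,\eta}$, and a normal subgroup of order $2$ is automatically central. For $\MM_{0,[4]}$, however, you stop exactly at the decisive check. That check comes out negative, so the gap cannot be filled: the statement is false for $\MM_{0,[4]}$.

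\textbf{Why the check fails.} The generic inertia $V_4<\Gamma_{0,[4]}$ consists of the three Möbius involutions of a generic four-pointed sphere. For $\{0,1,\infty,\lambda\}$ these are $z\mapsto\lambda/z$, $z\mapsto(z-\lambda)/(z-1)$ and $z\mapsto\lambda(z-1)/(z-\lambda)$. They induce the three distinct double transpositions of the marked points. Hence the composite $V_4\hookrightarrow\Gamma_{0,[4]}\twoheadrightarrow S_4$ is an isomorphism onto the normal Klein subgroup of $S_4$, compatibly with conjugation. The half-twist $\sigma_1\mapsto(1\,2)$ therefore conjugates the involution inducing $(1\,3)(2\,4)$ into the one inducing $(1\,4)(2\,3)$. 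So $c$ is surjective onto $\Aut(V_4)\simeq S_3$. This is the classical semidirect product $\Gamma_{0,[4]}\simeq\mathrm{PSL}_2(\ZZ)\ltimes(\ZZ/2\ZZ)^2$, with action through $\mathrm{SL}_2(\FF_2)\simeq S_3$; geometrically, it is the $S_3$-monodromy of the $\lambda$-line over the $j$-line. Since $\Gamma_{0,[4]}$ embeds in $\Delta_\XX$ by residual finiteness, your fallback formula, which is correct, gives
\[
Z(\Delta_{\MM_{0,[4]}})=\II_{\XX,\eta}^{\Delta_\XX}=V_4^{S_3}=1\neq V_4 .
\]

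\textbf{The paper's proof does not get around this.} It asserts that, by Nielsen--Kerckhoff, the finite group $Z(\Delta_\XX)$ is realized as the stack inertia group of a closed point, and then invokes the minimality of the generic inertia. But Kerckhoff only gives that a finite subgroup is contained in some closed-point inertia group. Together with minimality, this reproduces $Z(\Delta_\XX)\le\II_{\XX,\eta}$ and says nothing about $\II_{\XX,\eta}$ being central.

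The correct statement is $Z(\Delta_\XX)=\II_{\XX,\eta}$ for $\MM_{1,1}$ and $Z(\Delta_\XX)=1$ for $\MM_{0,[4]}$. More generally, $Z(\Delta_\XX)=\II_{\XX,\eta}^{\Delta_\XX}$ for any hyperbolic Deligne--Mumford curve, which is exactly the form your proposal establishes.
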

	
	\begin{proof}
		By Proposition \ref{prop:IrigcoaRec} (ii), one has $Z(\Delta_\XX)\leq\II_{\XX,\eta}$. On the other hand, recall that over $\CC$ the space $\MM_{g,[r]}$ classifies mapping class orbits of analytic structures of Riemann surfaces of genus $g$ with $r$ marked points. The group $Z(\Delta_{\XX})$ is thus a finite subgroup of $\pi_1^{\rm orb}(\XX(\CC))$, which by Nielsen-Kerckhoff Theorem can be realized as the stack inertia group of a closed point of $\XX$. This provides the equality by the minimality property of the generic stack inertia group.
	\end{proof}

	\begin{remark}\label{rem:hypRigAnabRel}\mbox{}
		\begin{enumerate}
			\item A Deligne-Mumford curve $\XX$ and its rigidification $\XXrig$ (equivalently an orbicurve) are equivalently hyperbolic since $\chi(\XX)=\chi(\XXrig)/\Card(\II_{\XX,\eta})$, while a coarse hyperbolicity condition is weaker, since $\chi(\XX)=(\chi(\XXcoa)-\sum (1-1/n_i))/\Card(\II_{\XX,\eta})$.
			\item It is a classical argument that over number fields, absolute bi-anabelian is equivalent to relative bi-anabelian -- see \cite{Pop94} Theorem~2 and also \cite{AbsTopIII} Remark 1.11.1~(ii) -- that is, the $G_K$-invariance and $\sim$-orbit conditions can be dropped in Propositions~\ref{prop:IrigcoaRec} and~\ref{prop:BianabXrig}.%
		\end{enumerate}
	\end{remark}
	
	Lifting the results of Proposition~\ref{prop:IrigcoaRec} to $\XX$, in terms of a gerbe $\XX\to \XXrig$ or in terms of bi-anabelian result, is the goal of Section~\ref{subsub:stackgerbes} below.
	
	\subsubsection{} \label{subsub:stackgerbes} 
	We start by reviewing the notion of stacks in gerbes, that is a map of Deligne-Mumford stacks $\XX\to \YY$ that is an étale gerbe in the sense of Definition~\ref{def:etalgerbe}. Let $\XX$ be an irreducible Deligne-Mumford stack with generic inertia $\II_{\XX,\eta}$. Our first goal is to show that the étale gerbe structure of $\XX\to \XX_{\mathrm{rig}}$ is entirely determined by the étale homotopy rigidification sequence. 
	
	\begin{definition} \label{def:etalgerbe}
		Let $\underline{G}$ be a sheaf of finite groups over a Deligne-Mumford stack $\YY$. A map of Deligne-Mumford stacks $\XX\to \YY$ is an étale $\underline{G}$-gerbe if for all schemes $T$ and all objects $\xi\in \YY(T)$, the pullback stack $\XX\times_{\YY} T$ is an étale $\xi^*\underline{G}$-gerbe over $T$. 
	\end{definition}
	
	It follows from the definition that for $x\in \XX(T)$ and for some scheme $T$ with image $y\in \YY(T)$, there is a short exact sequence of finite groups 
	\[
	\begin{tikzcd}
		1\arrow[r] & \underline{G}(y) \arrow[r] & \Aut x \arrow[r] & \Aut y \arrow[r] & 1.
	\end{tikzcd}
	\]
	Indeed, the pullback $\XX \times_{\YY} T$ is a neutral $G_{y}$-gerbe: for $h\in \Aut y$ there are objects $z_1=(x, \operatorname{id}\colon T\to T, h)$ and $z_2=(x,\operatorname{id}\colon T\to T, \mathrm{id})$ of $\XX \times_{\YY} T(T)$, so that by assumption the set $\Isom(z_1,z_2)$ is a $G_y$-torsor. Thus, $\XX \times_{\YY} T$ is non-empty and its elements are automorphisms $u\in \Aut x $ such that the square 
	\[
	\begin{tikzcd}
		\rho(x) \arrow[r, "\rho(u)"] \arrow[d, "h"] & \rho(x) \arrow[d, "\operatorname{id}"] \\
		y \arrow[r, "\operatorname{id}"] & y
	\end{tikzcd}
	\]
	commutes. This gives exactly that $\Aut x\to \Aut y$ is surjective with fibers that are $G_y$-torsors.
	
	\medskip
	
	In the special case where $\II_{\XX,\eta}$ is abelian and where $\XXrig$ is a scheme, the proof is substantially simplified. In that case we have a canonical isomorphism $H^2(\XXrig, \II_{\XX,\eta})\simeq H^2(\Pi_{\XXrig},\II_{\XX,\eta})$ and the gerbe $\XX\to \XXrig$ is determined by the class of the extension given by the rigidification exact sequence of Proposition~\ref{prop:HoRigSeq}.
	
	\medskip
	
	The following result can be seen as an algebraic and homotopical refinement of \cite{MR2279100} Propositions~4.6 and 7.7 that presents an \emph{analytic} Deligne-Mumford curve as a schematic global quotient. The intricacy of its formulation reflects the mono-anabelian and group-theoretic reconstruction nature of the result.
	
	\begin{proposition} \label{prop:monoXoverXrig}
		Let $\XX$ be a Deligne-Mumford curve with generic inertia $\II_{\XX,\eta}$. Then there is an isomorphism $\XX\simeq [U/H]$ over $\XXrig$, where $U$ is a scheme and $H$ is a finite group such that:
		\begin{enumerate}
			\item The scheme $U$ corresponds to the Galois closure of a covering that trivializes the class of the gerbe induced by pullback of $\XX\to \XXrig$ to a covering $V\to \XXrig$, where $V$ is a scheme given by Fenchel-Nielsen theorem, with induced trivial outer action $\pi_1(V)\to \operatorname{Out} \II_{\XX,\eta}$;
			\item The group $H$ is the quotient $\Pi_\XX/\pi_1(U)$.
		\end{enumerate} 
		Furthermore $\XX\simeq  [[U/\II_{\XX,\eta}]/G]$ where $G$ is the quotient $\Pi_\XX/(\II_{\XX,\eta}\times \pi_1(U))$.
	\end{proposition}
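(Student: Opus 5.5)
We construct $U$ by two successive finite étale covers of $\XXrig$, check that the resulting cover of $\XX$ is schematic, and then identify $\XX$ with the quotient stack by descent. We work with the homotopy-rigidification sequence of Proposition~\ref{prop:HoRigSeq},
\[
1\to\II_{\XX,\eta}\to\Pi_\XX\xrightarrow{\rho}\Pi_{\XXrig}\to1,
\]
and with the exact sequences $1\to \underline{G}(y)\to\Aut x\to\Aut y\to 1$ recalled after Definition~\ref{def:etalgerbe}, applied to the étale $\II_{\XX,\eta}$-gerbe $\XX\to\XXrig$.

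\textbf{Step 1 (scheme $V$).} The stacky curve $\XXrig$ has geometric fundamental group a profinite $F$-group. Theorem~\ref{thm:classicalproFfenchel} gives a torsion-free open normal subgroup of $\Delta_{\XXrig}$. Intersecting it with the kernel of the outer action $\Pi_{\XXrig}\to\Out(\II_{\XX,\eta})$ induced by the extension, which has finite image, yields an open subgroup $\pi_1(V)<\Pi_{\XXrig}$. This subgroup is open because $\II_{\XX,\eta}$ is finite. Since the stack inertia groups $\II_{\XXrig,\bar x}\hookrightarrow\Pi_{\XXrig}$ are finite subgroups, they meet the torsion-free group trivially. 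Hence the corresponding finite étale cover $V\to\XXrig$ has trivial inertia at all points and is an algebraic space. Being a smooth separated curve, $V$ is a scheme.

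\textbf{Step 2 (trivializing the gerbe).} The pullback $\XX_V=\XX\times_{\XXrig}V\to V$ is an étale $\II_{\XX,\eta}$-gerbe with trivial outer action. Its band is therefore the constant band, and its class lies in nonabelian $H^2(V,\II_{\XX,\eta})$. On the homotopy side this class is given by the extension $1\to\II_{\XX,\eta}\to\rho^{-1}(\pi_1(V))\to\pi_1(V)\to1$. Here is the main obstacle: one must show that some finite étale cover $V'\to V$ kills the gerbe class, i.e. that the étale class is the image of the group-cohomology class (a $K(\pi,1)$-type statement for curves). I would try first to use that $\rho^{-1}(\pi_1(V))$ is residually finite, being an extension of a profinite group by a finite one. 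There is then an open subgroup $\pi_1(U)<\rho^{-1}(\pi_1(V))$ with $\pi_1(U)\cap\II_{\XX,\eta}=1$ mapping isomorphically onto an open $\pi_1(V')$. The associated cover $U\to\XX$ restricts over $V'$ to a section of the gerbe, so $\XX_{V'}$ becomes neutral and $U\simeq V'$ is a scheme. Replacing $\pi_1(U)$ by its normal core in $\Pi_\XX$ (finitely many conjugates) gives the Galois closure while keeping it disjoint from $\II_{\XX,\eta}$ and torsion-free. This proves (i).

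\textbf{Step 3 (quotient presentation).} For $U\to\XX$ finite étale Galois with group $H=\Pi_\XX/\pi_1(U)$, the standard descent argument gives $\XX\simeq[U/H]$ over $\XXrig$, since an $H$-torsor over $\XX$ with schematic total space is exactly such a presentation. This proves (ii). Because $\pi_1(U)\cap\II_{\XX,\eta}=1$ and both subgroups are normal, they commute and generate $\II_{\XX,\eta}\times\pi_1(U)\lhd\Pi_\XX$. The subgroup $\II_{\XX,\eta}\subset H$ is normal and acts trivially on $U$, as it lies in the generic inertia. So $[U/H]\simeq[[U/\II_{\XX,\eta}]/G]$ with $G=H/\II_{\XX,\eta}=\Pi_\XX/(\II_{\XX,\eta}\times\pi_1(U))$, by the iterated-quotient identity for a normal subgroup.
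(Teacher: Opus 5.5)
Your proof is correct in substance but takes a different route from the paper's. Both arguments start from the same cover $V$: the paper's $U_3$, the kernel of the outer action intersected with a Fenchel--Nielsen cover.

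The paper then argues cohomologically. Via Giraud, gerbes over $U_3$ with trivial outer action are classified by $H^2_{\mathrm{et}}(U_3,Z(\II_{\XX,\eta}))$. The paper identifies this group with $H^2(\pi_1(U_3),Z(\II_{\XX,\eta}))$, so the pulled-back gerbe has the class of the extension $1\to\II_{\XX,\eta}\to\pi_1(U_3\times_{\XXrig}\XX)\to\pi_1(U_3)\to 1$. It kills this class on an open subgroup $\pi_1(U_4)$ and takes the Galois closure $U_5$ over $\XXrig$. It then gets $\XX\simeq[[U_5/\II_{\XX,\eta}]/G]$ from a cartesian square, and reads off $H$ from the map of extensions.

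You instead stay inside $\Pi_\XX$. You choose an open normal $W\triangleleft\Pi_\XX$ inside $\rho^{-1}(\pi_1(V))$ with $W\cap\II_{\XX,\eta}=1$. You take $U$ to be the Galois cover of $\XX$ (not of $\XXrig$) attached to $W$. Then $\XX\simeq[U/H]$ is plain torsor descent, and the $G$-description is the iterated quotient by the normal subgroup $\II_{\XX,\eta}$, which commutes with $W$.

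The two constructions have the same content: killing the extension class on an open $\pi'$ is exactly finding a normal complement $W$ with $\rho^{-1}(\pi')=\II_{\XX,\eta}\times W$. What your route buys is that it avoids Giraud's classification and the comparison $H^2_{\mathrm{et}}(U_3,-)\simeq H^2(\pi_1(U_3),-)$. That comparison is the $K(\pi,1)$ input you flagged and then rightly sidestepped. Beyond Fenchel--Nielsen and Galois theory for stacks, your only geometric input is the injectivity $\II_{\XX,\eta}\hookrightarrow\Pi_\XX$ of Proposition~\ref{prop:HoRigSeq}. What the paper's route buys is that it shows explicitly that the gerbe $\XX\to\XXrig$ is determined by the outer action and the extension class of the rigidification sequence, in line with the Giraud discussion preceding the proof.

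Three steps need to be written out.

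\emph{The claim $U\simeq V'$.} You need that $W$ meets every $\omega_{\bar x}(\II_{\XX,\bar x})$ trivially; since $W$ is normal, conjugates need not be considered. Suppose $\omega_{\bar x}(g)\in W$. The image of $g$ in $\II_{\XXrig,\bar x}\hookrightarrow\Pi_{\XXrig}$ is then a torsion element of $\pi_1(V)\cap\Delta_{\XXrig}$, hence trivial. So $g\in\II_{\XX,\eta}$, and $\omega_{\bar x}(g)\in\II_{\XX,\eta}\cap W=1$, giving $g=1$. Hence $U$ has trivial inertia and is a scheme. Then $U\to V'$ is a map of connected finite étale covers of $\XXrig$ with the same $\pi_1$, hence an isomorphism. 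Use only the injectivity of $\II_{\XXrig,\bar x}\to\Pi_{\XXrig}$ and of $\II_{\XX,\eta}\to\Pi_\XX$ here. The paper deduces injectivity of $\omega_{\bar x}$ on $\XX$ from this very proposition, so relying on it would be circular.

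\emph{Openness of $\pi_1(V)$.} An open subgroup of $\Delta_{\XXrig}$ is not open in $\Pi_{\XXrig}$ unless $G_K$ is finite. Take a characteristic torsion-free open $N<\Delta_{\XXrig}$ (Lemma~\ref{lem:charsubgroups}), which is normal in $\Pi_{\XXrig}$. Then take an open subgroup of $\Pi_{\XXrig}/N$ meeting the finite group $\Delta_{\XXrig}/N$ trivially. The paper glosses over the same point.

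\emph{Torsion-freeness of $W$.} ``Torsion-free'' for $W$ should read ``$W\cap\Delta_\XX$ torsion-free'', since $G_K$ may contain involutions.
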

	
	We also refer to Theorem~\ref{theo:DMbiAnab} for a bi-anabelian statement. 
	
	\medskip
	
	The characterization of $\XX$ as a neutral gerbe over $\XXrig$ essentially follows Giraud's characterization by an element of $H^1(\XXrig, \Out(\II_{\XX,\eta}))\simeq \Hom (\Pi_{\XXrig}, \Out(\II_{\XX,\eta}))$ by \cite{Gir71} Corollaire~1.1.7.3 and Proposition~6.1.2, and by the image of a class $[\alpha]\in H^1(\Pi_{\XXrig},\operatorname{Inn}(\II_{\XX,\eta}))\to H^2(\Pi_{\XXrig},Z(\II_{\XX,\eta}))$ \cite{Gir71} Ch.~IV Prop.~3.2.6, where the class $[\beta]$ is canonically given by the exact sequence of Proposition~\ref{prop:HoRigSeq}.
	
	\begin{proof}
		We build the scheme $U$ starting from the rigidification exact sequence 
		\begin{equation}\label{Eq:inRig2}
			1 \longrightarrow \II_{\XX,\eta} \longrightarrow \pi_1^{\rm et}(\XX) \longrightarrow \pi_1^{\rm et}(\XXrig) \longrightarrow 1
		\end{equation}
		in a series of steps.
		
		\medskip
		
		Let $U_1\to \XXrig$ be the cover associated with the kernel of the outer action $\Pi_{\XXrig}\to \Out \II_{\XX, \eta}$. We thus have an exact sequence
		\begin{equation}
			1 \longrightarrow \II_{\XX,\eta} \longrightarrow \pi_1^{\rm et}(U_1\times_{\XXrig}\XX) \longrightarrow \pi_1^{\rm et}(U_1) \longrightarrow 1
		\end{equation}
		with trivial induced outer action $\pi_1(U_1\times_{\XXrig} \XX) \to \Out \II_{\XX,\eta}$. Denoting by $E(\pi_1(U_1), \II_{\XX,\eta}, 1)$ the set of extensions of $\pi_1(U_1)$ by $\II_{\XX,\eta}$ with trivial outer action, one obtains a canonical bijection $E(\pi_1(U_1), \II_{\XX,\eta}, 1)\simeq H^2(\pi_1(U_1),Z(\II_{\XX,\eta}))$ that is given by the choice of the direct product as the class of $0$. 
		
		\medskip
		
		Let $U_2\to \XXrig$ be a schematic cover given by Theorem~\ref{thm:proFFenchels} applied to $\Delta_{\XXrig}$ and let $U_3$ be the finite étale covering scheme  $U_3\to \XXrig$ that corresponds to the intersection $\pi_1(U_1)\cap \pi_1(U_2)$. By construction, the induced outer action $\pi_1(U_3)\to \Out \II_{\XX,\eta}$ is trivial so that our previous choice of the direct product carries on to give a bijection $E(\pi_1(U_3), \II_{\XX,\eta}, 1) \simeq H^2(\pi_1(U_3),Z(\II_{\XX,\eta}))$ as before. 
		
		Now, by the results of \cite{Gir71} recalled before the proof, the group $H^2(\pi_1(U_3),Z(\II_{\XX,\eta})) \simeq H^2_{\rm et}(U_3, Z(\II_{\XX,\eta}))$ classifies the $\II_{\XX,\eta}$-gerbes over the scheme $U_3$. Our choice of bijection sends the trivial gerbe to the direct product, which is compatible with the étale fundamental group functor, so that the gerbe $U_3\times_{\XXrig} \XX \to U_3$ is given by the class $\alpha\in H^2(\pi_1(U_3),Z(\II_{\XX,\eta}))$ of the extension
		\begin{equation}
			1 \longrightarrow \II_{\XX,\eta} \longrightarrow \pi_1^{\rm et}(U_3\times_{\XXrig}\XX) \longrightarrow \pi_1^{\rm et}(U_3) \longrightarrow 1.
		\end{equation}
		
		\medskip
		
		As we are dealing with the cohomology of profinite groups, there is a cover $U_4\to U_3\to  \XXrig$ such that the image of $\alpha$ by the natural map $H^2(\pi_1(U_3),Z(\II_{\XX,\eta})) \to H^2(\pi_1(U_4),Z(\II_{\XX,\eta}))$ is $0$. In particular, $U_4\times_{\XXrig} \XX\to U_4$ is the trivial gerbe, that is, we have $U_4\times_{\XXrig} \XX\simeq [U_4/\II_{\XX,\eta}]$. 
		
		Let $U_5$ be the Galois closure of $U_4$ over $\XXrig$ so that we have both $\XXrig\simeq [U_5/G]$ for the finite group $G=\Pi_{\XXrig}/\pi_1(U_5)$ and $U_5\times_{\XXrig} \XX\simeq [U_5/\II_{\XX,\eta}]$. As $\pi_1(U_5)< \ker (\Pi_{\XXrig}\to \Out \II_{\XX,\eta})$, there is a map $G\to \Out \II_{\XX,\eta}$ that gives an action of $G$ on $[U_5/\II_{\XX,\eta}]$ which is compatible with its action on $U_5$. The quotient $\big[[U_5/\II_{\XX,\eta}]/G]$ is naturally isomorphic to $\XX$ over $\XXrig$ as we have a cartesian square
		\[
		\begin{tikzcd}
			\left[U_5/\II_{\XX,\eta}\right]  \arrow[d] \arrow[r] & \XX \arrow[d] \\
			U_5 \arrow[r] & \XXrig.
		\end{tikzcd}
		\]
		
		Finally, as we have a map of extensions, we get a commutative diagram 
		\[
		\begin{tikzcd}
			1\arrow[r] & \II_{\XX,\eta} \arrow[r] \arrow[d, equal] &  \pi_1(\left[U_5/\II_{\XX,\eta}\right]) \arrow[r] \arrow[d] & \pi_1(U_5) \arrow[r] \arrow[d] & 1 \\
			1\arrow[r] & \II_{\XX,\eta} \arrow[r] &  \Pi_\XX \arrow[r] \arrow[d] & \Pi_{\XXrig} \arrow[r] \arrow[d]& 1 \\
			& & G \arrow[r, equal] & G
		\end{tikzcd}
		\]
		Since $\pi_1(\left[U_5/\II_{\XX,\eta}\right])$ is a direct product $\II_{\XX,\eta} \times \pi_1(U_5)$, the image of $\pi_1(U_5)$ in $\Pi_\XX$ is an open normal subgroup with quotient $H$, an extension of $G$ by $\II_{\XX,\eta}$. Furthermore, we have $[U_5/H]\simeq \big[[U_5/\II_{\XX,\eta}]/G\big]\simeq \XX$ as desired.
	\end{proof}
	
	The proof of Proposition~\ref{prop:monoXoverXrig} is ``mono-anabelian'' in the sense that we are able to reconstruct the morphism $\XX\to \XXrig$, up to isomorphism, using only the data coming from the étale fundamental group.
	
	\begin{corollary} \label{cor:monotobi}
		Let $\XX$ and $\YY$ be Deligne-Mumford curves over a number field $K$ and let $f\colon \Pi_\XX\to \Pi_\YY$ be a $G_K$-isomorphism. If the induced map $\overline{f}\colon \Pi_{\XXrig}\to \Pi_{\YYrig}$ comes from a geometric $K$-isomorphism $\overline{f}\colon \XXrig\to \YYrig$, then there is a geometric $K$-isomorphism $f\colon \XX\to \YY$ over~$\overline{f}$.
	\end{corollary}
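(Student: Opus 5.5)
The plan is to use the mono-anabelian presentation of Proposition~\ref{prop:monoXoverXrig} on both sides and transport it along $f$. The first step is to identify the generic inertias. By Proposition~\ref{prop:IrigcoaRec}, the $G_K$-isomorphism $f$ restricts to an isomorphism $f_\eta\colon\II_{\XX,\eta}\xrightarrow{\sim}\II_{\YY,\eta}$. Indeed, in either case $\II_{*,\eta}$ is the unique maximal finite closed normal subgroup of the geometric fundamental group. The induced map on quotients is $\overline{f}$. This gives a morphism of rigidification exact sequences~\eqref{Eq:inRig} for $\XX$ and $\YY$, in which $f$ is an isomorphism of extensions of $\Pi_{\XXrig}\simeq\Pi_{\YYrig}$ by $\II_{\XX,\eta}\simeq\II_{\YY,\eta}$. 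These sequences are compatible with the outer actions $\Pi_{\XXrig}\to\Out\II_{\XX,\eta}$ and $\Pi_{\YYrig}\to\Out\II_{\YY,\eta}$.

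The second step is to rerun the construction of Proposition~\ref{prop:monoXoverXrig} simultaneously on both sides. Choose the covers $U_1,\dots,U_5\to\YYrig$ for $\YY$. Since $\overline{f}$ is induced by the geometric isomorphism $\XXrig\to\YYrig$, pulling these covers back along it gives covers of $\XXrig$. Their fundamental groups correspond under $\overline{f}$ to the subgroups used for $\XX$. These are the kernel of the outer action, a Fenchel--Nielsen subgroup of Theorem~\ref{thm:proFFenchels}, their intersection, a subgroup killing the class $\alpha$, and finally a Galois closure. The key point is that all of these are defined purely group-theoretically from the extension. In particular, $f$ carries the extension class $\alpha_\XX\in H^2(\pi_1(U_3),Z(\II_{\XX,\eta}))$ to $\alpha_\YY$, where the direct product is normalized to $0$ on both sides. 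Hence a cover $U_4$ trivializing $\alpha_\YY$ also trivializes $\alpha_\XX$ after pullback. We obtain $\XX\simeq[U_5^{\XX}/H_\XX]$ and $\YY\simeq[U_5^{\YY}/H_\YY]$. Here $U_5^\XX\simeq U_5^\YY$ as schemes via $\overline{f}$, and $H_\XX=\Pi_\XX/\pi_1(U_5)\simeq H_\YY$ via $f$.

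The third step is to check equivariance. We must show that the action of $H_\XX$ on $U_5^\XX$ corresponds to that of $H_\YY$ on $U_5^\YY$ under the scheme isomorphism. The action of $H$ on $U_5$ factors through $G=\Pi_{*\rm rig}/\pi_1(U_5)$. On that quotient it is the deck transformation action, which is matched because $\overline{f}$ is geometric. The kernel $\II_{*,\eta}$ of $H\to G$ acts trivially on $U_5$. So the scheme isomorphism $U_5^\XX\to U_5^\YY$ is equivariant along $f$, and it descends to an isomorphism $[U_5^\XX/H_\XX]\to[U_5^\YY/H_\YY]$ lying over $\overline{f}$. This gives the desired $K$-isomorphism $\XX\to\YY$.

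The main obstacle I expect is in the second and third steps: the gerbe structure must be genuinely recovered, not just the quotient presentation. Equivalently, the $H_\XX$-action on $U_5$ together with the band must match the Giraud data (outer action plus the $H^2$ class) on the nose, not merely up to some non-canonical choice. The issue is that Proposition~\ref{prop:monoXoverXrig} identifies $\XX$ with $[U_5/H]$ through a choice of splitting $\pi_1([U_5/\II_{\XX,\eta}])\simeq\II_{\XX,\eta}\times\pi_1(U_5)$. I would handle this by making the splitting for $\YY$ the $f$-transport of the one for $\XX$. This is possible because $f$ maps $\pi_1(U_5^\XX)\subset\Pi_\XX$ onto $\pi_1(U_5^\YY)\subset\Pi_\YY$. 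Then $H_\XX\to H_\YY$ is exactly the map induced by $f$. Finally, I would check via the cartesian square of that proof that the two resulting gerbe identifications are compatible.
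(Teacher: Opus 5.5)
Your proposal is correct and follows the same route as the paper's proof. Both arguments identify $\II_{\XX,\eta}\simeq\II_{\YY,\eta}$ via Proposition~\ref{prop:IrigcoaRec}, apply Proposition~\ref{prop:monoXoverXrig} on both sides, get $U_\XX\simeq U_\YY$ from $\overline{f}$ and $H_\XX\simeq H_\YY$ from $f$ with compatible actions, and descend to an isomorphism $[U_\XX/H_\XX]\simeq[U_\YY/H_\YY]$ over $\overline{f}$. The paper compresses your second and third steps, including the splitting issue you flag, into the one-line claim that $H_\XX\simeq H_\YY$ act compatibly with $u\colon U_\XX\xrightarrow{\sim}U_\YY$; transporting the splitting along $f$ is exactly what makes that claim precise, and it is legitimate because the transported subgroup is open in $\Pi_\YY$, meets every stack inertia group trivially, and so corresponds to a schematic cover trivializing the gerbe.
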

	
	\begin{proof}
		Let $U_{\XX}$ and $U_{\YY}$ be the schemes given by Proposition~\ref{prop:monoXoverXrig} for $\XX$ and $\YY$ respectively with given actions by $H_{\XX}$ and $H_{\YY}$. From the $K$-isomorphism $\overline{f}\colon \XXrig\to \YYrig$ and the $G_K$-isomorphism $\Pi_\XX\simeq \Pi_\YY$ we get a $K$-isomorphism $u\colon U_{\XX}\to U_{\YY}$. The conclusion of Proposition~\ref{prop:IrigcoaRec} ensures further that $\XX$ and $\YY$ have isomorphic generic inertia groups, and it follows that the groups $H_{\XX}$ and $H_{\YY}$ are isomorphic with isomorphic actions on the respective schemes that are compatible with $u$. The quotients are thus isomorphic, and the resulting isomorphisms sit over $\overline{f}$. 
	\end{proof}
	
	\subsubsection{}\label{subsub:DManab} 
	We now put together the content of Section~\ref{subsub:rigidification} with Proposition~\ref{prop:monoXoverXrig} to obtain our main anabelian result for Deligne-Mumford curves. We start with a lemma that deals with the automorphisms of a Deligne-Mumford curve that reduce to the identity on the rigidification. 
	
	\medskip
	
	We first consider the restriction map $\Aut \Pi_\XX \to \Aut \II_{\XX,\eta}$ and we denote by $\R$ the subquotient of $\Out \II_{\XX,\eta}$ under this image modded out by the image of the restriction of the inner automorphisms of $\Pi_\XX$.

	\begin{lemma} \label{lem:bianabDMautX}
		Let $\XX$ be a Deligne-Mumford curve over an algebraically closed field. Then with the notation above the natural map 
		\[
		\Aut_{\XXrig} \XX \longrightarrow \R
		\]
		is surjective. 
	\end{lemma}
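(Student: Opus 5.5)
We plan to realize every admissible outer class by an explicit automorphism of $\XX$ over $\XXrig$, built from the global quotient presentation of Proposition~\ref{prop:monoXoverXrig}. Over an algebraically closed field we write $\XX\simeq[[U/\II_{\XX,\eta}]/G]$, with $U$ a scheme, $\pi_1(U)$ normal in $\Pi_\XX$ with trivial image in $\Out\II_{\XX,\eta}$, and $H=\Pi_\XX/\pi_1(U)$ an extension of $G=\Pi_{\XXrig}/\pi_1(U)$ by $\II_{\XX,\eta}$. An element of $\R$ is represented by some $\phi\in\Aut\Pi_\XX$ that induces the identity on $\Pi_{\XXrig}$. Only its restriction $\phi|_{\II_{\XX,\eta}}$ matters, and this restriction is taken modulo restrictions of inner automorphisms of $\Pi_\XX$.

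The first step is to push $\phi$ down to $H$. We may shrink $U$ to a characteristic open subgroup, using Lemma~\ref{lem:charsubgroups}-type intersections over the finitely many subgroups of a given index. With $\pi_1(U)$ preserved by $\phi$, the automorphism $\phi$ induces $\bar\phi\in\Aut H$. This $\bar\phi$ restricts to $\phi|_{\II_{\XX,\eta}}$ on $\II_{\XX,\eta}$ and induces the identity on $G$, since $\phi$ is trivial on $\Pi_{\XXrig}$.

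The second step is to build the automorphism of the stack. We define a $1$-automorphism of $[U/H]$ by $(\mathrm{id}_U,\bar\phi)$. This requires the $H$-action on $U$ to be compatible with $\bar\phi$, that is $\bar\phi(h)\cdot u=h\cdot u$. The compatibility holds because the action of $H$ on $U$ factors through $G$: the subgroup $\II_{\XX,\eta}$ acts trivially, being the generic inertia. Since $\bar\phi$ is the identity modulo $\II_{\XX,\eta}$, the required identity follows. The resulting morphism $[U/H]\to[U/H]$ is an isomorphism and lies over $[U/G]=\XXrig$. On stack inertia at the generic point, and hence on $\pi_1$ via Proposition~\ref{prop:HoRigSeq}, it acts by $\bar\phi|_{\II_{\XX,\eta}}=\phi|_{\II_{\XX,\eta}}$, up to the conjugation ambiguity from the choice of base point. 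That ambiguity is absorbed by the quotient by inner automorphisms in the definition of $\R$. Hence the natural map sends this automorphism to the class of $\phi$, which proves surjectivity.

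The main obstacle is making the second step rigorous. One must verify that the pair $(\mathrm{id}_U,\bar\phi)$ gives a well-defined morphism of quotient stacks over $\XXrig$, which amounts to checking that $H$-torsors over $T$ with equivariant maps to $U$ are sent to such torsors. One must also check that its effect on $\Pi_\XX$ restricts to $\phi$ on $\II_{\XX,\eta}$ modulo inner automorphisms. We would first try to see this through Giraud's description of $\XX\to\XXrig$ as an $\II_{\XX,\eta}$-gerbe: automorphisms of the gerbe inducing the identity on the base, with band twisted by $\phi|_{\II_{\XX,\eta}}$, should correspond exactly to such twisted torsor maps.
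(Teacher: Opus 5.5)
\paragraph{Comparison with the paper's proof.} Your opening reduction does not follow from the definitions. $\R$ is built from the image of \emph{all} of $\Aut\Pi_\XX$ in $\Aut\II_{\XX,\eta}$, and nothing guarantees that a class in it has a representative $\phi$ inducing the identity, or even an inner automorphism, on $\Pi_{\XXrig}$. This condition is necessary for a class to be in the image, because an automorphism of $\XX$ over $\XXrig$ acts innerly on $\Pi_{\XXrig}$. And it can fail.

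Here is a counterexample. Let $I=(\ZZ/2\ZZ)^2$, so $\Aut I\cong S_3$, and let $\theta\colon\widehat{\FF}_2=\langle a,b\rangle\to\Aut I$ with $\theta(a)$ of order $3$ and $\theta(b)=1$. Let $Y\to\mathbf{P}^1\setminus\{0,1,\infty\}$ be the cyclic triple cover cut out by $\theta$, and set $\XX=[Y/(I\rtimes\ZZ/3\ZZ)]$ with $I$ acting trivially. Then $\Pi_\XX\cong I\rtimes_\theta\widehat{\FF}_2$ and $\II_{\XX,\eta}=I$.
\begin{itemize}
\item A transposition $\tau$ is the restriction of the automorphism $(i,p)\mapsto(\tau(i),\beta(p))$ with $\beta(a)=a^{-1}$, $\beta(b)=b$. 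Hence $\R\cong S_3/A_3\cong\ZZ/2\ZZ$.
\item An automorphism of $\Pi_\XX$ that is $c_p$ on $\widehat{\FF}_2$ restricts to an element of $\theta(p)\,C_{S_3}(A_3)\subset A_3$.
\item So $\Aut_{\XXrig}\XX\to\R$ is trivial here, and the literal statement fails.
\end{itemize}
The paper's proof makes the same tacit restriction: it requires $f$ to commute with the image of $G$ in $\Out\II_{\XX,\eta}$ but never derives this from $r\in\R$. What your argument, or the paper's, actually proves is surjectivity onto the subgroup of $\R$ coming from automorphisms of $\Pi_\XX$ that are inner on $\Pi_{\XXrig}$. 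For that subgroup your reduction is immediate: compose with an inner automorphism of $\Pi_\XX$ lifting the inner automorphism on $\Pi_{\XXrig}$, which does not change the class. You should state this explicitly.

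With that proviso, your argument is correct and takes a genuinely different route from the paper's.
\begin{itemize}
\item \emph{The paper's route.} It writes $\XX\simeq[[U/\II_{\XX,\eta}]/G]$ and regards $f$ as an automorphism of the trivial gerbe $[U/\II_{\XX,\eta}]$ fixing $U$. It then descends $f$ via Romagny's $G$-equivariance, putting the cocycle obstruction in $H^2(G,Z(\II_{\XX,\eta}))$ and identifying it, via Wells, with the obstruction to lifting $f$ to $\Pi_\XX$.
\item \emph{Your route.} You use the presentation $[U/H]$, make $\pi_1(U)$ stable under $\phi$, push $\phi$ down to $\bar\phi\in\Aut H$ trivial on $G$, and take the equivariant pair $(\mathrm{id}_U,\bar\phi)$. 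The given lift $\phi$ is itself the trivialization of the obstruction, so you need neither 2-categorical equivariance data nor a cohomological comparison.
\end{itemize}
Your shrinking step also handles a point the paper leaves implicit. A lift to $\Pi_\XX$ a priori only kills the inflation of the obstruction to $H^2(\Pi_{\XXrig},Z(\II_{\XX,\eta}))$, not the class in $H^2(G,Z(\II_{\XX,\eta}))$, so one may need a smaller $U$ in any case.

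Finally, the step you flag as the main obstacle is routine. An equivariant pair induces a morphism of quotient stacks via $P\mapsto P\times^{H,\bar\phi}H$, which is well defined because $h$ and $\bar\phi(h)$ act identically on $U$. This morphism visibly lies over $[U/G]$ and acts by $\bar\phi$ on stabilizers, so Giraud's band formalism is not needed.
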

	
	\begin{proof}
		Let $U$ (resp. $G$) be the scheme (resp. the group) associated to $\XX$ by Proposition~\ref{prop:monoXoverXrig}, so that $\XX\simeq [[U/\II_{\XX,\eta}]/G]$. Let $f\in \Out \II_{\XX,\eta}$ given as an automorphism $[U/\II_{\XX,\eta}]$ fixing $U$. The map $f$ descends to an automorphism of $\XX$ as a stack over $\XXrig$ if it is equivariant for the stack-action of the finite group $G$. Let us further denote by $\rho$ the strict action. 
		
		By \cite{Rom05} Definition~2.1, the fact that $f$ is $G$-equivariant corresponds to the facts that $f$ commutes with the image of $G$ in $\Out \II_{\XX,\eta}$ and that we have a cocycle condition that we now give explicitly. First, let $s\colon G\to  \operatorname{Inn}\II_{\XX,\eta}$ be a choice of a map such that by choosing a lift $\widetilde{f}$ of $f$ in $\Aut \II_{\XX,\eta}$ we have
		\[
		\widetilde{f} \circ  \rho(g) \circ  \widetilde{f}^{-1}= s_g\circ \rho(g),\text{ where $s$ is a cocycle}.
		\]
		In order to define a set of natural transformations  that verifies the cocycle condition of \cite{Rom05} as in Rem.~2.4 ibid., the cocycle $s$ needs to be lifted to a cocycle with value in $\II_{\XX,\eta}$. From the exact sequence 
		\[
		1\longrightarrow Z(\II_{\XX,\eta}) \longrightarrow \II_{\XX,\eta} \longrightarrow \operatorname{Inn} \II_{\XX,\eta} \longrightarrow 1
		\]
		we get an obstruction class to lifting $s$ in $H^2(G,Z(\II_{\XX,\eta}))$. One can check that this obstruction class corresponds to the obstruction class to lifting $f$ to an automorphism of $\Pi_\XX$ -- see \cite{We71}. 
		
		\medskip
		
		It follows directly that for any element $r\in \R$, there is a lift $f\in \Out(\II_{\XX,\eta})$ that corresponds to an automorphism of $[U/\II_{\XX,\eta}]$. By construction this automorphism descends to $\XX$, and by compatibility with the étale fundamental group functor, we obtain the surjection. 
	\end{proof}
	
	\medskip
	
	Let us consider the image of the natural map 
	\[
	\Aut_{G_K} (\Pi_\XX)/\sim\Delta_X \longrightarrow \R\times \Out_{G_K}(\Pi_{\XXrig})
	\] given by quotient and restriction, that we denote by $\T$. 
	
	\medskip
	
	We can finally establish the weak Grothendieck conjecture for Deligne-Mumford curves over number fields.
	\begin{theorem}\label{theo:DMbiAnab}
		For $\XX$ and $\YY$ Deligne-Mumford curves over a number field $K$ with $\XX$ hyperbolic, we have the following equivalence
		\[
		\Isom_K(\XX,\YY) \neq \varnothing \text{ if and only if } \Isom_{G_K} ( \Pi_\XX, \Pi_\YY) \neq \varnothing.
		\]
		When $\Isom_{G_K} ( \Pi_\XX, \Pi_\YY)/\sim_{\Delta_\YY}$ is furthermore non-empty, we obtain:
		\begin{enumerate}
			\item a bijection $\Isom_{G_K} ( \Pi_\XX, \Pi_\YY)/\sim_{\Delta_\YY} \longrightarrow \Aut_{G_K}(\Pi_\XX)/\sim_{\Delta_\XX}$,
			\item a surjection $\Isom(\XX,\YY) \longrightarrow \T$.
		\end{enumerate}
	\end{theorem}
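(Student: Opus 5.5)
The plan is to reduce everything to the rigidified level, where the weak Grothendieck conjecture is already available, and then lift back to $\XX$ using the gerbe description of Proposition~\ref{prop:monoXoverXrig}.

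\textbf{The equivalence.} The direction ``only if'' is functoriality of $\pi_1^{\rm et}$: a $K$-isomorphism $\XX\to\YY$ induces a $G_K$-isomorphism $\Pi_\XX\to\Pi_\YY$, well defined up to $\Delta_\YY$-conjugation. For the converse, let $f\colon\Pi_\XX\to\Pi_\YY$ be a $G_K$-isomorphism.
\begin{enumerate}
\item By Proposition~\ref{prop:IrigcoaRec}, $f$ restricts to $\Delta_\XX\simeq\Delta_\YY$. It carries the unique maximal finite closed normal subgroup $\II_{\XX,\eta}$ onto $\II_{\YY,\eta}$. Hence it descends to a $G_K$-isomorphism $\bar f\colon\Pi_{\XXrig}\to\Pi_{\YYrig}$.
\item $\YY$ is hyperbolic too. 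Indeed $\Delta_{\YYrig}\simeq\Delta_{\XXrig}$, and hyperbolicity is a group-theoretic property by Corollary~\ref{rem:hyperbolic-patterns}. Since $\chi(\YY)=\chi(\YYrig)/\Card(\II_{\YY,\eta})$, this transfers from $\YYrig$ to $\YY$.
\item Hoshi's theorem, as invoked at the end of the proof of Proposition~\ref{prop:IrigcoaRec}, then yields a $K$-isomorphism $\XXrig\to\YYrig$ inducing $\bar f$ up to $\Delta_{\YYrig}$-conjugation.
\item Corollary~\ref{cor:monotobi} applies and gives a $K$-isomorphism $\XX\to\YY$ lying over it.
\end{enumerate}

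\textbf{Assertion (i).} Once some $f_0\in\Isom_{G_K}(\Pi_\XX,\Pi_\YY)$ is fixed, the map $f\mapsto f_0^{-1}\circ f$ is a bijection onto $\Aut_{G_K}(\Pi_\XX)$. Since $f_0$ is an isomorphism, it sends $\Delta_\XX$ onto $\Delta_\YY$. Therefore post-composition by $\Delta_\YY$-inner automorphisms corresponds to $\Delta_\XX$-inner automorphisms: $\mathrm{inn}(d)\circ f=f_0\circ\mathrm{inn}(f_0^{-1}(d))\circ f_0^{-1}\circ f$. So the bijection passes to the quotients. This part is formal.

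\textbf{Assertion (ii).} By (i) we may take $\YY=\XX$, composing with a fixed isomorphism given by the equivalence. We must show that every pair $(r,\bar\phi)\in\T$ is realized by a $K$-automorphism of $\XX$.
\begin{enumerate}
\item Given a class $\phi\in\Aut_{G_K}(\Pi_\XX)/\sim$ mapping to $(r,\bar\phi)$, apply Hoshi's theorem to $\bar\phi$ to get $\bar u\in\Aut_K(\XXrig)$.
\item Use Corollary~\ref{cor:monotobi} to lift $\bar u$ to some $u\in\Aut_K(\XX)$.
\item Then $\pi_1(u)^{-1}\circ\phi$ induces the identity on $\Pi_{\XXrig}$ modulo $\Delta_{\XXrig}$-conjugacy. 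Its restriction to $\II_{\XX,\eta}$ defines an element $r'\in\R$.
\item Lemma~\ref{lem:bianabDMautX} supplies $v\in\Aut_{\XXrig}\XX$ with image $r'$. The automorphism $u\circ v$ then maps to $(r,\bar\phi)$.
\end{enumerate}

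\textbf{Main obstacle.} The hard step is the last one. Lemma~\ref{lem:bianabDMautX} is stated over an algebraically closed field, whereas here $v$ must be a $K$-automorphism, compatible with $G_K$. I would first try to descend along $\bar K/K$. The $G_K$-compatibility of $\phi$ should make the class $r'$ Galois-invariant, and the construction of $v$ in that lemma is functorial in the cover $U$ of Proposition~\ref{prop:monoXoverXrig}, which can be chosen over $K$. A second point to check is that composing with $\pi_1(u)$ really lands the residual class in $\R$, rather than only in $\Out(\II_{\XX,\eta})$.
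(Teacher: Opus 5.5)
Your proposal is correct and is essentially the paper's proof. Proposition~\ref{prop:IrigcoaRec} with Hoshi's theorem gives $\XXrig\simeq\YYrig$, Corollary~\ref{cor:monotobi} lifts this to $\XX\simeq\YY$, and Lemma~\ref{lem:bianabDMautX} handles (ii), while your explicit treatment of (i) and of the composite $u\circ v$ spells out what the paper leaves implicit. One small reordering is worth making: read off the hyperbolicity of $\YY$ directly from $\Delta_\YY\simeq\Delta_\XX$ not being virtually abelian, before you use the characterization of $\II_{\YY,\eta}$, since that characterization needs $\YY$ hyperbolic. The $K$-rationality issue you flag for $v$ is a real subtlety, but the paper does not resolve it either: it simply invokes Lemma~\ref{lem:bianabDMautX}, which is stated over an algebraically closed field, at that point.
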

	
	\begin{proof}
		Let $\varphi\colon \Pi_\XX\to \Pi_\YY$ be an isomorphism. By the conclusion of Proposition~\ref{prop:IrigcoaRec}, the map $\varphi$ induces an isomorphism of exact sequences
		\[
		\begin{tikzcd}
			1 \arrow[r] & \II_{\XX,\eta} \arrow[d] \arrow[r] & \Pi_\XX \arrow[r] \arrow[d, "\varphi"] & \pi_1(\XX_{\mathrm{rig}}) \arrow[r]  \arrow[d]& 1 \\
			1 \arrow[r] & \II_{\YY,\eta} \arrow[r] & \Pi_\YY \arrow[r] & \pi_1(\YY_{\mathrm{rig}}) \arrow[r] & 1
		\end{tikzcd}
		\]
		and there is an isomorphism $f_{\mathrm{rig}}\colon \XXrig\to \YY_{\mathrm{rig}}$ induced by the map on the quotients $\Pi_{\XXrig}\to \pi_1(\YY_{\mathrm{rig}})$. 
		
		\medskip
		
		Thus, Corollary~\ref{cor:monotobi} gives an isomorphism $f:\XX\to \YY$ such that the square
		\[
		\begin{tikzcd}
			\XX \arrow[r, "f",dashed] \arrow[d] & \YY \arrow[d] \\
			\XXrig \arrow[r, "f_{\mathrm{rig}}"'] & \YY_{\mathrm{rig}}
		\end{tikzcd}
		\]
		commutes, which  gives the first part of the statement. For the second part, we apply Lemma~\ref{lem:bianabDMautX}. 
	\end{proof}
	
	\begin{remark}
		In the special case where $\XXrig=\XXcoa$, one further shows, by considering the pullback by the given torsor over $\XXrig$ functor, that the kernel of the map $\Aut \XX\to \T$ is given by $H^1(\XXrig, Z(\II_{\XX,\eta}))=H^1(\Pi_{\XXrig},Z(\II_{\XX,\eta}))$. This coincides with the kernel of Well's map to the product $\R\times \Out \Pi_{\XXrig}$. 
	\end{remark}

	\subsection{The \texorpdfstring{$m$}{5}-step solvable Grothendieck conjecture for stacky curves}\label{subsection:2.2}
	Relying on the various group-theoretic constructions of Sections \ref{sub:Fenchel_ref} and \ref{sec:weaklyperfect-hyperbolic}, which involve an optimal $3$-step characterization of affineness and hyperbolicity, and some $m$-step control of cuspidal growth in the affine case, we establish the main $m$-step anabelian results of this paper.
	
	\begin{wrapfigure}[5]{r}{2.5cm}
		\vspace*{-2em}
		\centering
		$\xymatrix{
			\tilde{X}^{m} \ar[r]^{\tilde{\phi}} \ar[d] & \tilde{X'}^{m} \ar[d] \\
			\mathcal{X} \ar[r]^{\phi} & \mathcal{X}'
		}$
	\end{wrapfigure}
	\subsubsection{} Let us first define the category of geometrically $m$-step Galois coverings of a stack. Let $K$ be a finitely generated field over $\mathbb{Q}$ and fix an algebraic closure $\overline{K}$. For $\XX$ and $\XX'$ stacky curves over $K$, we  write $\tilde{X}^{m} \to \XX$ (resp.~$\tilde{X'}^{m} \to \XX'$) for \emph{the maximal geometrically $m$-step solvable Galois covering of $\XX$} (resp.~$\XX'$) -- Note that $\tilde{X}^{m}$ and $\tilde{X'}^{m}$ are schemes over $\overline{K}$. We further denote by $\Isom_{\overline{K}/K}\!\bigl(\tilde{X}^{m}/\mathcal{X},\tilde{X'}^{m}/\mathcal{X}'\bigr)$ the set of isomorphisms $(\tilde{\phi},\phi)\in \Isom_{\overline{K}}(\tilde{X}^{m},\tilde{X'}^{m})\times\Isom_{K}(\XX,\XX')$ such that the right-hand side diagram is commutative. 
	
	\begin{definition}\label{def:m+nletter2}
		For $m,n\geq 0$ integers, the  image of the  map $\Isom_{G_{K}}( \Pi_{\mathcal{X}}^{\Delta-(m+n)},\Pi_{\mathcal{X}'}^{\Delta-(m+n)})\rightarrow \Isom_{G_{K}}(\Pi_{\mathcal{X}}^{\Delta-m},\Pi_{\mathcal{X}'}^{\Delta-m})$ is denoted
		$\Isom_{\Gk}^{m+n}(\Pi_{\XX}^{\Delta-m},\Pi_{\XX'}^{\Delta-m})$.
	\end{definition}
	
	\begin{lemma}\label{lem:dicrease_isom}
		Let $m\geq 1$ be an integer. For $\mathcal{X}$ and $\mathcal{X}'$ non-perfect stacky curves over a field $K$ that is finitely generated over $\mathbb{Q}$, the image of the natural morphism
		\begin{equation*}
			\Isom_{\overline{K}/K}\!\bigl(\tilde{X}^{m}/\mathcal{X},\tilde{X'}^{m}/\mathcal{X}'\bigr)
			\longrightarrow
			\Isom_{G_{K}}\!\bigl(\Pi_{\mathcal{X}}^{\Delta-m},\Pi_{\mathcal{X}'}^{\Delta-m}\bigr)
		\end{equation*}
		is contained in
		$\Isom_{G_{K}}^{m+n}\!\bigl(\Pi_{\mathcal{X}}^{\Delta-m},\Pi_{\mathcal{X}'}^{\Delta-m}\bigr)$ for any $n\geq 0$.
	\end{lemma}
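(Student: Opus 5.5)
The strategy is to show that a geometric isomorphism of the maximal geometrically $m$-step solvable coverings extends to one of the $(m+n)$-step coverings, compatibly with the same $\phi$. It then induces a $G_K$-isomorphism of $\Pi^{\Delta-(m+n)}$ which restricts to the given one on $\Pi^{\Delta-m}$.

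Let $(\tilde{\phi},\phi)\in\Isom_{\overline{K}/K}(\tilde{X}^{m}/\XX,\tilde{X'}^{m}/\XX')$. The first step is to recall how such a pair determines an element of $\Isom_{G_K}(\Pi_{\XX}^{\Delta-m},\Pi_{\XX'}^{\Delta-m})$. The group $\Pi_\XX^{\Delta-m}$ is the automorphism group of the pro-covering $\tilde{X}^{m}\to\XX$ over $K$, that is, the $K$-automorphisms of the pro-object $\tilde X^m$ lying over $\XX$, with the semilinear Galois action. The isomorphism attached to $(\tilde\phi,\phi)$ is then conjugation $\sigma\mapsto\tilde\phi\circ\sigma\circ\tilde\phi^{-1}$. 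Compatibility with $G_K$ holds because $\tilde\phi$ is $\overline K$-semilinear over $\phi$, which is defined over $K$.

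Next I extend $\phi$. Since $\phi\colon\XX\to\XX'$ is a $K$-isomorphism of stacks, it induces an isomorphism $\Pi_\XX\to\Pi_{\XX'}$ over $G_K$, well defined up to $\Delta_{\XX'}$-conjugacy, and this descends to every $\Pi^{\Delta-k}$. Equivalently, $\phi$ carries the maximal geometrically $(m+n)$-step solvable covering of $\XX$ to that of $\XX'$. This holds because the covering is defined intrinsically, and $\Delta^{(m+n)}$ is characteristic. So there exists some $\tilde\psi\colon\tilde X^{m+n}\to\tilde{X'}^{m+n}$ over $\phi$. Its image in $\tilde X^m\to\tilde{X'}^m$ is a lift $\tilde\phi_0$ of $\phi$. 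Any two lifts of $\phi$ on the $m$-step level differ by a deck transformation, that is, $\tilde\phi=\tilde\phi_0\circ\delta$ with $\delta\in\Delta_\XX^m$. The deck transformation is geometric over $\overline K$ because both lifts are $\overline K$-morphisms over the same $\phi$ and $\tilde X^m$ is connected.

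Now I lift $\delta$ to $\tilde\delta\in\Delta_\XX^{m+n}$ via the surjection $\Delta_\XX^{m+n}\twoheadrightarrow\Delta_\XX^m$, and set $\tilde\psi'=\tilde\psi\circ\tilde\delta$. This is an isomorphism of the $(m+n)$-step coverings over $\phi$ which reduces to $\tilde\phi$. Its conjugation isomorphism $\Pi_\XX^{\Delta-(m+n)}\to\Pi_{\XX'}^{\Delta-(m+n)}$ is $G_K$-compatible, and it maps to the conjugation by $\tilde\phi$. The reason is that the projection $\tilde X^{m+n}\to\tilde X^m$ is equivariant for $\Pi^{\Delta-(m+n)}\twoheadrightarrow\Pi^{\Delta-m}$. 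This places the image inside $\Isom^{m+n}_{G_K}$ for every $n\ge0$.

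The main obstacle is the bookkeeping in the stacky setting. One must justify that the pro-coverings are schemes, which holds by non-perfectness and Theorem~\ref{thm:proFFenchels}: for $m$ large the covering kills the torsion, and for small $m$ one works with the Galois category formalism of \cite{NOO04} regardless. One must also check that the deck transformation group of $\tilde X^m\to\XX\times\overline K$ is exactly $\Delta_\XX^m$, so that the lifts of $\phi$ form a $\Delta^m_\XX$-torsor. I would handle this by working purely in the Galois category of finite étale covers of $\XX$, where these statements are formal.
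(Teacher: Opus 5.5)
Your argument is correct and is essentially the paper's: the paper factors the map through $\Isom_{\overline{K}/K}(\tilde{X}/\XX,\tilde{X'}/\XX')\to\Isom_{G_K}(\Pi_\XX,\Pi_{\XX'})$ and the successive quotient maps, using without comment that every isomorphism of $m$-step coverings lifts to the full universal coverings. Your torsor step, which corrects an arbitrary lift over $\phi$ by a preimage of the deck transformation $\delta$ under $\Delta_\XX^{m+n}\twoheadrightarrow\Delta_\XX^{m}$, is exactly the justification of that lifting, carried out directly at level $m+n$ rather than at the profinite limit.
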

	
	\begin{proof}
		For any $n\geq 0$, we have the following commutative diagram with natural maps:
		\begin{equation*}
			\xymatrix{
				\Isom_{\overline{K}/K}(\tilde{X}/\mathcal{X},\tilde{X'}/\mathcal{X}')\ar[r]\ar@{->>}[dd]
				&  \Isom_{G_{K}}(\Pi_{\mathcal{X}},\Pi_{\mathcal{X}'})\ar[d]\\
				&\Isom_{G_{K}}(\Pi_{\mathcal{X}}^{\Delta-(m+n)},\Pi_{\mathcal{X}'}^{\Delta-(m+n)})\ar[d]\\
				\Isom_{\overline{K}/K}(\tilde{X}^{m}/\mathcal{X},\tilde{X'}^{m}/\mathcal{X}')\ar[r]
				& \Isom_{G_{K}}(\Pi_{\mathcal{X}}^{\Delta-m},\Pi_{\mathcal{X}'}^{\Delta-m})
			}
		\end{equation*}
		Thus the image of
		$\Isom_{\overline{K}/K}(\tilde{X}^{m}/\mathcal{X},\tilde{X'}^{m}/\mathcal{X}')$
		in
		$\Isom_{G_{K}}(\Pi_{\mathcal{X}}^{\Delta-m},\Pi_{\mathcal{X}'}^{\Delta-m})$
		is contained in the image of
		$\Isom_{G_{K}}(\Pi_{\mathcal{X}}^{\Delta-(m+n)},\Pi_{\mathcal{X}'}^{\Delta-(m+n)})$.
	\end{proof}
	
	The $m$-step anabelian result of Theorem~\ref{thm:mstep_generalGenus} relies on a version of \cite{MR4745885} Lemma~4.11, originally involving a generization process, that we recall for the reader's convenience.
	
	\begin{theorem}\label{th:Yam24mStep}
		Let $U$ and $U'$ be affine hyperbolic curves over a field $K$ finitely generated over $\mathbb{Q}$, and
		assume $r_U\geq 3$ with $(g_U,r_U)\neq (0,3),\, (0,4)$. Then for any $G_K$-isomorphism $\Phi\colon \Pi_{U}^{\Delta-4}\xrightarrow{\ \sim\ } \Pi_{U'}^{\Delta-4}$, there exists a $K$-isomorphism $f^\Phi\colon U\xrightarrow{\ \sim\ } U'$.
	\end{theorem}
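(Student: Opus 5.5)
The plan is to reduce to the statement of \cite{MR4745885} Lemma~4.11, which is formulated for curves over finite fields and extended to fields finitely generated over $\mathbb{Q}$ by a specialization/generization argument. First, I recover the basic invariants from $\Phi$. Since $\Phi$ commutes with $G_K$, it induces a $G_K$-isomorphism $\Delta_U^{\rm ab}\simeq\Delta_{U'}^{\rm ab}$. The weight argument of Proposition~\ref{lem:reconstructionsgr}, using the exact sequence \eqref{wf2}, then gives $(g_U,r_U)=(g_{U'},r_{U'})$. In particular $U'$ also satisfies $r_{U'}\geq 3$ and $(g_{U'},r_{U'})\neq(0,3),(0,4)$.

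Next, I reconstruct the cuspidal inertia groups in $\Delta_U^{3}$ from $\Pi_U^{\Delta-4}$. I take the weight $-2$ part of the abelianizations of open subgroups containing $\Delta_U^{(1)}$; Lemma~\ref{lemma:mstep} identifies these abelianizations with data visible in $\Pi_U^{\Delta-4}$. This shows that $\Phi$ maps cuspidal inertia groups to cuspidal inertia groups in the $3$-step quotient. The hypotheses $r_U\geq 3$ and $(g,r)\neq(0,3),(0,4)$ are exactly what makes the cusp count grow under $2$-step covers, as in Corollary~\ref{cor:r_greater-1}, so that distinct cusps are separated.

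I then spread out the data. Choose a regular $\mathbb{Z}$-algebra $A$ of finite type with fraction field $K$, over which $U$ and $U'$ have smooth models with étale boundary. For a closed point $\mathfrak p$ of $\spec A$ with residue field $\mathbb{F}_{\mathfrak p}$, Oda--Tamagawa's good reduction criterion \cite{MR1478817} shows that $\Phi$ specializes, after restriction to a decomposition group, to a $G_{\mathbb{F}_{\mathfrak p}}$-isomorphism of the $4$-step quotients of the tame fundamental groups of the reductions $U_{\mathfrak p}$ and $U'_{\mathfrak p}$. The finite-field $4$-step result of \cite{MR4745885} then gives isomorphisms $U_{\mathfrak p}\simeq U'_{\mathfrak p}$. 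It uses the cuspidal reconstruction above to recover the multiplicative monoid of the function field, together with additive structure recovered via Kummer theory at the $4$-th step.

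The main obstacle is the generization step: passing from isomorphisms of the fibers over a dense set of closed points $\mathfrak p$ to a $K$-isomorphism $U\simeq U'$. My first approach is the following. The isomorphisms $U_{\mathfrak p}\simeq U'_{\mathfrak p}$ are uniquely determined by their action on the reconstructed cuspidal and Kummer data, and this action is controlled by $\Phi$. They are therefore compatible as $\mathfrak p$ varies. Since the scheme $\Isom_A(U,U')$ is finite and unramified over $\spec A$ (hyperbolicity), having sections over a Zariski-dense set of closed points compatible with one global $\Phi$ forces a section over the generic point. This yields $f^\Phi$.
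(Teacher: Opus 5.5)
The paper gives no proof of this statement. It is quoted from \cite{MR4745885} Lemma~4.11 and used as a black box in Theorem~\ref{thm:mstep_generalGenus}. Your outline has the expected Tamagawa-type shape (specialize to finite fields, apply a finite-field $m$-step result, generize). However, the steps you add beyond that citation are not sound as written.

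\textbf{Generization.} This is the main gap. Having $\mathbb{F}_{\mathfrak p}$-points of the finite unramified scheme $\Isom_A(U_A,U'_A)$ over a Zariski-dense set of closed points does not produce a $K$-point. For example, $\spec A[\sqrt d]\to\spec A$ with $A=\ZZ[1/2d]$ and $d$ not a square has such points at half of the primes. The phrase ``compatible as $\mathfrak p$ varies'' does not supply the missing mechanism. What is needed, and a single closed point suffices, is the following.
\begin{enumerate}
\item You need the finite-field input in its strong form: an isomorphism $f_{\mathfrak p}$ that induces the specialization of $\Phi$ modulo inner automorphisms. Bare existence of some $U_{\mathfrak p}\simeq U'_{\mathfrak p}$ is useless, since a nontrivial twist of $U$ becomes isomorphic to $U$ on a Zariski-dense set of closed points.
\item Choose $\mathfrak p$ in the locus where $\Isom_A(U_A,U'_A)$ is finite étale. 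Then $f_{\bar{\mathfrak p}}$ lifts to $\bar f\in\Isom(U_{\bar K},U'_{\bar K})$. By the specialization isomorphism on $\Delta^{\rm ab}\otimes\ZZ_\ell$ with $\ell\neq p$, $\bar f$ induces the same map as $\Phi$ on $\Delta_U^{\rm ab}\otimes\ZZ_\ell$.
\item For $\sigma\in G_K$, the $G_K$-equivariance of $\Phi$ shows that ${}^{\sigma}\bar f$ induces that same map. Since $\Aut(U_{\bar K})$ acts faithfully on $\Delta_U^{\rm ab}\otimes\ZZ_\ell$ for hyperbolic curves, ${}^{\sigma}\bar f=\bar f$. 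Hence $\bar f$ descends to $K$.
\end{enumerate}

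\textbf{Specialization and hypotheses.} The specialization step is asserted rather than proved. The Oda--Tamagawa criterion is a statement about the pro-$\ell$ outer action of inertia. It does not by itself show that $\Phi$ carries the kernel of $\Delta_U^{4}\twoheadrightarrow(\Delta^{\rm tame}_{U_{\bar{\mathfrak p}}})^{4}$, which has a $p$-part, onto the corresponding kernel for $U'$. This $4$-step specialization compatible with $\Phi$ is part of what \cite{MR4745885} establishes, so in substance you are citing the result you set out to prove.

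Your explanation of the hypotheses is also wrong. Cusp numbers grow along abelian covers of every affine hyperbolic curve, including $(0,3)$ (Fermat covers). So $r_U\geq 3$ and $(g_U,r_U)\neq(0,3),(0,4)$ are not about separating cusps via Corollary~\ref{cor:r_greater-1}; they are hypotheses inherited from the cited lemma.
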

	
	The following is a key lemma in our $m$-step anabelian results.
	\begin{lemma}\label{lem:cofH}
		Let $\XX$ be a non-perfect affine hyperbolic stacky curve over a field $K$ finitely generated over $\mathbb{Q}$. Then for $k\geq 2$ (or $k\geq 1$ in the case $(g_\XX,r_\XX)\neq (0,1)$)  the set
		\begin{equation}\label{eq_cofinalset}
			\mathcal{H}_\XX^k=
			\left\{
			H \lesssim \Pi_{\mathcal{X}}^{\Delta-(k+4)} \,\middle|\,
			\begin{array}{l}
				\text{$H$ is a torsion-free open normal subgroup such that} \\
				\text{$\Delta_{\mathcal{X}}^{(k)}/\Delta_{\mathcal{X}}^{(k+4)} \subset H$, $r_{\mathcal{X}_H} \ge 3$, and $(g_{\mathcal{X}_H},r_{\mathcal{X}_H})\notin\{(0,3),(0,4)\}$}
			\end{array}
			\right\}
		\end{equation}
		is non-empty and cofinal in the set of open subgroups $H\triangleleft\Pi_{\mathcal{X}}^{\Delta-(k+4)}$ satisfying $\Delta_{\mathcal{X}}^{(k)}/\Delta_{\mathcal{X}}^{(k+4)} \subset H$, where $\mathcal{X}_H$ denotes the Galois covering of $\mathcal{X}$ corresponding to $H$.
	\end{lemma}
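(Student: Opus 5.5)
We will work at the level of $\Delta=\Delta_{\XX}$, which is a non-perfect affine hyperbolic profinite $F$-group with $r=r_\XX\geq 1$. First, recall that open normal subgroups $H\triangleleft\Pi_{\XX}^{\Delta-(k+4)}$ containing $\Delta_\XX^{(k)}/\Delta_\XX^{(k+4)}$ correspond bijectively to open normal subgroups $\tilde H\triangleleft\Pi_\XX$ containing $\Delta_\XX^{(k)}$. Moreover, by Lemma~\ref{lemma:mstep} applied with $m=k$, $n=4$, one has $\tilde H^{\Delta-4}\simeq H^{\Delta-4}$. In particular, torsion-freeness of $H$ can be tested on $\tilde H\cap\Delta_\XX$ via Lemma~\ref{lemma:torsion}. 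The covering $\XX_H$ is then a finite étale cover of $\XX$, and its signature is the induced signature of Proposition~\ref{prop:inducedsign}.

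For non-emptiness, we split into two cases. If $(g_\XX,r_\XX)\neq(0,1)$, Corollary~\ref{cor:r_greater-1} gives, for any $r_0$, a torsion-free open characteristic subgroup $H_0<\Delta$ with $\Delta/H_0$ abelian, hence $H_0\supset\Delta^{(1)}\supset\Delta^{(k)}$ for $k\geq 1$, and $r_{H_0}\geq r_0$. If $(g_\XX,r_\XX)=(0,1)$, the same corollary yields $\Delta/H_0$ of derived length $\le 2$, so $H_0\supset\Delta^{(2)}$, which is why $k\geq 2$ is needed. Since $H_0$ is characteristic in $\Delta$, it is normal in $\Pi_\XX$. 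We therefore take $\tilde H=H_0$, or more precisely its preimage intersected appropriately so as to be normal in $\Pi_\XX$; this is fine because $H_0<\Delta$ is already normal in $\Pi_\XX$. Choosing $r_0=5$ guarantees $r_{H_0}\geq 3$ and excludes $(0,3),(0,4)$. The curve $\XX_{H_0}$ is a scheme, since $H_0$ is torsion-free, and it is affine because $r\ge1$ propagates by Proposition~\ref{prop:inducedsign}.

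For cofinality, let $H'$ be any open normal subgroup of $\Pi_\XX^{\Delta-(k+4)}$ containing $\Delta^{(k)}_\XX/\Delta_\XX^{(k+4)}$, and let $H\in\mathcal H^k_\XX$ be the element constructed above. We take $H\cap H'$, which is again open and normal and still contains $\Delta^{(k)}/\Delta^{(k+4)}$. It is torsion-free, being contained in $H$, via Lemma~\ref{lemma:mstep} and Lemma~\ref{lemma:torsion}, or directly since $\tilde H\cap\tilde H'\cap\Delta$ is an open subgroup of a free profinite group. The remaining conditions must be stable under passage to open subgroups of a torsion-free affine hyperbolic $F$-group. Such subgroups are free with signature $(g_U,r_U;\emptyset)$, and for an index-$d$ subgroup one has $r_{U'}\geq r_U$, since each cusp has at least one preimage by Proposition~\ref{prop:inducedsign}~(i). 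Hence $r_{U'}\geq 3$ is preserved. The types $(0,3),(0,4)$ cannot occur because $r_{U'}\geq r_U\geq 5$.

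The main obstacle will be the $(0,1)$ case. There, Lemma~\ref{cor:r_greater} bounds the cusps of abelian-quotient covers by $r_{\Delta^{(1)}}$, possibly equal to $1$. So we must genuinely pass through a second abelian step: invoke \cite{MR4745885} Lemma~1.5 on the hyperbolic free group $\Delta^{(1)}$ (of genus $\ge1$ by Lemma~\ref{cor:r_greater}), as in Corollary~\ref{cor:r_greater-1}. We must also check that the resulting subgroup can be taken characteristic, so that it is normal in $\Pi_\XX$ and contains $\Delta^{(2)}$.
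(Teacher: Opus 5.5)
Your proof is correct and takes the paper's approach: non-emptiness comes from Corollary~\ref{cor:r_greater-1} with $r_0\geq 5$ (abelian quotient, or derived length $\leq 2$ in the $(0,1)$ case, which explains the restriction on $k$), and cofinality comes from intersecting with a given $H'$, since torsion-freeness is inherited and, by Proposition~\ref{prop:inducedsign}, cusp numbers can only grow under passage to open subgroups; the paper leaves this last step implicit in its ``direct consequence''. Two small corrections: $H_0$ is open in $\Delta_\XX$ but not in $\Pi_\XX$, so the issue is openness rather than normality, and what you need is an open normal $\tilde H\triangleleft\Pi_\XX$ with $\tilde H\cap\Delta_\XX=H_0$, which exists because $\Delta_\XX/H_0$ is a finite subgroup of the profinite group $\Pi_\XX/H_0$; and your closing ``obstacle'' is already settled by that corollary, whose subgroup is characteristic in $\Delta_\XX$.
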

	
	\begin{proof}
		In both cases the cofinality and non-emptiness are a direct consequence of Corollary~\ref{cor:r_greater-1}. In the case $(g_\XX,r_\XX)= (0,1)$, the cofinal set can be initialized with the torsion-free open normal subgroup $H_1< \Pi_{\XX}^{\Delta-(k+4)}$, for $k+4\geq 5$, induced by Lemma~\ref{cor:r_greater}.
	\end{proof}
	
	\subsubsection{} In the following, we say that the Grothendieck $m$-step anabelian conjecture holds for $\XX$ and $\XX'$ if the following equivalence holds
	\begin{equation}\tag*{$(*)_m$}
		\mathcal{X}\cong_{K}\mathcal{X}' \text{ if and only if } \Pi_{\mathcal{X}}^{\Delta-m}\cong_{G_{K}}\Pi_{\mathcal{X}'}^{\Delta-m}.
	\end{equation}
	
	\medskip
	
	We reach the main results of this paper regarding the Grothendieck $m$-step anabelian conjecture for affine hyperbolic stacky curves. 
	\begin{theorem}\label{thm:mstep_generalGenus}
		Let $\mathcal{X}$ and $\mathcal{X}'$ be two stacky curves over a field $K$ finitely generated over $\mathbb{Q}$, so that $\mathcal{X}$ is non-perfect, affine, and hyperbolic, and assume that $(g_\XX,r_\XX)\neq(0,1)$. Then for every $k\geq 1$ the natural map
		\begin{equation*}\label{thm:mstep_generalGenus_map}
			\Isom_{\overline{K}/K}(\tilde{X}^{k}/\mathcal{X},\tilde{X'}^{k}/\mathcal{X}')\to  \Isom^{k+4}_{G_{K}}(\Pi_{\mathcal{X}}^{\Delta-k},\Pi_{\mathcal{X}'}^{\Delta-k})
		\end{equation*}
		is surjective. In particular, the Grothendieck $5$-step conjecture holds, that is:
		\begin{equation}\tag*{$(*)_5$}
			\mathcal{X}\cong_{K}\mathcal{X}' \text{ if and only if }\Pi_{\mathcal{X}}^{\Delta-5}\cong_{G_{K}}\Pi_{\mathcal{X}'}^{\Delta-5}
		\end{equation}
		In the case where $(g_\XX,r_\XX)=(0,1)$, denote $\{n_1,\dots,n_k\}$ the periods of $\XX$, then the above holds if we assume  $\textstyle\prod\nolimits_{i=1}^k n_i/\operatorname{lcm}(n_1,\dots,n_k) \geq 3$. 
		In particular, the $6$-step Grothendieck conjecture holds for any hyperbolic $(g,r)$-data.   
	\end{theorem}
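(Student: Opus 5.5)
The plan follows the strategy of \cite{MR4745885}: pass to a cofinal family of schematic Galois covers, apply the $4$-step result Theorem~\ref{th:Yam24mStep} to each, and glue. Fix $k\geq 1$ and an element $\Phi_k\in\Isom^{k+4}_{G_K}(\Pi_\XX^{\Delta-k},\Pi_{\XX'}^{\Delta-k})$, lifted to some $\Phi\colon\Pi_\XX^{\Delta-(k+4)}\xrightarrow{\sim}\Pi_{\XX'}^{\Delta-(k+4)}$ over $G_K$. The group-theoretic invariants are transported first. Non-perfectness is clear from $\Delta^{\rm ab}$. Hyperbolicity and affineness follow from Proposition~\ref{prop:3steprecoofaffine}, since $k+4\geq 3$. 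Torsion-freeness of subgroups follows from Lemma~\ref{lemma:torsion} combined with Lemma~\ref{lemma:mstep}. Finally, $(g,r)$ is recovered from Proposition~\ref{lem:reconstructionsgr}. Hence $\XX'$ satisfies the same hypotheses as $\XX$, and $\Phi$ maps $\mathcal{H}^k_\XX$ bijectively onto $\mathcal{H}^k_{\XX'}$. Here one uses that $r_{\XX_H}$ and $g_{\XX_H}$ are read off from $H^{\rm ab}$ with its $G_{K_H}$-action via weights, as in Eq.~\eqref{wf2}. By Lemma~\ref{lem:cofH}, this family is non-empty and cofinal.

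For each $H\in\mathcal{H}^k_\XX$, let $\tilde H\lesssim\Pi_\XX$ be its inverse image and $H'=\Phi(H)$. Then $\XX_H$ is a schematic affine hyperbolic curve over a finite extension $K_H$ of $K$, with $r\geq 3$ and $(g,r)\neq(0,3),(0,4)$. Since $H$ contains $\Delta^{(k)}_\XX/\Delta^{(k+4)}_\XX$, Lemma~\ref{lemma:mstep} with $m=k$ and $n=4$ gives $\Pi_{\XX_H}^{\Delta-4}\cong H$, compatibly over $G_{K_H}$. Hence $\Phi$ restricts to a $G_{K_H}$-isomorphism $\Pi_{\XX_H}^{\Delta-4}\xrightarrow{\sim}\Pi_{\XX'_{H'}}^{\Delta-4}$. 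Theorem~\ref{th:Yam24mStep} then yields a $K_H$-isomorphism $f_H\colon\XX_H\xrightarrow{\sim}\XX'_{H'}$. Next I would check that the $f_H$ are compatible for $H_2\subset H_1$ and equivariant for the conjugation actions of $\Pi^{\Delta-(k+4)}_\XX/H$. This uses the bijectivity (injectivity) part of the $4$-step theorem together with slimness, i.e.\ center-freeness of $H^{4}$-type quotients as in Proposition~\ref{prop:center-free}, to pin down the $f_H$ uniquely. Taking the limit over the cofinal system gives $\tilde\phi\colon\tilde X^k\to\tilde X'^k$ over $\overline{K}$. Passing to the quotient stacks $\XX=[\XX_H/(\Pi_\XX^{\Delta-(k+4)}/H)]$, via the uniformization used in Proposition~\ref{prop:monoXoverXrig}, gives $\phi\colon\XX\xrightarrow{\sim}\XX'$ inducing $\Phi_k$. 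This proves surjectivity. The statement $(*)_5$ is the case $k=1$, whose converse direction is trivial.

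The case $(g,r)=(0,1)$ needs separate treatment. There the cusp count of torsion-free covers with small derived length is bounded by $r_{\DOne}=\prod n_i/\operatorname{lcm}(n_i)$, by Lemma~\ref{cor:r_greater}. The assumption $r_{\DOne}\geq 3$ guarantees that $\DOne$ itself, which is torsion-free with $g\geq 1$ by that lemma, lies in $\mathcal{H}^1_\XX$. This lets Lemma~\ref{lem:cofH} start at $k=1$. Without this assumption one uses $k\geq 2$ via Corollary~\ref{cor:r_greater-1}, giving $k+4=6$ and hence the $6$-step statement for all hyperbolic data.

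The main obstacle is the gluing/equivariance step. Theorem~\ref{th:Yam24mStep} is only stated as producing some isomorphism $f^\Phi$. One needs that it is functorial in $\Phi$, and that the map it induces on $4$-step quotients agrees with $\Phi|_H$ up to $\Delta$-conjugacy. This is what makes the descent to $\XX$ and the compatibility with $\Phi_k$ work. I would try first to invoke the functorial form of \cite{MR4745885} Lemma~4.11: its construction via specialization is natural in the group isomorphism. Combined with the center-freeness of $\Delta^m$ for $m\geq m(\Delta)+2$, this forces uniqueness of the $f_H$ and hence compatibility across the system.
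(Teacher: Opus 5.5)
Your architecture is the paper's: the cofinal family $\mathcal{H}^k_\XX$ of Lemma~\ref{lem:cofH}; Lemma~\ref{lemma:mstep} to identify $H^{\Delta-4}$ with $\Pi^{\Delta-4}_{\XX_H}$ (note $H^{\Delta-4}$, not $H$ itself as you wrote); Theorem~\ref{th:Yam24mStep} on each $\XX_H$; gluing into $\tilde\phi$ and descending to $\phi$; and the $(0,1)$ case via Lemma~\ref{cor:r_greater} and Corollary~\ref{cor:r_greater-1}. Your explicit transport of torsion-freeness to $\Phi(H)$ through Lemmas~\ref{lemma:torsion} and~\ref{lemma:mstep} is a detail the paper leaves implicit.

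The gap is in the step you single out. You want each $f_H$ to induce $\Phi|_H$ on $4$-step quotients up to conjugacy, and then to get uniqueness, hence compatibility, from slimness and Proposition~\ref{prop:center-free}. Nothing supplies this. Theorem~\ref{th:Yam24mStep} is a pure existence statement. The $m$-step theorems only control the induced map four steps \emph{below} the level of the given isomorphism, which is exactly why the target in the statement is $\Isom^{k+4}_{G_K}(\Pi^{\Delta-k}_\XX,\Pi^{\Delta-k}_{\XX'})$. So level-$4$ data on a single $\XX_H$ gives no control of the induced map at any positive level. Center-freeness can only rigidify a lift of a \emph{known} outer isomorphism, and which outer isomorphism $f_H$ should induce is precisely what is unknown. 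For the same reason, your concluding assertion that $\phi$ ``induces $\Phi_k$'' is unsupported. That assertion is the actual content of surjectivity; for $(*)_5$ alone, the existence of $\phi$ would suffice.

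The paper needs neither uniqueness nor level-$4$ agreement. The compatibility of $\phi_M$ with $\phi_H$ for $M\subset H$ in $\mathcal{H}^k_\XX$ is taken directly from \cite{MR4745885} Lemma~4.14. This is naturality of the construction under passage to open subgroups, which is the usable form of your fallback ``functorial form of Lemma~4.11''. Write $\theta_\phi$ for the isomorphism $\Pi^{\Delta-k}_\XX\to\Pi^{\Delta-k}_{\XX'}$ induced by $(\tilde\phi,\phi)$, and $\bar\theta=\Phi_k$. The identification $\theta_\phi=\bar\theta$ is then made only at level $k$, in four steps:
\begin{enumerate}
\item By construction, $\tilde\phi$ carries the cover attached to $H$ onto the one attached to $\Phi(H)$, so $\theta_\phi(H)=\bar\theta(H)$.
\item From this one passes to $\theta_\phi(s(G))=\bar\theta(s(G))$ for every section $s$ of $\Pi^{\Delta-k}_\XX\to G_K$ over an open $G\subseteq G_K$.
\item Since both maps lie over $G_K$, they agree elementwise on $s(G)$.
\item $\Pi^{\Delta-k}_\XX$ is topologically generated by such $s(G)$ (\cite{FJ23}, Lemma~14.1.1), whence $\theta_\phi=\bar\theta$.
\end{enumerate}
Replacing your uniqueness argument by the compatibility lemma and this level-$k$ identification closes the proof.
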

	
	We recall that in the case $(g,r)=(0,1)$, following Lemma~\ref{cor:r_greater}, the number of cusps $r_H$ of any torsion-free open normal subgroup $H\leq \Delta$ is bounded by
	\[
	r_H\leq r_{\DOne}=\prod_{i=1}^k n_i/\operatorname{lcm}(n_1,\dots,n_k).
	\]
	
	\begin{proof}
		Let us first assume $k\geq 2$ and establish the $k+4$-step Grothendieck conjecture, that is, in particular,  the $6$-step Grothendieck conjecture. We consider the following commutative diagram and we construct the dotted diagonal arrow:
		\begin{equation}\label{proof:mstep_generalGenus_diag1}
			\vcenter{
				\xymatrix{		\Isom_{\overline{K}/K}\!\bigl(\tilde{X}^{k+4}/\mathcal{X},\tilde{X'}^{k+4}/\mathcal{X}'\bigr)
					\ar[r]\ar@{->>}[d]
					&		\Isom_{G_{K}}\!\bigl(\Pi_{\mathcal{X}}^{\Delta-(k+4)},\Pi_{\mathcal{X}'}^{\Delta-(k+4)}\bigr)
					\ni \theta
					\ar@{..>}[dl]\ar[d]
					\\
					(\tilde{\phi},\phi)\in
					\Isom_{\overline{K}/K}\!\bigl(\tilde{X}^{k}/\mathcal{X},\tilde{X'}^{k}/\mathcal{X}'\bigr)
					\ar[r]
					&		\Isom_{G_{K}}\!\bigl(\Pi_{\mathcal{X}}^{\Delta-k},\Pi_{\mathcal{X}'}^{\Delta-k}\bigr)
					\ni \theta_{\phi},\,\overline{\theta}
				}
			}
		\end{equation}
		
		We can assume that the group $\Isom_{G_{K}}(\Pi_{\mathcal{X}}^{\Delta-(k+4)},\Pi_{\mathcal{X}'}^{\Delta-(k+4)})$ is non-empty. For a given $\theta\in \Isom_{G_{K}}(\Pi_{\mathcal{X}}^{\Delta-(k+4)},\Pi_{\mathcal{X}'}^{\Delta-(k+4)})$, we first construct the pair $(\tilde{\phi},\phi)$ of isomorphisms.
		By Proposition~\ref{lem:reconstructionsgr}, we have $(g_{\XX},r_{\XX})=(g_{\XX'},r_{\XX'})$, and by Proposition~\ref{prop:3steprecoofaffine}, both $\mathcal{X}$ and $\mathcal{X}'$ are hyperbolic.
		
		By Lemma~\ref{lem:cofH}, one can consider $H<\mathcal{H}_\XX^k$ that is cofinal in our setup, and put $H'\coloneqq \theta(H)\lhd \Pi_{\mathcal{X}'}^{\Delta-(k+4)}$.
		Since $H$ (resp. $H'$) is torsion-free, each $\mathcal{X}_{H}$ (resp. $\mathcal{X}'_{H'}$) is a hyperbolic (non-stacky) curve.
		We have $H^{\Delta-4}\xrightarrow{\sim}\Pi_{\mathcal{X}_{H}}^{\Delta-4}$
		(resp. $H'^{\Delta-4}\xrightarrow{\sim}\Pi_{\mathcal{X}'_{H'}}^{\Delta-4}$) by Lemma~\ref{lemma:mstep}.
		Hence $\theta$ induces an isomorphism $\theta_{H}:\ \Pi_{\mathcal{X}_{H}}^{\Delta-4}\xrightarrow{\ \sim\ }\Pi_{\mathcal{X}'_{H'}}^{\Delta-4}$ over $G_{K}$.
		By the $m$-step solvable Grothendieck conjecture for affine hyperbolic curves of Theorem~\ref{th:Yam24mStep} with $m=k$, there exists a $K_{H}$-isomorphism
		\begin{equation*}
			\phi_{H}:\ \mathcal{X}_{H}
			\xrightarrow{\ \sim\ }
			\mathcal{X}'_{H'},
		\end{equation*}
		where $K_{H}$ stands for the field of definition of $\mathcal{X}_{H}$.
		Moreover, this construction is compatible along further open subgroups. That is, for any element $M\in \mathcal{H}_\XX^k$ such that $M< H$, with $M'=\theta(M)< H'$, the induced isomorphisms $\phi_{M}\colon \mathcal{X}_{M}\xrightarrow{\sim }\mathcal{X}_{M'}$
		fit into the commutative diagram
		\begin{equation*}
			\xymatrix{
				\mathcal{X}_{M}\ar[r]^{\phi_{M}}\ar[d] & \mathcal{X}_{M'}\ar[d]\\
				\mathcal{X}_{H}\ar[r]^{\phi_{H}}&\mathcal{X}'_{H'}
			}
		\end{equation*}
		by \cite{MR4745885} Lemma~4.14.
		Running over all such subgroups $H$, we obtain a $\overline{K}$-isomorphism
		\begin{equation*}
			\tilde{\phi}:\tilde{\mathcal{X}}^{k}
			\xrightarrow{\ \sim\ }
			\tilde{\mathcal{X}'}^{k}.
		\end{equation*}
		Since the quotient stack of $\tilde{\mathcal{X}}^{k}$  (resp. $\tilde{\mathcal{X}'}^{k}$) by $\Pi_{\XX}^{\Delta-k}$ (resp. $\Pi_{\XX'}^{\Delta-k}$) equals $\mathcal{X}$ (resp. $\mathcal{X}'$), we obtain an isomorphism $\phi\colon \mathcal{X}\xrightarrow{\sim}\mathcal{X}'$.
		This finishes the construction of the pair $(\tilde{\phi},\phi)$ of isomorphisms, hence of the dotted arrow of Diagram~\eqref{proof:mstep_generalGenus_diag1}.
		
		\medskip
		
		Finally, we show the commutativity of the lower right triangle in Diagram~\eqref{proof:mstep_generalGenus_diag1}.
		Let $\theta_{\phi}$ and $\overline{\theta}$ be the image of  the pair $(\tilde{\phi},\phi)$ and of $\theta$ in $\Isom_{G_{K}}(\Pi_{\mathcal{X}}^{\Delta-k},\Pi_{\mathcal{X}'}^{\Delta-k})$, respectively.
		Then by the construction of $\phi$, for every such $H$, we have $\theta_{\phi}(H)=\overline{\theta}(H)$.
		Hence, for any open subgroup $G$ of $G_{K}$ and any section $s\colon G\to \Pi_{\mathcal{X}}^{\Delta-k}$ of the projection $\Pi_{\mathcal{X}}^{\Delta-k}\to G_{K}$, we obtain that $\theta_{\phi}(s(G))=\overline{\theta}(s(G))$.
		Since $\theta$ and $\theta_{\phi}$ are isomorphisms over $G_{K}$, $\theta_{\phi}(x)=\overline{\theta}(x)$ holds for any $x\in s(G)$ and
		since $\Pi_{\mathcal{X}}^{\Delta-k}$ is generated by such $s(G)$, that is, we have
		\[
		\Pi_{\XX}^{\Delta-k}=\overline{\langle s(G) \mid G\subset^{op} G_K,~s\colon G\to \Pi_{\XX}^{\Delta-k}\rangle}
		\]
		from Lemma~14.1.1 of Chapter~14.5 of \cite{FJ23}. We therefore obtain that $\theta_{\phi}=\overline{\theta}$ holds.
		
		\medskip
		
		The same construction holds in the two following cases: If we assume furthermore that $(g_\XX,r_\XX)\neq (0,1)$, then the cofinality and non-emptiness of the set $\mathcal{H}_\XX^k$ is still given by Lemma~\ref{lem:cofH}, so that the map
		\[
		\Isom_{\overline{K}/K}(\tilde{X}^{1}/\mathcal{X},\tilde{X'}^{1}/\mathcal{X}')\to  \Isom^{5}_{G_{K}}(\Pi_{\mathcal{X}}^{\Delta-1},\Pi_{\mathcal{X}'}^{\Delta-1})
		\]
		is surjective, and the $5$-step Grothendieck conjecture ensues.
		
		\medskip
		
		In the case $(g_\XX,r_\XX)= (0,1)$, the cofinal set is initialized with the torsion-free open normal subgroup $H_1$ of $\Pi_{\XX}^{\Delta-5}$ given by Lemma~\ref{cor:r_greater}, so that $r_{H_1}\geq 3$ and $(g_{H_1},r_{H_1})\notin \{(0,3),(0,4)\}$. 
	\end{proof}
	
	We also give a group-theoretic corollary of Theorem~\ref{thm:mstep_generalGenus}. For $m\geq 0$, let us write
	\begin{equation*}
		\Aut^{m+n}_{G_{K}}(\Pi^{\Delta-m}_{\mathcal{X}})\coloneq
		\Isom^{m+n}_{G_{K}}(\Pi^{\Delta-m}_{\mathcal{X}},\Pi^{\Delta-m}_{\mathcal{X}}).
	\end{equation*}
	Then the family
	$\{\Aut^{m+n}_{G_{K}}(\Pi^{\Delta-m}_{\mathcal{X}})\}_{n\ge 0}$ forms a decreasing
	sequence of subgroups of $\Aut_{G_{K}}(\Pi^{\Delta-m}_{\mathcal{X}})$, and we show that this sequence stabilizes.
	
	\begin{corollary}
		Let $m\geq 2$ be an integer. For $\mathcal{X}$ a non-perfect affine hyperbolic stacky curve over a field $K$ finitely generated over $\mathbb{Q}$, we have, for any $N \ge m+4$
		\begin{equation*}
			\Aut^{N}_{G_{K}}(\Pi^{\Delta-m}_{\mathcal{X}})
			= \Aut^{m+4}_{G_{K}}(\Pi^{\Delta-m}_{\mathcal{X}}).
		\end{equation*}
	\end{corollary}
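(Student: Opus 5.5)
The inclusion $\Aut^{N}_{G_{K}}(\Pi^{\Delta-m}_{\mathcal{X}})\subseteq \Aut^{m+4}_{G_{K}}(\Pi^{\Delta-m}_{\mathcal{X}})$ is formal: any automorphism of $\Pi^{\Delta-N}_{\XX}$ over $G_K$ induces one of $\Pi^{\Delta-(m+4)}_{\XX}$, compatibly with the projections to $\Pi^{\Delta-m}_{\XX}$. Indeed $\Delta^{(m+4)}_\XX/\Delta^{(N)}_\XX$ is characteristic in $\Delta^{N}_\XX$, and $\Delta^{N}_\XX$ is preserved since automorphisms commute with the projection to $G_K$. So the content of the statement is the reverse inclusion. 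The plan is to obtain it by passing through geometry, using the surjectivity part of Theorem~\ref{thm:mstep_generalGenus} together with Lemma~\ref{lem:dicrease_isom}, both applied with $\XX'=\XX$.

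Take $\alpha\in \Aut^{m+4}_{G_{K}}(\Pi^{\Delta-m}_{\mathcal{X}})$. Since $m\geq 2$, Theorem~\ref{thm:mstep_generalGenus} applies with $k=m$ and with no restriction on $(g_\XX,r_\XX)$. Indeed, the $k\geq 2$ part of its proof only uses Lemma~\ref{lem:cofH}, which for $k\geq 2$ covers the case $(0,1)$ as well. It therefore provides a pair $(\tilde\phi,\phi)\in \Isom_{\overline{K}/K}(\tilde{X}^{m}/\mathcal{X},\tilde{X}^{m}/\mathcal{X})$ whose image in $\Aut_{G_K}(\Pi^{\Delta-m}_\XX)$ equals $\alpha$. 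Lemma~\ref{lem:dicrease_isom}, applied with $n=N-m$, then says this image lies in $\Aut^{N}_{G_{K}}(\Pi^{\Delta-m}_{\mathcal{X}})$. The reason is that the pair lifts, through the surjection $\Isom_{\overline{K}/K}(\tilde X/\XX,\tilde X/\XX)\twoheadrightarrow\Isom_{\overline{K}/K}(\tilde X^m/\XX,\tilde X^m/\XX)$, to an automorphism of the full universal pro-cover. That automorphism induces an element of $\Aut_{G_K}(\Pi^{\Delta-N}_\XX)$ mapping to $\alpha$. Hence $\alpha\in\Aut^{N}_{G_K}(\Pi^{\Delta-m}_\XX)$, which gives equality.

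The main obstacle is checking that the hypotheses line up. First, in the case $(g_\XX,r_\XX)=(0,1)$, Theorem~\ref{thm:mstep_generalGenus} carries an extra assumption on the periods. I would argue that this assumption is only needed for $k=1$, since for $k\geq 2$ Lemma~\ref{lem:cofH} already holds unconditionally. If that reading fails, I would add the period hypothesis to the statement. Second, the lifting of pairs $(\tilde\phi,\phi)$ to the full universal cover, used in Lemma~\ref{lem:dicrease_isom}, must be justified. For this I would use that $\tilde X^{m}\to\XX$ is a quotient of the universal pro-étale cover $\tilde X\to\XX$, and that an isomorphism of $\XX$ lifts to the universal cover compatibly with any given lift at level $m$. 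That lift exists because the universal cover of $\tilde X^m$ is $\tilde X$.
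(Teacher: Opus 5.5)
Your proposal is correct and follows the paper's argument: the paper proves the corollary in one line from Lemma~\ref{lem:dicrease_isom} and Theorem~\ref{thm:mstep_generalGenus}. It uses the same two ingredients, the formal inclusion $\Aut^{N}_{G_K}(\Pi^{\Delta-m}_{\XX})\subseteq\Aut^{m+4}_{G_K}(\Pi^{\Delta-m}_{\XX})$ for $N\geq m+4$ and the reverse inclusion obtained by realizing each element of $\Aut^{m+4}_{G_K}(\Pi^{\Delta-m}_{\XX})$ geometrically and then lifting to level $N$. Your reading that for $k=m\geq 2$ the surjectivity holds with no restriction on $(g_\XX,r_\XX)$, because Lemma~\ref{lem:cofH} is unconditional for $k\geq 2$, is also how the paper's proof of the theorem is organized.
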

	
	\begin{proof}
		The assertion follows immediately from Lemma~\ref{lem:dicrease_isom} and Theorem~\ref{thm:mstep_generalGenus}.
	\end{proof}

	\begin{remark}\label{rem:asbmStep}
		\emph{Absolute $m$-step.} With the same arguments as in Remark~\ref{rem:hypRigAnabRel}~(ii), i.e., since $\Delta_{\mathcal{X}}^m$ is the maximal normal closed finitely generated subgroup of $\Pi_{\mathcal{X}}^{\Delta-m}$, the isomorphism class of $\mathcal{X}$ as $K$-stack can be reconstructed from $\Pi_{\XX}^{\Delta-5}$ or $\Pi_{\XX}^{\Delta-6}$ as in Theorem~\ref{thm:mstep_generalGenus} \emph{without} the structure morphism to $\Gk$. 
	\end{remark}
	
	\subsubsection{}\label{subsub:mGCfails}
	We conclude with some discussion on the $m$-step solvable Grothendieck conjecture for Deligne-Mumford curves with non-trivial generic inertia, that shows that the generic inertia has to be $m$-step solvable for the conjecture to hold -- we further provide an example where, for any $m\geq 3$, the $m+1$-step version holds while the $m$-step version fails.
	
	\medskip
	
	Let us start with a lemma that indicates which property can be read within the fundamental group whenever the coarse and rigidified spaces coincide.
	
	\begin{lemma}\label{lem:finite_rigid_coarse}
		Let $\XX$ be a Deligne--Mumford curve over an algebraically closed field of characteristic $0$. Assume that
		\begin{equation*}
			\Pi_{\XXcoa}\cong \widehat{\FF}_{k}
		\end{equation*}
		for some integer $k\geq 2$.
		Let $m\geq 2$ be an integer.
		Then the following are equivalent:
		\begin{enumerate}
			\item $\XXrig=\XXcoa$.
			\item The image of the closed subgroup $N_{\XX}$, generated by the inertia subgroups of all closed points, in $ \Pi_{\XX}^m$ is finite.
		\end{enumerate}
	\end{lemma}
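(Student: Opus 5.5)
The plan is to reduce everything to the exact sequences of Propositions~\ref{prop:HoRigSeq} and~\ref{prop:hocoar}, working over the algebraically closed base so that $\Pi=\Delta$. Since $\II_{\XX,\eta}$ lies in every closed-point inertia group, we have $\II_{\XX,\eta}\leq N_\XX$. The image of $N_\XX$ in $\Pi_{\XXrig}$ is $N_{\XXrig}$, the closed normal subgroup generated by the stack inertia groups $\II_{\XXrig,\bar x}\simeq \II_{\XX,\bar x}/\II_{\XX,\eta}$. The condition $\XXrig=\XXcoa$ is equivalent to $\XXrig$ having no stacky points, that is, to all the periods of $\XXrig$ being trivial ($k_{\XXrig}=0$). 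By the characterization of periods in Proposition~\ref{prop:SigIsoInv}, this is in turn equivalent to $N_{\XXrig}=1$, i.e.\ to $N_\XX=\II_{\XX,\eta}$.

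\emph{(i)$\Rightarrow$(ii).} If $\XXrig=\XXcoa$, then $N_\XX=\II_{\XX,\eta}$ is finite. Hence its image in $\Pi_\XX^m$ is finite, and this direction is immediate.

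\emph{(ii)$\Rightarrow$(i).} We argue by contraposition and assume that $\XXrig$ has a stacky point, so that $k_{\XXrig}\geq1$. The assumption $\Pi_{\XXcoa}\cong\widehat{\FF}_k$ with $k\geq2$ means that $\XXcoa$ is affine with $2g+r-1=k\geq2$. Consequently $\Delta_{\XXrig}$ has signature $(g,r;\{n_1,\dots,n_{k'}\})$ with $r\geq1$, $k'\geq1$, and it is hyperbolic. By Theorem~\ref{thm:proFFenchels}~\ref{thm:proFFenchels--1}, the subgroup $U=\ker(\Delta_{\XXrig}\to(\Delta_{\XXrig}^{\rm ab})_{\rm tor})$ is torsion-free and open, and the inertia generators $\delta_i$ survive in $\Delta_{\XXrig}^{\rm ab}$ by~\eqref{eq:DabExpl}. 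It therefore suffices to show that the image of $N_{\XXrig}$ in $\Pi_{\XXrig}^m$ is infinite, since $\Pi_\XX^m\twoheadrightarrow\Pi_{\XXrig}^m$ and a finite group has finite image.

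To see this, note that $N_{\XXrig}$ is the normal closure of the $\delta_i$, and its image contains the normal closure of $\bar\delta_1$ in $\Pi_{\XXrig}^2$. Consider the abelian-by-finite quotient $\Delta_{\XXrig}/U^{(1)}$, which is a quotient of $\Delta^2_{\XXrig}$ because $\Delta_{\XXrig}/U$ is abelian and $m\geq2$. In this quotient, the conjugates $g\delta_1g^{-1}\delta_1^{-1}$ for $g\in U$ lie in $U^{\rm ab}$, which is a free $\widehat{\ZZ}$-module. The key step is then to show that these commutators generate an infinite subgroup, i.e.\ that $\delta_1$ acts nontrivially on $U^{\rm ab}$ by conjugation. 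This follows from the faithfulness of $\Delta_{\XXrig}/U$ on $U^{\rm ab}\otimes\QQ_\ell$, which is exactly the argument used in the proof of Proposition~\ref{prop:center-free}: the automorphism of the cover $X_U$ induced by $\delta_1\neq1$ acts nontrivially on $H^1$ of the hyperbolic curve $X_U$. Because $U^{\rm ab}$ is torsion-free, a nontrivial element $(\delta_1-1)u$ generates an infinite subgroup, so the normal closure of $\bar\delta_1$ in $\Pi^m_{\XXrig}$ is infinite.

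The main obstacle is this faithfulness step. One must make sure that the nontrivial action survives in $U^{\rm ab}$ itself and not only rationally, and that the relevant elements actually lie in the image of $N_{\XXrig}$. The torsion-freeness of $U^{\rm ab}$ handles the first concern. For the second, I would use that $N_{\XXrig}\cap U$ contains $[U,\delta_1]$. The hypothesis $k\geq2$ is what guarantees hyperbolicity of the cover $X_U$, so that faithfulness holds.
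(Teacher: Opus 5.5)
Your argument is correct, but it follows a different route from the paper's.

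\textbf{The paper's route.} The paper uses the free-product structure $\Pi_{\XXrig}\cong\widehat{\FF}_k\ast C_{n_1}\ast\cdots\ast C_{n_s}$, which is available because $r\geq 1$. It writes down an explicit surjection onto the metabelian wreath product $C_{n_1}\wr\widehat{\ZZ}$. There the normal closure of the image of $\delta$ already contains the infinite base group. This is purely combinatorial group theory. It needs no hyperbolicity of covers and no cohomological input, only one free generator and one cyclic factor.

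\textbf{Your route.} You pass to the abelian-by-finite quotient $\Delta_{\XXrig}/U^{(1)}$ of $\Delta_{\XXrig}^2$, where $U$ is the torsion-free kernel from Theorem~\ref{thm:proFFenchels}~\ref{thm:proFFenchels--1}. You then detect the infinitude inside the torsion-free module $U^{\rm ab}$ as $(1-\delta_1)U^{\rm ab}\neq 0$. For this you use faithfulness of the deck action on $H_1$ of the hyperbolic curve $X_U$, the same mechanism as in Proposition~\ref{prop:center-free}.

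\textbf{What each buys.} Your route gives a more conceptual and more general statement. Whenever some torsion-free open normal $U$ with $\Delta/U$ of derived length at most $d$ misses $\delta_1$, the image of $N_{\XX}$ in $\Delta^{d+1}$ is infinite. So it would also cover, for example, proper stacky curves satisfying the abelian-period condition, where no free-product decomposition exists. The price is that you rely on faithfulness of $\Aut(X_U)$ on $H^1$ for possibly affine hyperbolic curves, and you should say why it holds. Via the Gysin sequence, trivial action on $H^1(X_U,\QQ_\ell)$ forces trivial action on $H^1(\overline{X}_U,\QQ_\ell)$ and on the augmentation kernel of $\QQ_\ell[\text{cusps}]$. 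Hence the automorphism fixes every cusp, and a case check on the genus (Lefschetz for $g\geq 1$, three fixed points for $g=0$) gives the identity.

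\textbf{A non-issue.} The ``integral versus rational'' concern you flag is not a real obstacle. A nontrivial action on $U^{\rm ab}\otimes\QQ_\ell$ is automatically nontrivial on $U^{\rm ab}$.
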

	
	\begin{proof}
		The implication (i)$\Rightarrow$(ii) is clear, since $N_{\XX}=\II_{\XX,\eta}$ in this case and
		$\II_{\XX,\eta}$ is finite.
		Assume that $\XXrig\neq \XXcoa$ and let us prove the implication (ii)$\Rightarrow$(i).
		By the natural surjections
		\begin{equation*}
			\Pi_{\XX}^m \twoheadrightarrow \Pi_{\XXrig}^m\twoheadrightarrow \Pi_{\XXrig}^2,
		\end{equation*}
		it is sufficient to show that the image of $N_{\XX}$ in $\Pi_{\XXrig}^2$ is infinite.
		Since $ \Pi_{\XXcoa}\cong \widehat{\FF}_{k}$ with $k\geq 2$, the presentation of an affine profinite $F$-group gives an isomorphism
		\begin{equation*}
			\Pi_{\XXrig} \cong \widehat{\FF}_{k}\ast C_{n_{1}}\ast \cdots\ast C_{n_{s}}
		\end{equation*}
		for some integers $s\geq 1$ and $n_{j}\geq 2$.
		Choose a cyclic factor $C_{n_{1}}=\langle \delta\rangle$ and a free generator $x\in \widehat{\FF}_{k}$.
		Then, by the universal property of profinite free products, there exists a continuous surjection
		\begin{equation*}
			\Pi_{\XXrig} \twoheadrightarrow C_{n_{1}} \wr \widehat{\mathbf{Z}}=C_{n_{1}}^{\widehat{\mathbf{Z}}}\rtimes \widehat{\mathbf{Z}},
		\end{equation*}
		sending $\delta$ to the element $(e,0_{\widehat{\mathbf{Z}}})$ (where we define the element $e$ as $e(0)=\delta$ and $e(a)=1$ for $a\neq 0$), $x$ to $(\mathrm{id}_{C_{n_{1}}^{\widehat{\mathbf{Z}}}}, 1_{\widehat{\mathbf{Z}}})$, and all remaining generators to $1$ (see \cite[Appendix D]{MR2599132}).
		Since the wreath product $C_{n_{1}} \wr \widehat{\mathbf{Z}}$ is metabelian, this morphism factors through $\Pi_{\XXrig}^m$.
		Since $N_{\XX}$ is the closed normal subgroup generated by all torsion elements of $\Pi_{\XXrig}$, its image in $C_{n_{1}} \wr \widehat{\mathbf{Z}}$ contains $C_{n_{1}}^{\widehat{\mathbf{Z}}}$, and hence is infinite.
	\end{proof}
	
	\begin{proposition} \label{prop:monoDMmstep} 
		Let $\XX$ be a Deligne-Mumford curve over an algebraically closed field. Let $G$ be a finite group and $\widehat{\FF}_k$ be the free profinite group of rank $k\geq 2$. Let $m\geq 2$ be an integer, and assume that we have an exact sequence
		\[
		1 \to G \to \Delta_\XX^{m}
		\to \widehat{\FF}_{k}^{m} \to 1.
		\]
		Then, we have an isomorphism $\Delta_{\XXcoa}\simeq \widehat{\FF}_{k}$. Furthermore, we have that $\XXrig=\XXcoa$ and the image of the generic inertia group of $\XX$ in $\Delta^m_{\XX}$ coincides with $G$.
	\end{proposition}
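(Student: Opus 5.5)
We work throughout with the chain of surjections $\Delta_\XX\twoheadrightarrow\Delta_{\XXrig}\twoheadrightarrow\Delta_{\XXcoa}$ given by Proposition~\ref{prop:HoRigSeq} and Proposition~\ref{prop:hocoar}, and compare the given extension with the torsion-free quotient. The image $\bar G$ of $G$ is a finite normal subgroup of $\Delta_\XX^m$. Every torsion element of $\Delta_\XX$ lies in a stack inertia group $\II_{\XX,\bar x}$, by Proposition~\ref{prop:IrigcoaRec}~\ref{it:RelAnI} applied after passing to $\XXrig$. The quotient $\widehat{\FF}_k^m$ is torsion-free, since iterated extensions of free abelian profinite groups are torsion-free. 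Hence the image of $N_\XX$ in $\Delta_\XX^m$ is contained in $G$, so it is finite, and there is an induced surjection $\Delta_{\XXcoa}^m\twoheadrightarrow\widehat{\FF}_k^m/(\text{image of }G)$.

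\emph{Step 1: identify $\Delta_{\XXcoa}$.} Abelianizing the exact sequence shows that $\Delta_\XX^{\mathrm{ab}}$ modulo torsion has rank $k$. By the identification $\Delta_{\XXcoa}^{\rm ab}\simeq\Delta_{\XX,\rm tf}^{\rm ab}$ of Section~\ref{subsec2.1.2}, $\Delta_{\XXcoa}^{\rm ab}$ is free of rank $k$. Since $\XXcoa$ is a smooth curve, $\Delta_{\XXcoa}$ is a free or surface group of type $(g,r)$, so $\Delta_{\XXcoa}^{\rm ab}$ has rank $2g+r-1$ or $2g$. To rule out the proper case, we use the $2$-step (Chen rank) invariants as in Proposition~\ref{prop:3steprecoofaffine}~(ii). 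Because $G$ is finite, $\Delta_{\XXcoa}^m$ and $\widehat{\FF}_k^m$ are commensurable up to a finite kernel, and modulo torsion they have the same rational Chen ranks $\Theta_1,\Theta_2$ (here $m\ge 2$). The surface relation $2\Theta_2=\Theta_1^2-\Theta_1-2$ differs from the free one, so $\XXcoa$ is affine. Therefore $\Delta_{\XXcoa}\simeq\widehat{\FF}_k$.

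\emph{Step 2: $\XXrig=\XXcoa$.} This is exactly Lemma~\ref{lem:finite_rigid_coarse}. Its hypothesis $\Pi_{\XXcoa}\cong\widehat{\FF}_k$ over $\bar K$ holds by Step~1, and condition (ii) there holds because the image of $N_\XX$ in $\Delta_\XX^m$ lies in the finite group $G$. So $\XXrig=\XXcoa$.

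\emph{Step 3: the image of $\II_{\XX,\eta}$ equals $G$.} By Step~2, $\Delta_{\XXrig}=\Delta_{\XXcoa}\simeq\widehat{\FF}_k$, and Proposition~\ref{prop:HoRigSeq} gives $1\to\II_{\XX,\eta}\to\Delta_\XX\to\widehat{\FF}_k\to1$. Passing to $m$-step quotients, the image $I$ of $\II_{\XX,\eta}$ in $\Delta_\XX^m$ is a finite normal subgroup with $\Delta^m_\XX/I\simeq\widehat{\FF}_k^m$. Right-exactness of $G\mapsto G^m$ gives this quotient, and the finite-index correction is handled by Lemma~\ref{lemma:mstep}. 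Both $G$ and $I$ are finite normal subgroups with torsion-free quotient, so each is contained in the other. Indeed, the image of $I$ in $\Delta^m_\XX/G\simeq\widehat{\FF}_k^m$ is a finite subgroup of a torsion-free group, hence trivial, and symmetrically. Thus $I=G$.

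The main obstacle is Step~1, specifically excluding the proper or surface case and the possibility that $G$ maps nontrivially onto $\Delta_{\XXcoa}$-data. The rank count alone does not suffice, since it is $2g$ versus $2g+r-1$. I would first try the Chen-rank comparison after quotienting by the finite torsion. If that is delicate, the fallback is the weight argument of Proposition~\ref{lem:reconstructionsgr}, which is unavailable over $\bar K$, or the observation that a torsion-free profinite group commensurable with $\widehat{\FF}_k^m$ modulo a finite kernel must have $m$-step quotient of cohomological dimension $1$ at the $2$-step level.
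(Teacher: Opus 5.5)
Your proof is correct and follows the same skeleton as the paper's. Torsion-freeness of $\widehat{\FF}_k^m$ forces the image of $N_\XX$ into $G$, which gives a surjection $\Delta^m_{\XXcoa}\twoheadrightarrow\widehat{\FF}_k^m$ with finite kernel $G/\mathrm{Im}(N_\XX)$. Chen ranks then separate free groups from surface groups, and Lemma~\ref{lem:finite_rigid_coarse} yields $\XXrig=\XXcoa$.

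The difference lies in how the finite kernel is handled.
\begin{itemize}
\item The paper first shows that $\XXcoa$ is hyperbolic, since its group surjects onto the non-abelian $\widehat{\FF}_k^m$. It then uses torsion-freeness of $\Delta^m_{\XXcoa}$ for hyperbolic curve groups (\cite{Yam26} Theorem~2.9) to kill the kernel, which gives an honest isomorphism $\Delta^m_{\XXcoa}\cong\widehat{\FF}_k^m$. Injectivity of $\Gamma\mapsto\Gamma^m$ on free and surface groups then identifies $\Delta_{\XXcoa}$. The last claim follows from $G/\mathrm{Im}(N_\XX)=1$ together with $N_\XX=\II_{\XX,\eta}$.
\item You instead observe that $\Theta_1$ and the $\mathbb{Z}_\ell$-rank of $\mathrm{gr}_2$ of the metabelian quotient are unchanged by a finite kernel. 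This identifies $\Delta_{\XXcoa}\cong\widehat{\FF}_k$ directly. The abelian rank count also disposes of the non-hyperbolic coarse spaces, so no separate hyperbolicity step is needed.
\end{itemize}
As a result you only use torsion-freeness of $\widehat{\FF}_k^m$ itself, and your mutual-inclusion argument for $I=G$ is then sound. Your route uses slightly less input; the paper's produces the isomorphism of $m$-step quotients as a by-product.

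A few small points:
\begin{itemize}
\item The inclusion $\mathrm{Im}(N_\XX)\subseteq G$ only needs that stack inertia groups are finite. It does not need the converse statement you attribute to Proposition~\ref{prop:IrigcoaRec}, which in any case is stated over number fields and under hyperbolicity.
\item The induced surjection is simply onto $\widehat{\FF}_k^m$, since $G$ maps trivially there.
\item No appeal to Lemma~\ref{lemma:mstep} is needed in Step~3.
\item Torsion-freeness of $\widehat{\FF}_k^m$ needs more than \emph{iterated extensions of free abelian groups}. One must know that the abelianizations of the infinitely generated derived subgroups are torsion-free. This is the content of \cite{Yam26} Theorem~2.9, or follows from projectivity of closed subgroups of free profinite groups via their free maximal pro-$p$ quotients.
\item The fallbacks in your last paragraph are unnecessary, since the Chen-rank comparison goes through as stated.
\end{itemize}
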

	
	\begin{proof}
		Let $N_{\XX}$ be the closed subgroup of $\Delta_{\XX}$ generated by all closed inertia subgroups and such that $\Delta_\XX/N_{\XX}\simeq \Delta_{\XXcoa}$. By \cite{Yam26} Theorem~2.9, the group $\widehat{\FF}_{k}^{m}$ is torsion-free, thus any torsion element of $\Delta_\XX^{m}$ is in $G$. Since $N_{\XX}$ is the closed normal subgroup generated by all torsion elements of $\Delta_\XX$, its image $\mathrm{Im}(N_{\XX})$ in $\Delta_\XX^{m}$ is generated by torsion elements; thus, we get  $\mathrm{Im}(N_{\XX})< G$. We further obtain a commutative diagram with exact rows
		\[
		\xymatrix{
			1 \ar[r] &
			N_{\XX} \ar[r] \ar[d] &
			\Delta_\XX \ar[r] \ar@{->>}[d] &
			\Delta_{\XXcoa} \ar[r] \ar@{->>}[d] &
			1 \\
			1 \ar[r] &
			G \ar[r] &
			\Delta_\XX^{m} \ar[r] &
			\widehat{\FF}_{k}^{m} \ar[r] &
			1.
		}
		\]
		Since $\widehat{\FF}_{k}^{m}$, $m\geq 2$, is non-abelian, $\Delta_\XX^{m}$ is also non-abelian, thus $\Delta_{\XXcoa}$ is non-abelian, so $\XXcoa$ is hyperbolic. By taking the maximal $m$-step solvable quotient of the upper horizontal exact sequence in the diagram above, we get the commutative diagram with exact rows
		\begin{equation}\label{comdiag3.13.2}
			\xymatrix{
				&
				N_{\XX}^{m} \ar[r] \ar[d] &
				\Delta_\XX^{m} \ar[r] \ar[d]^{\cong} &
				\Delta_{\XXcoa}^{m} \ar[r] \ar@{->>}[d] &
				1 \\
				1 \ar[r] &
				G \ar[r] &
				\Delta_\XX^{m} \ar[r] &
				\widehat{\FF}_{k}^{m} \ar[r] &
				1.
			}
		\end{equation}
		Therefore, by the snake lemma, we obtain that $\ker(\Delta_{\XXcoa}^{m} \twoheadrightarrow \widehat{\FF}_{k}^{m})$ is a finite group. On the other hand, since $\XXcoa$ is hyperbolic, we have that $\Delta_{\XX_{\rm coarse}}^{m}$ is torsion-free by \cite{Yam26} Theorem~2.9. Then  $\Delta_{\XXcoa}^{m}\cong \widehat{\FF}_{k}^{m}$ follows. The calculation of Chen ranks implies that the assignment
		\[
		\{\widehat{\FF}_{r},\ \widehat{S}_{g}\mid r\geq 2, g\geq 2\}
		\longrightarrow
		\{\FinStepSolvQuo{\widehat{\FF}_{r}}{m},\ \FinStepSolvQuo{\widehat{S}_{g}}{m}\mid r\geq 2, g\geq 2\}
		\]
		is injective.
		Hence  $\Delta_{\XXcoa}^{m}\cong \widehat{\FF}_{k}^{m}$ implies  $\Delta_{\XXcoa}\cong \widehat{\FF}_{k}$.
		
		\medskip
		
		By Lemma~\ref{lem:finite_rigid_coarse}, and Diagram~\ref{comdiag3.13.2}, the rest of the conclusion follows.
	\end{proof}
	
	We conclude with the following anabelian result that illustrates the case of affine and global quotient Deligne-Mumford curves with equal rigidification and coarsification.
	
	\begin{proposition} \label{prop:biDMmstep} 
		Let $m\geq 4$ be an integer and $\XX=[X/G]$ affine hyperbolic Deligne-Mumford curve over $K$ where $G$ is a finite $m$-step solvable group acting trivially on $X$. Then any Deligne-Mumford curve $\YY$ such that there is a $G_K$-isomorphism $\Pi_\YY^{\Delta-m}\simeq \Pi_\XX^{\Delta-m}$ gives a commutative square
		\[
		\begin{tikzcd}
			\YY \arrow[r,dashed] \arrow[d] & \XX \arrow[d] \\
			\YYcoa \arrow[r,"\sim"] & \XXcoa
		\end{tikzcd}
		\]
		where the bottom arrow is an isomorphism. 
	\end{proposition}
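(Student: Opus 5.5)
The plan is to reduce to the $m$-step results for the coarse (equivalently rigidified) affine hyperbolic curve, using Proposition~\ref{prop:monoDMmstep} to identify the generic inertia from the truncated group. Since $G$ acts trivially on $X$, we have $\XXrig=\XXcoa=X$ and $\II_{\XX,\eta}=G$, with a split extension $\Pi_\XX\simeq G\times\Pi_X$ over $G_K$ (for the geometric part, $\Delta_\XX\simeq G\times\Delta_X$). As $G$ is $m$-step solvable, $G^{(m)}=1$. Hence $\Delta_\XX^{(m)}=\Delta_X^{(m)}$, and $\Delta_\XX^m\simeq G\times\Delta_X^m$, with $\Delta_X\simeq\widehat{\FF}_k$ for some $k\geq 2$ because $X$ is affine and hyperbolic. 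This gives an exact sequence $1\to G\to\Delta_\XX^m\to\widehat{\FF}_k^m\to1$, and the $G_K$-isomorphism $\Pi_\YY^{\Delta-m}\simeq\Pi_\XX^{\Delta-m}$ transports it to $\Delta_\YY^m$.

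First I would apply Proposition~\ref{prop:monoDMmstep} to $\YY\times\bar K$. It gives $\Delta_{\YYcoa}\simeq\widehat{\FF}_k$ and $\YYrig=\YYcoa$, and it shows that the image of $\II_{\YY,\eta}$ in $\Delta_\YY^m$ is the subgroup corresponding to $G$. I would then characterize this $G$ group-theoretically, as the maximal finite normal subgroup, or equivalently the subgroup generated by torsion. This uses the torsion-freeness of $\widehat{\FF}_k^m$ from \cite{Yam26} Theorem~2.9. The identification shows that the isomorphism $\Pi_\YY^{\Delta-m}\simeq\Pi_\XX^{\Delta-m}$ carries the image of $\II_{\YY,\eta}$ onto $G$. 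Passing to quotients, we obtain a $G_K$-isomorphism $\Pi_{\YYcoa}^{\Delta-m}\simeq\Pi_{X}^{\Delta-m}$. Here I use that $\Delta_\YY^m/\mathrm{Im}(\II_{\YY,\eta})\simeq\Delta_{\YYcoa}^m$, which follows from diagram~\eqref{comdiag3.13.2} once finiteness of the kernel and torsion-freeness are known.

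Next I would check the hypotheses needed for the $m$-step conjecture on the coarse level. Both $\YYcoa$ and $X$ are affine hyperbolic curves, and $g,r$ agree by Proposition~\ref{lem:reconstructionsgr} applied to their $m$-step quotients. One then wants a $K$-isomorphism $\YYcoa\simeq X$ from the $G_K$-isomorphism of $4$-step quotients, via Theorem~\ref{th:Yam24mStep}; this is where $m\geq4$ enters. Theorem~\ref{th:Yam24mStep} requires $r\ge3$ and $(g,r)\ne(0,3),(0,4)$. I plan to remove these restrictions by passing to characteristic open covers $H$ with enough cusps, using Corollary~\ref{cor:r_greater-1} and Lemma~\ref{lemma:mstep}, and gluing as in the proof of Theorem~\ref{thm:mstep_generalGenus}. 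Alternatively I would invoke the full $4$-step result of \cite{MR4745885} for affine hyperbolic curves directly, which is what the statement at level $m\geq 4$ suggests. This yields the bottom arrow $\YYcoa\xrightarrow{\sim}\XXcoa$.

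The square then commutes by construction, the dashed arrow being the composite of the coarse maps $\YY\to\YYcoa\simeq\XXcoa$ and a section of $\XX=[X/G]\to X$, whose existence is trivial since the gerbe is neutral. No isomorphism $\YY\simeq\XX$ is asserted. I expect the main obstacle to be the second step. Proposition~\ref{prop:monoDMmstep} is stated over an algebraically closed field, so one must ensure the identification of $G$ with the image of the generic inertia is $G_K$-equivariant. One must also ensure that the quotient of $\Pi_\YY^{\Delta-m}$ by it really is $\Pi_{\YYcoa}^{\Delta-m}$, and not merely a group with isomorphic geometric part.
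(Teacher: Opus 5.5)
Your treatment of the coarse level follows the paper. Proposition~\ref{prop:monoDMmstep} gives $\YYrig=\YYcoa$ and identifies the image of $\II_{\YY,\eta}$ in $\Delta_\YY^m$ with $G$. Quotienting then yields a $G_K$-isomorphism $\Pi_{\YYcoa}^{\Delta-m}\simeq\Pi_X^{\Delta-m}$, and the $m$-step result for affine hyperbolic curves gives the bottom arrow.

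The obstacles you anticipate are harmless. Since $\widehat{\FF}_k^m$ is torsion-free, $G$ is exactly the set of torsion elements of $\Delta^m$. It is therefore characteristic in $\Delta^m$ and normal in $\Pi^{\Delta-m}$, so the given isomorphism carries one copy onto the other, compatibly with $G_K$. Moreover, $\YYrig=\YYcoa$ forces $N_\YY=\II_{\YY,\eta}$, so $\Pi_\YY^{\Delta-m}/\mathrm{Im}(\II_{\YY,\eta})=\Pi_\YY/(\II_{\YY,\eta}\Delta_\YY^{(m)})=\Pi_{\YYcoa}^{\Delta-m}$ directly.

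Of your two options for the coarse isomorphism, only the direct appeal to the full result of \cite{MR4745885} covers $m=4$. The covering trick of Theorem~\ref{thm:mstep_generalGenus} loses steps: by Lemma~\ref{lemma:mstep}, a cover containing $\Delta^{(j)}$ only inherits $(m-j)$-step data. So that option requires $m\geq 5$.

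The genuine difference is the top arrow. You take the trivial section $X\to[X/G]$, i.e.\ the trivial $G$-torsor. This proves the statement as literally written, but it shows the square exists as soon as $\YYcoa\simeq\XXcoa$. Your arrow also kills $\II_{\YY,\eta}$ on fundamental groups, so it carries no anabelian information.

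The paper instead writes $\XX\simeq X\times BG$ and maps $\YY$ to it via the coarse map together with the $G$-torsor classified by $\Pi_\YY\to\Pi_\YY^{\Delta-m}\simeq\Pi_\XX^{\Delta-m}\twoheadrightarrow G$. This arrow restricts on $\II_{\YY,\eta}$ to the surjection onto its image $G$ in $\Delta_\YY^m$. It is therefore compatible with the given isomorphism on the generic-inertia factor.

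Your version is shorter. The paper's version makes the dashed arrow realize the group-theoretic identification of the generic inertia, which is the intended content of the proposition.
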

	
	\begin{proof}
		By Proposition~\ref{prop:monoDMmstep} and the $G_K$-compatibility, we have $\YYrig\simeq \YYcoa$ and $G$ is the maximal $m$-step solvable quotient of the generic inertia group $\II_{\YY,\eta}$. We further get a $G_K$-compatible isomorphism $\Pi_{\YYcoa}^{\Delta-m}\simeq \Pi_X^{\Delta-m}$ so that both coarse spaces are isomorphic. 
		
		\medskip
		
		As $\XX\simeq X\times_{\operatorname{Spec} K} B_{\operatorname{Spec} K} G$ is the trivial $G$-gerbe over $X$, a map $\YY\to \XX$ over $X$ is equivalent to giving a map $\YY\to B_{\operatorname{Spec} K} G$. Such a map corresponds to a $G$-bundle over $\YY$ but by assumption we have a surjective homomorphism
		\[
		\Pi_{\YY} \longrightarrow G
		\]
		which provides such a $G$-bundle over $\YY$. 
	\end{proof}
	
	One can note that, with the notations of the proposition, $\Delta_{\YY}$ is a semi-direct product $\II_{\YY,\eta}\rtimes \widehat{\FF_k}$. We further have that the $m$-step solvable quotient of $\II_{\YY,\eta}$ is $G$ and that the induced action of $\widehat{\FF_k}$ on $\II_{\YY,\eta}^m=G$ is trivial. 
	
	\printbibliography[heading=bibintoc]
	
\end{document}